\newtheorem{theo}{Theorem}[section]
\newtheorem{prop}[theo]{Proposition}
\newtheorem{lemma}[theo]{Lemma} 
\newtheorem{coro}[theo]{Corollary}
\newtheorem{claim}[theo]{Claim}
\newtheorem{fact}[theo]{Fact} 
\newtheorem{setup}[theo]{Setup}
\newtheorem{defn}[theo]{Definition}
\newcommand{\mc}[1]{\mathcal{#1}}
\newcommand{\nib}[1]{\noindent {\bf #1}} 
\newcommand{\Qart}{\mathcal{Q}}
\newcommand{\Part}{\mathcal{P}}
\newcommand{\F}{\mathcal F}
\newcommand{\V}{\mathcal V}
\newcommand{\Sa}{\mathcal S}
\newcommand{\R}{\mathcal R}
\newcommand{\D}{\mathcal D} 
\newcommand{\vx}{\mathrm{vertex}} 
\newcommand{\T}{\mathcal T}
\newcommand{\K}{\mathcal K}
\newcommand{\A}{\mathcal A}
\newcommand{\Cell}{\mathrm{Cell}}
\newcommand{\B}{\mathcal B}
\newcommand{\U}{\mathcal U}
\newcommand{\Lo}{\mathcal L}
\newcommand{\Akpq}{\mathcal A^k_{p,q}}
\newcommand{\sm}{\setminus}
\newcommand{\eps}{\varepsilon}
\newcommand{\Z}{\mathbb Z}
\newcommand{\vb}{{\bf v}}
\newcommand{\yb}{{\bf y}}
\newcommand{\xb}{{\bf x}}
\newcommand{\ub}{{\bf u}}
\newcommand{\ib}{{\bf i}}
\def\noproof{{\unskip\nobreak\hfill\penalty50\hskip2em\hbox{}\nobreak\hfill%
       $\square$\parfillskip=0pt\finalhyphendemerits=0\par}\goodbreak}
\def\endproof{\noproof\bigskip}
\title{Packing $k$-partite $k$-uniform hypergraphs} 
\author{Richard Mycroft}
\begin{document}
\begin{abstract}
Let $G$ and $H$ be $k$-graphs ($k$-uniform hypergraphs); then a perfect $H$-packing in~$G$ is a collection of vertex-disjoint copies of $H$ in~$G$ which together cover every vertex of~$G$. For any fixed $H$ let $\delta(H, n)$ be the minimum $\delta$ such that any $k$-graph~$G$ on $n$ vertices with minimum codegree $\delta(G) \geq \delta$ contains a perfect $H$-packing. The problem of determining $\delta(H, n)$ has been widely studied for graphs (\emph{i.e.} $2$-graphs), but little is known for $k \geq 3$. Here we determine the asymptotic value of $\delta(H, n)$ for all complete $k$-partite $k$-graphs $H$, as well as a wide class of other $k$-partite $k$-graphs. In particular, these results provide an asymptotic solution to a question of R\"odl and Ruci\'nski on the value of $\delta(H, n)$ when $H$ is a loose cycle. We also determine asymptotically the codegree threshold needed to guarantee an $H$-packing covering all but a constant number of vertices of $G$ for any complete $k$-partite $k$-graph $H$.
\end{abstract}
\maketitle 

\section{Introduction}\label{intro}

\subsection{Basic notions}
A \emph{$k$-uniform hypergraph}, or \emph{$k$-graph}~$H$ consists of a \emph{vertex set}~$V(H)$ and an \emph{edge set}~$E(H)$, where every $e \in E(H)$ is a set of precisely~$k$ vertices of~$H$. So a $2$-graph is a simple graph. We often identify $H$ with its edge set, for example writing $e \in H$ to mean that $e$ is an edge of $H$, or $|H|$ to denote the number of edges of~$H$.

If $G$ and $H$ are $k$-graphs, then an \emph{$H$-packing} in $G$ (also known as an \emph{$H$-tiling} or \emph{$H$-matching}) is a collection of vertex-disjoint copies of $H$ in $G$. This is a generalisation of a \emph{matching} in $G$, which is the case of an $H$-packing when~$H$ is the $k$-graph with $k$ vertices and one edge. We say that a matching or $H$-packing in $G$ is \emph{perfect} if it covers every vertex of $G$. Clearly $G$ can only contain a perfect $H$-packing if $|V(H)|$ divides $|V(G)|$.

We focus mainly on the case when $H$ is a fixed $k$-graph and $|V(G)|$ is much larger than $|V(H)|$.
Our general question is then: what minimum degree conditions on $G$ are sufficient to guarantee that $G$ contains a perfect $H$-packing? 
There are various notions of minimum degree for $k$-graphs, but we shall consider here only one, namely the \emph{codegree}. Let $G$ be a $k$-graph on~$n$ vertices. For any set $S \subseteq V(G)$, the \emph{degree} $\deg_G(S)$ of $S$ is the number of edges of $G$ which contain $S$ as a subset. The \emph{minimum codegree} $\delta(G)$ of $G$ is then the minimum of $\deg_G(S)$ taken over all sets $S$ of $k-1$ vertices of $G$. 
Note that this coincides with the standard notion of degree for graphs.

For any fixed $k$-graph $H$ and any integer $n$ we define $\delta(H, n)$ to be the smallest integer $\delta$ such that any $k$-graph $G$ on $n$ vertices with minimum codegree $\delta(G) \geq \delta$ contains a perfect $H$-packing. As noted earlier, this is only defined for those $n$ which are divisible by $|V(H)|$; we shall only consider these values of $n$. A major problem in extremal graph theory is to determine the behaviour of $\delta(H, n)$ for large $n$.

\subsection{Perfect packings in graphs}
In the case when $H$ is a graph, this question has been widely studied, and the value of $\delta(H, n)$ has been determined up to an additive constant in all cases. Indeed, the celebrated Hajnal-Szemer\'edi theorem~\cite{HSz} determined that $\delta(K_r, n) = (r-1)n/r$, and Koml\'os, S\'ark\"ozy and Szemer\'edi~\cite{KSS} showed that for any graph $H$ there exists a constant $C$ such that $\delta(H, n) \leq (1 - 1/\chi(H))n + C$. This confirmed a conjecture of Alon and Yuster~\cite{AY}, who had previously established the weaker result with $o(n)$ in place of $C$, and who observed that the constant $C$ cannot be removed completely. Finally K\"uhn and Osthus~\cite{KO2} determined the value of $\delta(H, n)$ up to an additive constant for any graph $H$ using the critical chromatic number~$\chi_{\textrm{cr}}(H)$ first introduced by Koml\'os~\cite{Km}. 

Since there are multiple similarities between the results of K\"uhn and Osthus for graphs~\cite{KO2} and the results of this paper for $k$-graphs, we shall state their results in some detail. Let $H$ be a graph on $m$ vertices, and let $\chi(H)$ denote the chromatic number of $H$, which we assume here to be greater than two (the behaviour in the bipartite case is somewhat different, but is less closely related to the $k$-graph results considered in this paper). So assume that $\chi(H) = r \geq 3$, and define $\D(H) := \bigcup_c \{|X_i^c| - |X_j^c| : i, j \in [r]\}$, where the union is taken over all proper $r$-colourings $c$ of $H$, and $X_1^c, \dots, X_r^c$ denote the colour classes of $c$. We then define $\gcd(H)$ to be the greatest common factor of $\D(H)$, unless $\D(H) = \{0\}$, in which case $\gcd(H)$ is undefined. Also, we define $\sigma(H) := \min_{c, j} |X_j^c|/m$, so $\sigma(H)$ is the smallest possible size of a colour class of a proper $r$-colouring of $H$, expressed as a proportion of the number of vertices of $H$. K\"uhn and Osthus~\cite{KO2} demonstrated that there exists a constant $C$ such that
$$ \left(1 - \frac{1}{\chi^*(H)} \right)n \leq  \delta(H, n) \leq \left(1 - \frac{1}{\chi^*(H)} \right)n+C,$$
where 
$$ 
\chi^*(H) = 
\begin{cases}
\chi_{\textrm{cr}}(H) := \frac{\chi(H) - 1}{1-\sigma(H)} & \textrm{ if $\gcd(H) = 1$,} \\
\chi(H) & \textrm{ otherwise.}
\end{cases}
$$

\subsection{Perfect packings in hypergraphs: known results}
For $k$-graphs $H$ with $k \geq 3$ much less is known. Indeed, the only cases for which $\delta(H, n)$ is known even asymptotically are the cases when $H$ is a 3-graph on 4 vertices and the case of a perfect matching (\emph{i.e.} when $H = K^k_k$ consists of $k$ vertices and one edge). The first bounds for the latter case were given by Daykin and H\"aggkvist~\cite{DH}, and later R\"odl, Ruci\'nski and Szemer\'edi~\cite{RRS3} showed that $n/2 - k \leq \delta(K^k_k, n) \leq n/2$ for all sufficiently large $n$ (indeed, they actually determined the exact value of $\delta(K^k_k, n)$ for large values of $n$). Beyond this, the value of $\delta(H, n)$ is known for three other $3$-graphs, all on four vertices. Let $K^3_4-2e$, $K^3_4-e$ and $K^3_4$ denote the $3$-graphs on $4$ vertices with $2, 3$ and $4$ edges respectively. The value of $\delta(K^3_4-2e, n)$ was found to be $n/4 + o(n)$ by K\"uhn and Osthus~\cite{KO}; recently Czygrinow, DeBiasio and Nagle~\cite{CDN} found the exact value for large $n$ to be either $n/4$ or $n/4+1$ according to the parity of $n/4$. Lo and Markstr\"om~\cite{LM, LM2} showed that $\delta(K^3_4-e, n) = n/2 + o(n)$ and that $\delta(K^3_4, n) = 3n/4 + o(n)$. Simultaneously with the latter, Keevash and Mycroft~\cite{KM} showed that the exact value of $\delta(K^3_4, n)$ for large $n$ is $3n/4-1$ or $3n/4-2$, again according to the parity of $n/4$; these results confirmed a conjecture of Pikhurko~\cite{P}, who had previously shown that $\delta(K^3_4, n) \leq 0.8603 n$, and who gave the construction which establishes the lower bound on $\delta(K^3_4, n)$. The exact value of $\delta(K^3_4 - e, n)$ for large $n$ remains an open problem. The cases listed above comprise all the $k$-graphs $H$ with no isolated vertices for which the value of $\delta(H, n)$ was previously known even asymptotically (if $H$ contains an isolated vertex then the behaviour is somewhat different, as we can restate the problem as asking for non-perfect packing of a smaller $k$-graph).

Other conditions, such as different notions of degree, which guarantee a perfect $H$-packing in a large $k$-graph $G$ have also been considered; see the survey by R\"odl and Ruci\'nski~\cite{RR} for a full account of these. In particular, in recent years there has been much study of the case of a perfect matching, see e.g.~\cite{AGS09, Alon+, CK, HPS, KKM, KM, Khp1, Khp2, KOT, LMp, MR, P, TZ12, TZp}. For perfect $H$-packings other than a perfect matching, results are much more sparse. Lo and Markstr\"om~\cite{LM2} found the asymptotic values of $\delta_1(K^3_3(m), n)$ and $\delta_1(K^4_4(m), n)$, where $\delta_1(H, n)$ denotes the smallest integer $\delta$ such that any $k$-graph $G$ on $n$ vertices with $\deg_G(\{x\}) \geq \delta$ for any $x \in V(G)$ contains a perfect $H$-packing, and $K^r_r(m)$ denotes the complete $r$-partite $r$-graph (defined below) whose vertex classes each have size $m$. More recently, Han and Zhao~\cite{HZ} gave the exact value of $\delta_1(K^3_4-2e, n)$ for large $n$, whilst Lenz and Mubayi~\cite{LM3} proved that for any linear $k$-graph $F$ (meaning that any two edges of $F$ intersect in at most one vertex), any sufficiently large `quasirandom' $k$-graph with linear density contains a perfect $F$-packing. However, in general our knowledge of conditions which guarantee a perfect $H$-packing in a $k$-graph $G$ remains very limited.

\subsection{Perfect packings in hypergraphs: new results}
In this paper we determine the asymptotic value of $\delta(K, n)$ for all complete $k$-partite $k$-graphs, as well as a large class of non-complete $k$-partite $k$-graphs $K$. Let $K$ be a $k$-graph on vertex set $U$ with at least one edge (if $K$ has no edges then trivially $\delta(K, n) = 0$). Then a \emph{$k$-partite realisation} of $K$ is a partition of $U$ into \emph{vertex classes} $U_1, \dots, U_k$ so that for any $e \in K$ and $1 \leq j \leq k$ we have $|e \cap U_j| = 1$. Equivalently, we colour all vertices of $K$ with $k$ colours so that no edge contains two vertices of the same colour; the vertex classes are then the colour classes. Note in particular that we must have $|U_j| \geq 1$ for every $1 \leq j \leq k$. We say that $K$ is \emph{$k$-partite} if it admits a $k$-partite realisation. 
The \emph{complete $k$-partite $k$-graph} with vertex classes $U_1, \dots, U_k$ is the $k$-graph on $U = U_1 \cup \dots \cup U_k$ in which every set $e \subseteq U$ with $|e \cap U_j| =1$ for each $1 \leq j \leq k$ is an edge.
Observe that a complete $k$-partite $k$-graph has only one $k$-partite realisation up to permutations of the vertex classes $U_1, \dots, U_k$.

Our first theorem states that for any $k$-partite $k$-graph $K$ we have $\delta(K, n) \leq n/2 + o(n)$.

\begin{theo} \label{main1}
Let $K$ be a $k$-partite $k$-graph on $b$ vertices. Then for any $\alpha > 0$ there exists $n_0$ such that if $G$ is a $k$-graph on $n \geq n_0$ vertices for which $b$ divides $n$ and $\delta(G) \geq n/2 + \alpha n$, then $G$ contains a perfect $K$-packing.
\end{theo}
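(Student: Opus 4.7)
The plan is to apply the \emph{absorbing method}, which reduces the problem to two independent tasks: constructing a small absorbing structure, and finding an almost-perfect $K$-packing of the remainder. For the absorbing lemma, I would show that for any sufficiently small $\beta > 0$, $G$ contains a $K$-packing $\A$ covering a vertex set $A$ of size $|A| \le \beta n$ such that for every $L \subseteq V(G) \setminus A$ with $b \mid |L|$ and $|L| \le \beta^2 n$, the induced subhypergraph $G[A \cup L]$ admits a perfect $K$-packing. The key combinatorial ingredient is that for every ordered $b$-tuple $T = (v_1, \dots, v_b)$ of vertices of $G$, there are $\Omega(n^{sb})$ vertex sets $W$ of a fixed size $sb$ (for some constant $s = s(K)$) such that both $G[W]$ and $G[W \cup T]$ admit perfect $K$-packings; any such $W$ is an \emph{absorber} for $T$. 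Building absorbers exploits the $k$-partite structure of $K$: one assigns the vertices of $T$ to the vertex classes of $K$ (in one of finitely many possible ways) and greedily extends to copies of $K$ together with a switchable companion copy, using the codegree bound $\delta(G) \ge n/2 + \alpha n$ at each step to find completing vertices. A standard probabilistic selection (bounding the expected overlap of pairs of absorbers via Markov's inequality, then deleting one absorber from each overlapping pair) then produces a vertex-disjoint family whose union is $\A$.

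For the almost-perfect packing lemma, I would prove that $G' := G - A$ contains a $K$-packing covering all but at most $\beta^2 n$ vertices. My approach is to apply the weak hypergraph regularity lemma to $G'$ (which inherits the codegree condition up to a small loss), producing a reduced $k$-graph $R$ whose vertices are the clusters and whose edges correspond to dense regular $k$-partite sub-$k$-graphs. The codegree condition transfers to $R$, so by the Rödl--Ruciński--Szemerédi matching theorem $R$ admits an almost-perfect matching. Each matching edge of $R$ corresponds to a dense regular $k$-partite $k$-graph on cluster-sized vertex classes, which by a standard counting and extension argument contains an almost-perfect $K$-packing of its vertex set; combining these across all matching edges of $R$ yields the desired almost-perfect $K$-packing of $G'$.

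Combining the two lemmas finishes the argument: applying the almost-perfect packing lemma to $G' = G - A$ leaves an uncovered set $L$ with $|L| \le \beta^2 n$, and divisibility ($b \mid n$, $b \mid |A|$ since $\A$ is a $K$-packing, and $b$ divides the size of the almost packing) forces $b \mid |L|$, so the absorbing property of $\A$ gives a perfect $K$-packing of $A \cup L$ that extends the almost packing to all of $V(G)$. I expect the hardest part to be the absorbing lemma, and in particular verifying that \emph{every} $b$-tuple $T$ admits many absorbers, including degenerate tuples and tuples that sit badly with respect to $G$; this is where the $k$-partite hypothesis on $K$ is essential, because it permits building copies of $K$ containing $T$ by greedily extending partial copies one vertex at a time, and the codegree threshold $n/2 + \alpha n$ is precisely what makes such extensions always possible with a linear amount of room to spare.
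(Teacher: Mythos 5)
Your route (absorption: an absorbing lemma plus an almost-perfect packing lemma) is genuinely different from the paper's proof, which first deletes a few copies of $K$ to fix divisibility and then finds a perfect $\B(K)$-packing via Lemma~\ref{main2simple}, proved with the regular approximation lemma, Keevash's hypergraph blow-up lemma and irreducibility; the paper itself remarks after Theorem~\ref{main1} that an absorbing argument in the style of Lo and Markstr\"om would also work, so the strategy is viable. Two remarks on your second lemma: complete $k$-partite $k$-graphs are not linear, so weak regularity gives no counting lemma for $K$ inside a regular tuple; what you should say instead is that weak regularity guarantees every linear-sized sub-box of a dense regular tuple still spans $\Omega$ of (its size)$^k$ edges, so Theorem~\ref{turandensityzero} lets you greedily remove aligned copies of $\B(K)$ until only an $\eps$-fraction of each cluster remains, which is all you need since the leftover only has to be at most $\beta^2 n$.

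The genuine gap is in your absorbing lemma, specifically in the justification that \emph{every} $b$-tuple has $\Omega(n^{sb})$ absorbers. The mechanism you cite --- greedy extension to copies of $K$ ``using the codegree bound at each step'', with $n/2+\alpha n$ being ``precisely what makes such extensions always possible'' --- cannot be the reason: building copies of $K$ through prescribed vertices already works at $\delta(G)\geq \alpha n$ (the Tur\'an density of a $k$-partite $k$-graph is zero; cf.\ Lemma~\ref{incorporateexcep}), and at that degree the absorber statement is false. Indeed, the construction of Proposition~\ref{extrem1} has codegree about $n/2-k$, every vertex lies in many copies of $K$, yet for $K$ of type $0$ a $b$-tuple $T$ with the wrong residue of $|T\cap V_2|$ has \emph{no} absorber at all, since every copy of $K$ there meets $V_2$ in a multiple of $p$. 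So the entire content of the threshold $n/2$ is the switching step hidden in your phrase ``switchable companion copy'': one must prove that any two vertices $u,v$ are reachable, i.e.\ that there are $\Omega(n^{\ell b-1})$ sets $S$ with both $G[S\cup\{u\}]$ and $G[S\cup\{v\}]$ admitting perfect $K$-packings. This is where $1/2$ enters: each of the links of $u$ and $v$ contains at least $(1/2+\alpha-o(1))\binom{n-1}{k-1}$ many $(k-1)$-sets, so their common link has $\Omega(\alpha n^{k-1})$ edges, and a partite supersaturation argument inside this common link (together with $\delta(G)$ to complete the remaining class of $K$) produces many copies of $K$ containing $u$ in which $u$ may be exchanged for $v$; alternatively one reaches $v$ from $u$ by a bounded chain of such switches, as in Lo and Markstr\"om. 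Your sketch never isolates or proves this step, and without it the probabilistic selection and the final divisibility bookkeeping (which are fine) have nothing to act on. With reachability of all pairs established, the rest of your outline does close the proof.
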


Theorem~\ref{main1} could also be proved by the so-called `absorbing method' by using similar arguments and results to those of Lo and Markstr\"om~\cite{LM2}. However, our methods also give stronger bounds for many $k$-partite $k$-graphs $K$, for this we make the following definitions. Let $K$ be a $k$-partite $k$-graph on vertex set $U$. Then we define 
$$\Sa(K) := \bigcup_\chi \{|U_1|, \dots, |U_k|\} \textrm{    and    }  \D(K) := \bigcup_\chi \{||U_i| - |U_j|| : i, j \in [k]\},$$
where in each case the union is taken over all $k$-partite realisations $\chi$ of $K$ into vertex classes $U_1, \dots, U_k$ of~$K$. The \emph{greatest common divisor} of $K$, denoted $\gcd(K)$, is then defined to be the greatest common divisor of the set $\D(K)$ (if $\D(K) = \{0\}$ then $\gcd(K)$ is undefined). So for any given $k$-partite realisation of $K$, the difference in size of any two vertex classes of this realisation must be divisible by $\gcd(K)$. 
However, it is not true that a $k$-partite $k$-graph $K$ must have some $k$-partite realisation in which the greatest common factor of the differences of vertex class sizes is $\gcd(K)$. To see this, take disjoint sets $A, B, C, D$ and $E$ of size one, one, two, two and six respectively. Form a $3$-graph $K$ on $A \cup B \cup C \cup D \cup E$ whose edges are any triple $\{x, y, z\}$ with $x \in A, y \in C$ and $z \in E$ or with $x \in B$, $y \in D$ and $z \in E$. Then, up to permutation of the vertex classes, $K$ has two distinct $3$-partite realisations, one with vertex classes $A \cup B$, $C \cup D$ and $E$ of sizes two, four and six (so the highest common factor of the differences of class sizes is two), and the other with vertex classes $A \cup D$, $B \cup C$ and $E$ of sizes three, three and six (whose differences have highest common factor three). So $\gcd(K) = 1$ in this case, but the differences between sizes of vertex classes of any single $k$-partite realisation of $K$ have a larger common factor. 

We also define the \emph{smallest class ratio} of $K$, denoted $\sigma(K)$, by 
$$ \sigma(K) := \frac{\min_{S \in \Sa(K)} S}{|V(K)|}.$$
So each vertex class of any $k$-partite realisation of $K$ has size at least $\sigma(K)|V(K)|$. The parameter $\sigma(K)$ therefore provides a measure of how `lopsided' $K$ can be. Note in particular that $\sigma(K) \leq 1/k$, with equality if and only if we have $|U_1| = |U_2| = \dots = |U_k|$ for any $k$-partite realisation of $K$ with vertex classes $U_1, \dots, U_k$.

Observe that the definitions of the parameters $\gcd(K)$ and $\sigma(K)$ of a $k$-partite $k$-graph $K$ bear a strong resemblance to those of the parameters $\gcd(H)$ and $\sigma(H)$ defined earlier for a graph $H$ with chromatic number $r=k$. We saw that K\"uhn and Osthus showed that these parameters determine $\delta(H, n)$ for any such $H$; similarly $\gcd(K)$ and $\sigma(K)$ play an extensive role in determining $\delta(K, n)$ for a $k$-partite $k$-graph $K$. Indeed, our next theorem strengthens Theorem~\ref{main1} for $k$-partite $k$-graphs $K$ with $\gcd(K) = 1$, by stating that $\delta(K, n) \leq \sigma(K)n + o(n)$ for such graphs.

\begin{theo} \label{main2}
Let $K$ be a $k$-partite $k$-graph on $b$ vertices, and suppose that $\gcd(K) = 1$. Then for any $\alpha > 0$ there exists $n_0$ such that if~$G$ is a $k$-graph on $n \geq n_0$ vertices for which $b$ divides $n$ and $\delta(G) \geq \sigma(K)n + \alpha n$, then~$G$ contains a perfect $K$-packing.
\end{theo}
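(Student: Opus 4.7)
The plan is to prove Theorem~\ref{main2} by combining the absorbing method with an almost-perfect $K$-packing obtained through hypergraph regularity. Throughout I write $\sigma = \sigma(K)$ and $b = |V(K)|$.

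The first step is to construct an \emph{absorbing set} $A \subseteq V(G)$ with $|A| \le \alpha n/100$ and $b \mid |A|$, such that for every $L \subseteq V(G) \setminus A$ with $|L| \le \alpha^2 n$ and $b \mid |L|$, the sub-$k$-graph $G[A \cup L]$ admits a perfect $K$-packing. A standard probabilistic argument delivers such an $A$: the codegree hypothesis yields many $K$-absorbers for each fixed $b$-set of vertices (small configurations admitting perfect $K$-packings both including and excluding the set), and a random family of absorbers then has the desired universal absorbing property with positive probability. Next, apply the weak hypergraph regularity lemma to $G' := G \setminus A$, producing clusters $V_1,\dots,V_t$ of common size $m$ and a reduced $k$-graph $R$ on $[t]$ with $\delta(R) \ge (\sigma + \alpha/3)t$, whose edges correspond to $k$-tuples of clusters spanning $\eps$-regular $k$-partite subhypergraphs of $G'$ of density bounded away from $0$.

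The central task is to produce a near-perfect $K$-tiling of $R$: an assignment of copies of $K$ to edges of $R$, each equipped with a specified $k$-partite realisation, so that every cluster is covered with total weight $m - O(1)$. The codegree threshold $\sigma t$ is natural because the smallest vertex class of $K$ contains only $\sigma b$ vertices, so allocating a given cluster to play that role makes minimal demand on codegree neighbourhoods in $R$; the additional slack $\alpha t/3$ guarantees the existence of a fractional $K$-tiling covering each cluster to within $o(m)$. To convert this fractional tiling into one that is exactly balanced, the hypothesis $\gcd(K) = 1$ is essential: by Bezout, the set $\D(K)$ of vertex-class-size differences over all $k$-partite realisations of $K$ generates $\Z$, so trading a copy of $K$ in one realisation for a copy in another transfers a single unit of vertex usage between two clusters. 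Iterating such trades corrects any imbalance between clusters.

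Each copy in the reduced tiling then lifts, via the regularity of the corresponding $k$-tuple of clusters, to a near-perfect $K$-packing within those clusters; the resulting leftover $L \subseteq V(G')$ satisfies $|L| \le \alpha^2 n$, and $b \mid |L|$ by divisibility bookkeeping, since $b \mid n$, $b \mid |A|$, and every tiling copy uses exactly $b$ vertices. Applying the absorbing property of $A$ to $A \cup L$ completes the perfect $K$-packing of $G$. The principal obstacle is the balancing step in the reduced tiling: without $\gcd(K) = 1$ the coverage vectors achievable by $K$-tilings of $R$ lie in a proper sublattice of $\Z^t$ that need not contain the all-$m$ vector, and no amount of swapping can produce an exactly balanced tiling, which is precisely why Theorem~\ref{main1} must weaken the codegree bound to $n/2 + o(n)$ in the general case.
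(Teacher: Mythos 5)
Your outline takes a genuinely different route from the paper (absorption plus weak regularity, versus the paper's reduction to complete $k$-partite $k$-graphs followed by the Regular Approximation Lemma and Keevash's blow-up lemma), but it has a gap at its very first step. You assert that ``the codegree hypothesis yields many $K$-absorbers for each fixed $b$-set'' and that a standard probabilistic argument then produces the absorbing set $A$. The probabilistic selection of a good family from many absorbers is indeed routine; what is not routine is the existence of many absorbers per $b$-set (equivalently, reachability of any two vertices) at codegree $\sigma(K)n + \alpha n$, which can be as small as $n/b + \alpha n$. At codegree $n/2+\alpha n$ any two $(k-1)$-sets have many common neighbours and absorption is easy, which is exactly why the paper remarks that the absorbing method in the style of Lo and Markstr\"om~\cite{LM2} yields Theorem~\ref{main1} (the $n/2+o(n)$ bound); below that level the neighbourhoods of $(k-1)$-sets extending $u$ and those extending $v$ need not meet at all, and showing that $u$ and $v$ can nevertheless be connected by bounded-length chains of $K$-configurations is the heart of the problem. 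The paper's substitute for this is precisely the machinery of Sections 5--7: $\Phi$-dense, $Z$-sparse triples, the auxiliary graph $\Sa$ with $\delta(\Sa)>m/b$ and hence fewer than $b$ components, and the trading of different $k$-partite realisations of $K$. Your proposal gives no argument for the absorber supply at this threshold, so the central difficulty is assumed rather than proved.

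Two further steps are asserted where real arguments are required. First, the reduced $k$-graph only \emph{almost} inherits the codegree condition: one gets $\deg_\R(S)\geq(\sigma(K)+\alpha/2)t$ for all but $o(t^{k-1})$ of the $(k-1)$-sets (cf.\ Lemma~\ref{redgraphmindeg}), not $\delta(\R)\geq(\sigma+\alpha/3)t$, and deducing an almost-perfect, correctly proportioned tiling from this is the content of a nontrivial counting argument (the paper's $\Akpq$-packing, Lemma~\ref{akpqpacking} together with Lemma~\ref{akpqsplit}). Second, your Bezout trading can only shift a unit of usage between clusters that are linked by edges, or paths of edges, of $\R$ supporting copies of $K$ in the required realisations; this needs connectivity of $\R$ restricted to the covered clusters (proved in the paper as part of Lemma~\ref{akpqpacking} and exploited in Lemmas~\ref{gcd1balance} and~\ref{gcdbalance}), plus enough room to realise the trades with non-negative multiplicities (Fact~\ref{gcdcoeffs}, Proposition~\ref{hpackings}). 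These latter points are repairable, but together with the unjustified absorbing lemma the proposal does not yet amount to a proof of Theorem~\ref{main2}.
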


Our third theorem improves on Theorem~\ref{main1} for $k$-partite $k$-graphs $K$ with $\gcd(\Sa(K)) = 1$ such that $\gcd(K) > 1$ is odd. 

\begin{theo} \label{main3}
Let $K$ be a $k$-partite $k$-graph on $b$ vertices such that $\gcd(\Sa(K)) = 1$ and $\gcd(K) > 1$,
and let $p$ be the smallest prime factor of $\gcd(K)$. Then for any $\alpha > 0$ there exists $n_0$ such that if $G$ is a $k$-graph on $n \geq n_0$ vertices for which $b$ divides $n$ and $$\delta(G) \geq \max\left\{\sigma(K) n + \alpha n, \frac{n}{p} + \alpha n\right\},$$ then $G$ contains a perfect $K$-packing.
\end{theo}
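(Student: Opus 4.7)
The plan is to adapt the method used for Theorem~\ref{main2}, adding two new ingredients to handle the divisibility obstruction induced by $\gcd(K) > 1$: (i) exploiting $\gcd(\Sa(K))=1$ to move between multiple $k$-partite realisations of $K$, and (ii) using the stronger bound $\delta(G) \geq n/p + \alpha n$ to rule out the ``modular'' extremal configurations that force the threshold above $\sigma(K)n$.

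First I would set aside, at random, a small absorbing set $A \subseteq V(G)$ of size $o(n)$ and establish an absorbing lemma: for every sufficiently small $L \subseteq V(G) \sm A$ whose size lies in an appropriate residue class modulo $\gcd(K)$, the induced $k$-graph $G[A \cup L]$ admits a perfect $K$-packing. A standard absorbing construction should produce enough ``absorbing gadgets'' for each achievable leftover type, with the bound $\delta(G) \geq n/p + \alpha n$ supplying the flexibility needed to realise a full set of types via different realisations of $K$, where $\gcd(\Sa(K))=1$ ensures that the achievable types span $\mb{Z}/\gcd(K)\mb{Z}$.

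Next, applying the weak hypergraph regularity lemma to $G \sm A$ and repeating the argument used for Theorem~\ref{main2} (which requires only $\delta(G) \geq \sigma(K)n + \alpha n$), I would obtain a $K$-packing $M$ of $G \sm A$ covering all but $o(n)$ vertices, leaving a small leftover set $L_0$. If $|L_0|$ already lies in the absorbable residue class then $A$ finishes the packing. Otherwise, since $\gcd(\Sa(K)) = 1$, there exist $k$-partite realisations of $K$ whose vertex-class sizes represent differing residues modulo $p$; swapping out $O(1)$ copies of $K$ in $M$ for copies built from alternate realisations shifts the residue of $|L_0|$ by any integer combination of elements of $\Sa(K)$, hence to the required value modulo $\gcd(K)$.

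The main obstacle will be the stability analysis needed to certify that this ``type-shifting'' step always succeeds. The only way it can fail is if $V(G)$ admits a partition into $p$ almost-equal parts $V_1, \dots, V_p$ so that the edges of $G$ lie in a restricted family of intersection patterns with respect to this partition, preventing us from executing the swaps; but any such configuration forces many $(k-1)$-sets confined to a single part to have codegree at most $n/p + o(n)$, contradicting $\delta(G) \geq n/p + \alpha n$. Carefully classifying these near-extremal configurations and verifying that the $n/p$ bound eliminates each of them will be the technical heart of the argument.
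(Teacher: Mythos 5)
Your sketch has a genuine gap, and it sits exactly at the point you defer to a ``stability analysis''. The obstruction that forces the $n/p$ term is not the residue of the leftover's \emph{size} modulo $\gcd(K)$: since every copy of $K$ removes exactly $b$ vertices and $b$ divides $n$, the leftover after any partial $K$-packing automatically has size divisible by $b$, so the scalar invariant your absorbing lemma and your ``type-shifting'' step are designed to control is never the problem. The real obstruction is vector-valued: as in the construction of Proposition~\ref{extrem3}, the vertex set may carry a hidden partition into $p$ parts such that every copy of $K$ has index vector in a proper sublattice $\V_p$ of $\Z_p^p$, while the index vector of $V(G)$ does not lie in that sublattice. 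An absorbing structure built to fix $|L_0| \bmod \gcd(K)$, and swaps that shift $|L_0|$ by integer combinations of elements of $\Sa(K)$, do nothing against this; one has to control how the packing distributes across such partitions, which in lattice language means proving that the codegree bound makes the relevant lattice of achievable index vectors complete (the ``transferral'' analysis of Keevash--Mycroft type). Your claim that ``the only way it can fail'' is a clean $p$-part partition with all confined $(k-1)$-sets of codegree at most $n/p+o(n)$ is precisely this hard step, asserted rather than proved; also note that in the extremal example \emph{every} $(k-1)$-set, not just those inside one part, has codegree about $n/p$, so the classification of near-extremal configurations you would need is more delicate than the sentence suggests. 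Relatedly, $\gcd(\Sa(K))=1$ is used in the paper not to span $\Z/\gcd(K)\Z$ with class sizes directly, but (via Fact~\ref{gcdcoeffs}) to manufacture a complete $k$-partite auxiliary graph with one class size coprime to $\gcd(K)$, which is what makes the congruences in the balancing step solvable.

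For comparison, the paper does not use absorption at all for this theorem: it first reduces to a complete $k$-partite $k$-graph $K_3$ admitting a perfect $K$-packing, with $\gcd(K_3)=\gcd(K)$, $\sigma(K_3)\le\sigma(K)+\alpha/3$ and a class size coprime to $\gcd(K)$ (built from $\U_s(K)$, $\Lo(K)$ and a graph obtained from several realisations of $K$), and then proves Lemma~\ref{main1simple} by the regular approximation lemma plus Keevash's blow-up lemma. There the condition $\delta(G)\ge n/p+\alpha n$ enters only to guarantee that the auxiliary graph $\Sa$ of $\Phi$-dense, $Z$-sparse pairs has fewer than $p$ components, which feeds into Lemma~\ref{edgevectors} and Lemma~\ref{gcdbalance} to adjust cluster sizes modulo $bk\gcd(K)$. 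An absorbing proof along your lines is conceivable, but it would have to replace your scalar bookkeeping by a genuine index-vector/lattice argument and prove the extremal-structure dichotomy you currently assume; as written, the proposal does not close the theorem.
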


\setlength{\tabcolsep}{12pt}
\begin{table}
\begin{tabular}{c c c}
\toprule
Property of $K$ & Type & Upper bound on $\delta(K, n)$ \\ 
\midrule
$\Sa(K)= \{1\}$ & 0 &$ n/2 + o(n)$ \\
$\gcd(\Sa(K)) > 1$ & 0 &$n/2 + o(n)$ \\
$\gcd(K) = 1$ & 1 & $\sigma(K) n +o(n)$ \\
$\gcd(\Sa(K)) = 1, \gcd(K) = d > 1$ & $d$ & $\max \left(\sigma(K) n, \frac{n}{p(d)}\right) + o(n) $ \\
\bottomrule
\end{tabular}
\caption{A summary of the upper bounds on $\delta(K, n)$ provided by Theorems~\ref{main1},~\ref{main2} and~\ref{main3}, according to the \emph{type} of $K$ as defined in Section~\ref{sec:extremal}. In the final row, $p(d)$ denotes the smallest prime factor of $d$.} 
\label{boundtable}
\end{table}

For convenience, the upper bounds on $\delta(K, n)$ provided by Theorems~\ref{main1},~\ref{main2} and~\ref{main3} are summarised in Table~\ref{boundtable}. In Section~\ref{sec:extremal} we shall see that these upper bounds are best possible up to the error term for a large class of $k$-partite $k$-graphs $K$; in particular, this is true for all complete $k$-partite $k$-graphs. So for any complete $k$-partite $k$-graph $K$ the asymptotic value of $\delta(K, n)$ is determined by the parameters $\gcd(K)$ and $\sigma(K)$, in the same way that the corresponding parameters determined $\delta(G, n)$ for any $r$-chromatic graph $G$. The same is true for many incomplete $k$-partite $k$-graphs $K$; we discuss these $k$-graphs in Section~\ref{sec:conc}, as well as discussing the value of $\delta(K, n)$ for those $k$-partite $k$-graphs for which the correct asymptotic value remains unknown (of course, Theorems~\ref{main1},~\ref{main2} and~\ref{main3} still provide an upper bound on $\delta(K, n)$ in these cases). For those $k$-partite $k$-graphs $K$ for which we do determine the correct asymptotic value of $\delta(K, n)$, it is natural to ask whether the $o(n)$ error term can be removed. We conjecture that in fact each of Theorems~\ref{main1},~\ref{main2} and~\ref{main3} is still valid if the $\alpha n$ error term is replaced by a sufficiently large constant $C$ which depends only on~$K$. 

One special case of these results answers a question of R\"odl and Ruci\'nski~\cite[Problem~3.15]{RR}, who asked for the value of $\delta(C_{s}^3, n)$, where $C_s^3$ denotes the loose cycle $3$-graph of length $s$. (For general $k$, the loose cycle $k$-graph of length $s$, denoted $C^k_s$, is defined for any $s > 1$ to have $s(k-1)$ vertices $\{1, \dots, s(k-1)\}$ and $s$ edges 
$\{\{j(k-1)+1, \dots, j(k-1)+k\} \textrm{ for }0 \leq j < s\}$,
with addition taken modulo $k$.) In Section~\ref{sec:cycles} we shall see that $\gcd(C_s^k) = 1$ for any $k \geq 3$ and $s \geq 2$ except for the case $k=3$ and $s = 3$, in which case $\Sa(C_s^k) = \{2\}$ and so $\gcd(K)$ is undefined. So in all cases except for $k=s=3$ the $k$-graph $C_s^k$ satisfies the condition of Theorem~\ref{main2}, and in fact we will show that furthermore $C_s^k$ belongs to the class of $k$-graphs for which Theorem~\ref{main2} is best possible up to the error term. By modifying our arguments to handle the case $k=s=3$ separately, we obtain the following theorem.
\begin{theo} \label{cyclepack}
For integers $k \geq 3$ and $s \geq 2$ we have
$$\delta(C^k_s, n) = 
\begin{cases}
\frac{n}{2(k-1)} + o(n) &\mbox{if $s$ is even, and}\\
\frac{s+1}{2s(k-1)}n + o(n) &\mbox{otherwise.} 
\end{cases}$$
\end{theo}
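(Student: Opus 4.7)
In any $k$-partite realisation of $C_s^k$, the shared vertices $v_1,\ldots,v_s$ must be properly $k$-coloured as the cycle $C_s$ (consecutive shared vertices lie in a common edge); conversely any such colouring extends to a realisation by assigning each of the $k-2$ internal vertices of $e_j$ one of the $k-2$ colours avoided on $\{v_j,v_{j+1}\}$. If colour~$i$ is used on $t_i$ shared vertices, its shared vertices lie in $2t_i$ distinct edges while the remaining $s-2t_i$ edges each contribute exactly one internal vertex of colour~$i$, so class~$i$ has size $s-t_i$. The $t_i$ shared vertices of colour~$i$ form an independent set of $C_s$, so $t_i\le\lfloor s/2\rfloor$ and the minimum class size is $\lceil s/2\rceil$, yielding
\[
\sigma(C_s^k)=\frac{\lceil s/2\rceil}{s(k-1)}=
\begin{cases}
\dfrac{1}{2(k-1)} & s \text{ even},\\[3pt]
\dfrac{s+1}{2s(k-1)} & s \text{ odd},
\end{cases}
\]
which matches the formula in the theorem. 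A short case-check shows that $\gcd(C_s^k)=1$ whenever $(s,k)\ne(3,3)$: for $s\ge 4$ even the proper colouring $v_1\ldots v_s=1,2,1,2,\ldots,1,3$ of the shared-vertex cycle produces class sizes $\{s/2,s/2+1,s-1,s,\ldots,s\}$, which contain consecutive integers; the case $s=2$ is immediate, and analogous perturbations of near-alternating proper colourings handle $s$ odd with $s\ge 5$, while $(s,k)=(3,k\ge 4)$ follows from the rainbow colouring of $C_3$. The exception $(s,k)=(3,3)$ is genuine: the only proper $3$-colouring of $C_3$ (up to permutation) uses each colour once, each internal vertex is then forced to take the missing colour, and every realisation has class sizes $(2,2,2)$, so $\Sa(C_3^3)=\{2\}$ and $\gcd(C_3^3)$ is undefined.

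\textbf{Upper bound.} For $(s,k)\ne(3,3)$, Theorem~\ref{main2} applies and immediately yields $\delta(C_s^k,n)\le\sigma(C_s^k)n+o(n)$. The remaining case $(s,k)=(3,3)$ is the main obstacle, since $\gcd(\Sa(C_3^3))=2$ places $C_3^3$ in the first row of Table~\ref{boundtable}, so Theorem~\ref{main1} only gives $n/2+o(n)$, whereas we need $\sigma(C_3^3)n+o(n)=n/3+o(n)$. I would treat this by adapting the absorbing-plus-almost-perfect-packing framework underpinning Theorems~\ref{main1}--\ref{main3}: the ``extremal'' configurations driving the $n/2$ bound of Theorem~\ref{main1} arise from the divisibility constraint $\Sa(C_3^3)=\{2\}$, and because $C_3^3$ has an automorphism group acting transitively on its vertices I would verify directly that any such near-extremal configuration with codegree above $n/3+\alpha n$ still admits a perfect $C_3^3$-packing, so the codegree threshold drops to $\sigma(C_3^3)n + o(n) = n/3+o(n)$.

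\textbf{Lower bound.} A single construction handles every case. Fix $B\subseteq V(G)$ with $|B|=\lceil\sigma(C_s^k)n\rceil-1$ and take $E(G)=\{e\in\binom{V(G)}{k}:e\cap B\ne\emptyset\}$. A direct computation gives $\delta(G)=|B|$. In any copy of $C_s^k$ in $G$, each of the $s$ edges must meet $B$, so the $B$-vertices of the copy form a vertex cover of $C_s^k$. Since shared vertices of $C_s^k$ cover two edges each while internal vertices cover only one, the vertex cover number of $C_s^k$ equals that of the cycle $C_s$, namely $\lceil s/2\rceil$. Thus any $C_s^k$-packing in $G$ has at most $|B|/\lceil s/2\rceil<n/(s(k-1))$ members, ruling out a perfect packing and giving $\delta(C_s^k,n)\ge\sigma(C_s^k)n-O(1)$, which together with the upper bound completes the proof.
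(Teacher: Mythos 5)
Your computation of $\sigma(C_s^k)$ and $\gcd(C_s^k)$, your use of Theorem~\ref{main2} for all $(s,k)\neq(3,3)$, and your lower bound construction (which is exactly Proposition~\ref{extrem2}, noting $\tau(C_s^k)=\sigma(C_s^k)$) all agree with the paper. The genuine gap is the case $s=k=3$, which is precisely the part of the theorem that requires new work, and your treatment of it is not a proof but a hope. You correctly identify that $\Sa(C_3^3)=\{2\}$ makes $C_3^3$ type~0, so the general machinery only gives $n/2+o(n)$; but your proposed fix rests on the assertion that $C_3^3$ is vertex-transitive, which is false ($C_3^3$ has three vertices of degree two and three of degree one, and no automorphism can exchange them), and in any case vertex-transitivity is not the relevant property. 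What actually separates $C_3^3$ from the complete $3$-partite $3$-graph with classes of size two (for which the $n/2$ threshold is genuinely correct, by Proposition~\ref{extrem1} with $p=2$) is a parity-breaking fact: in any $3$-graph $H$ with $\delta(H)\geq n/3+\alpha n$ and any partition $V(H)=A\cup B$ with both parts of size at least $n/3+\alpha n$, there is a copy of $C_3^3$ with an \emph{odd} number of vertices in each of $A$ and $B$. This is the paper's Proposition~\ref{oddc33}, whose proof is a nontrivial red/blue colouring argument, and it is exactly the point where the argument would fail if $C_3^3$ were replaced by $K_{2,2,2}^{(3)}$.

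Even granting that lemma, you would still need to explain how it is fed into the packing framework: the paper must rerun the proof of Lemma~\ref{main2simple} under the weaker hypothesis $\delta(H)\geq n/3+\alpha n$, where the auxiliary graph $\Sa$ may now split into two large components after deleting a small set, and then use odd copies of $C_3^3$ (via Claim~\ref{cycleclaim}) to equalise the average cluster sizes across the two components before Lemma~\ref{samesizebalance} can be applied with a two-part partition. Your phrase ``verify directly that any near-extremal configuration still admits a perfect $C_3^3$-packing'' names no mechanism for doing this, and a stability-style analysis of near-extremal configurations is not what the paper does nor is it obviously available here. So the upper bound $\delta(C_3^3,n)\leq n/3+o(n)$ remains unproved in your proposal; everything else is essentially the paper's argument.
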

Note that $C^3_2$ is identical to the $3$-graph $K^3_4 - 2e$; as described earlier, the result above was proved in this case (that is, for $k=3$ and $s=2$) by K\"uhn and Osthus~\cite{KO}, and more recently Czygrinow, DeBiasio and Nagle~\cite{CDN} gave the exact value of $\delta(C^3_2, n)$ for large $n$.

The final results of this paper, in Section~\ref{sec:conc}, concern the problem of finding a $K$-packing covering all but a constant number of vertices of a large $k$-graph $H$. By adapting the methods used for our results on perfect packings, we find that the minimum codegree requirement of Theorem~\ref{main2} (which applied only to $k$-partite $k$-graphs $K$ with $\gcd(K) = 1$) is sufficient to ensure such a $K$-packing for any $k$-partite $k$-graph $K$. More specifically, we have the following theorem. 

\begin{theo} \label{almostpack}
Let $K$ be a $k$-partite $k$-graph. Then there exists a constant $C = C(K)$ such that for any $\alpha > 0$ there exists $n_0 = n_0(K, \alpha)$ such that any $k$-graph $H$ on $n \geq n_0$ vertices with $\delta(H) \geq \sigma(K)n + \alpha n$ admits a $K$-packing covering all but at most $C$ vertices of $H$.
\end{theo}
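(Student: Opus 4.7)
The plan is to follow the absorbing-method framework that underlies Theorems~\ref{main1}--\ref{main3}, while exploiting the fact that we are only asked for a $K$-packing leaving $O(1)$ vertices uncovered. This allows us to bypass the divisibility obstructions from $\gcd(K) > 1$ that necessitate either the $n/2$ bound of Theorem~\ref{main1} or the stronger hypothesis of Theorem~\ref{main3}.

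First, apply the weak hypergraph regularity lemma to $H$ with parameters $\eps \ll d \ll \alpha$, obtaining a partition $V_0, V_1, \dots, V_t$ with $|V_0| \leq \eps n$ and $|V_i| = m$ for $i \in [t]$, together with the reduced $k$-graph $R$ on $[t]$ whose edges are the $\eps$-regular $k$-tuples of density at least $d$; a standard calculation gives $\delta(R) \geq (\sigma(K) + \alpha/3) t$. The key combinatorial lemma to prove is: any $k$-graph $R$ with $\delta(R) \geq \sigma(K) t + \Omega(t)$ admits a perfect fractional $K$-tiling, i.e.\ a non-negative weighting of the embeddings of $K$ into $R$ in which every vertex receives total weight exactly one. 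The threshold $\sigma(K) t$ is natural, since the smallest class of any $k$-partite realisation of $K$ has size $\sigma(K) |V(K)|$, so codegree slightly above $\sigma(K) t$ suffices to place this class; and allowing fractional (rather than integer) tilings lets us average over realisations and embeddings, removing any need for a $\gcd$ condition. The lemma itself can be established by LP duality on the fractional tiling polytope. Converting this fractional tiling into an integer $K$-packing of $H$ via the counting lemma and careful bookkeeping of cluster boundaries yields a packing covering all of $V(H)$ except $V_0$ and at most $O(1)$ boundary vertices per cluster, i.e.\ a leftover $L \subseteq V(H)$ with $|L| \leq \eps n + O(t) = o(n)$.

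The final step is to reduce $|L|$ from $o(n)$ down to a constant $C = C(K)$ by iterating a swap argument: while $|L|$ substantially exceeds $C$, pick $v \in L$ and use the codegree condition $\delta(H) \geq \sigma(K) n + \alpha n$ to find many edges of $H$ through $v$; by pigeonhole over the copies of $K$ in the current packing, a short sequence of rotations (removing $O(1)$ existing copies of $K$ and inserting $O(1)$ new ones) produces a packing covering strictly more of $L$, and iteration drives $|L|$ down to $O(1)$. The main technical obstacle will be justifying this swap argument. Without a dedicated absorbing structure of the kind built in Theorems~\ref{main2} and~\ref{main3} (which used $\gcd(K)=1$ or the prime-factor hypothesis), the swaps must be engineered purely from the codegree slack $\alpha n$: one has to show that the flexibility given by the codegree hypothesis, together with the availability of multiple $k$-partite embeddings of $K$, is enough to guarantee a valid local rearrangement of the current packing whenever $|L|$ exceeds a $K$-dependent constant.
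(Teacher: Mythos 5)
Your first two steps (weak regularity, a fractional $K$-tiling of the reduced $k$-graph, and a greedy conversion into an integer packing) can plausibly be made to work, but at best they deliver a $K$-packing covering all but $o(n)$ vertices -- in fact all but $\psi n$ vertices for any fixed $\psi$, with $n_0$ depending on $\psi$. The entire content of Theorem~\ref{almostpack} is the final reduction from $o(n)$ leftover to a leftover of size at most $C(K)$, a constant independent of $\alpha$ and $n$, and this is exactly the step you leave unjustified: your ``swap/rotation'' argument is a restatement of the problem rather than a proof. Worse, there is a concrete reason to doubt that purely local rearrangements driven by the codegree slack can work: the divisibility constructions of Propositions~\ref{extrem1} and~\ref{extrem3} show that the number of covered vertices on one side of a hidden partition is forced modulo $p$ by \emph{every} copy of $K$, so no sequence of local swaps can change the leftover count modulo $p$; whether the leftover can be driven down to a constant is a global, structural question about how many such ``components'' the host graph can have, not something pigeonhole over the current packing will resolve. (Your key fractional-tiling lemma is also only asserted ``by LP duality''; it is plausible but is itself a nontrivial claim needing proof.)

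The paper's route is quite different at this crucial point. It reuses the strong-regularity/blow-up machinery of Lemmas~\ref{main1simple} and~\ref{main2simple}: after building robustly universal complexes on the subclusters, the only obstruction to packing each one perfectly is that subcluster sizes must be divisible by $bk\gcd(K^*)$ (or equal, in the balanced case), and this is repaired via a per-component deletion argument in the auxiliary graph $\Sa'$. The observation that makes the leftover constant is that $\sigma(K)\geq 1/b$, so $\delta(H)\geq \sigma(K)n+\alpha n \geq n/b + \alpha n$ forces $\Sa'$ to have fewer than $b$ connected components; a simplified version of Lemma~\ref{gcdbalance} then leaves at most one bad subcluster per component, and deleting at most $bk\gcd(K^*)$ vertices from each of these at most $b$ subclusters (so at most $b^3k$ vertices in total) restores divisibility, after which Corollary~\ref{completepacking} (or Fact~\ref{completepackingbalanced}) finishes inside each complex. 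If you want to salvage your approach, you would need to replace the vague swap argument with some mechanism of this kind -- i.e.\ an argument that bounds, independently of $n$ and $\alpha$, the number of ``divisibility classes'' the leftover can be spread over -- and that is where the hypothesis $\delta(H)\geq\sigma(K)n+\alpha n$ (as opposed to just enough codegree for an almost tiling of the reduced graph) genuinely enters.
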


By modifying a construction from Section~\ref{sec:extremal}, we will further see that Theorem~\ref{almostpack} is asymptotically best possible for a large class of $k$-partite $k$-graphs which includes all complete $k$-partite $k$-graphs. 

The results of this paper are significant as they provide the first cases other than that of a perfect matching for which the value of the well-studied parameter $\delta(H, n)$ is known even asymptotically for a $k$-graph $H$ on more than four vertices. Furthermore, the diverse behaviour of this parameter over different $k$-partite $k$-graphs, according to the divisibility properties of the different vertex class sizes, is interesting in itself and increases our understanding of the extensive role such divisibility conditions play in a wide variety of problems involving the embedding of a spanning subgraph in a large $k$-graph $H$ (see~\cite{KM} for further discussion of this point). The proofs in this paper also demonstrate techniques for making use of the recent hypergraph blow-up lemma of Keevash~\cite{K}, particularly the techniques used in Section~\ref{sec:delete} to delete copies of $K$ so as to meet certain divisibility conditions. 

\subsection{Layout of the paper}
In Section~\ref{sec:extremal} we give constructions which show that the lowest upper bound provided by Theorems~\ref{main1},~\ref{main2} and~\ref{main3} is asymptotically best possible for all complete $k$-partite $k$-graphs. Then, in Section~\ref{sec:reduction} we state Lemmas~\ref{main1simple} and~\ref{main2simple} which are similar to Theorems~\ref{main1},~\ref{main2} and~\ref{main3}, but which pertain only to certain complete $k$-partite $k$-graphs. We deduce Theorems~\ref{main1},~\ref{main2} and~\ref{main3} from these lemmas; having done so, we can focus solely on these complete $k$-partite $k$-graphs in proving Lemmas~\ref{main1simple} and~\ref{main2simple} (complete $k$-partite $k$-graphs are simpler to deal with as they have only one $k$-partite realisation). In Section~\ref{sec:outline} we outline how the proofs of Lemmas~\ref{main1simple} and~\ref{main2simple} will proceed. These proofs make extensive use of hypergraph regularity; in particular, we use the recent hypergraph blow-up lemma due to Keevash~\cite{K}. The necessary background for the use of these tools is given in Section 5. Section~\ref{sec:prelims} then gives a number of auxiliary lemmas which will be needed in the proofs of Lemmas~\ref{main1simple} and~\ref{main2simple}, after which we prove these lemmas in Section~\ref{sec:proofs}. In Section~\ref{sec:cycles} we turn to the problem of a loose cycle packing, proving Theorem~\ref{cyclepack}, and the final section, Section~\ref{sec:conc} consists of concluding remarks. Firstly, we consider non-complete $k$-partite $k$-graphs, identifying large classes of such $k$-graphs for which the bounds of Theorems~\ref{main1},~\ref{main2} and~\ref{main3} are asymptotically best possible. Following this we consider the question of finding a $K$-packing covering all but a constant number of vertices of a large $k$-graph $G$, proving Theorem~\ref{almostpack}. Finally, we briefly discuss the problem of finding $\delta(H, n)$ for $k$-graphs $H$ which are not $k$-partite. 

\subsection{Notation} 
Throughout this paper, when we speak of `deleting' a $k$-graph $K$ from a $k$-graph $H$, we mean that both the vertices and edges of $K$ are deleted from $H$, so what remains is the subgraph of $H$ induced by the undeleted vertices. Also, for a $k$-graph $H$ we define the \emph{adjacency graph} $\textrm{Adj}(H)$ to be the graph on $V(H)$ where there is an edge between two vertices $i$ and $j$ if and only if some edge of $H$ contains both $i$ and $j$. We say that $H$ is \emph{connected} if $\mathrm{Adj}(H)$ is connected.

We write vectors in bold font, and write, for example, $v_j$ for the $j$th coordinate of $\vb$. We write $\ub_j$ for the unit vector whose $j$th coordinate is one and whose other coordinates are all zero (the dimension of $\ub_j$ will always be clear from the context). Whenever we speak of a partition of a set, we implicitly fix an order of the parts of this partition. We write $[r]$ to denote the set of integers from $1$ to $r$. For a set $A$, we use $\binom{A}{k}$ to denote the collection of subsets of $A$ of size $k$, and similarly $\binom{A}{\leq k}$ to denote the collection of subsets of $A$ of size at most $k$. We write $x = y \pm z$ to mean that $y - z \leq x \leq y + z$, and write $o(n)$ to denote a function which tends to zero as $n \to \infty$. Also, we use $x \ll y$ to mean for any $y \geq 0$ there exists $x_0 \geq 0$ such that for any $x \leq x_0$ the following statement holds, and similar statements with more constants are defined similarly. Finally, we omit floors and ceilings throughout this paper whenever they do not affect the argument. 

\section{Extremal examples} \label{sec:extremal} 

In this section we shall give constructions which demonstrate that the upper bound on $\delta(K, n)$ provided by Theorems~\ref{main1},~\ref{main2} and~\ref{main3} is asymptotically best possible for all complete $k$-partite $k$-graphs, and many others besides.

For ease of discussion, we divide all $k$-partite $k$-graphs into \emph{types}. Indeed, let $K$ be a $k$-partite $k$-graph with at least one edge. Then we say that $K$ is \emph{type 0} if $\gcd(\Sa(K)) > 1$ or if $K$ consists of $k$ vertices and one edge (in which case a $K$-packing is a matching). Note that the latter condition is equivalent to saying that $\Sa(K) = \{1\}$. If $K$ is not type 0, then for any $d \geq 1$ we say that $K$ is \emph{type $d$} if $\gcd(\Sa(K)) =1$ and $\gcd(K) = d$. Observe that every $k$-partite $k$-graph $K$ with at least one edge falls into precisely one of these types, since $\gcd(K)$ is defined for any $K$ which is not type 0. Also note that the definitions of $\Sa(K)$ and $\gcd(K)$ immediately imply that $\gcd(\Sa(K))$ divides $\gcd(K)$, so we cannot have $\gcd(\Sa(K)) > 1$ and $\gcd(K) = 1$.
Table~\ref{boundtable} displays the asymptotic upper bounds provided for $k$-partite $k$-graphs of each type by Theorems~\ref{main1},~\ref{main2} and~\ref{main3}. 
The results of this section will show that for a large class of $k$-partite $k$-graphs, which includes all complete $k$-partite $k$-graphs, these bounds are best possible up to the $o(n)$ error terms.

Our first construction is well-known and gives a condition (P1) on a $k$-partite $k$-graph~$K$ which is sufficient to ensure that the bound given by Theorem~\ref{main1} is asymptotically tight. The left hand part of Figure~\ref{fig:div2} gives an illustration of this construction.

\begin{prop} \label{extrem1}
Let $p>1$, and let $K$ be a $k$-partite $k$-graph on vertex set $U$ such that 
\begin{equation}
\mbox{each set $A \subseteq U$ for which $|e \cap A|$ is even for every $e \in K$, has size divisible by $p$.}\tag{P1}
\end{equation}
 Then for any $n$ there exists a $k$-graph $G$ on $n$ vertices with $\delta(G) \geq n/2-k$ such that $G$ does not contain a perfect $K$-packing.
\end{prop}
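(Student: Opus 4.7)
The plan is to employ the standard parity construction. First I would select an integer $m$ within $1$ of $n/2$ (so $m \in \{\lfloor n/2 \rfloor, \lceil n/2 \rceil, \lceil n/2 \rceil + 1\}$, say) with $p \nmid m$; this is always possible, since $p > 1$ implies that among any two consecutive integers at least one is not divisible by $p$. Then I would partition $V(G)$ as $V_1 \cup V_2$ with $|V_1| = m$ and $|V_2| = n - m$, and let $E(G) := \{e \in \binom{V(G)}{k} : |e \cap V_1| \textrm{ is even}\}$.

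Next I would verify the codegree bound. Given any $(k-1)$-set $S \subseteq V(G)$, a vertex $v \notin S$ satisfies $S \cup \{v\} \in G$ precisely when adding $v$ preserves the parity of $|S \cap V_1|$, i.e., when $v \in V_2$ if $|S \cap V_1|$ is even, or $v \in V_1$ if $|S \cap V_1|$ is odd. Either way,
\[
\deg_G(S) \;\geq\; \min(|V_1|, |V_2|) - (k-1) \;\geq\; n/2 - k,
\]
since our choice of $m$ gives $\min(|V_1|, |V_2|) \geq n/2 - 1$.

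Finally I would show that $G$ admits no perfect $K$-packing. Suppose towards a contradiction that $\K_1, \dots, \K_t$ is such a packing, and for each $i$ set $A_i := V(\K_i) \cap V_1 \subseteq V(\K_i)$. For every edge $e$ of $\K_i$, since $e \in G$, the intersection $|e \cap A_i| = |e \cap V_1|$ is even. Identifying $\K_i$ with $K$, hypothesis (P1) then forces $p \mid |A_i|$ for each~$i$. Summing over $i$ and using that the vertex sets $V(\K_i)$ partition $V(G)$ yields
\[
p \;\Big|\; \sum_{i=1}^t |A_i| \;=\; |V_1| \;=\; m,
\]
contradicting our choice of $m$.

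The construction is completely classical; the only point requiring any care is that the chosen $|V_1|$ simultaneously (i) lies within $O(1)$ of $n/2$ to secure the codegree bound and (ii) avoids divisibility by $p$, both of which follow immediately from $p > 1$. I do not anticipate any genuine obstacle.
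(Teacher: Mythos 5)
Your construction is the same as the paper's (with the roles of the two sides swapped: the paper takes edges with even intersection with $V_2$ and asks $p \nmid |V_2|$), and the codegree bound and the parity/divisibility argument against a perfect $K$-packing are argued identically. One tiny point of care: the candidate value $m = \lceil n/2 \rceil + 1$ should only be used when $n$ is even (when $n$ is odd, $\lfloor n/2 \rfloor$ and $\lceil n/2 \rceil$ are already consecutive), so that $\min(|V_1|,|V_2|) \geq n/2 - 1$ indeed holds; with that understood, the proof is correct.
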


\begin{figure}[t] 
\centering
\psfrag{4}{$V_1$}
\psfrag{2}{$V_2$} 
\psfrag{3}{$V_3$}
\psfrag{A}{$V_1$}
\psfrag{B}{$V_2$}
\includegraphics[width=4cm]{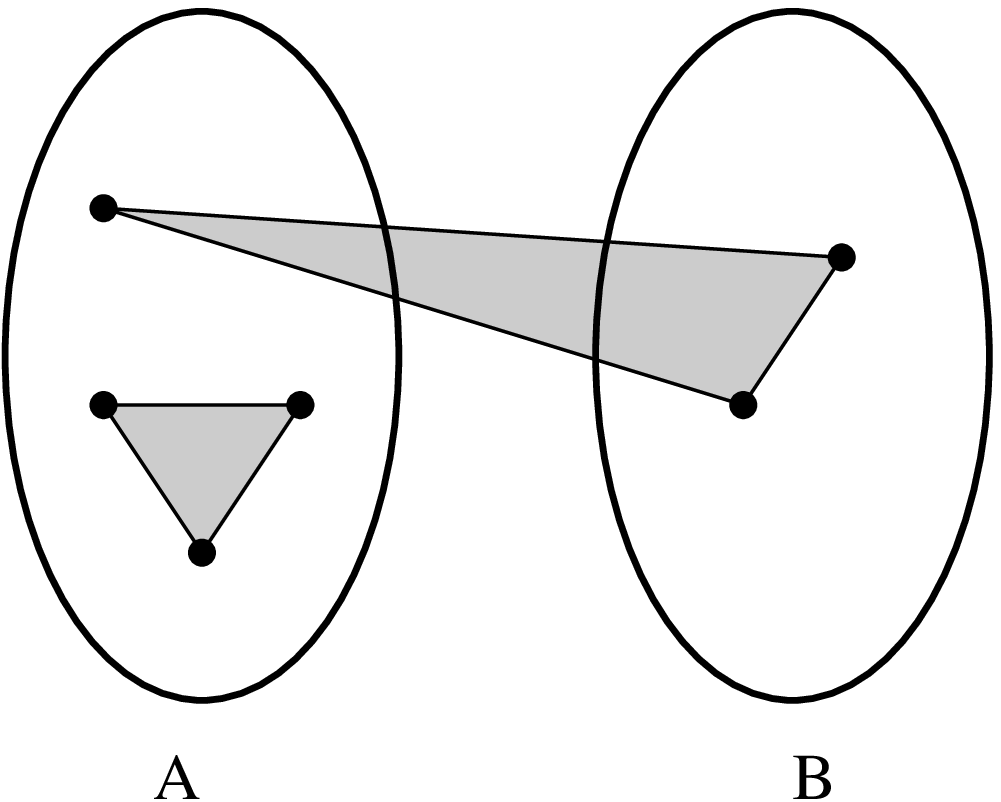} \hspace{2cm}
\includegraphics[width=6cm]{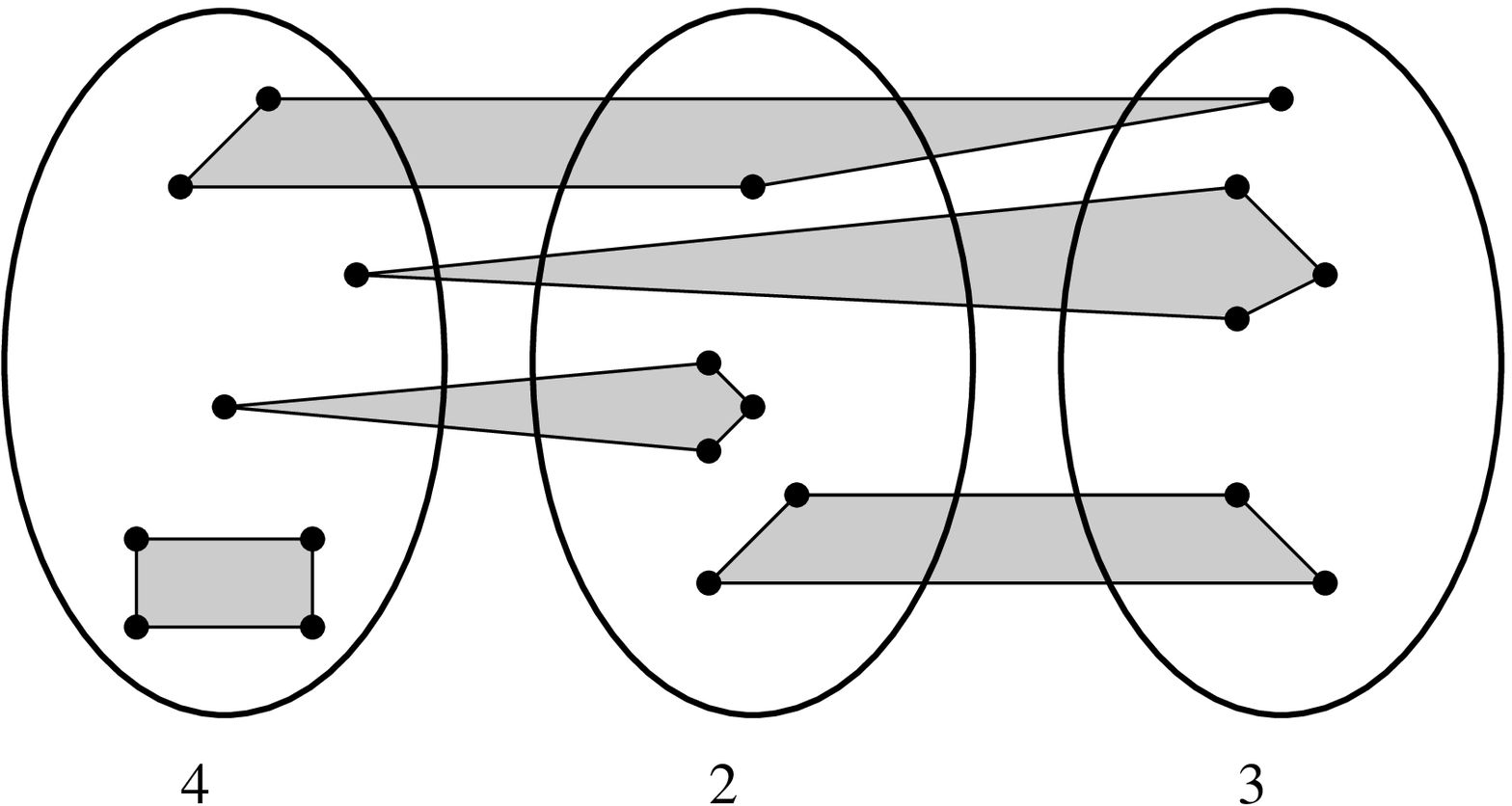}
\caption{The construction of Proposition~\ref{extrem3}, shown for $p=2, k=3$ on the left and for $p=3, k=4$ on the right. The left hand construction is also used for Proposition~\ref{extrem1} for $k=3$. In each case the $k$-graph of the construction has all $k$-tuples of the forms shown as edges (so, for example, the edges of the $3$-graph on the left are all $3$-tuples with either 1 or 3 vertices in~$V_1$).}
\label{fig:div2} 
\end{figure} 

\begin{proof}
Let $V_1$ and $V_2$ be disjoint sets of vertices with $|V_1 \cup V_2| = n$ such that $|V_1|, |V_2| \geq n/2 - 1$ and $p$ does not divide $|V_2|$. Let $G$ be the $k$-graph on vertex set $V_1 \cup V_2$ whose edges are all $k$-tuples $e \in \binom{V_1 \cup V_2}{k}$  such that $|V_2 \cap e|$ is even. Then $\delta(G) \geq \min\{|V_1|,|V_2|\} - (k-1) \geq n/2 - k$. Moreover, for any copy $K'$ of $K$ in $G$, every edge $e \in K'$ is an edge of $G$, and so $|V_2 \cap e|$ is even. By our assumption on $K$ this implies that $p$ divides $|V_2 \cap V(K')|$, and so the number of vertices of $V_2$ covered by any $K$-packing in $G$ is divisible by $p$. Since $p$ does not divide $|V_2|$, we conclude that $G$ does not contain a perfect $K$-packing. 
\end{proof}

Suppose that $K$ is a complete $k$-partite $k$-graph of type 0 with vertex classes $U_1, \dots, U_k$. If $K$ consists of just one edge then (P1) is trivially satisfied for $p = 2$; otherwise there exists some $p > 1$ such that $p$ divides $|U_j|$ for every $j \in [k]$. 
Let $A \subseteq U := \bigcup_{i \in [k]} U_i$ be such that $|e \cap A|$ is even for every $e \in K$. For any $i \in [k]$ and any $x,y \in U_i$ we may choose vertices $u_j \in U_j$ for each $j \neq i$, and since $K$ is complete both $\{x\} \cup \{u_j : j \neq i\}$ and $\{y\} \cup \{u_j : j \neq i\}$ are edges of $k$. Both these sets therefore have an even number of vertices in $|A|$, so we must have $x, y \in A$ or $x, y \notin A$. It follows that for each $i \in [k]$ we have either $U_i \subseteq A$ or $U_i \cap A = \emptyset$, and therefore that $p$ divides $|A|$. We conclude that any complete $k$-partite $k$-graph $K$ of type $0$ satisfies (P1), and so Theorem~\ref{main1} is asymptotically best possible for any such $K$. 

Our next construction generalises the construction used in Proposition~\ref{extrem1}. For this we need the following definitions. For any integer $p \geq 2$, let $\V_p$ be the $(p-1)$-dimensional sublattice of $\Z_p^p$ generated by the vectors $\vb_1, \dots, \vb_{p-1}$, where for each $1 \leq j \leq p-1$ we define
$$\vb_{j} = \ub_j + (j-1) \ub_p =  (\overbrace{0, \dots,0}^{j-1}, 1, \overbrace{0, \dots, 0}^{p-j-1}, {j-1}).
$$
The key property of the lattice $\V_p$ is that, working in $\Z^p_p$,
\begin{equation}
\parbox{13cm}{for any $\xb \in \Z_p^p$ there is precisely one $j \in [p]$ such that $\xb + \ub_j \in \V_p$.}\tag{$\dagger$}
\end{equation}
That is, for any vector $\xb \in \Z_p^p$ there is precisely one coordinate of $\xb$ which, if incremented by one (modulo $p$), yields a vector $\yb \in \V_p$. To see this, let $\xb \in \Z_p^p$. Then it follows immediately from the definition of $\V_p$ that there is $\yb' \in \V_p$ such that $\xb$ and $\yb'$ differ only in their last coordinates $x_p$ and $y_p'$ (or do not differ at all). Let $d \in [p]$ be such that $x_p - y_p' \equiv d-1$ modulo $p$, and define 
$$\yb :=
\begin{cases}
\yb' + \vb_d & \mbox{if $d \leq p-1$}\\
\yb' & \mbox{if $d=p$.} 
\end{cases} $$ 
Then in any case we have $\yb \in \V_p$. Moreover, if $d = p$ then $x_p - y_p = x_p - y_p' \equiv -1$ modulo $p$, so $\yb = \xb + \ub_p$. On the other hand, if $d \neq p$ then $y_p \equiv y'_p + (\vb_d)_p \equiv y'_p + d-1 \equiv x_p$ modulo $p$, so $\yb$ and $\xb$ differ only in coordinate $d$, with $y_d \equiv x_d + 1$ modulo $p$, and therefore $\yb = \xb + \ub_d$. This proves that for any $\xb \in \Z^p_p$ there is at least one $j \in [p]$ such that $\xb + \ub_j \in \V_p$. If for some $\xb \in \Z^p_p$ there were $j \neq j'$ such that $\xb + \ub_j \in \V_p$ and $\xb + \ub_{j'} \in \V_p$, then we would obtain $(\xb + \ub_j) - (\xb + \ub_{j'}) = \ub_j - \ub_{j'} \in \V_p$. However, it is easily checked that $\ub_j - \ub_{j'} \notin \V_p$ for any $j \neq j'$, proving $(\dagger)$. 

If $\Part$ is a partition of a set $X$ into parts $X_1, \dots, X_p$, for any $S \subseteq X$ we define the \emph{index vector of $S$ with respect to $\Part$}, denoted $\ib_\Part(S)$, to be the vector in $\Z_p^p$ whose $j$-th coordinate is $|S \cap X_j|$ modulo~$p$; this is well-defined since we consider $\Part$ to include an order on its parts. We sometimes omit the subscript $\Part$ and write simply $\ib(S)$ if $\Part$ is clear from the context.

\begin{prop} \label{extrem3}
Let $p \geq 2$ and let $K$ be a $k$-partite $k$-graph on vertex set $U$ such that 
\begin{equation}
\parbox{13cm}{for any partition $\Part$ of $U$ into $p$ parts such that $\ib(e) \in \V_p$ for every $e \in K$ we must also have $\ib(U) \in \V_p$.} \tag{P2}
\end{equation}
Then for any $n$ there exists a $k$-graph $G$ on $n$ vertices with $\delta(G) \geq n/p-k$ such that $G$ does not contain a perfect $K$-packing.
\end{prop}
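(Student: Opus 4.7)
The plan is to generalize the construction of Proposition~\ref{extrem1} by using an ordered partition $\Part$ of $V(G)$ into $p$ classes $V_1,\dots,V_p$ of sizes close to $n/p$, tweaked so that $\ib_\Part(V(G))\notin\V_p$, and then defining $G$ to be the $k$-graph whose edges are exactly the $k$-sets $e\in\binom{V(G)}{k}$ with $\ib_\Part(e)\in\V_p$.

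The codegree bound is then a direct consequence of property~$(\dagger)$: given a $(k-1)$-set $S\subseteq V(G)$, there is a unique $j^*\in[p]$ with $\ib_\Part(S)+\ub_{j^*}\in\V_p$, so the edges of $G$ containing $S$ are precisely the sets $S\cup\{v\}$ with $v\in V_{j^*}\setminus S$. Hence $\deg_G(S)\geq|V_{j^*}|-(k-1)\geq n/p-k$, provided each $|V_j|\geq n/p-1$.

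To rule out a perfect $K$-packing, suppose for a contradiction that $\{K^{(1)},\dots,K^{(m)}\}$ is such a packing. For each $i$, I would fix an isomorphism $\phi_i\colon K^{(i)}\to K$, and let $\Part_i$ be the ordered partition of $U=V(K)$ induced by $\Part\vert_{V(K^{(i)})}$ through $\phi_i$. Since $K^{(i)}\subseteq G$, every edge $e$ of $K$ satisfies $\ib_{\Part_i}(e)=\ib_\Part(\phi_i^{-1}(e))\in\V_p$. Applying (P2) to $K$ with partition $\Part_i$ yields $\ib_{\Part_i}(U)\in\V_p$, equivalently $\ib_\Part(V(K^{(i)}))\in\V_p$. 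Summing over $i$ and using that $V(G)$ is the disjoint union of the $V(K^{(i)})$ gives $\ib_\Part(V(G))\in\V_p$, contradicting our choice.

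The main obstacle is choosing class sizes that simultaneously satisfy $|V_j|\geq n/p-1$ and $\ib_\Part(V(G))\notin\V_p$. Writing $n=pq+r$ with $0\leq r<p$: if $p\nmid n$, I would place the ``extra'' $r$ vertices on an $r$-subset $T\subseteq[p]$ (sizes $q+1$ on $T$ and $q$ elsewhere); varying $T$ changes the index vector by sums of $\ub_j-\ub_{j'}$, and these differences generate the quotient $\Z_p^p/\V_p$ (the images of $\ub_1,\dots,\ub_p$ are all distinct in this quotient of order $p$ by $(\dagger)$), so every coset of $\V_p$ is achieved and some $T$ yields an index vector outside $\V_p$. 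If $p\mid n$, I instead use sizes $(n/p+1,n/p-1,n/p,\dots,n/p)$, whose index vector $(1,-1,0,\dots,0)\bmod p$ violates the defining congruence $v_p\equiv\sum_{j=1}^{p-1}(j-1)v_j\pmod p$ of $\V_p$.
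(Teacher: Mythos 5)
Your proposal is correct and follows essentially the same route as the paper: build $G$ from a partition into $p$ parts of size roughly $n/p$ with $\ib_\Part(V(G))\notin\V_p$, take as edges the $k$-sets with index vector in $\V_p$, get the codegree bound from $(\dagger)$, and rule out a perfect packing by applying (P2) to each copy of $K$ and summing index vectors. The only difference is that you additionally verify that suitable part sizes exist (a detail the paper leaves implicit), and that verification is sound, since the $r$-subset sums of the $p$ distinct coset representatives form an interval of length at least $p-1$ and hence meet every coset of $\V_p$.
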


\begin{proof}
Let $V$ be a set of $n$ vertices, and choose a partition $\Part$ of $V$ into parts $V_1, \dots, V_p$ such that $\ib(V) \notin \V_p$ and $|V_j| \geq n/p - 1$ for each $j \in [p]$. Let $G$ be the $k$-graph on vertex set $V$ such that a $k$-tuple $e \in \binom{V}{k}$ is an edge of $G$ precisely if $\ib(e) \in \V_p$ (see Figure~\ref{fig:div2} for two illustrations of this construction). Then for any $(k-1)$-tuple $e' \in \binom{V}{k-1}$ we can choose $j \in [p]$ by $(\dagger)$ such that $\ib(e') + \ub_j \in \V_p$, and then adding any of the $|V_j \sm e'| \geq n/p-k$ vertices $v \in V_j \sm e'$ to $e'$ gives a $k$-tuple $e := e' \cup \{v\}$ such that $\ib(e) = \ib(e') + \ub_j \in \V_p$, that is, an edge $e \in G$. So $\delta(G) \geq n/p - k$.  Now, for any copy $K'$ of $K$ in $G$, every edge $e \in K'$ is an edge of $G$ and so has the property that $\ib(e) \in \V_p$. By (P2) it follows that $\ib(V(K')) \in \V_p$. So if $\F$ is a $K$-packing in $G$, then $\ib(V(\F)) = \sum_{K' \in \F} \ib(V(K')) \in \V_p$; since $\ib(V) \notin \V_p$ this implies that $\F$ is not perfect.
\end{proof}

Note in particular that for $p=2$ the $k$-graph $G$ constructed in the proof of Proposition~\ref{extrem3} is the same as that in Proposition~\ref{extrem1}. 
A similar argument as above shows that any complete $k$-partite $k$-graph $K$ of type $d \geq 2$ satisfies (P2) for any $p$ which divides $d$. Indeed, let $U_1, \dots, U_k$ be the vertex classes of $K$, and suppose that sets $V_1, \dots, V_p$ partition $V(K)$ such that (taking index vectors with respect to this partition) $\ib(e) \in \V_p$ for every $e \in K$. Fix any $j \in [k]$ and any $u, v \subseteq U_j$; then we may choose edges $e, e' \in K$ such that $u \in e$, $v \in e'$ and $e \sm \{u\} = e' \sm \{v\} =: e^*$. Since $\ib(e^*) \in \Z^p_p$, by $(\dagger)$ there is precisely one $i \in [k]$ such that adding a vertex of $V_i$ to $e^*$ gives an edge whose index vector lies in $\V_p$. Since $\ib(e), \ib(e') \in \V_p$, we must have $u, v \in V_i$ for this $i$, and so we conclude that every vertex class $U_j$ of $K$ must be a subset of some $V_i$. Since $K$ has type $d$ we have that $p$ divides $\gcd(K) = d$, and so each vertex class $U_j$ has equal size $b_1$ modulo $p$. So we must have $\ib(V(K)) = b_1 \ib(e) \in \V_p$, proving that $K$ indeed satisfies property (P2). So, up to the $o(n)$ error term, the bound of $n/p + o(n)$ given in Theorem~\ref{main3} is best possible for any complete $k$-partite $k$-graph of type $d \geq 2$.

The final construction we use is well-known. For this, we write $\tau(K)$ to denote the proportion of vertices of $K$ contained in a smallest vertex cover of $K$. That is, $\tau(K) = |S|/b$, where $b = |V(K)|$ and $S \subseteq V(K)$ is a set of minimum size with the property that every edge of $K$ contains a vertex of $S$ (we express $\tau(K)$ as a proportion for comparison with $\sigma(K)$). 
 
\begin{figure}[t] 
\centering
\psfrag{A}{$A$}
\psfrag{B}{$B$}
\includegraphics[width=5cm]{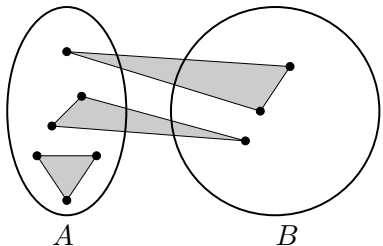}
\caption{The construction of Proposition~\ref{extrem2}, illustrated for $k=3$. The edges of this 3-graph are all 3-tuples which intersect $A$.}
\label{fig:space} 
\end{figure} 

\begin{prop} \label{extrem2}
For any $k$-graph $K$ and any $n$ there exists a $k$-graph $G$ on $n$ vertices with $\delta(G) = \lceil\tau(K) n\rceil - 1$ such that $G$ does not contain a perfect $K$-packing.
\end{prop}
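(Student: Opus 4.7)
The plan is to use the standard ``independent set'' construction pictured in Figure~\ref{fig:space}. I would first partition the vertex set $V$ into two parts $A$ and $B$ with $|A| = \lceil \tau(K) n \rceil - 1$, and then take $G$ to be the $k$-graph on $V$ whose edges are precisely those $k$-subsets of $V$ that meet $A$; equivalently, $B$ is an independent set in $G$.

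To verify the codegree, I would consider an arbitrary $(k-1)$-set $T \subseteq V$. If $T$ already meets $A$ then $T \cup \{v\}$ is an edge for every $v \in V \setminus T$, giving $\deg_G(T) = n-k+1$, while if $T \subseteq B$ then the extensions which are edges are exactly those adding a vertex of $A$, giving $\deg_G(T) = |A|$. Since $K$ has at least one edge, removing any single vertex from $V(K)$ still leaves a vertex cover, so $\tau(K) \leq (b-1)/b < 1$; consequently $|A| \leq n-k+1$ for all but trivially small $n$, and the minimum codegree of $G$ is exactly $|A| = \lceil\tau(K) n\rceil - 1$.

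To rule out a perfect $K$-packing I would argue by contradiction. If $b$ does not divide $n$ there is no perfect $K$-packing in any graph, so assume $b \mid n$ and suppose $\F$ is a perfect $K$-packing in $G$. For every copy $K' \in \F$, each edge of $K'$ is an edge of $G$ and so meets $A$; thus $V(K') \cap A$ is a vertex cover of $K'$, and by the definition of $\tau(K)$ we have $|V(K') \cap A| \geq \tau(K) b$. Summing over the $n/b$ members of $\F$ gives
\[
|A| \;=\; \sum_{K' \in \F} |V(K') \cap A| \;\geq\; \frac{n}{b}\cdot \tau(K) b \;=\; \tau(K) n.
\]
Since $b \mid n$ and $\tau(K) b = |S| \in \Z$, the quantity $\tau(K) n$ is a positive integer, so $|A| \geq \tau(K) n$, whereas by construction $|A| = \lceil\tau(K) n\rceil - 1 = \tau(K) n - 1$, a contradiction.

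There is essentially no obstacle here; the argument is entirely classical. The only points requiring any care are book-keeping with the ceiling function, ensuring that the $-1$ in $|A|$ is not swallowed by the ceiling (which uses precisely the divisibility $b \mid n$ forced by perfection), and separating off the trivial case $b \nmid n$.
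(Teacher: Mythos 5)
Your proposal is correct and uses essentially the same construction and counting argument as the paper (the paper simply notes directly that $\lceil \tau(K)n\rceil - 1 < \tau(K)n$, so any $K$-packing has size at most $|A|/\tau(K)b < n/b$, without needing the integrality of $\tau(K)n$). The extra care you take over the exact value of $\delta(G)$ and the trivial divisibility case are fine refinements of the same argument.
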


\begin{proof} 
Write $b = |V(K)|$ and $\tau =\tau(K)$. Let $A$ and $B$ be disjoint sets of vertices such that $|A| = \lceil\tau n\rceil - 1$ and $|A \cup B| = n$, and let $G$ be the $k$-graph on vertex set $A \cup B$ whose edges are all $k$-tuples $e \in \binom{A \cup B}{k}$ such that $|e \cap A| \geq 1$ (this construction is illustrated in Figure~\ref{fig:space}). Then $\delta(G) = |A| = \lceil\tau n\rceil - 1$. Let $K'$ be a copy of $K$ in $G$. Then any edge $e \in K'$ must contain a vertex of $A$. So $A \cap V(K')$ is a vertex cover of $K'$, so $|A \cap V(K')| \geq \tau b$. Any $K$-packing in $G$ therefore has size at most $|A|/\tau b < n/b$, so is not perfect.
\end{proof}
 
This means that for any $k$-graph $K$ with $\tau(K) = \sigma(K)$ we have $\delta(K, n) \geq \sigma(K) n$. Together with Theorems~\ref{main2} and~\ref{main3} this determines $\delta(K, n)$ asymptotically for any such $k$-partite $k$-graph $K$ which has type 1, or which has type $d \geq 2$ and also satisfies property (P2) with $p$ being the smallest prime factor of $d$. Note that in particular we have $\tau(K) = \sigma(K)$ for any complete $k$-partite $k$-graph $K$. To see this, let $K$ be a complete $k$-partite $k$-graph on $b$ vertices with vertex classes $U_1, \dots, U_k$. Since $K$ is complete a subset $S \subseteq V(K)$ is a vertex cover if and only if $U_j \subseteq S$ for some $j \in [k]$. So $\tau(K) = \min_{j \in [k]} |U_j|/b = \sigma(K)$, as required.

	 In conclusion, these examples show that the bound given by Theorems~\ref{main1},~\ref{main2} and~\ref{main3} is best possible up to the $o(n)$ error term for all complete $k$-partite $k$-graphs. We consider non-complete $k$-partite $k$-graphs further in Section~\ref{sec:conc}. 

\section{Reduction to complete $k$-partite $k$-graphs} \label{sec:reduction} 

For simplicity, we would like to restrict our attention to complete $k$-partite $k$-graphs alone, as these have only one $k$-partite realisation (up to permutations of the vertex classes) and so are easier to work with. Clearly if $U_1, \dots, U_k$ are the vertex classes of a $k$-partite realisation of a $k$-graph $K$, then $K$ is a spanning subgraph of the complete $k$-partite $k$-graph $K'$ on the same vertex classes, and so if $G$ contains a $K'$-packing then $G$ contains a $K$-packing also. However, we cannot deduce from this that proving Theorems~\ref{main1},~\ref{main2} and~\ref{main3} for complete $k$-partite $k$-graphs would imply that these theorems hold for all $k$-partite $k$-graphs, as it may well be the case that $\gcd(K) \neq \gcd(K')$. Instead, in this section we shall state two lemmas (Lemmas~\ref{main1simple} and~\ref{main2simple}) which essentially say that Theorems~\ref{main1},~\ref{main2} and~\ref{main3} hold for certain complete $k$-partite $k$-graphs. We shall then deduce Theorems~\ref{main1},~\ref{main2} and~\ref{main3} from these two lemmas, showing that it is indeed sufficient to consider only these complete $k$-partite $k$-graphs for the rest of the paper (in which we prove Lemmas~\ref{main1simple} and~\ref{main2simple}). 

\begin{lemma} \label{main1simple}
Let $K$ be the complete $k$-partite $k$-graph whose vertex classes have sizes $b_1, \dots, b_k$, where these sizes are not all equal, and suppose that $\gcd(K)$ and $b_1$ are coprime. Then for any $\alpha > 0$ there exists $n_0 = n_0(K, \alpha)$ such that the following statement holds. Let $H$ be a $k$-graph on $n \geq n_0$ vertices such that
\begin{enumerate}
\item[(a)] $b := b_1 + \dots + b_k$ divides $n$,
\item[(b)] $\delta(H) \geq \sigma(K)n + \alpha n$, and
\item[(c)] if $\gcd(K) > 1$, then $\delta(H) \geq n/p + \alpha n$, where $p$ is the smallest prime factor of $\gcd(K)$.
\end{enumerate}
Then $H$ contains a perfect $K$-packing.
\end{lemma}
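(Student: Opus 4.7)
My plan is to follow the regularity-plus-blow-up paradigm foreshadowed in the introduction. First I would apply a suitable hypergraph regularity lemma to $H$, partitioning $V(H)$ into clusters $V_1,\dots,V_m$ of common size $n/m$ and producing a reduced $k$-complex $R$ on $[m]$ whose edges correspond to $k$-tuples of clusters which are $\eps$-quasirandom with not-too-small density. Hypothesis (b) (and also (c) when $\gcd(K)>1$) transfers to an analogous minimum codegree-type condition on $R$, up to an additive $o(1)$ loss. Within $R$ I would then search for a \emph{$K$-tiling}: a non-negative integer weighting $(x_\phi)$ indexed by labelled embeddings $\phi\colon V(K)\to [m]$ that respect the unique $k$-partite realisation of $K$ and whose image is an edge of $R$, satisfying $\sum_\phi x_\phi\,|\phi^{-1}(i)| = n/m$ for every cluster $i\in[m]$. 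Once such an integer tiling is produced, Keevash's hypergraph blow-up lemma lifts it to an actual perfect $K$-packing of $H$, so the whole problem reduces to exhibiting the tiling in $R$.

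A fractional version of the $K$-tiling in $R$ is routine to produce from the codegree condition via an LP-duality / min-cut argument: the bound $\sigma(K)n + \alpha n$ in (b) is precisely the threshold that makes the fractional tiling polytope non-empty, because the dual obstruction is exactly a small vertex cover in a copy of $K$. Rounding the fractional solution to integer weights leaves, in each cluster, a residual discrepancy of bounded size. The bulk of the proof lies in eliminating these residuals — the ``deletion'' step emphasised in the introduction. I would think of the residuals as a vector in the sublattice $\{x\in\Z^m:\sum x_i\equiv 0\pmod b\}$, and of the allowable local modifications (deleting a $K$-copy, inserting a new one, or swapping two $K$-copies so as to move vertex class $U_j$ from one cluster to another) as generators of a sublattice of moves. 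Verifying that the available moves actually span every admissible residual vector is the main algebraic/combinatorial task; this is exactly where the hypothesis $\gcd(b_1,\gcd(K))=1$ is used, since shifts of the form $b_1 \ub_s - b_1 \ub_t$ become units modulo $\gcd(K)$ precisely when $b_1$ is coprime to $\gcd(K)$, so iterating such shifts reaches every vector in the target sublattice.

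The main obstacle I anticipate is this divisibility correction. Two sub-issues make it delicate. First, the purely lattice-theoretic claim — that the moves span the correct sublattice — needs to split into the case $\gcd(K)=1$, where any integer shift in $\Z^m$ with zero total is reachable directly from (b), and the case $\gcd(K)>1$, where the smallest prime $p$ dividing $\gcd(K)$ forms a genuine obstruction modulo $p$ and the additional codegree bound $n/p+\alpha n$ in (c) is needed to make the ``elementary'' single-cluster moves available in $R$. Second, each planned insertion or deletion must be realisable inside $R$ without destroying the regularity framework, so the moves must be supported on cluster-tuples that remain available after the bulk tiling has been placed; this will be arranged by reserving a small random ``buffer'' of copies of $K$ at the start of the argument. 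Once these divisibility corrections succeed, invoking Keevash's blow-up lemma on the resulting exact integer tiling closes the proof.
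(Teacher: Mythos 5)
Your overall paradigm (regularity, a tiling of the reduced structure, divisibility corrections, blow-up) matches the paper, but two steps in the middle are genuinely unproved and one of them would fail as described. The heart of the matter is the divisibility correction. You assert that the local moves span the required sublattice because shifts of the form $b_1\ub_s-b_1\ub_t$ are units modulo $\gcd(K)$, and you say hypothesis (c) is there ``to make the elementary single-cluster moves available''. That is not what (c) buys. Single-vertex (or single-class) transfers between clusters $s$ and $t$ are only realisable when the pair $(s,t)$ supports the right local structure (in the paper: a $\Phi$-dense, $Z$-sparse triple, recorded as an edge of an auxiliary graph $\Sa$), and condition (c) only gives $\delta(\Sa)>m/p$, hence that $\Sa$ has \emph{fewer than $p$ connected components} --- it does not connect all pairs of clusters. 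Between different components of $\Sa$ no shift move of any size is available; the only freedom is to place whole copies of $K$ on edges of the reduced $k$-graph that straddle several components, which changes the component totals by sums of the $b_j$'s. Showing that these whole-edge placements, together with the coprimality of $b_1$ and $\gcd(K)$ and the fact that every prime factor of $\gcd(K)$ exceeds the number of components, can realise every admissible residue vector is a genuinely nontrivial number-theoretic/combinatorial lemma (the paper's Lemma~\ref{edgevectors}, fed into Lemma~\ref{gcdbalance}); your proposal simply asserts the spanning property, and the generators you name are not available where you need them. The construction of Proposition~\ref{extrem3} shows this is not a technicality: with $p$ components the correction is impossible, so any proof must use (c) exactly at this point and must argue across components.

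The second gap is in the reduction ``exact integer tiling $+$ blow-up $\Rightarrow$ perfect packing''. An exactly proportional tiling has no slack: the blow-up machinery forces you to delete a small but positive fraction of each cluster (to pass to super-regular/robustly universal complexes), and further vertices are lost covering exceptional clusters and absorbing the rounding residuals, so the class sizes inside each embedding group are no longer exactly proportional to $(b_1,\dots,b_k)$. To finish you need each group to be \emph{strictly less lopsided} than $K$ plus divisibility of the class sizes (the paper's Corollary~\ref{completepacking}); creating that slack is precisely where the full strength of $\delta(H)\ge\sigma(K)n+\alpha n$ is spent, via the $\Akpq$-packing of the reduced graph and the splitting of clusters into lopsided groups with ratio $p/q>\sigma(K)$ (Lemmas~\ref{akpqpacking} and~\ref{akpqsplit}). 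Your outline gives no mechanism for this, and your claim that the fractional tiling polytope is nonempty ``routinely'' at codegree $\sigma(K)n+\alpha n$ is also unsubstantiated (the reduced graph only satisfies the degree condition for almost all $(k-1)$-sets, so a perfect fractional tiling of it need not even exist, and leftover clusters must be handled separately, as the paper does with Lemma~\ref{incorporateexcep}). Finally, a smaller but real point: the blow-up lemma cannot be applied to $H$ directly; one must work with the approximating $k$-graph $G$ from the Regular Approximation Lemma and keep all chosen copies inside $G\sm Z$.
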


\begin{lemma} \label{main2simple}
Let $K$ be the complete $k$-partite $k$-graph whose vertex classes each have size $b_1$. Then for any $\alpha > 0$ there exists $n_0 = n_0(K, \alpha)$ such that if $n \geq n_0$ is divisible by $b_1k$ and $H$ is a $k$-graph on $n$ vertices with $\delta(H) \geq n/2 + \alpha n$ then $H$ contains a perfect $K$-packing.
\end{lemma}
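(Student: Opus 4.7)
The plan is to prove Lemma~\ref{main2simple} by combining an absorbing argument with a regularity-based almost-perfect packing, using Keevash's hypergraph blow-up lemma as the principal embedding tool. Since $K = K^k_k(b_1)$ is the balanced complete $k$-partite $k$-graph, producing a perfect $K$-packing amounts to partitioning $V(H)$ into groups of $b_1 k$ vertices, each admitting a $k$-partition into classes of size $b_1$ that supports a copy of $K$; in particular, the divisibility bookkeeping is simplified by the hypothesis $b_1 k \mid n$ and by the fact that all vertex classes of $K$ have the same size.

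First I would construct an absorbing set $A \subseteq V(H)$ with $|A| = o(n)$ such that for every $L \subseteq V(H) \sm A$ with $|L|$ small and $b_1 k \mid |A \cup L|$, the induced sub-hypergraph $H[A \cup L]$ admits a perfect $K$-packing. Following the Lo--Markstr\"om absorbing framework, this reduces to showing that every $b_1 k$-subset $S \subseteq V(H)$ is contained in polynomially many absorbing gadgets, i.e.\ small sub-hypergraphs that can be perfectly $K$-packed both with and without the vertices of $S$. The codegree bound $\delta(H) \geq n/2 + \alpha n$, which sits comfortably above the R\"odl--Ruci\'nski--Szemer\'edi threshold for perfect matchings, supplies the flexibility needed to exhibit such gadgets, and a random-greedy selection then produces $A$. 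Next, I apply a hypergraph regularity lemma to $H \sm A$ to obtain clusters $V_1, \dots, V_t$ of equal size together with a reduced $k$-graph $R$ on $[t]$ inheriting a minimum codegree of at least $(1/2 + \alpha/3) t$. The R\"odl--Ruci\'nski--Szemer\'edi theorem on Dirac-type matchings then yields a perfect matching $M$ in $R$; for each edge $\{V_{i_1}, \dots, V_{i_k}\} \in M$, after passing to super-regular sub-clusters of a common size divisible by $b_1$, Keevash's blow-up lemma embeds a perfect $K$-packing into those sub-clusters. Taking the union over $M$ yields a $K$-packing of $H \sm A$ covering all but a set $L$ of $o(n)$ vertices; a bounded adjustment (discarding a constant number of $K$-copies from the almost-cover if necessary) ensures $b_1 k \mid |A \cup L|$, and applying the absorbing property of $A$ then finishes the proof.

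The main obstacle will be Step~1, the construction of the absorber. One must establish that every $b_1 k$-subset of $V(H)$ lies in sufficiently many absorbing gadgets, which is subtler for $k$-graphs than for graphs and relies delicately on the codegree hypothesis together with the symmetric structure of $K^k_k(b_1)$. By contrast, Step~2 follows a familiar regularity-plus-blow-up template once one verifies that the super-regular sub-clusters can be chosen of equal size divisible by $b_1$; this requirement is mild here because every vertex class of $K$ has the same size, so the divisibility target is uniform across all clusters in a given matching edge, and no ``delete copies of $K$'' balancing argument (of the kind presumably needed for Lemma~\ref{main1simple}) is required.
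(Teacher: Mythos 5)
Your strategy is sound, but it is genuinely different from the paper's proof of Lemma~\ref{main2simple}, which uses no absorbing set at all: there, after the Regular Approximation Lemma, one finds an almost-perfect matching $M_\R$ in the reduced $k$-graph together with an irreducibility property (Corollary~\ref{almostmatching}), builds robustly universal complexes on the matched clusters (Theorem~\ref{robust-universal} via Lemma~\ref{getrobuni}), removes exceptional vertices greedily with Lemma~\ref{incorporateexcep}, and then --- the heart of the argument --- deletes a further $K$-packing, guided by the digraph $\Sa^+$ of $\Phi$-dense, $Z$-sparse triples and by irreducibility (Lemma~\ref{samesizebalance}), so that the clusters of each matching edge end up with equal sizes divisible by $b_1$; Fact~\ref{completepackingbalanced} and $D$-universality then finish. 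You instead delegate all leftover and balancing problems to an absorbing set, a route the paper itself flags as viable (the remark after Theorem~\ref{main1} about the Lo--Markstr\"om absorbing method), and your crux, Step 1, does go through at this threshold: for any two vertices $u,v$ and any $(k-2)$-set $W$ the codegree bound gives $|N(W\cup\{u\})\cap N(W\cup\{v\})|\ge 2\alpha n$, so the common link of $u$ and $v$ has positive density, and a supersaturated application of Theorem~\ref{turandensityzero} to the $k$-graph of edges sitting on that common link produces $\Omega(n^{kb_1-1})$ sets $T$ for which both $T\cup\{u\}$ and $T\cup\{v\}$ span copies of $K$ --- exactly the reachability input the Lo--Markstr\"om absorbing lemma needs, and exactly the point where $\delta(H)>n/2$ (rather than $\sigma(K)n$) is indispensable, in line with the parity construction of Proposition~\ref{extrem1}. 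Two small repairs to your Step 2: the reduced $k$-graph inherits the codegree condition only for almost all $(k-1)$-tuples (Lemma~\ref{redgraphmindeg}), so you should settle for an almost-perfect matching there --- which suffices, since uncovered cluster vertices simply join the leftover to be absorbed and the exact-divisibility perfect matching via R\"odl--Ruci\'nski--Szemer\'edi is neither available nor needed; and with strong hypergraph regularity the regular structure lives in a $k$-graph $G$ that is only $\xi$-close to $H$, so the blow-up/universality step must be run on $G\sm Z$ (as in the paper's robust-universality lemmas) to guarantee that the embedded copies of $K$ actually lie in $H$. As for what each route buys: your absorption argument is shorter for this balanced case and, as you correctly observe, dispenses with the delete-copies-of-$K$ balancing machinery; the paper's deletion-based method is heavier here but is the one that extends to the lower thresholds $\sigma(K)n$ and $n/p$ of Lemma~\ref{main1simple}, where the common-link argument above (and hence cheap absorption) is unavailable.
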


For the rest of this section we seek to deduce Theorems~\ref{main1},~\ref{main2} and~\ref{main3} from Lemmas~\ref{main1simple} and~\ref{main2simple}. For this we shall need the following fact of elementary number theory.

\begin{fact}\label{gcdcoeffs}
For any positive integers $r_1, \dots, r_k$ there exist integers $a_1, \dots, a_k$ such that $a_1 r_1 + \dots + a_k r_k = \gcd(\{r_1, \dots, r_k\})$.
\end{fact}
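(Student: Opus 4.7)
The plan is to proceed by induction on $k$. For the base case $k=1$, the identity is trivial with $a_1 = 1$. For $k=2$, this is the classical B\'ezout identity: writing $d = \gcd(r_1, r_2)$, one can either invoke the Euclidean algorithm and back-substitute the successive remainders, or argue more abstractly by letting $d'$ be the smallest positive element of the set $S = \{xr_1 + yr_2 : x, y \in \Z\}$; division with remainder then forces $d'$ to divide both $r_1$ and $r_2$, so $d' \leq d$, while $d \mid d'$ is immediate from $d \mid r_1, r_2$, giving $d' = d \in S$.

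For the inductive step, assume the result for $k-1$ and let $d_{k-1} = \gcd(\{r_1, \dots, r_{k-1}\})$. By the inductive hypothesis there exist integers $a_1', \dots, a_{k-1}'$ with $a_1' r_1 + \dots + a_{k-1}' r_{k-1} = d_{k-1}$. A standard identity for greatest common divisors gives $\gcd(\{r_1, \dots, r_k\}) = \gcd(d_{k-1}, r_k)$, and applying the $k=2$ case to the pair $(d_{k-1}, r_k)$ yields integers $\alpha, \beta$ with $\alpha d_{k-1} + \beta r_k = \gcd(d_{k-1}, r_k)$. Substituting the expression for $d_{k-1}$ and setting $a_i := \alpha a_i'$ for $i < k$ and $a_k := \beta$ gives the desired identity.

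There is no genuine obstacle here: the statement is a textbook consequence of the Euclidean algorithm, and the induction is purely mechanical. Since the paper labels this as a \emph{fact of elementary number theory} and only needs the existence of integer coefficients realising the greatest common divisor (with no quantitative control over their size or sign), nothing further is required.
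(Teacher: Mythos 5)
Your proof is correct: the induction on $k$ via the two-variable B\'ezout identity and the identity $\gcd(\{r_1,\dots,r_k\}) = \gcd(\gcd(\{r_1,\dots,r_{k-1}\}), r_k)$ is the standard argument. The paper itself gives no proof of this Fact, stating it as elementary number theory, so there is nothing to compare against; your write-up fills that (intentionally omitted) gap adequately.
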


Let $\F$ be a collection of $k$-graphs. Then we say that a $k$-graph $G$ contains an \emph{$\F$-packing} if $G$ can be packed with members of $\F$. More precisely, an $\F$-packing in $G$ is a collection of pairwise vertex-disjoint subgraphs $F_1, \dots, F_r$ of $G$ so that each $F_j$ is in $\F$ (that is, $F_j$ is isomorphic to a member of $\F$). We say that an $\F$-packing of $G$ is $\emph{perfect}$ if it covers every vertex of $G$. This naturally generalises the notion of an $H$-packing for a $k$-graph $H$, as an $H$-packing of $G$ and an $\{H\}$-packing of $G$ are identical. The following elementary proposition implies that to demonstrate that $G$ contains a perfect $H$-packing it is sufficient to show that $G$ contains a perfect $\F$-packing for some family $\F$ such that every $F \in \F$ contains a perfect $H$-packing (we omit the simple proof).

\begin{prop}\label{fpacktohpack}
Suppose that $G$ and $H$ are $k$-graphs and that $\F$ is a collection of $k$-graphs such that 
\begin{enumerate}[(i)]
\item $G$ contains a perfect $\F$-packing, and
\item every $F \in \F$ contains a perfect $H$-packing. 
\end{enumerate}
Then $G$ contains a perfect $H$-packing.
\end{prop}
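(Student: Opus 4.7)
The plan is to simply glue together the two packings in the obvious way. Let $\{F_1, \dots, F_r\}$ be a perfect $\F$-packing of $G$, as guaranteed by (i). By (ii), for each $j \in [r]$, since $F_j$ is isomorphic to some member of $\F$, there is a perfect $H$-packing $\mathcal{H}_j$ of $F_j$, that is, a collection of vertex-disjoint copies of $H$ inside $F_j$ whose union covers $V(F_j)$.

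I would then take $\mathcal{H} := \bigcup_{j=1}^r \mathcal{H}_j$ as the claimed perfect $H$-packing in $G$. Every member of $\mathcal{H}$ is a copy of $H$ in $G$, since each copy lies inside some $F_j$, which is itself a subgraph of $G$. Any two copies of $H$ that come from the same $\mathcal{H}_j$ are vertex-disjoint because $\mathcal{H}_j$ is a packing, while any two copies coming from distinct $\mathcal{H}_j$ and $\mathcal{H}_{j'}$ are vertex-disjoint because $V(F_j) \cap V(F_{j'}) = \emptyset$ by (i). Finally, every vertex $v \in V(G)$ lies in some $F_j$ by (i), and hence in some copy of $H$ in $\mathcal{H}_j \subseteq \mathcal{H}$ by (ii). Thus $\mathcal{H}$ is a perfect $H$-packing in $G$.

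There is no real obstacle here: the argument is purely a disjoint union, and the only point to verify is that vertex-disjointness is preserved both within each $\mathcal{H}_j$ and across different $\mathcal{H}_j$'s, which is immediate from the definitions. This is presumably why the authors chose to omit the proof.
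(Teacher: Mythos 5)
Your proof is correct and is exactly the straightforward gluing argument the paper has in mind (the paper omits the proof, calling the proposition elementary). Nothing further is needed.
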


To deduce Theorems~\ref{main1},~\ref{main2} and~\ref{main3} from Lemmas~\ref{main1simple} and~\ref{main2simple}, we make use of the following complete $k$-partite $k$-graphs, which will also play important roles later on in the paper. 

\begin{defn}\label{defBULo}
Fix an integer $k \geq 3$.
\begin{enumerate}[(i)]
\item For any integer $m$, we define the \emph{balanced $k$-partite $k$-graph} $\B(m)$ to be the complete $k$-partite $k$-graph on vertex classes $W_1, \dots, W_k$, where $|W_1| = |W_2| = \dots = |W_k| = m$. So $\B(m)$ has $km$ vertices.
\item Likewise, for integers $m$ and $d$ with $d < m$, we define the \emph{$d$-unbalanced $k$-partite $k$-graph} $\U(m, d)$ to be the complete $k$-partite $k$-graph on vertex classes $W_1, \dots, W_k$, where 
$$ |W_1| = m - d, |W_2| = m+d, \mbox{ and } |W_3| = \dots = |W_k| = m.$$
So $\U(m, d)$ has $km$ vertices also.
\item Finally, let $m$ be an integer and $0 < \sigma < 1$. Then we define the \emph{$\sigma$-lopsided $k$-partite $k$-graph $\Lo(m, \sigma)$} to be the complete $k$-partite $k$-graph on vertex classes $W_1, \dots, W_k$, where 
$$|W_1| = \sigma m, \mbox{ and } |W_2| = \dots = |W_k| = \frac{(1-\sigma)m}{k-1}, $$
provided that these vertex class sizes are each integers (otherwise $\Lo(m, \sigma)$ is undefined). So $\Lo(m, \sigma)$ has $m$ vertices.
\end{enumerate}
Note that the definitions of $\B(m)$, $\U(m, d)$ and $\Lo(m, \sigma)$ each depend on $k$; this dependence is suppressed in our notation as $k$ will always be clear from the context.

Given a $k$-partite $k$-graph $K$, some special cases of the above definition will be of particular importance; we therefore define $\B(K) := \B(b)$ and $\Lo(K) := \Lo((k-1)!b,\sigma(K))$, where $b$ denotes the number of vertices of $K$. Finally, if $\gcd(K)$ is defined, then we define $\U_s(K) := \U(sb, \gcd(K))$ for those integers $s$ for which the $k$-graph $\U_s(K)$ so defined admits a perfect $K$-packing; the next proposition tells us that this is the case for any sufficiently large~$s$.
\end{defn}

We note for future reference that $\B(K)$, $\U_s(K)$ and $\Lo(K)$ have $kb$, $kbs$ and $(k-1)!b$ vertices respectively, that $\sigma(\Lo(K)) = \sigma(K)$, and that if $\gcd(K) $ divides $s$ then $\gcd(\U_s(K)) = \gcd(K)$. Crucially, each of these $k$-partite $k$-graphs admits a perfect $K$-packing, as shown by the following proposition. 

\begin{prop}\label{hpackings}
Let $K$ be a $k$-partite $k$-graph on $b$ vertices, and let $\sigma := \sigma(K)$. Then the $k$-graphs $\B(K) = \B(b)$ and $\Lo(K) = \Lo((k-1)!b, \sigma)$ each contain a perfect $K$-packing. Furthermore, if $d := \gcd(K)$ is defined then there exists $s_0 = s_0(K)$ such that for any $s \geq s_0$ the $k$-graph $\U_s(K) = \U(sb, d)$ contains a perfect $K$-packing.
\end{prop}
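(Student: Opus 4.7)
The plan is to prove each of the three claims by explicit construction, with the third being the main obstacle.

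For $\B(K) = \B(b)$, I would fix any $k$-partite realisation of $K$ with class sizes $b_1, \ldots, b_k$ summing to $b$ and denote the vertex classes of $\B(b)$ by $W_1, \ldots, W_k$. For each $i \in [k]$, form a copy $K^{(i)}$ of $K$ by embedding $U_j$ into a chosen subset of $W_{i+j \bmod k}$ of size $b_j$; since $\B(K)$ is complete $k$-partite, any such class-respecting assignment gives a valid copy. For each fixed $\ell \in [k]$ the pieces assigned to $W_\ell$ across the $k$ copies have sizes $\{b_{\ell - i \bmod k} : i \in [k]\}$, which is just a permutation of $\{b_1, \ldots, b_k\}$ and so sums to $b = |W_\ell|$; thus an appropriate partition of $W_\ell$ into these $k$ chunks makes the copies pairwise disjoint, giving a perfect $K$-packing.

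For $\Lo(K)$, I would pick a realisation attaining $\sigma(K)$, relabel so that $|U_1| = \sigma b$, and write $b_j := |U_j|$. For each permutation $\pi$ of $\{2, \ldots, k\}$ place a copy of $K$ with $U_1$ embedded into $W_1$ (using $\sigma b$ vertices) and $U_j$ embedded into $W_{\pi(j)}$ for $j \geq 2$ (using $b_j$ vertices). Over the $(k-1)!$ permutations, the total usage of $W_1$ is $(k-1)!\sigma b = |W_1|$, and of each $W_\ell$ with $\ell \geq 2$ it is $\sum_\pi b_{\pi^{-1}(\ell)} = (k-2)!(b - \sigma b) = |W_\ell|$, so an appropriate partition of each vertex class makes the copies pairwise disjoint and produces a perfect $K$-packing.

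For $\U_s(K) = \U(sb, d)$ it is enough to produce a perfect $K$-packing of $\U_{s_0}(K)$ for some fixed $s_0 = s_0(K)$, because for $s \geq s_0$ one can split each vertex class of $\U_s(K)$ into an exceptional part of the correct $\U_{s_0}(K)$-size and a bulk part of size $(s - s_0)b$; the bulk is a disjoint union of $s - s_0$ copies of $\B(K)$, each perfectly $K$-packed by the first claim. To handle $\U_{s_0}(K)$, I would adopt a lattice viewpoint: associate to each potential copy of $K$ its \emph{signature} in $\Z^k$, namely the vector of vertex counts it contributes to $W_1, \ldots, W_k$, so signatures are permutations of the class-size tuples of realisations of $K$, and a perfect $K$-packing corresponds to writing the target $\tau = (s_0 b - d,\, s_0 b + d,\, s_0 b, \ldots, s_0 b)$ as a non-negative integer combination of signatures. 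Swapping two coordinates of a single realisation's signature shows that $(-\delta, \delta, 0, \ldots, 0)$ lies in the $\Z$-lattice $\mathcal{L}$ spanned by signatures for each $\delta \in \D(K)$; Fact~\ref{gcdcoeffs} then yields $(-d, d, 0, \ldots, 0) \in \mathcal{L}$ and hence $\tau \in \mathcal{L}$. The first claim already exhibits $(b, b, \ldots, b)$ as an explicit non-negative integer combination of signatures, so adding sufficiently many copies of that combination to a signed expression for $\tau$ washes out any negative coefficients once $s_0$ is taken large in terms of $K$, yielding the desired non-negative expression; the actual disjoint copies of $K$ are then placed by partitioning each $W_\ell$ into chunks of the prescribed sizes, which is possible because $\U_{s_0}(K)$ is complete $k$-partite. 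The hardest step is precisely this lattice argument, where multiple realisations of $K$ must be combined through Bezout's identity to achieve the $|W_2| - |W_1| = 2d$ imbalance while padding with balanced $\B(K)$-signatures keeps all coefficients non-negative for a constant $s_0$ depending only on $K$.
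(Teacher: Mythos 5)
Your constructions for $\B(b)$ and $\Lo(K)$ are exactly the paper's: cyclic shifts of one realisation across the classes of $\B(b)$, and the $(k-1)!$ permutations fixing the smallest class for $\Lo(K)$. For $\U_s(K)$ your plan is also the same in substance as the paper's: reduce to a single $\U_{s_0}(K)$ plus $s-s_0$ copies of $\B(b)$, and build $\U_{s_0}(K)$ by applying Fact~\ref{gcdcoeffs} across \emph{all} realisations to realise the imbalance $d$, padding with balanced copies of $K$ to keep multiplicities non-negative; the paper packages this as an explicit construction (for each realisation $\chi$ it takes $a-a_\chi$ copies in standard orientation and $a_\chi$ copies with classes $1,2$ swapped, where $a=\max_\chi a_\chi$, using the symmetry $\sum_\chi |U_j^\chi|=bN/k$ to make the class totals come out as $aNb/k\pm d$), whereas you phrase it as membership of the target vector in the lattice spanned by signatures followed by a non-negativity correction.

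One step of your write-up is loose as stated. After writing $(-d,d,0,\dots,0)=\sum_\sigma e_\sigma\sigma$ with possibly negative integers $e_\sigma$, you propose to ``add sufficiently many copies of that combination'' from the first claim, i.e.\ of the single balanced decomposition of $(b,\dots,b)$ coming from one fixed realisation. That only increases the coefficients of the $k$ signatures in that particular decomposition's support, so a signature $\sigma$ with $e_\sigma<0$ lying outside this support (which will typically happen, since the Bezout expression mixes several realisations) keeps a negative coefficient no matter how large $s_0$ is. The repair stays inside your framework: the cyclic-shift decomposition of $(b,\dots,b)$ can be based at \emph{any} signature $\sigma$ (its cyclic shifts are again signatures of the same realisation), so for each $\sigma$ with $e_\sigma<0$ add $-e_\sigma$ copies of the decomposition based at $\sigma$; taking $s_0\geq\sum_\sigma\max(0,-e_\sigma)$ then gives a genuinely non-negative expression for $\tau$, and the rest of your argument (partitioning each class of the complete $k$-partite $\U_{s_0}(K)$ into chunks of the prescribed sizes) goes through.
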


\begin{proof}
Let $U_1, \dots, U_k$ be the vertex classes of a $k$-partite realisation of $K$. We form a $k$-partite $k$-graph $K^*$ with vertex classes $W_1, \dots, W_k$ as follows. Initially take $W_1, \dots, W_k$ to be empty sets, and then add $k$ vertex-disjoint copies of $K$ to $K^*$, so that the vertices of $U_i$ in the $j$th copy of $K$ are added to $W_{i+j}$ (with addition taken modulo $k$). That is, each vertex class of $K^*$ receives the vertices of one copy of $U_1$, one copy of $U_2$, and so forth. So each vertex class of $K^*$ has size $b$. We conclude that $K^*$ is a spanning subgraph of $\B(b)$. By construction $K^*$ contains a perfect $K$-packing, so $\B(b)$ contains a perfect $K$-packing also.

A similar argument holds for $\Lo((k-1)!b, \sigma)$. Indeed, by the definition of $\sigma(K)$ we may assume that $|U_1| = \sigma b$. Then we form a $k$-partite $k$-graph $K^*$ consisting of $(k-1)!$ vertex-disjoint copies of $K$: for each permutation $\rho$ of $[k]$ with $\rho(1) = 1$ we add a copy of $K$ to $K^*$ in which the vertices of $U_j$ are included in $W_{\rho(j)}$ for each $j \in [k]$. So $K^*$ has $(k-1)!b$ vertices in total; the first vertex class of $K^*$ has size $(k-1)!|U_1| = (k-1)!b\sigma$, whilst each other vertex class of $K^*$ has equal size 
$$\frac{(k-1)!b - (k-1)!b\sigma}{k-1} = (k-2)! b(1-\sigma).$$
So $K^*$ is a spanning subgraph of $\Lo((k-1)!b, \sigma)$. As before, since $K^*$ contains a perfect $K$-packing by construction, $\Lo((k-1)!b, \sigma)$ contains a perfect $K$-packing also.

Finally we come to $\U(sb, d)$. For this we must consider all possible $k$-partite realisations $\chi$ of $K$; let $\aleph$ be the set formed by all such $\chi$. We write $U_1^\chi, \dots, U_k^\chi$ for the vertex classes of the realisation $\chi$. Note that we consider all possible realisations, not simply all possible realisations up to permutations of the vertex classes. In particular, this means that the number of realisations $N := |\aleph|$ is divisible by $k!$. Note also that $N \leq k^b$, and that by symmetry we have $\sum_{\chi \in \aleph} |U_j^\chi| = bN/k$ for each $j \in [k]$. 
In addition, recall that 
$$d := \gcd(K) := \gcd \left(\{|U_1^\chi| - |U_2^\chi| : \chi \in \aleph\}\right).$$ 
So by Fact~\ref{gcdcoeffs} we may choose integers $a_\chi$ for each $k$-partite realisation $\chi$ of $K$ such that 
$$\sum_{\chi \in \aleph} a_\chi (|U_1^\chi| - |U_2^\chi|) = d.$$ 
Let $a := \max_{\chi \in \aleph} a_\chi$. We now form a $k$-partite $k$-graph $K^*$ similarly as before, with vertex classes $W_1, \dots, W_k$ which we initially take to be empty sets. Then, for each realisation $\chi$ of $K$, add $a - a_\chi$ vertex-disjoint copies of $K$ to $K^*$, with the vertices of $U_j^\chi$ added to $W_j$ for each $j \in [k]$, and also add $a_\chi$ vertex-disjoint copies of $K$ to $K^*$, with the vertices of $U_1^\chi$ added to $W_2$, the vertices of $U_2^\chi$ added to $W_1$, and the vertices of $U_j^\chi$ added to $W_j$ for each $j \geq 3$. Then the total number of vertices added to $W_1$ is 
$$\sum_{\chi \in \aleph} \left((a - a_\chi) |U_1^\chi| + a_\chi |U_2^\chi|\right) = \sum_{\chi \in \aleph} a|U_1^\chi| - \sum_{\chi \in \aleph} a_\chi(|U_1^\chi| - |U_2^\chi|) = \frac{aNb}{k} - d.$$ 
In the same way the number of vertices added to $W_2$ is 
$$\sum_{\chi \in \aleph} (a - a_\chi) |U_2^\chi| + a_\chi|U_1^\chi| = \frac{aNb}{k} + d,$$ 
and the number of vertices added to $W_j$ for each $j \geq 3$ is  
$\sum_{\chi \in \aleph} a|U^\chi_j| = aNb/k.$ 
So we may take $s_0 = aN/k$. Then $K^*$ contains a perfect $K$-packing by construction, and is a spanning subgraph of $\U(s_0b, d)$, from which we conclude that $\U(s_0b, d)$ contains a perfect $K$-packing. Finally, for any $s \geq s_0$ observe that $\U(sb, d)$ admits a $\{\B(b), \U(s_0b, d)\}$-packing consisting of $s-s_0$ copies of $\B(b)$ and one copy of $\U(s_0b, d)$; since we have already seen that both $\B(b)$ and $\U(s_0b, d)$ contain perfect $K$-packings it follows that $\U(sb, d)$ contains a perfect $K$-packing by Proposition~\ref{fpacktohpack}.
\end{proof}

We now have the definitions we need to derive Theorems~\ref{main1},~\ref{main2} and~\ref{main3} from Lemmas~\ref{main1simple} and~\ref{main2simple}. We shall also need the following weak version of a theorem of Erd\H{o}s~\cite{E}, which states that the Tur\'an density of any $k$-partite $k$-graph is zero.

\begin{theo}[\cite{E}] \label{turandensityzero}
For any $k$-partite $k$-graph $K$ and any $\alpha> 0$ there exists $n_0$ such that any $k$-graph $G$ on $n \geq n_0$ vertices with at least $\alpha \binom{n}{k}$ edges contains a copy of $K$.
\end{theo}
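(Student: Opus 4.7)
The plan is to reduce the claim to the case of the complete balanced $k$-partite $k$-graph $K^k_k(t)$ for suitably large $t$, and then prove the reduced statement by induction on the uniformity. For the reduction, note that any $k$-partite $k$-graph $K$ on $b$ vertices embeds into $K^k_k(b)$ by extending any $k$-partite realisation of $K$ with isolated vertices so that each class has size exactly $b$. Thus it suffices to prove that for any $t \geq 1$ and $\alpha > 0$ there exists $n_0$ such that any $k$-graph $G$ on $n \geq n_0$ vertices with $e(G) \geq \alpha \binom{n}{k}$ contains a copy of $K^k_k(t)$. It is in fact more convenient to establish the stronger supersaturation statement that such a $G$ contains $\Omega_{k,t,\alpha}(n^{kt})$ labelled copies of $K^k_k(t)$, since the supersaturation is what drives the induction.

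The base case $k=2$ is the K\H{o}v\'ari-S\'os-Tur\'an theorem together with its standard counting refinement. For each vertex $v$ of degree $d(v)$ and each $t$-subset $T \subseteq N(v)$, the pair $(v,T)$ contributes to a potential copy of $K_{t,t}$. By Jensen's inequality applied to the convex function $x \mapsto \binom{x}{t}$, one has $\sum_v \binom{d(v)}{t} \geq n\binom{2e(G)/n}{t} = \Omega(n^{t+1})$; a second application of Jensen to the number of vertices sharing a common $t$-set of neighbours then yields $\Omega(n^{2t})$ labelled copies of $K_{t,t}$.

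For the inductive step, assume the supersaturated version for $(k-1)$-uniform hypergraphs. Given $G$ with $e(G) \geq \alpha \binom{n}{k}$, the identity $\sum_v d_G(v) = k\, e(G)$ ensures that at least $\alpha n/2$ vertices $v$ have link hypergraph $L_v$ (a $(k-1)$-graph on $V(G)\setminus\{v\}$) satisfying $e(L_v) \geq (\alpha/2)\binom{n-1}{k-1}$. The inductive hypothesis applied to each such $L_v$ yields $\Omega(n^{(k-1)t})$ labelled copies of $K^{k-1}_{k-1}(t)$ in $L_v$, so summing gives $\Omega(n^{(k-1)t+1})$ pairs $(v,C)$ where $C$ is such a labelled copy and every edge of $C$, together with $v$, is an edge of $G$. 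Since there are only $O(n^{(k-1)t})$ possible such $C$, one final Jensen-style argument produces $\Omega(n^{kt})$ labelled copies of $K^k_k(t)$ in $G$ by choosing the $t$-th vertex class from the compatible vertices. The main technical point, though routine, is ensuring disjointness of this chosen $t$-subset from the vertex set of $C$; the contribution from overlapping choices is of order $n^{kt-1}$ and is absorbed into the implicit constant. The theorem then follows by taking $t \geq b$.
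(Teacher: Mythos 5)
Your proof is correct. Note that the paper does not prove this statement at all: it is quoted as a (weak form of a) theorem of Erd\H{o}s~\cite{E}, so the only meaningful comparison is with Erd\H{o}s's original argument. Your route is essentially that argument, upgraded to a supersaturation statement: the reduction to the complete balanced $k$-partite $k$-graph, induction on the uniformity with K\H{o}v\'ari--S\'os--Tur\'an as the base case, and convexity at each step. The organisational difference is that Erd\H{o}s fixes a $t$-set $T$ and passes to the common link $(k-1)$-graph $\{S : S \cup \{x\} \in G \ \forall x \in T\}$, choosing a good $T$ by one convexity estimate, whereas you pass to single-vertex links, carry the $\Omega(n^{(k-1)t})$ copy count through the induction, and recover the $t$-th class by a final Jensen step; carrying supersaturation through the induction is exactly what makes your version self-contained, and your handling of the degenerate (non-disjoint) choices as an $O(n^{kt-1})$ error is the right bookkeeping. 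The individual estimates you use (the $\alpha n/2$ vertices with dense links, the averaging $\sum_C c(C)^t \geq N_C(\sum_C c(C)/N_C)^t$ with average common degree $\Omega(n)$) all check out, so the argument stands as a complete proof of the cited theorem, indeed of the stronger counting version.
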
 

\medskip \noindent {\bf Proof of Theorem~\ref{main1}.}
We may assume that $1/n \ll 1/k, 1/b, \alpha$. By repeated application of Theorem~\ref{turandensityzero} we may delete at most $k$ vertex-disjoint copies of $K$ from $G$ to obtain a subgraph $H$ such that $kb$ divides $n' := |V(H)|$. Since we deleted at most $kb \leq \alpha n/2$ vertices in forming $H$ we have $\delta(H) \geq n/2 + \alpha n/2 \geq n'/2 + \alpha n'/2$. So $H$ contains a perfect $\B(K)$-packing by Lemma~\ref{main2simple} (applied with $\B(K), n'$ and $\alpha/2$ in place of $K, n$ and $\alpha$ respectively).  Together with the deleted copies of $K$ this gives a perfect $\{\B(K), K\}$-packing of $G$, and $G$ therefore contains a perfect $K$-packing by Propositions~\ref{fpacktohpack} and~\ref{hpackings}. 
\endproof
 
\medskip \noindent {\bf Proof of Theorems~\ref{main2} and~\ref{main3}.}
We may assume that $1/n \ll 1/k, 1/b, \alpha$. Introduce new constants $m$ and $s$ with $1/n \ll 1/m \ll 1/s \ll 1/k, 1/b, \alpha$ such that both $m$ and $s$ are divisible by $\gcd(K)$. Then we may assume that $s$ is large enough for $\U_s(K)$ to be defined and so to contain a perfect $K$-packing by Proposition~\ref{hpackings}.

Our first step is to form a complete $k$-partite $k$-graph $K_1$ with vertex class sizes $b_1, \dots, b_k$ such that $K_1$ admits a perfect $K$-packing and such that $b_1$ and $\gcd(K)$ are coprime. For Theorem~\ref{main2} we have $\gcd(K) = 1$ so we may simply take any $k$-partite realisation of $K$ and add edges to form a complete $k$-partite $k$-graph $K_1$. So assume that $\gcd(K) > 1$ and $\gcd(\Sa(K)) = 1$, as in Theorem~\ref{main3}, and let $s_1, \dots, s_t$ be the elements of $\Sa(K)$. By Fact~\ref{gcdcoeffs} we may choose integers $a_1, \dots, a_t \geq 0$ such that $a_1s_1 +\dots + a_ts_t$ is coprime to $\gcd(K)$, and since $1/s \ll 1/k, 1/b$ we may do this so that $a_1 + \dots + a_t \leq s$. By definition of $\Sa(K)$, for each $i \in [t]$ we may choose a $k$-partite realisation $\chi_i$ of $K$ whose $k$ vertex classes have sizes $b_1^i, b_2^i, \dots, b_k^i$ with $b_1^i = s_i$. Let $K_1$ be the complete $k$-partite $k$-graph whose $j$th vertex class has size $b_j := \sum_{i \in [t]} a_i b_j^i$. Then by construction $K_1$ has a perfect $K$-packing consisting of $a_1$ copies of $K$ with realisation $\chi_1$, $a_2$ copies of $K$ with realisation $\chi_2$, and so forth. Moreover, $b_1 = \sum_{i \in [t]} a_i b_1^i = \sum_{i \in [t]} a_i s_i$, so $b_1$ is coprime to $\gcd(K)$. So $K_1$ has the desired properties; note also that $K_1$ has at most $sb$ vertices.
 
We next form a complete $k$-partite $k$-graph $K_2$ with vertex class sizes $b_1', \dots, b_k'$ such that $K_2$ admits a perfect $\{\U_s(K), K_1\}$-packing, $\gcd(K_2) = \gcd(K)$ and $b_1'$ and $\gcd(K)$ are coprime. 
For this observe that, since $K_1$ has a perfect $K$-packing, the definition of $\gcd(K)$ implies that $b_i - b_j$ is divisible by $\gcd(K)$ for any $i, j \in [k]$. Without loss of generality we may assume that $b_2 \geq b_3$. Fix $d := (b_2 - b_3)/\gcd(K)$, and 
define 
\begin{align*}
b_1' &:= b_1 + (d+1)(bs + \gcd(K)),\\
b_2' &:= b_2 + (d+1)(bs - \gcd(K)), \mbox{ and}\\ 
b_i' &:= b_i + (d+1)bs \mbox{ for $3 \leq i \leq k$}.
\end{align*} 
So in particular $b_i' \leq 3b^3s$ for any $i \in [k]$. 
Let $K_2$ be the complete $k$-partite $k$-graph with vertex class sizes $b'_1, \dots, b'_k$. Then $K_2$ admits a perfect $\{\U_s(K), K_1\}$-packing consisting of $d+1$ copies of $\U_s(K)$ and one copy of $K_1$. Also, since  $b_1$ and $\gcd(K)$ were coprime, and $\gcd(K)$ divides $s$, we find that $b'_1$ and $\gcd(K)$ are coprime. Finally, observe that $b_3' - b_2' = b_3 - b_2 + (d+1)\gcd(K) = \gcd(K)$. So $\gcd(K_2) \leq \gcd(K)$, and from the definition of $b'_i$ for $i \in [k]$ and the fact that $\gcd(K)$ divides $s$ we see that $\gcd(K)$ divides $b'_i - b'_j$ for any $i, j \in [k]$, from which we conclude that $\gcd(K_2) = \gcd(K)$. So $K_2$ has the desired properties. 

Define $b''_i$ for $i \in [k]$ by 
\begin{align*}
b''_1 &= (k-1)!b\sigma(K)m + b'_1, \mbox{ and} \\
b''_i &= (k-2)!b(1-\sigma(K))m + b'_i \mbox{ for $2 \leq i \leq k$,}
\end{align*}
and let $K_3$ be the complete $k$-partite $k$-graph with vertex class sizes $b''_1, \dots, b''_k$. Then $K_3$ admits a perfect $\{\Lo(K), K_2\}$-packing consisting of one copy of $K_2$ and $m$ copies of $\Lo(K)$; by Propositions~\ref{fpacktohpack} and~\ref{hpackings} and the fact that $K_2$ admits a perfect $\{\U_s(K), K_1\}$-packing it follows that $K_3$ admits a perfect $K$-packing. Also, since $b'_1$ and $\gcd(K)$ were coprime, and $\gcd(K)$ divides $m$, we find that $b''_1$ and $\gcd(K)$ are coprime. Furthermore, $b''_3 - b''_2 = b'_3-b'_2 = \gcd(K)$, and since $\gcd(K)$ divides $m$ we deduce that $\gcd(K_3) = \gcd(K)$ also. Finally, $|V(K_3)| = (k-1)!bm + |V(K_2)| \geq (k-1)!bm$, and so
$$\sigma(K_3) \leq \frac{b_1''}{|V(K_3)|} \leq \frac{(k-1)!b\sigma(K)m + b'_1}{(k-1)!bm} = \sigma(K) + \frac{3b^3s}{(k-1)!bm} \leq \sigma(K) + \frac{\alpha}{3}.$$

By Theorem~\ref{turandensityzero} we may arbitrarily choose and delete from $G$ at most $|V(K_3)|/b = |V(\Lo(K))|m/b + |V(K_2)|/b \leq (k-1)!m + 3kb^2s \leq \alpha n/3b$ copies of $K$ so that the set $V' \subseteq V(G)$ of undeleted vertices is such that $|V(K_3)|$ divides $|V'|$. Also $H := G[V']$ has $\delta(H) \geq \delta(G) - \alpha n/3 \geq \sigma(K)n + 2\alpha n/3 \geq \sigma(K_3)n + \alpha n/3$. Similarly, if $\gcd(K) > 1$ then $\delta(H) \geq n/p + 2\alpha n/3$, where $p$ is the smallest prime factor of $\gcd(K) = \gcd(K_3)$. So we may apply Lemma~\ref{main1simple} with $K_3, \alpha/3$ and $|V'|$ in place of $K, \alpha$ and $n$ respectively to obtain a perfect $K_3$-packing in $H$. Together with the deleted copies of $K$ this gives a perfect $\{K_3, K\}$-packing of $G$, and $G$ therefore contains a perfect $K$-packing by Proposition~\ref{fpacktohpack}. 
\endproof

\section{Outline of the proofs} \label{sec:outline}

The proofs of Lemmas~\ref{main1simple} and~\ref{main2simple} use strong hypergraph regularity and the recent hypergraph blow-up lemma due to Keevash. The broad outline of how these are used will be familiar to those acquainted with the use of the blow-up lemma in graphs, but this method remains relatively novel for hypergraphs (for which there are many additional technicalities and subtleties). In this section we give a rough outline of how these proofs proceed. 

\subsection{Proof outline for Lemma~\ref{main1simple}}

The proof proceeds through the following steps.

\medskip \noindent \emph{Apply the Regular Approximation Lemma:}
The first step is to apply the Regular Approximation Lemma (Theorem~\ref{eq-partition}) to~$H$. This returns both a partition of $V(H)$ into `clusters' $U_1, \dots, U_m$, and a $k$-graph $G$ on $V(H)$, with the following properties. Firstly, $G$ is close to $H$, meaning that almost all edges of $G$ are edges of $H$, and vice versa. Secondly, this partition is regular for $G$, meaning (loosely speaking) that for the purposes of embedding small subgraphs in $G$, the $k$-partite subgraph of $G$ induced by any $k$-tuple of clusters behaves like a random $k$-graph of similar density. We write $Z = G \triangle H$, so $Z$ is a sparse graph which contains all the `bad' edges of $G$ which are not edges of~$H$; we will often choose copies of $K$ in $G \sm Z$, and by definition of $Z$ these copies are also in~$H$.

Having obtained $G$, $Z$ and the partition of $V(H)$ into $m$ clusters, we define a `reduced $k$-graph' $\R$ on $[m]$ (Definition~\ref{redgraphdef}). This has $m$ vertices, one corresponding to each cluster, and the edges of $\R$ are those $k$-tuples $S$ for which the corresponding clusters induce a dense $k$-partite subgraph of $G$ and a sparse $k$-partite subgraph of $Z$. Defined in this way, $\R$ `almost inherits' the minimum codegree condition of $H$, meaning that almost all $(k-1)$-tuples of vertices in $\R$ have almost the same degree (proportionately) as $(k-1)$-tuples in $H$ (Lemma~\ref{redgraphmindeg}).

\begin{figure}[t] 
\centering
\psfrag{A}{$V_1$} 
\psfrag{B}{$V_2$} 
\psfrag{C}{$V_3$} 
\psfrag{D}{$V_4$} 
\psfrag{E}{$1$}
\psfrag{F}{$2$}
\psfrag{G}{$3$} 
\psfrag{H}{$4$}
\psfrag{I}{$e$}
\psfrag{J}{$f$}
\includegraphics[width=6cm]{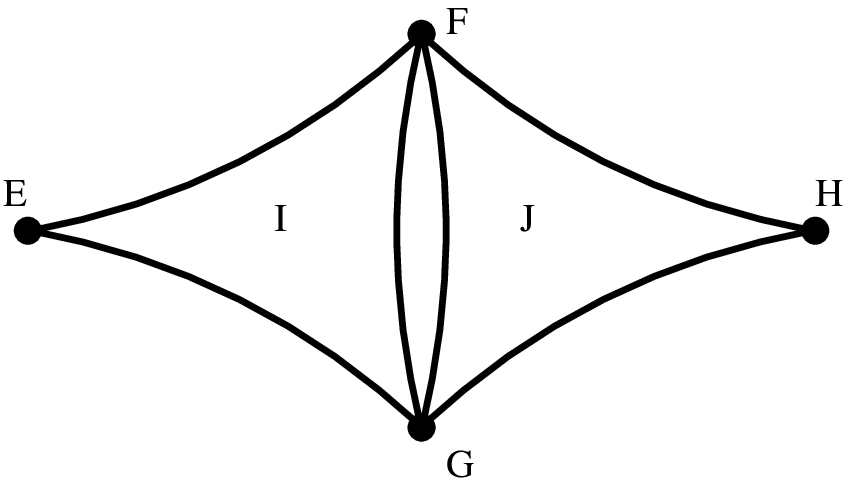} \hspace{2cm}
\psfrag{E}{$x_1$}
\psfrag{F}{$x_2$}
\psfrag{G}{$x_3$} 
\psfrag{H}{$x_4$}
\psfrag{I}{}
\psfrag{J}{}
\includegraphics[width=6cm]{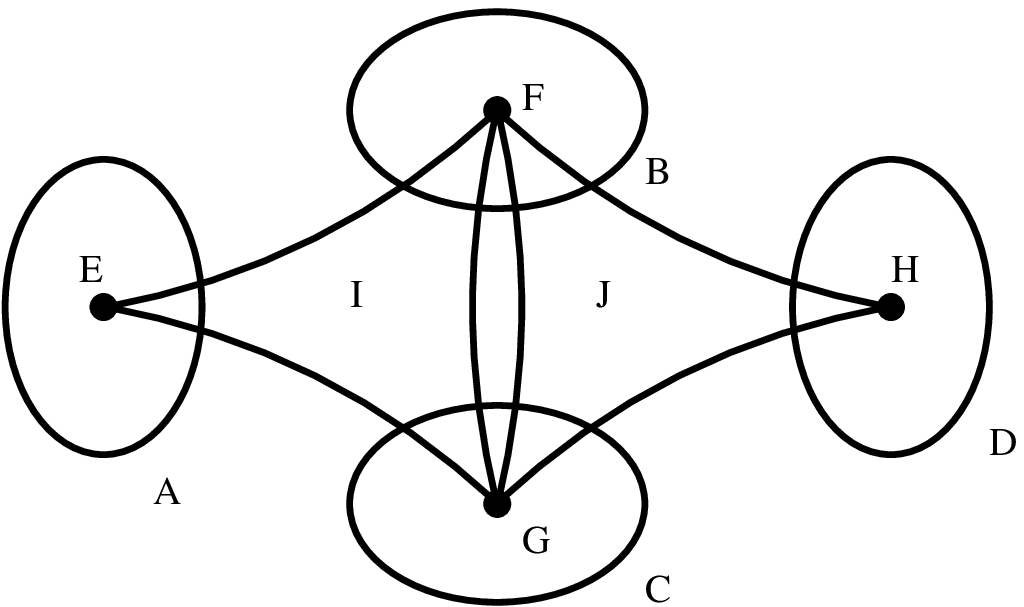}
\caption{The left hand diagram illustrates $\Phi$ in the case $k=3$, whilst the diagram on the right shows a copy of $\Phi$ located within four vertex classes $V_1, V_2, V_3$ and $V_4$. If there are many copies of $\Phi$ of the form shown, then we say that the triple $(1, S, 4)$ is $\Phi$-dense, where $S = \{2, 3\}$.} 
\label{fig:phi} 
\end{figure} 

One $k$-graph which will play an important role in our proof is the $k$-graph on $k+1$ vertices with two edges, which we denote by $\Phi$ (see Figure~\ref{fig:phi}). We call the $k-1$ vertices in the intersection of these edges the \emph{central vertices} of $\Phi$, and the remaining two vertices of $\Phi$ are the \emph{end vertices}. For vertices $i, j$ of $\R$ and a $(k-1)$-tuple $S$ of vertices of $\R$, we then say that the triple $(i, S, j)$ is \emph{$\Phi$-dense} if there are many copies of $\Phi$ in $G$ whose end vertices lie in the clusters $U_i$ and $U_j$ and whose central vertices lie in the clusters $U_{\ell}$ for $\ell \in S$, and the triple is \emph{$Z$-sparse} if there are few edges of $Z$ in either of the $k$-tuples of clusters corresponding to edges of $\Phi$. We are particularly interested in $\Phi$-dense and $Z$-sparse triples in $\R$, since we will be able to choose copies of $K$ within these triples with some flexibility over the number of vertices which are embedded in $U_i$ and $U_j$ (indeed, $k-1$ of the vertex classes of $K$ will be contained in the clusters $U_\ell$ for $\ell \in S$, whilst we will be able to choose how many vertices from the remaining vertex class are contained in each of $U_i$ and $U_j$). Also, to keep track of these useful triples we define a graph $\Sa$ on $[m]$, where each vertex corresponds to a cluster, and an edge $ij$ indicates the existence some $S$ for which the triple $(i, S, j)$ is $\Phi$-dense and $Z$-sparse. In Lemma~\ref{redgraphmindeg} we show that the condition $\delta(H) \geq n/p + o(n)$ yields a minimum codegree condition $\delta(\Sa) > m/p$ on $\Sa$. It follows that $\Sa$ has fewer than $p$ connected components if $\gcd(K) > 1$, a fact which plays a crucial role later in the proof. 

\medskip \noindent \emph{Refine the regularity partition into `lopsided groups':}
Our next step is to find an almost-perfect packing of $\R$ with a specific $k$-graph $\Akpq$ (where $p$ and $q$ are chosen depending on $\sigma(K)$). Lemma~\ref{akpqpacking} shows that the degree condition on $\R$ which is `inherited' from the condition $\delta(H) \geq \sigma(K) + \alpha n$ is sufficient to guarantee such a packing. For each copy $\A$ of $\Akpq$ in this packing we then use Lemma~\ref{akpqsplit} to partition the clusters covered by $\A$ into $kt$ `subclusters' $V^i_j$, which are labelled so that the subclusters $V^i_1, \dots, V^i_k$ are taken from clusters corresponding to an edge of $\A$ (\emph{i.e.} an edge of $\R$). This guarantees that the $k$-partite subgraphs $G^i$ and $Z^i$ induced by these $k$ subclusters are dense and sparse respectively. Furthermore, whilst it is unavoidable that these $k$ subclusters may have different sizes, the definition of $\Akpq$ will allow us to ensure that each $G^i$ has $\sigma(G^i) > \delta(H) - o(n) > \sigma(K) + o(n)$. That is, each $G^i$ is `less lopsided' than $K$. This is the point in the argument where this part of the minimum codegree assumption is used; a weaker condition would not suffice to guarantee the existence of subclusters with $\sigma(G^i) > \sigma(K)$.

Having obtained the $kt$ subclusters $V^i_j$, we define a $k$-graph $\R'$ and a graph $\Sa'$ on vertex set $[t] \times [k]$ to correspond to $\R$ and $\Sa$. Indeed, the vertex $(i, j)$ of $\R'$ and $\Sa'$ corresponds to the subcluster $(i, j)$, and $k$ subclusters form an edge of $\R'$ if the $k$ clusters from which these subclusters were taken form an edge of $\R$. Similarly, two subclusters form an edge of $\Sa'$ if the two clusters from which these subclusters were taken form an edge of $\Sa$. This allows us to retain information about the regularity partition when working with the subclusters. It follows from the definition of $\Sa'$ that the connected components of $\Sa'$ correspond to those of $\Sa$, so $\Sa'$ also has fewer than $p$ connected components if $\gcd(K) > 1$.

\medskip \noindent \emph{Obtain robustly-universal complexes:}
Next, for each $i$ we delete a small number of vertices from each subcluster $V^i_j$ so that the $k$-partite $k$-graph $G^i \sm Z^i$ restricted to the remaining vertices is `robustly universal'. This means that, even after the removal of a few more vertices, we can find any $k$-partite $k$-graph of bounded maximum degree in $G^i \sm Z^i \subseteq H$ which we can find in the complete $k$-partite $k$-graph on the same vertex set.  These deletions are achieved by Theorem~\ref{robust-universal}, a result of Keevash~\cite{K} which conceals the use of the hypergraph blow-up lemma. We also `put aside' a randomly-chosen set $X$ consisting of a small number of vertices from each subcluster; these vertices are immune from deletion over the next two steps, and ensure that every vertex which is not deleted lies in many edges which are not deleted, which is a requirement for the application of robust universality.

\medskip \noindent \emph{Delete a $K$-packing covering bad vertices:}
At this point we deal with the small number of `bad vertices', meaning those vertices in clusters which were not covered by our $\Akpq$-packing, as well as those vertices which were deleted to make the $k$-graphs $G^i \sm Z^i$ robustly universal. For this, Lemma~\ref{incorporateexcep} shows that for any vertex $v$ of $H$ there is a copy of $K$ in $H$ which contains $v$; this is a straightforward corollary of Theorem~\ref{turandensityzero} (the well-known result of Erd\H{o}s that $k$-partite $k$-graphs have Tur\'an density zero). Using this, we greedily choose and delete copies of $K$ in $H$ which cover all the bad vertices but which only cover a small number of vertices from each subcluster. Following these deletions, all remaining vertices of $H$ lie in $G^i \sm Z^i$ for some $i$. 

\medskip \noindent \emph{Delete a $K$-packing to ensure divisibility of cluster sizes:}
We now delete further copies of $K$ in $H$ so that, following these deletions, the number of vertices remaining in each subcluster is divisible by $bk \gcd(K)$ (recall that $b$ is the order of $K$). Lemma~\ref{gcdbalance} states that we can do this; loosely speaking, this is achieved by deleting a series of $K$-packings in $H$ to achieve successively stronger divisibility conditions on the subcluster sizes (a more detailed outline of the proof of this lemma is given in Section~\ref{sec:delete}). If $\gcd(K) > 1$, then it is crucial for this that, as stated above, $\Sa'$ has fewer than $p$ components. For example, the $k$-graph $G$ constructed for Proposition~\ref{extrem3} would yield a graph $\Sa'$ with $p$ components corresponding to the parts $V_1, \dots, V_p$, and the point of the construction is that it is not possible to delete a $K$-packing in $G$ so that every part has size divisible by $p$. Recall that $\Sa'$ has at most $p$ components since $\Sa$ had minimum codegree $\delta(\Sa) > m/p$, and that this in turn was inherited from the minimum codegree condition $\delta(H) \geq n/p + \alpha n$ of $H$. This is the point in the argument where this part of the minimum codegree assumption is used, and a weaker condition would not suffice. However, if $\gcd(K) = 1$ then we do not need this part of the minimum codegree assumption.

\medskip \noindent \emph{Blow-up a perfect $K$-packing in the remaining $k$-graph:}
Certainly a $K$-packing has bounded vertex degree, so our robustly universal $k$-graphs $G^i \sm Z^i$ each contain a perfect $K$-packing if and only if the complete $k$-partite $k$-graph on the same vertex set does also. To this end, Corollary~\ref{completepacking} shows that for those $k$-partite $k$-graphs $K$ which meet the conditions of Lemma~\ref{main1simple}, two properties are sufficient to ensure such a packing: firstly that $G^i \sm Z^i$ should be `less lopsided' than $K$, and secondly that each vertex class of $G^i \sm Z^i$ should have size divisible by $bk \gcd(K)$. Our partition into subclusters was chosen so that the first condition holds, whilst the final round of deletions described above ensures that the second  condition holds also. We can therefore find a perfect $K$-packing in $G^i \sm Z^i \subseteq H$ for each $i$; these $K$-packings, together with the deleted copies of $K$, form a perfect $K$-packing in $H$. This completes the outline of the proof of Lemma~\ref{main1simple}. 

\subsection{Proof outline for Lemma~\ref{main2simple}}
In this lemma $K$ is instead a complete $k$-partite $k$-graph $K$ whose vertex classes each have the same size $b_1$. The proof of this lemma proceeds through the same steps as the proof of Lemma~\ref{main1simple}, though there are two principal differences. Firstly, rather than finding an $\Akpq$-packing in $\R$, we can now find simply a matching $M_\R$ in $\R$ which covers almost all of the vertices of $\R$. In consequence, there is no need to divide the clusters into subclusters, or to define $\R'$ and $\Sa'$; we simply continue working with the clusters and the $k$-graph $\R$ and graph $\Sa$. The second principal difference is that a complete $k$-partite $k$-graph $G$ contains a perfect $K$-packing if and only if every vertex class of $G$ has equal size and this common size is divisible by $b_1$. So to find a perfect $K$-packing in our robustly universal $k$-graphs $G^i \sm Z^i$ in the final step of the proof, it is not sufficient to delete copies of $K$ in the penultimate step such that every cluster of $G^i \sm Z^i$ has size divisible by $bk\gcd(K)$; we must now ensure also that these clusters have the same size for any $i$. As a consequence we must be more precise in our definition of $\Sa$. Indeed, we now define a directed graph $\Sa^+$ whose vertices correspond to clusters, and whereas before an edge $ij \in S$ indicated the existence of some $(k-1)$-tuple $S$ for which $(i, S, j)$ is $\Phi$-dense and $Z$-sparse, we now only have an edge $i \to j$ of $\Sa^+$ if this is true for $S = e(i) \sm \{i\}$, where $e$ is the edge of $M_\R$ which contains $i$. Then, similarly as before, the minimum degree condition $\delta(H) \geq n/2 + \alpha n$ implies that $\Sa^+$ has minimum outdegree $\delta^+(\Sa^+) > m/2$.

It would be possible to proceed by considering the directed graph $\Sa^+$, but there are a number of additional problems which would arise in this case. Instead, we make use of the notion of `irreducibility' of $k$-graphs containing a perfect matching, which was introduced by Keevash and Mycroft~\cite{KM}, and is presented in Section~\ref{sec:irreduc}. Using the stronger minimum degree condition of Lemma~\ref{main2simple}, we can insist that the reduced $k$-graph $\R$ is irreducible on the matching $M_\R$. Then, in Section~\ref{sec:deleteeq} we show that, under this assumption, we need only consider the undirected base graph $\Sa$ of $\Sa^+$ for the purposes of deleting $K$-packings to adjust cluster sizes. Using this, we prove Lemma~\ref{samesizebalance}, which shows that it is indeed possible to delete a $K$-packing in $H$ so that, following these deletions, the $k$ clusters corresponding to any edge of $M_\R$ have equal size. We then use Lemma~\ref{samesizebalance} in place of Lemma~\ref{gcdbalance} in the proof of Lemma~\ref{main2simple}; all other steps of the proof proceed roughly as before. Note, however, that our use of irreducibility requires that almost all $(k-1)$-tuples $S$ of vertices of $\R$ have $\deg_\R(S) > m/k$ (this condition is inherited from the minimum codegree of $H$), so it would not be possible to use this approach in the proof of Lemma~\ref{main1simple}.

\section{Regularity and the Blow-up Lemma} \label{sec:regularity}

Much of the notation introduced in this section was first introduced by R\"odl and Skokan~\cite{RSk} and by R\"odl and Schacht~\cite{RS, RS2}.

\subsection{Hypergraphs, complexes and partitions}

A \emph{hypergraph} $H$ consists of a vertex set $V(H)$ and an edge set $E(H)$, where every edge of $H$ is a set of vertices of $H$. So a $k$-graph (as defined in
Section~\ref{intro}) is a hypergraph in which all the edges have size $k$. As with $k$-graphs we frequently identify a hypergraph $H$ with the set of its edges. So, for example, $e \in H$ means that $e$ is an edge of $H$, and $|H|$ is the number of edges in~$H$. Likewise, if $G$ and $H$ are hypergraphs on a common vertex set $V$ then the hypergraph $G \sm H$ is the hypergraph on $V$ formed by removing from $G$ any edge which also lies in $H$. For any hypergraph $H$ and any $U \subseteq V(H)$, the \emph{restriction of $H$ to $U$}, denoted $H[U]$ is the hypergraph on vertex set $U$ whose edges are those edges of $H$ which are subsets of $U$. Also, recall that if $H$ is a hypergraph with vertex set $V$, the \emph{degree} $\deg_H(S)$ of a set $S \subseteq V(H)$ is defined to be the number of edges of $H$ which contain $S$ as a subset. 
The \emph{maximum vertex degree} of $H$, denoted $\Delta_\vx(H)$, is then defined to be the maximum of $\deg_H(\{v\})$ taken 
over all vertices $v \in V(H)$; so every vertex of $H$ is contained in at most $\Delta_\vx(H)$ edges of $H$. For any set of vertices $X$, we write $K(X)$ for the complete hypergraph on vertex set $X$, that is, the edges of $K(X)$ are all subsets of $X$.

Now let $X$ be a set of vertices, and let $\Qart$ be a partition of $X$ into $r$ parts $X_1, \dots, X_r$. We say that a subset $S \subseteq X$ is \emph{$\Qart$-partite} if $|S \cap X_i| \leq 1$ for any $i \in [r]$. Similarly, we say that a hypergraph $H$ on~$X$ is \emph{$\Qart$-partite} if every edge of $H$ is $\Qart$-partite, and we refer to the parts $X_i$ of $\Qart$ as the \emph{vertex classes} of $H$. We say that $H$ is \emph{$r$-partite} if it is $\Qart$-partite for some partition $\Qart$ of $X$ into $r$ parts. For any $\Qart$-partite set $S \subseteq X$ we define the \emph{index} of $S$ to be $i(S) := \{i \in [r] : |S \cap X_i| = 1\}$. So $S$ intersects precisely those $X_i$ for which $i \in i(S)$. Likewise, for any $A \subseteq [r]$ we write $X_A := \bigcup_{i \in A} X_i$, and define $H_A$ to be the $|A|$-graph with vertex set $X_A$ whose edges are all edges of $H$ of index $A$ (note that $H_A$ is naturally $|A|$-partite with vertex classes $X_i$ for $i \in A$). In particular, $K(X)_A$ is the complete $|A|$-partite $|A|$-graph with vertex classes $X_i$ for $i \in A$.

A \emph{$k$-complex} $J$ is a hypergraph in which every edge has size at most $k$ and which has the property that if $e_1 \in J$ and $e_2 \subseteq e_1$ then $e_2 \in J$ (so the edges of $J$ form a simplicial complex). We refer to edges of size $i$ as \emph{$i$-edges}, and write $J_i$ for the $i$-graph on $V(J)$ formed by the $i$-edges of $J$.
Informally, it may be helpful to think of a $k$-complex $J$ as consisting of `layers' $J_i$ for $0 \leq i \leq k$. So any edge $e$ in the `$i$th layer' $J_i$ of $J$ lies `above' $i$ edges of $J$ in the `$(i-1)$th layer', namely those subsets of $e$ of size $i-1$. The `top layer' of a $k$-complex $J$ will play a particularly important role; due to this we often write $J_=$ in place of $J_k$ to emphasise that this is the `top layer'. So $J_=$ is a $k$-graph on $V(J)$. For any $k$-graph $H$ we can naturally generate a $k$-complex $H^{\leq}$ on $V(H)$, whose edges are all subsets of edges of $H$. Observe in particular that $(H^{\leq})_= = H$, and also that if $H$ is $\Qart$-partite for some partition $\Qart$ of $V(H)$ then $H^{\leq}$ is $\Qart$-partite also.

Now suppose again that $\Qart$ partitions a set of vertices $X$ into $r$ parts $X_1, \dots, X_r$, and that $J$ is a hypergraph on $X$. For any $A \subseteq [r]$, the \emph{absolute density of $J$ at $A$}, denoted $d(J_A)$, is the proportion of edges of $K(X)_A$ which are also edges of $J_A$.  So
$$d(J_A) := \frac{|J_A|}{|K(X)_A|} =  \frac{|J_A|}{\prod_{i\in A} |X_i|}.$$
If $J$ is a $k$-complex then we also have the notion of \emph{relative density}. Indeed, the \emph{relative density of $J$ at $A$} is the proportion of those edges which could feasibly be in $J_A$ (in the sense that they are supported by `lower levels' of $J$) which are actually edges of $J_A$. More precisely, we write $J^*_A$ for the set of all edges $e \in K(X)_A$ such that every proper subset $e' \subset e$ is an edge of $J$. So $J^*_A$ is the set of edges which could feasibly be in $J_A$ (given the `lower levels' of $J$), and we define the relative density of $J$ at index $A$ to be
$$d_A(J) := \frac{|J_A|}{|J^*_A|}.$$
(If the set $J^*_A$ is empty then we instead define $d_A(J)$ to be zero.)

\subsection{Partition complexes}

Loosely speaking, the Regular Approximation Lemma states that 
any $k$-graph is close to another $k$-graph which can be split into pieces, each of which forms the `top level' of a regular $k$-complex. For a graph $G$, this split involves simply a partition of the vertex set into a number of `clusters', whereupon the edges between any pair of clusters form a regular bipartite graph. However, for a $k$-graph $H$ (for $k \geq 3$) we must not only partition the vertices of $H$, but also the pairs of vertices of $H$, the triples of vertices of $H$, and so forth, up to $(k-1)$-tuples of vertices of $H$. To keep track of these partitions we need the notion of a \emph{partition complex}, which we now introduce.

Let $X$ be a set of vertices, and let $\Qart$ partition $X$ into parts $X_1, \dots, X_r$. Recall that for any $A \subseteq [r]$, $K(X)_A$ consists of all $|A|$-tuples of vertices of $X$ with index $A$. A \emph{partition $k$-system~$\Part$ on $X$} consists of a partition $\Part_A$ of the edges of $K(X)_A$ for each $A \subseteq [r]$ with $|A| \leq k$. We refer to the partition classes of $\Part_A$ as \emph{cells}. So every edge of $K(X)_A$ is contained in precisely one cell of $\Part_A$. 
We say that $\Part$ is \emph{$a$-bounded} if for each $A$ the partition $\Part_A$ has at most $a$ cells.
Also, for any $j \in [k]$ we write $$\Part^{(j)} = \bigcup_{A \in \binom{[r]}{j}} \Part_A,$$ so $\Part^{(j)}$ is a partition of the set of all $\Qart$-partite $j$-tuples of vertices of $X$. Note in particular that $\Part^{(1)}$ is a partition of the vertex set $X$ which refines $\Qart$.
We refer to the cells of $\Part^{(1)}$ as \emph{clusters} of $\Part$, so each cluster is a subset of some $X_i$, and every vertex of $X$ lies in some cluster of $\Part$. We say that $\Part$ is \emph{vertex-equitable} if every cluster of $\Part$ has equal size. Also, for any $\Qart$-partite set $S \subseteq X$ with $|S| \leq k$, we write $\Cell(S)$ to denote the cell of $\Part$ which contains $S$. 

We say that $\Part$ is a \emph{partition $k$-complex on $X$} if it is a partition $k$-system on $X$ with the additional property that for any edges $S, S' \in K(X)_A$ with $\Cell(S) = \Cell(S')$ and any subset $B \subseteq A$ we have $\Cell(S \cap X_B)=\Cell(S' \cap X_B)$. That is, if two sets lie in the same cell of $\Part$, then their subsets of any given index also lie in the same cell of $\Part$. To illustrate this definition, consider the following example of a partition $3$-complex, where we slightly abuse notation in subscripts by writing, for example, $\Part_{12}$ rather than $\Part_{\{1, 2\}}$. Take $X = X_1 \cup X_2 \cup X_3$, and let the vertex classes $X_1$, $X_2$ and $X_3$ also be the clusters of $\Part$ (but bear in mind it is also possible for each vertex class to be partitioned into several clusters). Then the partition $\Part^{(1)}$ is simply the partition of $X$ into the clusters $X_1, X_2$ and $X_3$. Next, $\Part_{12}$ is a partition of the set of all pairs $\{x_1, x_2\}$ with $x_1 \in X_1$ and $x_2 \in X_2$. That is, the cells $C^i_{12}$ of $\Part_{1 2}$ are edge-disjoint bipartite graphs with vertex classes $X_1$ and $X_2$, whose union is the complete bipartite graph on $X_1$ and $X_2$. Similarly, the cells $C^j_{13}$ of $\Part_{13}$ are bipartite graphs with vertex classes $X_1$ and $X_3$, and the cells $C^\ell_{23}$ of $\Part_{23}$ are bipartite graphs with vertex classes $X_2$ and $X_3$. Now, for any choice of cells $C^i_{12}$, $C^j_{13}$ and $C^\ell_{23}$ from $\Part_{12}$, $\Part_{13}$ and $\Part_{23}$ respectively, the union of these cells is a tripartite graph; let $\Delta_{ij\ell}$ be the set of triangles in this tripartite graph. Observe that these sets $\Delta_{ij\ell}$ partition the set of all triples $\{x_1, x_2, x_3\}$ with $x_1 \in X_1$, $x_2 \in X_2$ and $x_3 \in X_3$; indeed any triple $\{x_1, x_2, x_3\}$ appears only in the $\Delta_{ij\ell}$ such that $\{x_1, x_2\} \in C^i_{12}, \{x_1, x_3\} \in C^j_{13}$ and $\{x_2, x_3\} \in C^\ell_{23}$. Finally, $\Part_{123}$ is also a partition of the set of all triples $\{x_1, x_2, x_3\}$ with $x_1 \in X_1$, $x_2 \in X_2$ and $x_3 \in X_3$; the requirement that $\Part$ is a partition $k$-complex requires that $\Part_{123}$ is a refinement of the partition into sets $\Delta_{ij\ell}$.

Suppose that $\Part$ is a partition $k$-complex on $X$. For any $\Qart$-partite set $e \in \binom{X}{k}$, define $\Part(e) := \bigcup_{e' \subseteq e} \Cell(e')$. Then the fact that $\Part$ is a partition $k$-complex implies that $\Part(e)$ is a $k$-partite $k$-complex with vertex classes $X_j$ for $j \in i(e)$. Loosely speaking, the Regular Approximation Lemma will provide us with a partition $k$-complex so that all these $k$-complexes $\Part(e)$ are regular complexes (as defined in the next section). Now suppose instead that $\Part$ is a partition $(k-1)$-complex on $X$, and recall that $X$ is partitioned into $r$ parts $X_1, \dots, X_r$. Then for any $A \in \binom{[r]}{k}$ the cells of $\Part$ naturally generate a partition $\hat{\Part}_A$ of the edges of $K(X)_A$. Indeed, we say that edges $S$ and $S'$ in $K(X)_A$ are \emph{weakly equivalent} if $\Cell(S_B) = \Cell(S'_B)$ for any $B \subsetneq A$. This defines an equivalence relation on $K(X)_A$; we take the equivalence classes of this relation to be the parts of $\hat{\Part}_A$. We can then extend $\Part$ to a partition $k$-complex $\hat\Part$ on $X$ by adding the partitions  $\hat{\Part}_A$ for $A \in \binom{[r]}{k}$ to $\Part$. That is, for any $A \subseteq [r]$ with $|A| < k$ the cells of $\hat{\Part}_A$ are the cells of $\Part$ of index $A$, and for any $A \in \binom{[r]}{k}$ the cells of $\hat{\Part}_A$ are the equivalence classes of the weak equivalence relation on $K(X)_A$. We refer to $\hat\Part$ as the \emph{partition generated from $\Part$ by weak equivalence}. In particular, if $\Part$ is $a$-bounded, then $\Part_A$ has at most $a$ cells for each $A \in \binom{[r]}{k-1}$, so $\hat\Part$ is $a^k$-bounded. In a similar manner, for any $\Qart$-partite $k$-graph $G$ on $X$ we can generate a partition $k$-complex $G[\hat{\Part}]$ on $X$ from $\hat{\Part}$ by refining the partitions $\hat{\Part}_A$ for each $A \in \binom{[r]}{k}$. Indeed, for each such $A$ and each cell $C$ of $\hat{\Part}_A$ we have two cells of $G[\hat{\Part}]_A$, namely $G \cap C$ and $C \sm G$, whilst for any $A \in \binom{[r]}{\leq k-1}$, the cells of $G[\hat{\Part}]_A$ are the same as those of $\hat{\Part}_A$.

\subsection{Hypergraph regularity}

We now have all of the notation that we need to explain the notion of a regular complex and state the Regular Approximation Lemma we shall use. The concept of regularity with which we shall work was first introduced in the $k$-uniform case by R\"odl and Skokan~\cite{RSk}, but we shall consider it in the form used by R\"odl and Schacht~\cite{RS, RS2}.

Roughly speaking, an $r$-partite $k$-complex $J$ is $\eps$-regular if whenever we restrict $J$ to those edges supported by a large subcomplex of $J \sm J_=$ (that is, $J$ minus its `top layer'), the resulting $k$-complex has similar densities to $J$. To demonstrate this, we shall first consider graphs (\emph{i.e.} 2-graphs). If $G$ is a bipartite graph with vertex classes $V_1$ and $V_2$, then the standard definition of $\eps$-regularity of $G$ is that for any $V_1' \subseteq V_1$ and $V_2' \subseteq V_2$ with $|V_1'| > \eps |V_1|$ and $|V_2'| > \eps |V_2|$ we have $d(G[V_1' \cup V_2']) = d(G) \pm \eps$. However, the definition of regularity which we generalise to hypergraphs is subtly different. Indeed, we say that that $G$ is $\eps$-regular if for any $V_1' \subseteq V_1$ and $V_2' \subseteq V_2$ with $|V_1'||V_2'| > \eps |V_1||V_2|$ we have $d(G[V_1' \cup V_2']) = d(G) \pm \eps$ (note that this is equivalent to the previous definition in the sense that $\eps$-regularity in the former implies $\eps$-regularity in the latter, whilst $\eps^2$-regularity in the latter implies $\eps$-regularity in the former). Now consider the $2$-partite $2$-complex $J$ with edge set $\{\emptyset\} \cup \{\{v\} : v \in V(G)\} \cup G$, so the `layers' of $J$ are $\{\emptyset\}$, $\{\{v\} : v \in V(G)\}$ and $G$. Then saying that $G$ is $\eps$-regular (under the latter definition) is equivalent to saying that $J$ is $\eps$-regular under the following definition: $J$ is $\eps$-regular if, for any subcomplex $L \subseteq J$ with $|L^*_{\{1, 2\}}| \geq \eps |J^*_{\{1, 2\}}|$, we have $|J_{\{1, 2\}} \cap L_{\{1, 2\}}^*| / |L_{\{1, 2\}}^*| = d_{\{1, 2\}}(J) \pm \eps$. Indeed, using the correspondence $V_j' = \{v \in V_j : \{v\} \in L_{\{j\}}\}$ for $j \in \{1, 2\}$ we find that the two definitions are equivalent, since then $|L^*_{\{1, 2\}}| = |V_1'||V_2'|$ and $|J^*_{\{1, 2\}}| = |V_1||V_2|$.

In general, let $\Qart$ partition a set $X$ into $r$ parts $X_1, \dots, X_r$, and let $J$ be a $\Qart$-partite $k$-complex. Then we generalise the definition above as follows: for any $A \in \binom{[r]}{\leq k}$ we say that $J$ is \emph{$\eps$-regular at $A$} if for 
any subcomplex $L \subseteq J$ with $|L^*_A| \geq \eps |J^*_A|$ we have
$$
\frac{|J_A \cap L_A^*|}{|L_A^*|} = d_A(J) \pm \eps.
$$ 
We say $J$ is \emph{$\eps$-regular} if $J$ is $\eps$-regular at $A$ for every $A \in \binom{[r]}{\leq k}$. Now suppose that $\Part$ is a partition $k$-complex on $X$. Recall that for any $\Qart$-partite set $e \in \binom{X}{k}$ the partition $k$-complex $\Part$ naturally yields a $k$-partite $k$-complex $\Part(e)$ with vertex classes $X_j$ for $j \in i(e)$; we say that $\Part$ is \emph{$\eps$-regular} if $\Part(e)$ is $\eps$-regular for any $\Qart$-partite set $e \in \binom{X}{k}$.

Let $G$ and $H$ be $r$-partite $k$-graphs with common vertex classes $X_1, \dots, X_r$, and let $X := X_1 \cup \dots \cup X_r$. Then we say that $G$ and $H$ are \emph{$\xi$-close} if $|G_A \triangle H_A| < \xi |K_A(X)|$ for every $A \in \binom{[r]}{k}$. The Regular Approximation Lemma states that for any $r$-partite $k$-graph $H$ there is an $r$-partite $k$-graph $G$ on $V(H)$ (with the same vertex partition as $H$) and a partition $(k-1)$-complex $\Part$ on $V(H)$ such that $G$ is $\xi$-close to $H$ and the partition $k$-complex $G[\hat{\Part}]$ is $\eps$-regular. This will suffice for our purposes as we shall avoid using any edge of $G \sm H$ whilst working with $G$, so any edge we do use will be an edge of $H$. There are other  regularity lemmas for $k$-graphs which give information on $H$ itself (see~\cite{G1,RSk}) but the regular complexes yielded by these are not sufficiently dense to apply the blow-up lemma (see~\cite[Section 3]{K} for further discussion of this point). The next theorem is the Regular Approximation Lemma; this is a slight restatement of a result of R\"odl and Schacht (Theorem 14 of~\cite{RS}).

\begin{theo}[Regular Approximation Lemma, \cite{RS}]\label{eq-partition} 
Suppose that integers $n,a,r,k$ and reals $\eps, \xi$ satisfy $1/n \ll\eps \ll 1/a \ll \xi, 1/r, 1/k$ and that $a!r$ divides~$n$. Let $\Qart$ partition a set $X$ of $n$ vertices into $r$ parts of equal size, and let $H$ be a $\Qart$-partite $k$-graph on $X$. Then there is an $a$-bounded $\eps$-regular vertex-equitable partition $(k-1)$-complex $\Part$ on $X$ and a $\Qart$-partite $k$-graph $G$ with vertex set $X$ such that $G$ is $\xi$-close to $H$ and $G[\hat{\Part}]$ is $\eps$-regular.
\end{theo}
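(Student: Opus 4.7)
The plan is to prove Theorem~\ref{eq-partition} by iterative refinement, in the style of the classical Szemer\'edi Regularity Lemma but adapted to the hierarchical structure of partition complexes, combined with a final edge-perturbation step that exploits the $\xi$-closeness allowance (this is the key feature distinguishing the Regular Approximation Lemma from the weaker Regularity Lemmas for hypergraphs such as~\cite{RSk}). First I would fix the initial partition $(k-1)$-complex $\Part^{(0)}$ to be the trivial one in which each vertex class $X_i$ is a single cluster and each $\Part_A$ for $2 \leq |A| \leq k-1$ has exactly one cell (namely the whole of $K(X)_A$). I would then define a \emph{mean-square density index} $\mathrm{ind}(\Part)$, summed over all cells at all levels, of the form $\sum_A \sum_{C \in \Part_A} |C|^2 / |K(X)_A|$ (suitably normalised), which lies in $[0,1]$ and strictly increases whenever the partition is refined. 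This functional is standard in regularity-style arguments and provides the termination mechanism.

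Next I would iterate: given a current $a$-bounded vertex-equitable partition $(k-1)$-complex $\Part$, I would check whether the \emph{induced} $k$-complex $G[\hat\Part]$ (where $\hat\Part$ is generated by weak equivalence and $G$ is, at each stage, a current approximation to $H$) satisfies the $\eps$-regularity condition at every index $A$. If not, there is some $A$ and some cell $C$ of $\hat\Part_A$ together with a certifying subcomplex $L$ violating regularity. I would use $L$ to produce a nontrivial refinement of the cells of $\Part$ at index sets $B \subsetneq A$, following the Rödl-Schacht recipe: partition each vertex (or lower-index tuple) according to the pattern of its extensions into $L$. A standard defect-Cauchy-Schwarz calculation then shows that $\mathrm{ind}$ increases by a constant $c(\eps)>0$, so the process terminates after $O_\eps(1)$ refinement steps, producing an $a$-bounded partition. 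One must be slightly careful to re-equitable each cluster by splitting off a tiny remainder, which costs at most $o(1)$ in the densities.

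The step that differs from the classical Regularity Lemma, and which I expect to be the main obstacle, is the passage from an \emph{almost}-regular partition to one that is perfectly $\eps$-regular on some $k$-graph $G$ that is $\xi$-close to $H$. Having refined to a state where all but an $\eps^k$-fraction of cells are regular, I would modify $H$ inside each cell of $\hat\Part^{(k)}$ by random edge swaps: within each top-level cell $C \subseteq K(X)_A$, replace $H \cap C$ by a uniformly random subset of $C$ of the same relative density. By a concentration argument (e.g.\ a Chernoff/Azuma bound over the polynomially many subcomplexes that must be tested), the resulting $G$ is $\eps$-regular at every index, and differs from $H$ in at most $\xi |K(X)_A|$ edges of each index since $1/a \ll \xi$ controls the total volume of modification. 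The care required is that weak equivalence classes at the top level interact with the lower-level refinements; one handles this by performing the random replacement cell by cell of $\hat\Part^{(k)}$, using that each such cell is itself a product of lower-level cells up to $\eps$-error, so the regularity at lower indices is preserved while the top-level regularity is established by the randomness. This yields the desired $\Part$ and $G$.
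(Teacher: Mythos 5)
You should first note that the paper does not prove this statement at all: Theorem~\ref{eq-partition} is quoted, with a minor restatement, from R\"odl and Schacht (Theorem~14 of~\cite{RS}), so there is no in-paper argument to compare against, and any proof would in effect be a reconstruction of that (long and delicate) external result. Judged on its own merits, your sketch has the right general shape -- an energy/index increment plus a modification of $H$ into a nearby $G$ -- but it misses the point that makes the Regular Approximation Lemma hard, and as written it has a genuine gap.

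The gap is the parameter hierarchy $\eps \ll 1/a$. You propose to iterate refinement ``until $G[\hat\Part]$ is $\eps$-regular at every index,'' terminating after $O_\eps(1)$ steps with an $a$-bounded partition. An increment argument driven by $\eps$-irregularity produces a number of cells that is a (huge) function of $1/\eps$, i.e.\ it yields $1/a \ll \eps$ -- the wrong way round. The entire content of the theorem is that one can have the regularity parameter $\eps$ far smaller than $1/a$, and this cannot be achieved by refinement of the original $H$; it is exactly why the $\xi$-closeness allowance exists. Your second phase (random re-randomisation of $H$ inside each top-level cell) only addresses the regularity of the top level $G$ relative to $\Part$; but the theorem also demands that the partition $(k-1)$-complex $\Part$ itself be $\eps$-regular with $\eps \ll 1/a$, and modifying the $k$-graph cannot affect the cells of $\Part_A$ for $|A| \leq k-1$ at all. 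Nothing in your sketch explains how those lower-level cells acquire regularity at a scale much finer than the reciprocal of their number; in~\cite{RS} this is the heart of the proof and is handled by induction on the uniformity $k$, constructing the lower-level partitions via the $(k-1)$-uniform approximation lemma (where the ``modification'' freedom is available because those partitions are being built, not inherited), and then controlling the interaction between lower-level reconstruction, the top-level densities, and the total volume of edges moved so that it stays below $\xi |K(X)_A|$ per index. A secondary issue: the regularity condition quantifies over all subcomplexes $L$ with $|L^*_A| \geq \eps |J^*_A|$, which is an exponentially large family, not ``polynomially many''; a union bound can be salvaged only after passing to a counting/deviation characterisation of regularity, which your concentration step elides. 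So the proposal is not a proof, nor a faithful outline of one; the correct course in this paper is simply to cite~\cite{RS}, as the author does.
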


One useful property of regularity if that if $G$ is regular and dense, then the restriction of $G$ to any not-too-small subsets of its vertex classes is also regular and dense. The following lemma (a weakened version of Theorem~6.18 in~\cite{K}) states this more precisely.

\begin{lemma} [Regular restriction, \cite{K}] \label{regularrestriction}
Suppose that $1/n \ll \eps \ll d, c, 1/k$. Let $J$ be an $\eps$-regular $k$-partite $k$-complex with vertex classes $X_1, \dots, X_k$ such that $d(J_{[k]}) \geq c$, $d_{[k]}(J) \geq d$ and $|X_j| \geq n$ for each $j \in [k]$. Also, for each $j \in [k]$, let $X_j' \subseteq X_j$ have $|X_j'| \geq \eps^{1/2k} |X_j|$, and let $J' := J[X_1' \cup \dots \cup X_k']$. Then $J'$ is $\sqrt{\eps}$-regular, $d(J'_{[k]}) \geq c/2$ and $d_{[k]}(J') \geq d/2$.
\end{lemma}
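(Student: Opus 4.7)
The plan is to prove all three assertions by applying the $\eps$-regularity of $J$ to appropriately chosen subcomplexes of $J$, exploiting the fact that $J'$ is itself a subcomplex of $J$. The key observation tying things together is that for any $A \subseteq [k]$ with $|A| \geq 2$ and any subcomplex $L' \subseteq J'$, each $A$-indexed tuple in $(L')^*_A$ must lie entirely within $X'_1 \cup \cdots \cup X'_k$, since every vertex of such a tuple appears in some proper subset belonging to $L' \subseteq J'$ and hence sitting inside the restricted vertex set.

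The proof would proceed by induction on $|A|$, establishing for each $A \subseteq [k]$ the three statements: (i) $|(J')^*_A|/\prod_{j \in A} |X'_j|$ is within a factor $1 \pm \eps^{1/3}$ of $|J^*_A|/\prod_{j \in A}|X_j|$; (ii) $d_A(J') = d_A(J) \pm 2\eps$; and (iii) $J'$ is $\sqrt{\eps}$-regular at $A$. For (i), expand both $|J^*_A|$ and $|(J')^*_A|$ in terms of lower-level relative densities of $J$ and $J'$ respectively, using the inductive hypotheses (ii) and (i) at indices $B \subsetneq A$. For (ii), apply $\eps$-regularity of $J$ at $A$ with the subcomplex $J'$ itself: the hypothesis $|(J')^*_A| \geq \eps|J^*_A|$ holds by (i) combined with $|X'_j| \geq \eps^{1/2k}|X_j|$, since the product of these ratios is at least $\eps^{|A|/2k} \geq \eps^{1/2}$. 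The resulting estimate $|J_A \cap (J')^*_A|/|(J')^*_A| = d_A(J) \pm \eps$ rearranges to (ii) after observing that $J_A \cap (J')^*_A = J'_A$ by the key observation above. For (iii), given $L' \subseteq J'$ with $|(L')^*_A| \geq \sqrt{\eps}|(J')^*_A|$, combine with (i) to conclude $|(L')^*_A| \geq \eps|J^*_A|$; then apply $\eps$-regularity of $J$ at $A$ with subcomplex $L'$ (viewed as a subcomplex of $J$), and transfer the density estimate from $d_A(J) \pm \eps$ to $d_A(J') \pm 2\eps$ via (ii), which is comfortably inside the $\sqrt{\eps}$ budget.

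The two density bounds then follow from the case $A = [k]$: statement (ii) immediately gives $d_{[k]}(J') \geq d - 2\eps \geq d/2$, while the identity $d(J'_{[k]}) = d_{[k]}(J') \cdot |(J')^*_{[k]}|/\prod_j |X'_j|$ matches the analogous expression for $d(J_{[k]}) \geq c$ up to a multiplicative $(1 \pm O(\eps^{1/3}))$ factor by (i) and (ii), which is more than enough to conclude $d(J'_{[k]}) \geq c/2$.

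The main obstacle is the bookkeeping for (i) in the inductive step, where the supported-tuples counts are expressed as nested products of relative densities across up to $k$ layers, and one must verify that accumulating $\eps$-sized errors over $O(2^k)$ such layers remains within the $\eps^{1/3}$ budget. Since $\eps \ll 1/k$, factors of the form $(1 \pm \eps)^{2^k}$ lie comfortably inside $(1 \pm \eps^{1/3})$, so this is not a serious difficulty and the argument can be completed with only elementary estimates.
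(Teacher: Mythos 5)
First, note that the paper does not prove this lemma at all: it is imported verbatim (in weakened form) from Keevash's blow-up lemma paper, cited as Theorem~6.18 of \cite{K}. So your proposal has to stand on its own, and as written it has two genuine gaps.

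The first is step (i), which is the real content of the lemma and cannot be obtained by "expanding $|J^*_A|$ and $|(J')^*_A|$ in terms of lower-level relative densities": there is no exact such expansion. The identity $|J^*_A| \approx \prod_{B \subsetneq A,\,|B|\geq 1} d_B(J)\prod_{i\in A}|X_i|$ is the dense counting lemma for regular complexes, a nontrivial statement whose proof is itself an induction that applies regularity at each level to carefully chosen subcomplexes and, crucially, needs all intermediate relative densities to be bounded well below by something $\gg \eps$ so that the additive $\pm\eps$ errors convert into multiplicative ones. Your sketch never establishes such lower bounds (they do follow from $d(J_{[k]})\geq c$: every edge of $J_{[k]}$ projects onto $J_B$, giving $d_B(J)\geq c$ for all $B$), never identifies where the hypothesis $d(J_{[k]})\geq c$ is used, and your error accounting "$(1\pm\eps)^{2^k}$" is not what actually appears — the per-level multiplicative error is of the form $1\pm O(\sqrt{\eps}/c)$. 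Without supplying the counting lemma (either by proving it inside your induction or by citing it), statements (i)–(iii) do not get off the ground, since a priori the restriction could contain very few supported $A$-tuples.

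The second gap is the threshold check in step (iii) at $A=[k]$. From (i) you can only deduce $|(L')^*_{[k]}| \geq \sqrt{\eps}\,(1-\eps^{1/3})\prod_{j}\tfrac{|X'_j|}{|X_j|}\,|J^*_{[k]}|$, and when every $|X'_j|$ is at the minimum allowed size $\eps^{1/2k}|X_j|$ this is $(1-\eps^{1/3})\eps\,|J^*_{[k]}|$, which falls just short of the hypothesis $|(L')^*_{[k]}|\geq \eps|J^*_{[k]}|$ needed to invoke the $\eps$-regularity of $J$; your claim that this is "comfortably" satisfied is not correct in the extremal case, and there is no easy augmentation of $L'$ to a larger subcomplex of $J$ (adding edges of $J$ meeting $X\setminus X'$ destroys downward closure). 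This is exactly the kind of borderline arithmetic that the unweakened Theorem~6.18 in \cite{K} has slack to absorb; to make your self-contained argument close, you need either slightly stronger exponents in the intermediate estimates (e.g.\ prove (i) with error $o(1)$ in a form that beats the factor lost) or to argue regularity of $J'$ at the top level by a different route rather than a single application of $J$'s regularity to $L'$. The remaining transfer steps (the identity $J_A\cap (J')^*_A = J'_A$ for $|A|\geq 2$, the derivation of $d_{[k]}(J')\geq d/2$ and $d(J'_{[k]})\geq c/2$ from (i) and (ii)) are fine.
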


\subsection{Robustly universal complexes.}

Another vital tool in the proofs of Lemmas~\ref{main1simple} and~\ref{main2simple} is the recent hypergraph blow-up 
lemma of Keevash~\cite{K}. This states that if an $r$-partite $k$-complex $J$ is `super-regular' (a stronger property than regularity), then $J$ contains a copy of any $r$-partite $k$-complex $L$ with the same vertex classes and small maximum vertex degree. Another result in~\cite{K} shows that any regular and dense $r$-partite $k$-complex $J$ can be made super-regular by the deletion of a few vertices from each vertex class. However, the notion of hypergraph super-regularity is very technical, so we shall avoid these technicalities through the related notion of `robust universality', also from~\cite{K}. Roughly speaking, we say that an $r$-partite $k$-complex $J'$ is robustly universal if even after the deletion of many vertices of $J'$, the resulting complex $J$ has the property that one can find in $J$ a copy of any $r$-partite $k$-complex $L$ with the same vertex classes as $J$ and small maximum vertex degree. The next definition states this property formally; for this we make the following definitions. Let $R$ be a $k$-complex on vertex set $[r]$, and let $\Qart$ partition a set $X$ into parts $X_1, \dots, X_r$. Then a $\Qart$-partite $k$-complex $J$ on $X$ is \emph{$R$-indexed} if every edge $e \in J$ has $i(e) \in R$ (recall that $i(e)$ denotes the index of $e$). Also, for any $S \in R_=$, any $j \in S$ and $v \in X_j$, we write $J_S(v)$ for the $(k-1)$-partite $(k-1)$-complex with vertex set $\bigcup_{j \in S \sm \{i\}} X_j$ and whose edges are those $(k-1)$-tuples $e'$ of vertices such that $e' \cup \{v\} \in J_S$.

\begin{defn}[Robustly universal complexes, \cite{K}]
Suppose that $R$ is a $k$-complex on vertex set $[r]$, and that $J'$ is an $r$-partite $k$-complex with vertex classes $V'_1, \dots V'_r$ such that $|J'_{\{i\}}| = |V'_i|$ for each $i \in [r]$. Then we say that 
\begin{itemize}
\item $J'$ is \emph{$D$-universal on $R$} if for any $R$-indexed $r$-partite $k$-complex $L$ with vertex classes $U_1, \dots, U_r$ such that $|U_j| \leq |V_j'|$ for all $j \in [r]$ and $\Delta_\vx(L) \leq D$, there is a copy of $L$ in $J'$ in which the vertices of $U_j$ correspond to the vertices of $V_j'$. 
\item $J'$ is \emph{$\eta$-robustly $D$-universal on $R$} if for any sets $V_j \subseteq V'_j$ such that $|V_j| \ge \eta|V'_j|$ for any $j \in [r]$ and $|J_S(v)| \ge \eta|J'_S(v)|$ for any $S \in R_=$ and $v \in V_S$, where $J=J'[\bigcup_{j \in [r]} V_j]$, the $r$-partite $k$-complex $J$ is $D$-universal on $R$. 
\end{itemize}
In the case where $R$ has $k$ vertices and is formed by the downwards closure of a single edge, we omit `on $R$' and write simply `$D$-universal' or `$\eta$-robustly $D$-universal'.
\end{defn}
\medskip

Note that the definition of robust universality given here is weaker than that from~\cite{K} in two ways. Firstly, the definition there allows $R$ to be a so-called `multicomplex', allowing us to distinguish between edges of $J$ with the same index. Secondly, the definition in~\cite{K} also permits us to choose for a small number of vertices $v \in V(L)$ a small `target set' into which $v$ is to be embedded; an additional parameter $c_0$ governs how small these `target sets' can be. However, we do not need either of these strengthenings.

Clearly robust universality is a very strong property, and so the main difficulty in the use of robust universality lies in obtaining robustly universal complexes in the first place. For this purpose we have the following theorem, which is a weakened version of Theorem~6.32 in~\cite{K} (to correspond to our weakened definition of robustly universal complexes). It states that if $J$ is a regular $k$-complex which is dense on edges of $R$, and $Z$ is a $k$-graph which has few edges in common with $J_=$, then we may delete a small number of vertices from each vertex class of $J$ so that the subcomplex of $J \sm Z$ induced by the remaining vertices is robustly universal on $R$. Our use of the blow-up lemma is therefore concealed in this theorem, which we have slightly restated from the form in~\cite{K} in that the statements in (i) apply to $J' \sm Z$ rather than to $J'$. The proof of this theorem in~\cite{K} in fact gives this altered result; alternatively, it can be derived by first deleting vertices of $J$ which lie in atypically few edges of $J_=$ or in atypically many edges of $Z$, and then applying the form of the theorem stated in~\cite{K} (although the deletion step here is redundant, since these atypical vertices are deleted in the proof of this theorem in~\cite{K}).

\begin{theo}[\cite{K}]\label{robust-universal} 
Suppose that $$1/n \ll 1/r' \ll \eps \ll d^* \ll d_a \ll \nu \ll d, \eta, 1/k, 1/D, 1/C, 1/D_R,$$ and that $r \leq r'$.
Let $R$ be a $k$-complex on $[r]$ with $\Delta_\vx(R) \leq D_R$, and let $J$ be an $r$-partite $k$-complex with vertex classes $V_1, \dots, V_r$, such that $n \leq |J_{\{j\}}| = |V_j| \leq Cn$ for every $j \in [r]$. Also let $Z$ be a $k$-graph on $V(J)$, and suppose that 
\begin{enumerate}[(a)]
\item $J$ is $\eps$-regular,
\item $d_{S}(J) \ge d$ and $d(J_S) \ge d_a$ for any $S \in R_=$, 
\item $|Z \cap J_S| \leq \nu |J_S|$ for any $S \in R_=$. 
\end{enumerate}
Then we can can delete at most $2\nu^{1/3} |V_j|$ vertices from each set $V_j$ to obtain subsets $V'_j$ so that, writing $V' = V_1' \cup  \dots \cup V_r'$ and $J' = J[V']$, we have
\begin{enumerate}[(i)]
\item $d((J' \sm Z)_S) > d^*$ and $|(J' \sm Z)_S(v)| > d^*|(J' \sm Z)_S|/|V'_j|$ for every $S \in R_=$, $j \in S$ and $v \in V'_j$, and
\item $J' \sm Z$ is $\eta$-robustly $D$-universal on $R$. 
\end{enumerate}
\end{theo}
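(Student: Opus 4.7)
The plan is to prove Theorem~\ref{robust-universal} in two stages: first perform a cleanup by deleting ``atypical'' vertices of $J$, and then invoke the hypergraph blow-up lemma to deduce robust universality. For the cleanup, for each $S \in R_=$, each $j \in S$ and each $v \in V_j$ I would check whether $|J_S(v)|$ deviates far from its expected value $d(J_S) \prod_{i \in S \sm \{j\}} |V_i|$, or whether $v$ meets too many edges of $Z_S$. Standard slicing arguments applied to the $\eps$-regular complex $J$ (together with assumption~(b)) show that only an $O(\eps^{1/2})$-fraction of each $V_j$ is atypical in the first sense, while assumption~(c) restricts the number of vertices atypical in the second sense to at most a $\nu^{1/2}$-fraction. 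Deleting all such vertices yields sets $V_j'$ with $|V_j \sm V_j'| \leq 2\nu^{1/3}|V_j|$, and by Lemma~\ref{regularrestriction} the restricted complex $J' := J[V_1' \cup \cdots \cup V_r']$ is $\sqrt{\eps}$-regular with $d_S(J') \geq d/2$ and $d(J'_S) \geq d_a/2$. Conclusion~(i) is then essentially automatic, since after cleanup every $v \in V_j'$ meets at least $(d - o(1))|J'_S|/|V_j'|$ edges of $J'_S$, of which at most $2\nu|J'_S|/|V_j'|$ lie in $Z$.

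For~(ii), fix any sets $V_j \subseteq V_j'$ with $|V_j| \geq \eta|V_j'|$ satisfying the link-size condition $|J_S(v)| \geq \eta|J'_S(v)|$ for every $S \in R_=$ and $v \in V_S$, where $J = J'[\bigcup_j V_j]$. A further application of Lemma~\ref{regularrestriction} shows that $J$ is still regular, with relative and absolute densities bounded below by constants depending on $d, d_a, \eta$. Combining this regularity with the hypothesised vertex-level link-size lower bounds and the cleanup-guaranteed sparsity of $Z$ at each $S$, one verifies the hypotheses of the hypergraph blow-up lemma for complexes from~\cite{K}: $J \sm Z$ is, in the appropriate sense, super-regular on the $k$-complex $R$. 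The blow-up lemma then produces inside $J \sm Z$ a copy of any $R$-indexed complex $L$ with $\Delta_\vx(L) \leq D$ and $|U_j| \leq |V_j|$, respecting the vertex partition, which is precisely $D$-universality on $R$. Since this holds for every admissible choice of the $V_j$, the complex $J' \sm Z$ is $\eta$-robustly $D$-universal on~$R$.

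The main obstacle is of course the hypergraph blow-up lemma itself, whose proof in~\cite{K} is highly nontrivial: it proceeds by a randomised greedy embedding in which one must track, throughout the embedding, how the densities $d_A(J)$ at every index $A \in \binom{[r]}{\leq k}$ are affected by the vertices previously used. The delicate hierarchy $\eps \ll d^* \ll d_a \ll \nu \ll d, \eta$ in the statement reflects precisely the need to absorb at each level the accumulated errors from earlier levels while still leaving enough room to complete the embedding. Granted the blow-up lemma as a black box, the substance of the argument above reduces to checking that the cleanup is substantial enough to enforce the vertex-level link-size hypotheses needed to feed any further restriction of the $V_j'$ back into the blow-up lemma; this is the role of the robustness parameter $\eta$ and the reason the theorem is stated in this particular form.
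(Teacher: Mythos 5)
This statement is not proved in the paper at all: it is quoted (in slightly restated form) from Keevash's blow-up lemma paper, being a weakened version of Theorem~6.32 of~\cite{K}, and the only argument the paper offers is the remark that the restated version (with $J' \sm Z$ in place of $J'$) either follows from the proof in~\cite{K} directly or can be obtained by first deleting vertices lying in atypically few edges of $J_=$ or atypically many edges of $Z$ and then applying the original form. Your cleanup step therefore matches the route the paper gestures at, and conclusion~(i) together with the bound $2\nu^{1/3}|V_j|$ on the deletions is handled correctly in your sketch.

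However, as an attempted proof of the theorem itself your second stage has a genuine gap. You claim that, for an arbitrary admissible choice of subsets $V_j \subseteq V_j'$ satisfying only the size condition $|V_j| \geq \eta|V_j'|$ and the link-size condition $|J_S(v)| \geq \eta|J'_S(v)|$, one can ``verify the hypotheses of the hypergraph blow-up lemma'' and conclude that $J \sm Z$ is super-regular. Keevash's blow-up lemma applies to \emph{super-regular} complexes, and super-regularity is a technical condition involving all levels of the complex and typicality requirements that are \emph{not} consequences of $\eps$-regularity plus absolute/relative density plus vertex-level link lower bounds; Lemma~\ref{regularrestriction} only preserves regularity and density, not super-regularity. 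Showing that super-regularity (and hence universality) survives every such further restriction, and survives the removal of the edges of $Z$ given only the global bound in hypothesis~(c), is precisely the nontrivial content of Theorem~6.32 of~\cite{K} — it is the reason the notion of $\eta$-robust $D$-universality was introduced in the first place, as the present paper explicitly says it uses robust universality in order to avoid the technicalities of super-regularity. So your argument, read as a proof, silently assumes a statement essentially equivalent to the theorem being proved; read as a reduction to the literature, the honest formulation is simply to cite~\cite[Theorem~6.32]{K} together with the atypical-vertex cleanup, which is what the paper does.
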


Having obtained a robustly universal $k$-partite $k$-complex $J$, we will then delete further vertices of $J$, and we wish these deletions to preserve the property that $J$ is universal. The following proposition allows us to do this without difficulty; we shall only delete vertices which do not lie in the sets $X_j$.

\begin{prop} \label{randomsplitkeepsmatching2} Suppose that $1/n \ll d^* \ll \eta \ll \beta, 1/D, 1/k$. Let $J$ be a $k$-partite $k$-complex with vertex classes $W_1, \dots, W_k$ which is $\eta$-robustly $D$-universal and which satisfies $d(J_{[k]}) > d^*$ and $|J_{[k]}(v)| > d^*|J_=|/|W_j|$ for every $j \in [k]$ and $v \in W_j$. Suppose also that $\beta n \leq s_j \leq |W_j| \leq n$ for each $j \in [k]$ and some integers $s_j$. For each $j \in [k]$ choose a subset $X_j \subseteq W_j$ of size $s_j$ uniformly at random and independently of all other choices. Then with probability $1-o(1)$ we have the property that for any sets $Y_j$ with $X_j \subseteq Y_j \subseteq W_j$ for each $j \in [k]$, the induced $k$-complex $J[\bigcup_{j \in [k]} Y_j]$ is $D$-universal.
\end{prop}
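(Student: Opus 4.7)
The plan is to apply $\eta$-robust $D$-universality of $J$ to the sets $V_j := Y_j$, for any given admissible choice of $Y_j$. The size requirement $|Y_j| \geq \eta |W_j|$ is automatic: $|Y_j| \geq |X_j| = s_j \geq \beta n \geq \eta |W_j|$, using $|W_j| \leq n$ and $\eta \ll \beta$. So it remains to verify the link condition
\[
\bigl|(J[{\textstyle\bigcup_i Y_i}])_{[k]}(v)\bigr| \;\geq\; \eta\, |J_{[k]}(v)|
\]
for every $j \in [k]$ and every $v \in Y_j$. The left side depends only on the $Y_i$ with $i \neq j$ (the link of $v$ uses no other vertex of $W_j$) and is monotone non-decreasing in each of those $Y_i$. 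So it suffices to verify it in the worst case $Y_i = X_i$ for every $i \neq j$, and for every $v \in W_j$. Writing
\[
N_v := \bigl|\{ e \in J_{[k]} : v \in e,\; e \setminus \{v\} \subseteq {\textstyle\bigcup_{i \neq j} X_i} \}\bigr|
\]
for $v \in W_j$, the goal reduces to showing that with probability $1-o(1)$, every vertex $v \in V(J)$ satisfies $N_v \geq \eta\, |J_{[k]}(v)|$.

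For a fixed $v \in W_j$, independence of the $X_i$ yields
\[
\mathbb{E}[N_v] \;=\; \sum_{e \in J_{[k]},\, v \in e}\, \prod_{i \neq j} \frac{s_i}{|W_i|} \;\geq\; \beta^{k-1}\, |J_{[k]}(v)|.
\]
Since $\eta \ll \beta$ one can arrange $\eta \leq \beta^{k-1}/2$, so it is enough to show $N_v \geq \mathbb{E}[N_v]/2$ for every $v$. The density hypotheses $d(J_{[k]}) > d^*$ and $|J_{[k]}(v)| > d^*|J_{=}|/|W_j|$, together with $|W_i| \geq s_i \geq \beta n$, give $|J_{[k]}(v)| \geq (d^*)^2\prod_{i \neq j}|W_i| = \Omega(n^{k-1})$, and hence $\mathbb{E}[N_v] = \Omega(n^{k-1})$.

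For concentration I view $N_v$ as a function of the $k$ independent uniform random subsets $X_1,\ldots,X_k$. Swapping a single element in any one $X_i$ alters $N_v$ by at most the maximum vertex degree of the link of $v$ at any vertex of $W_i$, which is crudely bounded by $n^{k-2}$. McDiarmid's bounded-differences inequality for sampling without replacement (applied to the product of the $k$ independent samples) then yields
\[
\Pr\bigl[\,N_v < \mathbb{E}[N_v]/2\,\bigr] \;\leq\; 2\exp\!\Bigl(-\Omega\Bigl(\tfrac{n^{2k-2}}{kn\cdot n^{2(k-2)}}\Bigr)\Bigr) \;=\; \exp(-\Omega(n)).
\]
A union bound over the $\leq kn$ vertices $v$ completes the argument. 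The main technical care is in invoking a clean concentration statement for joint sampling without replacement across $k$ classes; if one prefers to avoid this, a standard workaround is to couple each $X_i$ with an independent Bernoulli sample of slightly larger expected size and condition on the correct cardinalities, after which an iterated Chernoff bound (one class at a time) does the job.
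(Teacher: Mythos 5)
Your proof is correct and follows essentially the same route as the paper: both arguments reduce, via monotonicity of the link in the sets $Y_i$ ($i \neq j$), to showing that with high probability every vertex $v \in W_j$ satisfies the link condition already with respect to the random sets $X_i$, and then conclude by a per-vertex concentration bound plus a union bound. The only difference is that the paper outsources the concentration step to Lemma~4.4 of the cited Keevash--K\"uhn--Mycroft--Osthus paper (proved there by Azuma's inequality), whereas you prove it directly via the expectation estimate and a bounded-differences inequality for sampling without replacement.
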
 

\begin{proof}
Observe that for any such sets $Y_j$ we have $|Y_j| \geq |X_j| = s_j \geq \beta n \geq \eta |W_j|$ for any $j \in [k]$, and that $|J_=[Y](v)| \geq |J_=[X \cup \{v\}](v)|$ for every $v \in Y$, where we define $Y := \bigcup_{j \in [k]} Y_j$ and $X = \bigcup_{j \in [k]} X_j$. So by definition of a $\eta$-robustly $D$-universal complex it suffices to show that with probability $1-o(1)$ we have the property that $|J_=[X \cup \{v\}](v)| \geq \eta|J_=(v)|$ for every $v \in W := \bigcup_{j \in [k]} W_j$. In fact, Lemma 4.4 of~\cite{KKMO}, which was proved by a straightforward application of Azuma's inequality, states that for any $v \in W$ this inequality holds with probability at least $1 - 1/n^2$, so taking a union bound over all vertices of $W$ proves the result.
\end{proof}

\subsection{The reduced $k$-graph} \label{sec:redgraph}
In this section we introduce the idea of the \emph{reduced $k$-graph}, for which we make use of the $k$-graph $\Phi$ defined in Section~\ref{sec:outline}. Recall that $\Phi$ has vertex set $[k+1]$, and has two edges, $\{1, \dots, k\}$ and $\{2, \dots, k+1\}$. Also recall that the vertices $1$ and $k+1$ are the end vertices of $\Phi$, and the vertices $2, \dots, k$ are the central vertices of $\Phi$. Our definition of the reduced $k$-graph $R$ will enable us, given a copy of $\Phi$ in $R$, to find a $(k+1)$-partite $k$-complex which is universal on this copy of $\Phi$. The next proposition shows that within this $k$-complex we can find many copies of $\Phi(m)$, the \emph{$m$-fold blowup} of $\Phi$, which is the $(k+1)$-partite $k$-graph with vertex classes $L_1, \dots, L_{k+1}$ of size $m$ and whose edges are all $k$-tuples of vertices whose index is an edge of $\Phi$. Copies of $\Phi(m)$ are particularly useful since we have flexibility over how a $k$-partite $k$-graph $K$ can be embedded within $\Phi(m)$: we can embed $k-1$ of the vertex classes of $K$ in the central vertex classes of $\Phi(m)$ (that is, those vertex classes corresponding to central vertices of $\Phi)$, and then the vertices of the remaining vertex class of $K$ can be distributed as we choose among the two end vertex classes of $\Phi(m)$ (that is, those vertex classes corresponding to end vertices of $\Phi$).

\begin{prop} \label{varykcomp}
Let $J$ be a $(k+1)$-partite $k$-complex with vertex classes $X_1, \dots, X_{k+1}$ which is $D$-universal on $\Phi$, where $D \geq 2^{(k+1)m}$, and suppose that $|X_i| \geq n$ for each $i \in [k+1]$. Then there are at least $\lfloor n/m \rfloor$ vertex-disjoint copies of $\Phi(m)$ in $J_=$ whose end vertex classes lie in $X_1$ and $X_{k+1}$, and whose central vertex classes lie in $X_2, \dots, X_k$.
\end{prop}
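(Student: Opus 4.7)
The plan is to construct an appropriate $\Phi$-indexed $(k+1)$-partite $k$-complex $L^{\leq}$ consisting of $\lfloor n/m \rfloor$ vertex-disjoint copies of $\Phi(m)$ (together with its downwards closure), verify it has bounded vertex degree, and then apply the $D$-universality of $J$ to embed $L^{\leq}$ into $J$ in the desired way.

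Concretely, set $t := \lfloor n/m \rfloor$, and for each $i \in [k+1]$ take disjoint sets $U_i^1, \dots, U_i^t$ each of size $m$, and let $U_i := U_i^1 \cup \dots \cup U_i^t$. Form a $(k+1)$-partite $k$-graph $L$ on vertex classes $U_1, \dots, U_{k+1}$ whose edges are, for each $s \in [t]$, all $k$-tuples of index $\{1, \dots, k\}$ with one vertex from each of $U_1^s, \dots, U_k^s$ together with all $k$-tuples of index $\{2, \dots, k+1\}$ with one vertex from each of $U_2^s, \dots, U_{k+1}^s$. By construction $L$ is the union of $t$ vertex-disjoint copies of $\Phi(m)$ whose end vertex classes lie in $U_1$ and $U_{k+1}$ and whose central vertex classes lie in $U_2, \dots, U_k$. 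Let $L^{\leq}$ be the $k$-complex on the same vertex set whose edges are all subsets of edges of $L$.

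Any edge of $L^{\leq}$ has index a subset of $\{1, \dots, k\}$ or of $\{2, \dots, k+1\}$, and hence is an edge of $\Phi$ viewed as a $k$-complex on $[k+1]$; so $L^{\leq}$ is $\Phi$-indexed. Moreover, each vertex of $L$ lies in at most $2m^{k-1}$ edges of $L$, and each edge of $L^{\leq}$ containing a given vertex $v$ is one of the at most $2^{k-1}$ subsets containing $v$ of some edge of $L$ through $v$. Hence
$$\Delta_\vx(L^{\leq}) \leq 2m^{k-1} \cdot 2^{k-1} = 2^k m^{k-1} \leq 2^{(k+1)m} \leq D.$$
Since $|U_i| = mt \leq n \leq |X_i|$ for each $i \in [k+1]$, the $D$-universality of $J$ on $\Phi$ provides a copy of $L^{\leq}$ in $J$ such that the vertices of $U_i$ are embedded into $X_i$ for every $i$. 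The image of $L$ under this embedding is then a collection of $\lfloor n/m \rfloor$ vertex-disjoint copies of $\Phi(m)$ in $J_=$ with end vertex classes in $X_1$ and $X_{k+1}$ and central vertex classes in $X_2, \dots, X_k$, as required. The only nontrivial check is the vertex-degree bound above, which is routine.
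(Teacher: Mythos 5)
Your proposal is correct and follows essentially the same route as the paper: take the downward closure of $\lfloor n/m\rfloor$ vertex-disjoint copies of $\Phi(m)$, note it is $\Phi$-indexed with bounded vertex degree, and invoke $D$-universality on $\Phi$ to embed it with the prescribed class correspondence. The only (immaterial) difference is that the paper bounds $\Delta_\vx(L^{\leq})\leq 2^{(k+1)m}$ crudely via the number of vertices in a single copy of $\Phi(m)$, whereas you compute the sharper bound $2^k m^{k-1}$, which still satisfies the hypothesis $D\geq 2^{(k+1)m}$.
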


\begin{proof}
Let $L$ be the $(k+1)$-partite $k$-complex formed by the downwards closure of $\lfloor n/m \rfloor$ vertex-disjoint copies of $\Phi(m)$; equivalently, $L$ consists of $\lfloor n/m \rfloor$ vertex-disjoint copies of $\Phi(m)^\leq$. Since each copy of $\Phi(m)$ has $(k+1)m$ vertices we have $\Delta_\vx(L) \leq 2^{(k+1)m}$. Together with the fact that $J$ is $D$-universal on $\Phi$, it follows that $J$ contains a copy of $L$ in which the end vertex classes of each copy of $\Phi(m)^\leq$ lie in $X_1$ and $X_{k+1}$ and whose central vertex classes lie in $X_2, \dots, X_k$. Then $L_= \subseteq J_=$ consists of the desired copies of $\Phi(m)$.
\end{proof}

For notational simplicity, for the rest of this section we work within the following setup. 

\begin{setup} \label{redsetup}
Fix integers $n, a, r, D$ and $k$ and constants $\eps, d^*, \xi, \nu, \mu, c, \eta, \theta$ and $\gamma$ with 
$$1/n \ll \eps \ll d^* \ll 1/a \ll 1/r, \xi \ll \nu \ll \mu \ll c, \eta \ll \theta \ll \gamma,  1/D, 1/k.$$
Let $X$ be a set of $n$ vertices, and let $\Qart$ be a partition of $X$ into $r$ parts $T_1, \dots, T_r$ of equal size. Let $\Part$ be an $a$-bounded $\eps$-regular vertex-equitable partition $(k-1)$-complex on $X$ such that the partition $\Part^{(1)}$ of $X$ into clusters $X_1, \dots, X_m$ refines $\Qart$. Assume that the number of clusters $m$ satisfies $r \leq m \leq ar$, and let $n_1 = n/m$ be the common size of each cluster. Finally let $G$ and $Z$ be $\Qart$-partite $k$-graphs on $X$, such that the partition $k$-complex $G[\hat{\Part}]$ is $\eps$-regular.
\end{setup} 

We can now give our definition of the reduced $k$-graph $R$. Similarly as in previous applications of hypergraph regularity, $R$ has vertices corresponding to the clusters of $\Part$, and edges corresponding to $k$-tuples of clusters which support many edges of $G$ and few edges of $Z$. However, we also add a third condition, which we will use for Lemma~\ref{redgraphmindeg} to show that any edge of $R$ can be extended to a copy of $\Phi$ in $R$ whose corresponding vertex classes support many copies of $\Phi$ in $G$.

\begin{defn}[Reduced $k$-graph, $\Phi$-dense, $Z$-sparse] \label{redgraphdef}
Under Setup~\ref{redsetup}, the \emph{reduced $k$-graph of $G$ and $Z$} (with parameters $c$ and $\nu$)
is the $k$-graph $R$ on vertex set $[m]$ in which vertex $i$ corresponds to the cluster $X_i$, and where $e \in \binom{[m]}{k}$ is an edge of $R$ if 
\begin{enumerate}[(i)]
\item $|G[\bigcup_{i \in e} X_i]| \geq c n_1^k$,
\item $|Z[\bigcup_{i \in e} X_i]| \leq \nu n_1^k$, and
\item for any $e' \in \binom{e}{k-1}$ there are at most $\nu^2 mn_1^k$ edges of $Z$ which intersect all of the clusters $X_i$ with $i \in e'$.
\end{enumerate}
Furthermore, for any $i, j \in [m]$ and $S \in \binom{[m]}{k-1}$, we say that the triple $(i, S, j)$ is \emph{$\Phi$-dense} if there are at least $c^2n_1^{k+1}$ copies of $\Phi$ in $G$ which have an end vertex in each of $X_i$ and $X_j$ and a central vertex in $X_\ell$ for each $\ell \in S$, and we say that $(i, S, j)$ is $Z$-sparse if each of $Z_{S \cup \{i\}}$ and $Z_{S \cup \{j\}}$ contains at most $\nu n_1^k$ edges.
\end{defn}
 
Note that under Setup~\ref{redsetup}, the partition $\Qart$ of $X$ naturally induces a partition of $[m] = V(R)$ into $r$ parts of equal size; we denote this partition by $\Qart_R$. So $i$ and $j$ are in the same part of $\Qart_R$ if and only if the clusters $X_i$ and $X_j$ are subsets of the same part of $\Qart$.
The next lemma shows that within any $\Phi$-dense and $Z$-sparse triple we can obtain a $k$-complex which is $D$-universal on $\Phi$, to which we can gainfully apply Proposition~\ref{varykcomp}.

\begin{lemma} \label{getphirobuni}
Adopt Setup~\ref{redsetup}, and suppose that sets $A, B \in \binom{[m]}{k}$ satisfy $|A \cap B| = k-1$. Let $i$ and $j$ be the elements of $A \sm B$ and $B \sm A$ respectively and suppose that the triple $(i, A \cap B, j)$ is $\Phi$-dense and $Z$-sparse. Suppose also that we have subsets $Y_\ell \subseteq X_\ell$ with $|Y_\ell| \geq \eta n_1$ for each $\ell \in A \cup B$. Then there exist subsets $W_\ell \subseteq Y_\ell$ with $|W_\ell| \geq (1-\mu)|Y_\ell|$ for each $\ell \in A \cup B$ and a $(k+1)$-partite $k$-complex $J$ whose vertex classes are $W_\ell$ for $\ell \in A \cup B$ such that $J_= \subseteq G \sm Z$ and $J$ is $D$-universal on $\Phi$ (where we here consider $A$ and $B$ to be the edges of $\Phi$, so $i$ and $j$ are the ends of $\Phi$).
\end{lemma}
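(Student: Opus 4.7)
My plan is to extract from the regular partition complex $G[\hat{\Part}]$ a $\Phi^\leq$-indexed $(k+1)$-partite $k$-complex $J^0$ on the clusters $\{X_\ell : \ell \in A \cup B\}$ whose top edges at indices $A$ and $B$ lie in $G$, then restrict to $\bigcup_\ell Y_\ell$, and finally invoke Theorem~\ref{robust-universal} to upgrade it to a $D$-universal subcomplex by shaving off a tiny fraction of vertices. Write $S := A \cap B$, so $|S| = k-1$. The $\Phi$-density hypothesis provides at least $c^2 n_1^{k+1}$ copies of $\Phi$ in $G$ with ends in $X_i, X_j$ and centrals in $X_\ell$ for $\ell \in S$.

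The first step is to choose the cells of $\hat{\Part}$ underlying $J^0$ by averaging. For each non-empty $e' \subseteq A$ or $e' \subseteq B$ with $|e'| \leq k-1$ the partition $\Part_{e'}$ has at most $a$ cells, and cells indexed by subsets of $S$ are automatically shared between the two branches since $\Part$ is a single partition. Thus there are at most $a^q$ choices of a family $\{C_{e'}\}$ for some $q = q(k)$, and I fix such a family retaining at least $c^2 n_1^{k+1}/a^q$ of the $\Phi$-copies (each of whose sub-$e'$ lies in the chosen $C_{e'}$). Let $C_A$ and $C_B$ be the corresponding weak-equivalence classes of $\hat{\Part}$ at indices $A$ and $B$, and form $J^0$ by taking its layer at index $e'$ (for $|e'| < k$) to be $C_{e'}$ and its top layers at indices $A$ and $B$ to be $G \cap C_A$ and $G \cap C_B$. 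A simple counting argument (each retained $\Phi$-copy contributes an edge in $G \cap C_A$, and each $k$-tuple of $C_A$ extends to at most $n_1$ copies of $\Phi$ by varying the vertex in $X_j$) gives $|G \cap C_A|, |G \cap C_B| \geq c_1 n_1^k$ for the constant $c_1 := c^2/a^{q+1}$; hence both absolute densities $d(J^0_A), d(J^0_B)$ and relative densities $d_A(J^0), d_B(J^0)$ are at least $c_1$. The $\eps$-regularity of $G[\hat{\Part}]$ propagates directly to $\eps$-regularity of $J^0$ at every index in $\Phi^\leq$ (indices $A'$ with $\{i,j\} \subseteq A'$ support no edges of $J^0$ at lower levels and so are vacuously regular).

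Next I restrict to $\bigcup_\ell Y_\ell$ and set $J^1 := J^0[\bigcup_\ell Y_\ell]$. Since $|Y_\ell| \geq \eta n_1$ and $\eps \ll \eta$, Lemma~\ref{regularrestriction} applied separately to $J^0(A)$ and $J^0(B)$ yields that $J^1$ is $\sqrt{\eps}$-regular with $d(J^1_T), d_T(J^1) \geq c_1/2$ for $T \in \{A, B\}$. Combining the $Z$-sparse bound $|Z_A|, |Z_B| \leq \nu n_1^k$ with the lower estimate $|J^1_T| \geq (c_1/2)\,\eta^k n_1^k$ gives $|Z \cap J^1_T| \leq \nu'\, |J^1_T|$ for $T \in \{A, B\}$, where $\nu' := 4\nu/(c_1 \eta^k)$; note $\nu' \ll \mu$ since $\nu \ll \mu, c, \eta$.

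Finally I apply Theorem~\ref{robust-universal} to $J^1$ with $R = \Phi^\leq$, bad $k$-graph $Z$, regularity parameter $\sqrt{\eps}$, relative-density parameter $c_1/2$, absolute-density parameter $(c_1/2)\eta^k$, and the $\nu'$ above. This yields subsets $W_\ell \subseteq Y_\ell$ with $|Y_\ell \setminus W_\ell| \leq 2(\nu')^{1/3}|Y_\ell| \leq \mu |Y_\ell|$ so that $J := (J^1 \setminus Z)[\bigcup_\ell W_\ell]$ is $\eta$-robustly $D$-universal on $\Phi$, hence in particular $D$-universal on $\Phi$. By construction $J_= \subseteq G \setminus Z$ and $|W_\ell| \geq (1-\mu)|Y_\ell|$, as required. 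The main obstacle is the bookkeeping for the averaging and the weak-equivalence collapse: one must verify that the simultaneously chosen cells leave the absolute densities at both $A$ and $B$ bounded below by a constant, so that after the restriction to $Y_\ell$ the raw $Z$-edge count of $\nu n_1^k$ can be diluted to the relative scale $\nu'$ demanded by Theorem~\ref{robust-universal}.
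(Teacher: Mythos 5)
Your overall architecture (choose one cell per index of $\Phi^\leq$ anchored by the many $\Phi$-copies, so that the cells over $A\cap B$ are shared, observe that the restrictions to $X_A$ and $X_B$ are exactly complexes of the form $G[\hat{\Part}](e)$ and hence $\eps$-regular, restrict to the $Y_\ell$ via Lemma~\ref{regularrestriction}, and finish with Theorem~\ref{robust-universal}) is the same as the paper's. However, your cell-selection step has a genuine quantitative gap. Averaging over all families of cells only guarantees $|G\cap C_A|\geq c_1 n_1^k$ with $c_1=c^2/a^{q+1}$, a quantity that decays with $a$; it gives no control of $|G\cap C_A|$ \emph{relative to the cell} $C_A$, nor of $|Z\cap C_A|$ relative to $C_A$. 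This breaks the application of Theorem~\ref{robust-universal} in two ways. First, your claim ``$\nu'\ll\mu$'' is false: since the hierarchy of Setup~\ref{redsetup} has $1/a\ll\nu$, your $\nu'=4\nu/(c_1\eta^k)=4\nu a^{q+1}/(c^2\eta^k)$ is in fact larger than $1$, so the bound $|Z\cap J^1_T|\leq\nu'|J^1_T|$ is vacuous. Second, even if the $Z$-ratio were genuinely of order $\nu$, Theorem~\ref{robust-universal} requires its relative-density parameter $d$ to dominate its sparseness parameter ($d_a\ll\nu\ll d$), whereas your relative density $c_1/2\sim a^{-(q+1)}$ is far below the $\nu$-scale (again because $1/a\ll\nu$). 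So the hypotheses of the blow-up machinery cannot be met with your constants.

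The paper avoids this by selecting the cells more carefully rather than by pure averaging: call a cell $C$ of index $A$ or $B$ \emph{good} if $|C|\geq c^2n_1^k/5a^k$, $|C\cap G|\geq c^2|C|/5$ and $|C\cap Z|\leq\nu^{1/2}|C|$, and then count to show that at least half of the $c^2n_1^{k+1}$ copies of $\Phi$ have both their edges in good cells (the losses from cells failing each condition are at most $c^2n_1^{k+1}/5$, $c^2n_1^{k+1}/5$ and $\nu^{1/2}n_1^{k+1}$ per side). Anchoring $J^1$ on one such copy gives relative densities $d_A(J^1),d_B(J^1)\geq c^2/5$ (a constant independent of $a$), absolute densities at least $2d_a$ with $d^*\ll d_a\ll 1/a$, and a cell-relative $Z$-bound $|Z\cap J^1_A|\leq\nu'|J^1_A|$ with $\nu\ll\nu'\ll\nu''\ll\mu$; after restriction to the $Y_\ell$ these survive with constants halved, and Theorem~\ref{robust-universal} then applies legitimately. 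Your argument would be repaired by incorporating exactly this good-cell filtering (i.e.\ discarding, before or during your averaging, those families whose top cells have $G$-density below a constant or $Z$-density above $\nu^{1/2}$ within the cell); as written, the proposal does not close.
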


\begin{proof}
Introduce new constants $d_a, \nu'$ and $\nu''$ with $d^* \ll d_a \ll 1/a$ and $\nu \ll \nu' \ll \nu'' \ll \mu$. 
Recall that for each $\Qart_R$-partite set $S \in \binom{[m]}{k}$, $\hat{\Part}$ partitions the $n_1^k$ edges of $K(X)_S$ into at most $a^k$ cells. We call such a cell $C$ a \emph{good} cell if it satisfies 
\begin{enumerate}[(a)]
\item $|C| \geq c^2 n_1^k/5a^k$, 
\item $|C \cap G| \geq c^2|C|/5$, and
\item $|C \cap Z| \leq \nu^{1/2} |C|$;
\end{enumerate}
otherwise, $C$ is a \emph{bad} cell. Consider the copies of $\Phi$ in $G$ whose edges $e$ and $f$ have indices $i(e) = A$ and $i(f) = B$. Since $(i, A \cap B, j)$ is $\Phi$-dense, there are at least $c^2 n_1^{k+1}$ such copies of $\Phi$ in $G$. We will show that at least one of these copies of $\Phi$ must have the property that both of its edges are contained in good cells of $\hat{\Part}$. For this, first note that since $\hat{\Part}$ partitions $K(X)_A$ into at most $a^k$ cells, at most $c^2 n_1^k/5$ edges of $K(X)_A$ lie in cells $C$ which fail (a). Likewise, since $(i, A \cap B, j)$ is $Z$-sparse we have $|Z_A| \leq \nu n_1^k$, and so at most $\nu^{1/2} n_1^k$ edges of $K(X)_{A}$ lie in cells $C$ which fail (c). Finally, the number of edges of $G_A$ which lie in cells $C$ which fail (b) is
$$ \sum_{C} |C \cap G| <  \sum_C c^2|C|/5 \leq c^2n_1^k/5, $$
where the sum is taken over all cells $C$ of index $A$ which fail (b). We deduce that at most $(c^2/5 + \nu^{1/2} + c^2/5)n_1^k \cdot n_1 < c^2n_1^{k+1}/2$ of the copies of $\Phi$ we counted have the edge of index $A$ in a cell which fails (a), (b) or (c). The same argument shows that fewer than $c^2n_1^{k+1}/2$ of the copies of $\Phi$ we counted have the edge of index $B$ in a cell which fails (a), (b) or (c). 

We may therefore fix a copy of $\Phi$ in $G$, whose edges $e$ and $f$ have indices $i(e) = A$ and $i(f) = B$ respectively, such that $\Cell(e)$ and $\Cell(f)$ each satisfy (a), (b) and (c). Recall that $G[\hat{\Part}](e)$ is defined to be the $k$-partite $k$-complex with vertex classes $X_i$ for $i \in A$ and whose edge set is 
$$(G_A \cap \Cell(e)) \cup \bigcup_{e' \subsetneq e} \Cell(e'),$$ 
and that $G[\hat{\Part}](f)$ is defined similarly. We define a $(k+1)$-partite $k$-complex $J^1$ with vertex classes $X_i$ for $i \in A \cup B$ to have edge set
$$J^1 := G[\hat{\Part}](e) \cup G[\hat{\Part}](f).$$
So the `top level' of $J^1$ consists of all edges of $G$ in the same cell as either $e$ or $f$, whilst the lower levels of $J^1$ are comprised of the cells of $\Part$ which lie `below' these cells. The crucial observation is that since $e$ and $f$ are the edges of a copy of $\Phi$ in $G$, for any $e' \subseteq e$ and $f' \subseteq f$ with $i(e') = i(f')$ we have $e' = f'$, and so $J^1$ includes only one cell of this index. That is, $J^1[X_A] = G[\hat{\Part}](e)$, and $J^1[X_B] = G[\hat{\Part}](f)$.
Since $G[\hat{\Part}]$ is $\eps$-regular, $G[\hat{\Part}](e)$ and $G[\hat{\Part}](f)$ are $\eps$-regular, and so it follows from the previous observation that $J^1$ is $\eps$-regular also. Furthermore, we have 
$$d_{A}(J^1) = \frac{|J^1_A|}{|(J_A^1)^*|} = \frac{|G \cap \Cell(e)|}{|\Cell(e)|} \geq \frac{c^2}{5},$$
and similarly $d_B(J^1) \geq c^2/5$. Also
$$d(J^1_A) = \frac{|J^1_A|}{|K(X)_A|} = \frac{|G \cap \Cell(e)|}{|\Cell(e)|} \cdot \frac{|\Cell(e)|}{n_1^k} \geq \frac{c^2}{5} \cdot \frac{c^2}{5a^k} > 2d_a,$$
and similarly $d(J^1_B) > 2d_a$. 
Finally, observe that 
$$|Z \cap J^1_A| \leq |Z \cap \Cell(e)| \leq \nu^{1/2}|\Cell(e)| \leq \nu^{1/2}\frac{|G \cap \Cell(e)|}{c^2/5} \leq \nu' |J^1_A|,$$
and similarly $|Z \cap J^1_B| \leq \nu' |J^1_B|$.

Let $Y := \bigcup_{\ell \in A \cup B} Y_i$, and define $J^2 := J^1[Y]$. So $J^2$ is a $(k+1)$-partite $k$-complex with vertex classes $Y_\ell$ for $\ell \in A \cup B$. By Lemma~\ref{regularrestriction} applied to $J^2[Y_A]$ and $J^2[Y_B]$ in turn, we find that $J^2$ is $\sqrt{\eps}$-regular, that $d_A(J^2), d_B(J^2) \geq c^2/10$, and that $d(J^2_A) \geq d(J^1_A)/2 \geq d_a$ and $d(J^2_B) \geq d(J^1_B)/2 \geq d_a$. In particular, the fact that $d(J^2_A) \geq d(J^1_A)/2$, together with our assumption that $|Y_\ell| \geq \eta |X_\ell|$ for each $\ell \in A$, implies that $|J^2_A| \geq \eta^k |J^1_A|/2$. So
$$|Z \cap J^2_A| \leq |Z \cap J^1_A| \leq \nu' |J^1_A| \leq \frac{2\nu' |J^2_A|}{\eta^k} \leq \nu'' |J^2_A|,$$
and similarly $|Z \cap J^2_B| \leq \nu'' |J^2_B|$. So we may apply Theorem~\ref{robust-universal} with $J^2$, $\Phi$ and the sets $Y_\ell$ in place of $J$, $R$ and the sets $V_\ell$ respectively, and with $\eta n_1, 1/\eta, \nu''$ and $c^2/10$ in place of $n, C, \nu$ and $d$ respectively. This yields subsets $W_\ell \subseteq Y_\ell$ with $|W_\ell| \geq (1-2(\nu'')^{1/3})|Y_\ell| \geq (1-\mu)|Y_\ell|$ for each $\ell \in A \cup B$ such that, writing $J := J^2[\bigcup_{\ell \in A \cup B} W_\ell] \sm Z$, we have that $J$ is $\eta$-robustly $D$-universal on $\Phi$ (so in particular $J$ is $D$-universal on $\Phi$), and that $J_= \subseteq G \sm Z$. 
\end{proof} 

Note that the application of Theorem~\ref{robust-universal} at the end of the proof also yields the facts that $d(J_A) > d^*$ and $|J_A(v)| > d^*|J_A|/|W_j|$ for any $j \in A$ and $v \in W_j$. We do not need these facts when applying Lemma~\ref{getphirobuni}, but we do need the analogous results when applying the next lemma, whose proof is similar to but simpler than that of Lemma~\ref{getphirobuni}, so we omit it (a comparable result was also proved for a slightly different definition of reduced $k$-graph in~\cite[Section 5.1.5]{KKMO}, by a similar argument).
 
\begin{lemma} \label{getrobuni} 
Adopt Setup~\ref{redsetup}, let $R$ be the reduced $k$-graph of $G$ and $Z$, and let $A$ be an edge of $R$. 
Then 
for any subsets $Y_i \subseteq X_i$ with $|Y_i| \geq \eta |X_i|$ for each $i \in A$, there exist subsets $W_i \subseteq Y_i$ with $|W_i| \geq (1-\mu)|Y_i|$ for each $i \in A$ and a $k$-partite $k$-complex $J$ with vertex classes $W_i$ for $i \in A$ such that $J$ is $\eta$-robustly $D$-universal, $J_= \subseteq G \sm Z$, $d(J_A) \geq d^*$ and $|J_A(v)| > d^* |J_A|/|W_j|$ for every $j \in A$ and $v \in W_j$.
\end{lemma}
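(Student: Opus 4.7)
The plan is to mirror the proof of Lemma~\ref{getphirobuni}, exploiting the substantial simplification that there is now a single edge $A$ of $R$ in play rather than two edges overlapping in $k-1$ vertices. Since $A \in R$, Definition~\ref{redgraphdef} supplies $|G_A| \geq c n_1^k$ and $|Z_A| \leq \nu n_1^k$, while $\hat{\Part}$ partitions $K(X)_A$ into at most $a^k$ cells. Introduce auxiliary constants $d_a$ with $d^* \ll d_a \ll 1/a$ and $\nu \ll \nu' \ll \nu'' \ll \mu$.

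First, call a cell $C$ of $\hat{\Part}_A$ \emph{good} if $|C| \geq c n_1^k/(3a^k)$, $|C \cap G| \geq (c/3)|C|$, and $|C \cap Z| \leq \nu^{1/2}|C|$. A simple counting argument (identical to the one in the proof of Lemma~\ref{getphirobuni}) shows that cells failing the first condition together cover at most $cn_1^k/3$ edges, cells failing the second cover at most $cn_1^k/3$ edges of $G_A$, and cells failing the third cover at most $\nu^{1/2} n_1^k$ edges of $K(X)_A$ (using $|Z_A| \leq \nu n_1^k$). Since $|G_A| \geq cn_1^k$, some $e \in G_A$ lies in a good cell. Form $J^1 := G[\hat{\Part}](e)$ on vertex classes $X_i$, $i \in A$. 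Because $G[\hat{\Part}]$ is $\eps$-regular, so is $J^1$; goodness of $\Cell(e)$ then yields $d_A(J^1) \geq c/3$, $d(J^1_A) \geq c^2/(9a^k) > 2d_a$, and $|Z \cap J^1_A| \leq \nu' |J^1_A|$.

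Second, restrict to the prescribed subsets by setting $J^2 := J^1[\bigcup_{i \in A} Y_i]$. Since $|Y_i| \geq \eta|X_i|$ and $\eps \ll \eta$, Lemma~\ref{regularrestriction} gives that $J^2$ is $\sqrt{\eps}$-regular with $d_A(J^2) \geq c/6$ and $d(J^2_A) \geq d_a$. From $|J^2_A| \geq (\eta^k/2)|J^1_A|$ we deduce $|Z \cap J^2_A| \leq \nu''|J^2_A|$. Third, apply Theorem~\ref{robust-universal} to $J^2$, taking the target $k$-complex to be the downward closure of the single edge $A$ on vertex set $A$ (so $r = k$ and $D_R \leq 2^k$), and playing the roles of $n, C, \nu, d$ with $\eta n_1, 1/\eta, \nu'', c/6$ respectively. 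This delivers subsets $W_i \subseteq Y_i$ with $|W_i| \geq (1 - 2(\nu'')^{1/3})|Y_i| \geq (1-\mu)|Y_i|$ such that $J := J^2[\bigcup_{i \in A} W_i] \sm Z$ is $\eta$-robustly $D$-universal with $J_= \subseteq G \sm Z$. The remaining conclusions $d(J_A) > d^*$ and $|J_A(v)| > d^*|J_A|/|W_j|$ for every $j \in A$ and $v \in W_j$ are delivered directly by conclusion (i) of Theorem~\ref{robust-universal}.

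The main obstacle is nothing beyond bookkeeping: ensuring the hierarchy of auxiliary constants is calibrated so that each intermediate density, regularity and sparsity bound survives into the next step (the cell-finding argument gives relative density $c^2/(9a^k)$, which must exceed $2d_a$; the restriction to $Y_i$ halves densities but inflates $|Z|/|J^2_A|$ by a factor $\eta^{-k}$; and Theorem~\ref{robust-universal} demands $\nu''$ small enough in terms of $d$, $\eta$ and $D$). Structurally the proof is strictly simpler than that of Lemma~\ref{getphirobuni}, since there is no two-edge overlap structure of the form $J^1[X_A] = G[\hat{\Part}](e)$, $J^1[X_B] = G[\hat{\Part}](f)$ to verify --- the single chosen cell $\Cell(e)$ carries the entire argument.
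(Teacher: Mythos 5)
Your proposal is correct and is exactly the argument the paper intends: the paper omits this proof, noting only that it is "similar to but simpler than" that of Lemma~\ref{getphirobuni}, and your single-cell version of that argument (good cells, restriction via Lemma~\ref{regularrestriction}, then Theorem~\ref{robust-universal}, whose conclusion (i) supplies the density and vertex-degree bounds) is precisely that simplification with the constants calibrated as in the paper.
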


Our final lemma shows if all $\Qart$-partite $(k-1)$-tuples have large degree in $G \cup Z$, then this degree condition is `almost' inherited by the reduced $k$-graph $R$, in that almost all $(k-1)$-tuples of $R$ satisfy a comparable condition. Furthermore, we also find that any edge of $R$ can be extended to many $\Phi$-dense and $Z$-sparse triples. To prove this latter result we make use of the unusual condition (iii) in Definition~\ref{redgraphdef}; this is the purpose of that condition. 

\begin{lemma} \label{redgraphmindeg}
Adopt Setup~\ref{redsetup}, and suppose that every $\Qart$-partite $(k-1)$-tuple $e$ of vertices of $G$ has $\deg_{G \cup Z}(e) \geq \gamma n$, and also that $|Z| \leq \xi n^k$. Then 
\begin{enumerate}[(i)]
\item there are at most $\theta m^{k-1}$ many $(k-1)$-tuples $S' \in \binom{[m]}{k-1}$ with $\deg_R(S') < (\gamma - \theta) m$. 
\item Furthermore, for any edge $S \in R$ and any $i \in S$ there are at least $(\gamma - \theta) m$ choices for $j \in [m] \sm S$ such that the triple $(i, S \sm \{i\}, j)$ is $\Phi$-dense and $Z$-sparse.
\end{enumerate}
\end{lemma}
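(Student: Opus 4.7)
My plan is to prove the two parts by separate counting arguments that both leverage the codegree hypothesis on $G \cup Z$, with hypergraph regularity of $G[\hat{\Part}]$ being the crucial ingredient for part (ii).

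For part (i), I would apply a double-counting argument to pairs $(S', j)$ with $S' \in F$ and $S' \cup \{j\} \notin R$, where $F \subseteq \binom{[m]}{k-1}$ denotes the set of bad $(k-1)$-tuples. Non-partite $S'$ automatically have $\deg_R(S') = 0$ (since $R$-edges are $\Qart_R$-partite) but number only $O(m^{k-1}/r) \ll \theta m^{k-1}$, so it suffices to restrict to partite $S'$. For each partite $S' \in F$, summing $\deg_{G \cup Z}(e) \ge \gamma n$ over the $n_1^{k-1}$ vertex $(k-1)$-tuples with index $S'$ yields $\sum_j |(G \cup Z)_{S' \cup \{j\}}| \ge \gamma m n_1^k$, of which the at most $(\gamma - \theta)m$ many $R$-edges through $S'$ contribute $\le (\gamma - \theta) m n_1^k$, leaving $\ge \theta m n_1^k$ from $k$-tuples $T \notin R$. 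Summing over $S' \in F$ (each $T$ appearing in at most $k$ decompositions) gives $|F| \theta m n_1^k \le k \sum_{T \notin R} |(G \cup Z)_T|$. To bound the right-hand side, I split $T \notin R$ by which condition of Definition~\ref{redgraphdef} fails: violations of (ii) or (iii) are controlled via $|Z| \le \xi n^k$ and contribute $O(\xi m^k n_1^k/\nu^2)$ in total, while pure (i)-violations contribute at most $(c + \nu) n_1^k$ per $T$, summing to at most $2c m^k n_1^k$. This yields $|F| \le 3kc m^{k-1}/\theta \le \theta m^{k-1}$, using $c \ll \theta$.

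For part (ii), fix $S \in R$ and $i \in S$, and write $e'_0 := S \sm \{i\}$. I would classify $j \in [m] \sm S$ via two conditions: (a) $|Z_{e'_0 \cup \{j\}}| \le \nu n_1^k$ (yielding $Z$-sparsity, since also $|Z_S| \le \nu n_1^k$ by $S \in R$); and (b) $|G_{e'_0 \cup \{j\}}| \ge (c + \nu^{1/2}) n_1^k$ (slightly stronger than the $R$-membership threshold, in order to absorb the regularity error below). Condition (iii) of Definition~\ref{redgraphdef} applied to $S$ with $e' = e'_0$ gives $\sum_j |Z_{e'_0 \cup \{j\}}| \le \nu^2 m n_1^k$, so at most $\nu m$ many $j$'s fail (a). Distributing the codegree hypothesis across the central $(k-1)$-tuples and subtracting the $j=i$ contribution $|G_S|$ gives $\sum_{j \in [m] \sm S} |G_{e'_0 \cup \{j\}}| \ge (\gamma - \nu^2 - 1/m) m n_1^k$; a Markov-type inequality (using $|G_{e'_0 \cup \{j\}}| \le n_1^k$) then shows at least $(\gamma - \theta/2) m$ many $j$'s satisfy (b). Intersecting gives at least $(\gamma - \theta) m$ candidates satisfying both. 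For each such $j$, I would deduce $\Phi$-density by applying a counting lemma for $\eps$-regular $k$-complexes to the $(k+1)$-partite restriction of $G[\hat{\Part}]$ on the vertex classes $X_\ell$ for $\ell \in S \cup \{j\}$: this restriction is $\eps$-regular with top-level densities at its two $\Phi$-edges $S$ and $e'_0 \cup \{j\}$ of at least $c$ and at least $c + \nu^{1/2}$ respectively, so the counting lemma yields
\[ C_j := \#\{\Phi\text{-copies}\} \ge (1 - O(\eps)) \cdot n_1^{k+1} \cdot d(G_S) \cdot d(G_{e'_0 \cup \{j\}}) \ge c^2 n_1^{k+1}, \]
the last inequality using $\eps \ll \nu \ll c$. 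Together with (a) this shows that at least $(\gamma - \theta) m$ many $j$'s yield a triple $(i, e'_0, j)$ which is both $\Phi$-dense and $Z$-sparse.

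The main obstacle is this final step. A naive averaging bound on $\sum_j C_j$ using the codegree hypothesis alone gives only $\sum_j C_j \gtrsim c\gamma m n_1^{k+1}$ and hence, via Markov, only roughly $c\gamma m$ good $j$'s --- far short of the $(\gamma - \theta) m$ required. One genuinely needs to exploit the regularity of $G[\hat{\Part}]$ via a counting lemma, rather than use only the density information encoded in $R$; moreover, the tight threshold $c^2 n_1^{k+1}$ in the definition of $\Phi$-dense forces the slight strengthening to $(c + \nu^{1/2}) n_1^k$ in condition (b), which provides just enough slack for the multiplicative $O(\eps)$ error in the counting lemma to be absorbed.
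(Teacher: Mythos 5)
Your part (i) is essentially correct and amounts to the paper's counting rearranged: the paper isolates a family of good $(k-1)$-tuples (those that are $\Qart_R$-partite and satisfy two $Z$-sparseness conditions) and shows every member has $R$-degree at least $(\gamma-\theta)m$, whereas you double-count over the bad tuples; both arguments rest on the same bounds via $|Z|\le\xi n^k$ and the density threshold $c$, and your use of the hierarchy ($\xi\ll\nu$, $c\ll\theta$, $1/r\ll\theta$) is legitimate.

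Part (ii), however, has a genuine gap at exactly the step you flagged as the main obstacle, and your proposed fix does not work. The inequality $C_j\ge(1-O(\eps))\,n_1^{k+1}\,d(G_S)\,d(G_{e'_0\cup\{j\}})$ is not a consequence of the $\eps$-regularity of $G[\hat{\Part}]$: that regularity is a statement about relative densities inside the cells of the partition complex, and the two dense $k$-graphs $G_S$ and $G_{e'_0\cup\{j\}}$ may be supported above disjoint families of cells of $\Part_{S\sm\{i\}}$, in which case the number of copies of $\Phi$ spanning them can be essentially zero even though both absolute densities exceed $c$; so for an individual $j$ satisfying your conditions (a) and (b) the triple need not be $\Phi$-dense at all. (This is precisely why Lemma~\ref{getphirobuni} starts from an actual copy of $\Phi$ in $G$, which guarantees that the two chosen cells share their lower levels; no counting lemma in terms of absolute densities alone is available here, nor is one true.) Moreover, your reason for abandoning the averaging argument is mistaken: the correct per-$j$ cap is not $n_1^{k+1}$ but $|G_S|\,n_1$, since every copy of $\Phi$ being counted consists of an edge of $G_S$ together with a single extra vertex of $X_j$. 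Using condition (iii) of Definition~\ref{redgraphdef} for $S\in R$ to discard the few central $(k-1)$-tuples with $Z$-degree at least $\nu n$, the codegree hypothesis gives at least $(|G_S|-\nu n_1^k)(\gamma-2\nu)n$ copies of $\Phi$ in total, and a Markov-type bound with the cap $|G_S|n_1$ yields at least $\bigl(|G_S|(\gamma-2\nu)m-(\gamma\nu+c^2)n_1^k m\bigr)/|G_S|\ge(\gamma-\theta/2)m$ values of $j$ for which $(i,S\sm\{i\},j)$ is $\Phi$-dense, because $|G_S|\ge cn_1^k$ and the $\Phi$-density threshold $c^2$ is quadratically below the $R$-density threshold $c$. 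Your treatment of $Z$-sparsity (again via condition (iii)) is fine and matches the paper; it is the $\Phi$-density step that must be replaced by this direct count rather than a regularity counting lemma.
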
 

\begin{proof}
Let $\mc{S'}$ consist of all sets $S' \in \binom{[m]}{k-1}$ such that 
\begin{enumerate}[(a)]
\item $S'$ is $\Qart_R$-partite,
\item at most $\nu^{2} mn_1^{k}$ edges of $Z$ intersect all of the clusters $X_\ell$ with $\ell \in S'$, and 
\item for any $S'' \in \binom{S'}{k-2}$ at most $\nu^3 m^2n_1^{k}$ edges of $Z$ intersect all of the clusters $X_\ell$ with $\ell \in S''$.
\end{enumerate}
We will show that every $S' \in\mc{S'}$ has $\deg_R(S') \geq (\gamma - \theta)m$. To see this, fix some $S' \in \mc{S'}$, and let 
$$\mc{S} := \{S' \cup \{i\} : i \in [m] \sm S'\}.$$ 
Since $S'$ is $\Qart_R$-partite, any $(k-1)$-tuple $e'$ which consists of one vertex of $X_\ell$ for each $\ell \in S'$ has $\deg_{G \cup Z}(e') \geq \gamma n$ by assumption. Since $G \cup Z$ is $\Qart$-partite, any edge $e \in G \cup Z$ with $e' \subseteq e$ must have $e \in (G \cup Z)_S$ for some $S \in \mc{S}$, and so we conclude that there are at least $n_1^{k-1} \gamma n = \gamma n_1^k m$ edges $e \in G \cup Z$ whose index $i(e)$ is a member of $\mc{S}$. By (b), at most $\nu^2 mn_1^k$ of these edges lie in $Z$, and furthermore
at most $c n_1^k m$ of these edges lie in $G_S$ for some $S \in \mc{S}$ with $|G_S| < c n_1^k$. This leaves at least $(\gamma - c - \nu^2)n_1^k m$ edges which lie in $G_S$ for some $S \in \mc{S}$ with $|G_S| \geq c n_1^k$. Since $|G_S| \leq n_1^k$ for any $S \in \mc{S}$, we conclude that there are at least $(\gamma - c - \nu^2)m$ sets $S \in \mc{S}$ such that $|G_S| \geq cn_1^k$. Now observe that there can be at most $\nu m$ sets $S \in \mc{S}$ such that $|Z_S| > \nu n_1^k$. Indeed, if there were more, then taking the union of these $Z_S$ we would obtain more than $\nu^2 mn_1^k$ edges of $Z$ which meet $X_\ell$ for each $\ell \in S'$, contradicting (b). Similarly, there can be at most $2k\nu m$ sets $S \in \mc{S}$ for which some subset $T' \in \binom{S}{k-1}$ has the property that least $\nu^2 n_1^{k}m$ edges of $Z$ meet $X_\ell$ for every $\ell \in T'$. Indeed, if there were more, then some $S'' \in \binom{S'}{k-2}$ would be a subset of at least $2\nu m$ of the subsets $T'$, implying that more than $\nu^3 m^2n_1^{k}$ edges of $Z$ meet $X_\ell$ for every $\ell \in S''$, contradicting (c). We conclude that there are at least $(\gamma - c - \nu^2 - \nu - 2k\nu)m \geq (\gamma - \theta)m$ sets $S \in \mc{S}$ such that $|G_S| \geq cn_1^k$, $|Z_S| \leq \nu n_1^k$, and no subset $T' \in \binom{S}{k-1}$ has the property that at least $\nu^2 n_1^{k}m$ edges of $Z$ meet $X_\ell$ for every $\ell \in T'$; any $S$ with these three properties is an edge of $R$. So we do indeed have
$\deg_R(S') \geq (\gamma - \theta)m$.

It remains to prove that there are at most $\theta m^{k-1}$ sets $S' \in \binom{[m]}{k-1}$ such that $S' \notin \mc{S}'$, that is, which fail either (a), (b) or (c). For this, first note that at most $m^{k-1}/r$ sets $S' \in \binom{[m]}{k-1}$ are not $\Qart_R$-partite. 
Writing $N$ for the number of sets $S' \in \binom{[m]}{k-1}$ such that more than $\nu^{2} mn_1^{k}$ edges of $Z$ meet $X_\ell$ for each $\ell \in S'$, the fact that $|Z| \leq \xi n^k$ implies that $N \nu^2 mn_1^k \leq k\xi n^k$, so $N \leq k \xi m^{k-1}/\nu^2$. Finally, write $N'$ for the number of sets $S' \in \binom{[m]}{k-1}$ such that some subset $S'' \in \binom{S'}{k-2}$ has the property that there are more than $\nu^3 m^2n_1^{k}$ edges of $Z$ which meet $X_\ell$ for every $\ell \in S''$. The number of sets $S''$ with this property is then at least $N'/m$, so we obtain $(N'/m)\nu^3 m^2n_1^{k} \leq k^2 \xi n^k$, that is, $N' \leq k^2 \xi m^{k-1}/\nu^3$. We conclude that, as claimed, the number of sets $S' \in \binom{[m]}{k-1}$ such that $S' \notin \mc{S}'$ is at most
$$m^{k-1}/r + N + N' \leq m^{k-1}/r + k \xi m^{k-1}/\nu^2 + k^2 \xi m^{k-1}/\nu^3 \leq \theta m^{k-1}.$$

For the `furthermore' part, fix any $S \in R$ and $i \in S$, and write $S' := S \sm \{i\}$. Since $S \in R$ we know that there are at most $\nu^2 mn_1^k$ edges of $Z$ which meet $X_\ell$ for every $\ell \in S'$. So at most $\nu n_1^{k-1}$ edges $e' \in K(X)_{S'}$ have $\deg_Z(e') \geq \nu n$. Now, for any edge $e \in G_S$ we have a $(k-1)$-tuple $e' := e \sm X_i \in K(X)_{S'}$; since $G$ is $\Qart$-partite our minimum codegree assumption implies that $\deg_{G \cup Z}(e') \geq \gamma n$. Each $(k-1)$-tuple $e'$ is formed in this way from at most $n_1$ edges of $G_S$, so we conclude that there are at least $|G_S| - \nu n_1^k$ edges $e \in G_S$ for which $\deg_G(e') \geq (\gamma - \nu) n$. Since at most $kn_1 \leq \nu n$ vertices lie the sets $X_\ell$ for $\ell \in S$, there are at least $(|G_S| - \nu n_1^k) (\gamma - 2\nu)n$ copies of $\Phi$ in $G$ whose edges have indices $S$ and $S' \cup \{j\}$ for some $j \in [m] \sm S$. Since for any $j \notin S$ at most $|G_S| n_1$ of these copies have a vertex in $X_j$, we conclude that the triple $(i, S', j)$ is $\Phi$-dense for at least 
$$\frac{(|G_S| - \nu n_1^k) (\gamma - 2\nu) n - c^2n_1^{k+1}m}{|G_S| n_1} \geq \frac{|G_S|(\gamma - 2\nu)m - (\gamma \nu + c^2)n_1^km}{|G_S|} \geq \left(\gamma - \frac{\theta}{2}\right)m$$ 
choices of $j \in [m] \sm S$, where we used the fact that $|G_S| \geq c n_1^k$ since $S \in R$. 
So to complete the proof it suffices to show that the triple $(i, S', j)$ is $Z$-sparse for all but at most $\theta m/2$ choices of $j \in [m] \sm S$. For this, recall that $|Z_S| \leq \nu n_1^k$ since $S$ is an edge of $R$, so if $(i, S', j)$ is not $Z$-sparse then $|Z_{S' \cup \{j\}}| > \nu n_1^k$. Furthermore, since $S \in R$ there are at most $\nu^2 mn_1^k$ edges of $Z$ which meet $X_\ell$ for every $\ell \in S'$. So, writing $N''$ for the number of choices of $j$ for which the triple is not $Z$-sparse, we have $N'' \nu n_1^k \leq \nu^2 mn_1^k$, and so $N'' \leq \nu m \leq \theta m/2$, as required.
\end{proof}

\subsection{Degree sequences and irreducibility} \label{sec:irreduc}
Let $J$ be a $k$-complex. Then the \emph{degree sequence} of $J$ is the sequence $\delta(J) = (\delta_0(J), \delta_1(J), \dots, \delta_{k-1}(J))$, where for any $i \in [k]$ we define$$\delta_{i-1}(J) := \min_{e \in J_{i-1}} \deg_{J_i}(e).$$
So every edge $e \in J_{i-1}$ is a subset of at least $\delta_{i-1}(J)$ edges of $J_i$, or in other words there are at least $\delta_{i-1}(J)$ vertices $v \in V(J)$ such that $e \cup \{v\} \in J$. Inequalities between degree sequences should always be interpreted pointwise. Note also that we only defined the minimum codegree $\delta(H)$ for $k$-graphs $H$, and the degree sequence $\delta(J)$ for $k$-complexes $J$, so there should be no confusion.

The following lemma states that if almost all $(k-1)$-tuples of vertices of a $k$-graph $H$ have high degree, then we can find a $k$-complex $J$ which covers almost all of the vertices of $H$, such that $J$ has a useful degree sequence and the `top level' $J_=$ of $J$ is a subgraph of $H$. The $k$-partite form of this lemma was given by Keevash, Knox and Mycroft~\cite[Lemma~7.3]{KKM} with a straightforward proof. The proof of the form given below is identical except for the simplification of not having to handle multiple vertex classes, so we omit it (this form is also implicit in~\cite{KM}).

\begin{lemma} \label{obtainseq}
Suppose that $1/m \ll \theta \ll \beta, 1/k$, and let $H$ be a $k$-graph on a vertex set $V$ of size $m$ in which at most $\theta m^{k-1}$ sets $S \in \binom{V}{k-1}$ have $\deg_{H}(S) \leq D$. Then there exists a $k$-complex $J$ with $V(J) \subseteq V$ such that $J_= \subseteq H$, $m' := |V(J)| \geq (1-\sqrt{\theta})m$ and $\delta(J) \geq (m', (1-\beta)m', \dots, (1-\beta)m', D-\beta m')$.
\end{lemma}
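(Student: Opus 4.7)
The plan is to delete a small set $B \subseteq V$ and build $J$ on $V' := V \setminus B$ by declaring an $i$-tuple (for $i \leq k-1$) to lie in $J_i$ exactly when every subset avoids a cascading family of bad sets. Fix an auxiliary parameter $\beta'$ with $\theta \ll \beta' \ll \beta, 1/k$. Set $L_{k-1} := \{S \in \binom{V}{k-1} : \deg_H(S) \leq D\}$, so $|L_{k-1}| \leq \theta m^{k-1}$ by hypothesis, and for $i = k-2, k-3, \dots, 1$ define
\[
L_i := \left\{ e \in \binom{V}{i} : \left|\{v \in V \setminus e : e \cup \{v\} \in L_{i+1}\}\right| \geq \beta' m \right\}.
\]
A double-count of pairs $(e, v)$ with $e \in L_i$ and $e \cup \{v\} \in L_{i+1}$ yields $|L_i| \beta' m \leq (i+1)|L_{i+1}|$, and iterating gives $|L_1| \leq (k!/\beta'^{k-2}) \theta m$. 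The hierarchy $\theta \ll \beta', 1/k$ then forces $|L_1| \leq \sqrt{\theta} m$.

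Take $B := L_1$, $V' := V \setminus B$, and $m' := |V'|$; then $m' \geq (1 - \sqrt{\theta})m$. I then define the complex $J$ by
\[
J_i := \left\{ e \in \binom{V'}{i} : e' \notin L_{|e'|} \text{ for every nonempty } e' \subseteq e \right\} \text{ for } 0 \leq i \leq k-1,
\]
and let $J_k$ consist of those $e \in H \cap \binom{V'}{k}$ whose $(k-1)$-subsets all lie in $J_{k-1}$. This system is downward-closed by construction, so $J$ is a $k$-complex with $J_= \subseteq H$.

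The verification of the degree sequence is now routine. We have $\delta_0(J) = |J_1| = m'$ since $V' \cap L_1 = \emptyset$. For $1 \leq i \leq k-2$ and $e \in J_i$, the extensions $v \in V'$ that fail to give $e \cup \{v\} \in J_{i+1}$ are precisely those for which $e' \cup \{v\} \in L_{|e'|+1}$ for some nonempty $e' \subseteq e$; since $e' \notin L_{|e'|}$ by $e \in J_i$, each such $e'$ contributes fewer than $\beta' m$ bad $v$, and summing over the at most $2^k$ subsets of $e$ yields fewer than $2^k \beta' m < \beta m'$ bad extensions, so $\delta_i(J) \geq (1-\beta)m'$. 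A parallel count for $e \in J_{k-1}$, combined with $\deg_{H[V']}(e) \geq \deg_H(e) - |B| \geq D+1 - \sqrt{\theta} m$, gives $\deg_{J_k}(e) \geq D - \beta m'$.

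The only calibration requiring care is the choice of $\beta'$: it must be small enough (in terms of $\beta$ and $k$) that the additive error $2^k \beta' m$ in the degree-sequence check is absorbed by $\beta m'$, and $\theta$ must be small enough (in terms of $\beta'$ and $k$) that the cascading blow-up in the bound on $|L_1|$ still gives $|B| \leq \sqrt{\theta}m$. Both are accommodated by the hypothesis $\theta \ll \beta, 1/k$, and no structural obstacle arises beyond this bookkeeping.
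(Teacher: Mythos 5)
Your proof is correct, and it is essentially the argument the paper has in mind: the paper omits the proof, deferring to the $k$-partite version in Keevash--Knox--Mycroft (their Lemma~7.3), whose proof is exactly this cascading-bad-sets construction --- define $L_{k-1}$ as the low-degree $(k-1)$-sets, iteratively declare an $i$-set bad if it extends to many bad $(i+1)$-sets, delete the $O(\theta m/\beta'^{k-2}) \leq \sqrt{\theta}m$ bad vertices, and take $J$ to consist of all sets with no bad nonempty subset (restricted to $H$ at the top level). Your double-counting bound on $|L_i|$ and the degree-sequence verification, including the $\deg_H(e) \geq D+1$ step for $e \notin L_{k-1}$ and the absorption of the $2^k\beta' m + \sqrt{\theta}m$ error into $\beta m'$, are all sound.
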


Now let $H$ be a $k$-graph on $n$ vertices which admits a perfect matching $M$. Using the terminology of Keevash and Mycroft~\cite{KM} we say that $H$ is \emph{$(C, L)$-irreducible} on $M$ if for any $u, v \in V(H)$ with $u \neq v$ there exist multisets $S$ and $T$ of edges of $H$ and $M$ respectively, so that $|S|, |T| \leq L$ and, counting with multiplicity, for some $c \leq C$ the vertex $u$ appears in precisely $c$ more edges of $S$ than of $T$, the vertex $v$ appears in precisely $c$ more edges of $T$ than of $S$, and every other vertex of $H$ appears equally often in $S$ as in $T$. The next lemma, a special case of a result of Keevash and Mycroft~\cite[Lemma~5.6]{KM}, gives a sufficient degree sequence condition on a $k$-complex $J$ for $J_=$ to be irreducible on a perfect matching in $J_=$.

\begin{lemma} \label{seqirreduc}
Suppose that $1/m \ll 1/C,  1/L \ll \alpha, 1/k$, and let $J$ be a $k$-complex on $m$ vertices with $\delta(J) \geq (m, (k-1)m/k + \alpha m, (k-2)/k + \alpha m,  \dots, m/k + \alpha m)$ such that $J_=$ admits a perfect matching $M$. Then $J_=$ is $(C, L)$-irreducible on $M$.
\end{lemma}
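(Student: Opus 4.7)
The plan is to prove $(C, L)$-irreducibility by exhibiting, for each ordered pair of distinct vertices $u, v \in V(J_=)$, a multiset $S$ of edges of $J_=$ and a multiset $T$ of edges of $M$ with $|S|, |T| \le L$ such that
\[
\sum_{e \in S} \chi_e - \sum_{e \in T} \chi_e = c(\chi_u - \chi_v)
\]
for some positive integer $c \le C$, where $\chi_e \in \mathbb{Z}^{V(J)}$ denotes the characteristic vector of $e$. This vertex-balance identity is precisely what the definition of $(C,L)$-irreducibility demands, and the construction specialises the argument of \cite[Lemma~5.6]{KM}.

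My first step would be to reduce to the case that $u$ and $v$ share an edge of $J_=$. The bound $\delta_1(J) \ge (k-1)m/k + \alpha m$ implies that all but fewer than $m/k$ vertices $w$ satisfy $\{u,w\} \in J_2$, and likewise for $v$, so when $\{u,v\} \notin J_2$ one can route through an intermediate vertex and compose two smaller exchanges. Given $\{u,v\} \in J_2$, the remaining degree conditions $\delta_i(J) \ge (k-i)m/k + \alpha m$ let one greedily extend $\{u,v\}$ through $J_3, J_4, \ldots, J_k$ to produce an edge $e^* = \{u,v,w_1,\ldots,w_{k-2}\} \in J_=$, since $\alpha m$ candidate vertices remain available at every stage. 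Starting from this $e^*$ a natural initial guess is $S_0 = \{e^*\}$ and $T_0 = \{M_v, M_v, M_{w_1}, \ldots, M_{w_{k-2}}\}$, where $M_x$ denotes the matching edge of $M$ containing $x$; a direct calculation shows that $\sum_{S_0} - \sum_{T_0}$ already agrees with $\chi_u - \chi_v$ at $u$, $v$ and each $w_i$, while leaving a bounded residual excess (at most $2(k-1) + (k-2)(k-1)$ units in total) on the ``other'' vertices of $M_v$ and of each $M_{w_i}$.

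The main step, and the main obstacle, is to eliminate this residual in a bounded number of rounds. A single unit of excess at a vertex $y$ is cancelled by adjoining to $S$ a $J_=$-edge through $y$ (guaranteed by $\delta_{k-1}(J) \ge m/k + \alpha m$) and compensating $T$ with the matching edges through the other $k-1$ vertices of that edge; but each cancellation threatens to introduce new residuals. To force termination I would, following the blueprint of \cite{KM}, fix in advance a pool of $O_k(1)$ matching edges and insist every newly added $J_=$-edge be drawn from this pool, using the $\alpha m$ slack at every level of the degree sequence to ensure such edges exist. Within this bounded pool the residual imbalance lives in a finite-dimensional $\mathbb Z$-lattice, and a telescoping argument reduces it to zero after $O_k(1)$ rounds, giving $|S|, |T| \le L$ and $c \le C$ for constants depending only on $k$ and $\alpha$. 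The degenerate cases when some of $M_u, M_v, M_{w_i}$ coincide require only routine adjustment of multiplicities in $T_0$.
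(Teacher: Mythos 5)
You should first note that the paper contains no proof of this lemma at all: it is stated as a special case of Keevash--Mycroft \cite[Lemma~5.6]{KM} and simply cited, so your argument has to stand entirely on its own. Its opening moves are fine (reducing to $\{u,v\}\in J_2$ via a common neighbour and composing two exchanges, greedily extending $\{u,v\}$ to an edge $e^*\in J_=$, and the bookkeeping for the initial pair $(S_0,T_0)$), but the step you yourself identify as the main one is exactly where the argument breaks. You propose to ``fix in advance a pool of $O_k(1)$ matching edges and insist every newly added $J_=$-edge be drawn from this pool, using the $\alpha m$ slack at every level of the degree sequence to ensure such edges exist.'' The degree sequence guarantees at least $(k-i)m/k+\alpha m$ extensions of an $i$-edge \emph{into the whole vertex set}; it guarantees nothing about extensions into a prescribed set of constantly many vertices. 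One can delete every edge of $J_=$ spanned by any fixed $O_k(1)$-sized union of matching edges (other than the matching edges themselves, so that $M$ survives) while changing each codegree by only $O_k(1)\ll\alpha m$; such a complex satisfies all the hypotheses, yet your pool spans no usable edges, and copies of matching edges are useless here because they cover their own vertices uniformly and so cannot repair the non-uniform residual sitting inside $M_v$ and the $M_{w_i}$. Without the pool restriction, each cancellation of one unit of imbalance at a vertex $y$ (an $H$-edge through $y$ into $S$, plus the matching edges of its other $k-1$ vertices into $T$) creates on the order of $(k-1)^2$ fresh units elsewhere, so the process expands and never terminates with $|S|,|T|\le L$.

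The idea that is missing is the one interaction between the hypotheses that actually powers a proof of this kind: $\delta_{k-1}(J)\ge m/k+\alpha m$ exceeds the number $m/k$ of edges of $M$, so by pigeonhole every $B\in J_{k-1}$ has two completions $x\neq y$ lying in a \emph{single} matching edge (indeed many such co-matched pairs, using the $\alpha m$ surplus). Pairs of edges $B\cup\{x\}$, $B\cup\{y\}$ anchored on one edge of $M$ are what allow the coverage of $S$ to be equalised across each matching edge --- which, once the appropriate multiples of matching edges are placed in $T$, is precisely what the identity $\sum_{e\in S}\chi_e-\sum_{f\in T}\chi_f=c(\chi_u-\chi_v)$ demands --- using only boundedly many edges. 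Your sketch uses $M$ only passively, as a source of edges for $T$, and uses the last coordinate of the degree sequence only to assert that some edge passes through a given vertex, so it never exploits this mechanism; consequently the ``finite-dimensional lattice plus telescoping'' step is an assertion rather than an argument. To repair the proof you would need to replace the bounded-pool step by this pigeonhole/closure argument (or do as the paper does and quote \cite[Lemma~5.6]{KM} directly).
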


\section{Ingredients of the proof}\label{sec:prelims}

\subsection{Partitioning clusters into lopsided groups.}\label{sec:partitions}
 
As described in Section~\ref{sec:outline}, we will find an almost-perfect packing of the reduced $k$-graph $\R$ with a particular $k$-partite $k$-graph $\Akpq$. The $k$-graph $\Akpq$ which we use is defined as follows. The vertex set $V(\Akpq)$ is the union of disjoint sets $A_1, \dots, A_{q-p}$ and $B$, where $|A_j| = k-1$ for each $j \in [q-p]$ and $|B| = p(k-1)$. Then any $k$-tuple of the form $\{x\} \cup A_j$ with $j \in [q-p]$ and $x \in B$ is an edge of $\Akpq$ (see Figure~\ref{fig:akpq} for an illustration). In particular we have $|V(\Akpq)|= q(k-1)$. 

\begin{figure}[t] 
\centering
\psfrag{1}{$A_1$}
\psfrag{2}{$A_2$} 
\psfrag{3}{$A_3$}
\psfrag{4}{$B$}
\includegraphics[width=6cm]{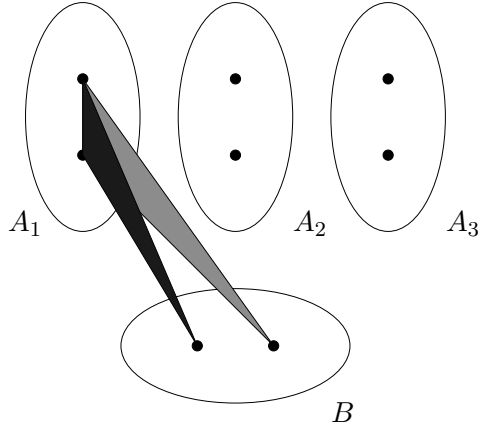} 
\caption{The $k$-graph $\Akpq$ in the case $k=3, p=1, q=4$; the edges between $B$ and $A_1$ are shown, and there are similar edges between $B$ and $A_2$ and between $B$ and $A_3$, giving six edges in total.}
\label{fig:akpq}
\end{figure} 

The next lemma shows that if $G$ is a $k$-graph on $m$ vertices in which almost all sets of $k-1$ vertices have degree slightly greater than $pm/q$, then $G$ contains an almost-perfect $\Akpq$-packing, that is, one which covers almost all vertices of $G$. We will apply this result with the reduced $k$-graph $\R$ in place of $G$ and with $p$ and $q$ chosen so that $p/q \approx \sigma(K)$. The degree condition needed will then follow from Lemma~\ref{redgraphmindeg} and our assumption in Lemma~\ref{main1simple} that $\delta(H) \geq \sigma(K)n + \alpha n$.
This lemma was previously proved for $k=3, p=1, q=4$ by K\"uhn and Osthus~\cite{KO} and then for $p=1, q=2k-2$ by Keevash, K\"uhn, Mycroft and Osthus~\cite{KKMO}; the proof given here is essentially identical, but is included for completeness.

\begin{lemma} \label{akpqpacking}
Suppose that $1/m \ll \theta \ll \psi\ll 1/q, 1/p, 1/k$ and that $G$ is a $k$-graph on vertex set $[m]$ such
that $\deg_G(S) > (\frac{p}{q} + \theta)m$ for all but at most
$\theta m^{k-1}$ sets $S \in \binom{[m]}{k-1}$. Then~$G$ admits an $\mc{A}^k_{p,q}$-packing~$\F$ such that $|V(\F)| \geq (1-\psi)m$ and $G[V(\F)]$ is connected (where $V(\F)$ denotes the set of vertices covered by $\F$).
\end{lemma}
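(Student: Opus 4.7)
My plan is to construct the desired $\Akpq$-packing via a global probabilistic argument combining a fractional relaxation with the semi-random (nibble) method for discretization. First, call a $(k-1)$-subset $S \subseteq [m]$ \emph{good} if $\deg_G(S) > (p/q + \theta)m$; the hypothesis guarantees at most $\theta m^{k-1}$ bad $(k-1)$-sets, and every copy of $\Akpq$ we include in $\F$ will use only good $A_j$'s. In the regime of interest $q \geq pk$ (which covers the intended applications, where $p/q \approx \sigma(K) \leq 1/k$), the threshold $p/q$ equals the minimum vertex-cover ratio of $\Akpq$: taking $B$ as a cover gives size $p(k-1)$, which is the smallest cover precisely when $q \geq pk$. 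Thus the degree hypothesis matches the extremal lower-bound construction of Proposition~\ref{extrem2}, so we expect it to be the natural threshold for near-perfect packing.

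The structural heart of the plan is to establish a near-perfect \emph{fractional} $\Akpq$-packing in $G$: a weighting $w : \mc{H} \to \mathbb{R}_{\geq 0}$ on the set $\mc{H}$ of copies of $\Akpq$ in $G$ using only good $A_j$'s, such that $\sum_{H \ni v} w_H \leq 1$ for every $v \in [m]$ and $\sum_{H \in \mc{H}} w_H \cdot q(k-1) \geq (1 - \psi/3)m$. To construct $w$, I would use either an explicit probabilistic template (sample $q-p$ random disjoint good $(k-1)$-sets and then sample $p(k-1)$ random vertices from the intersection of their neighborhoods) or LP duality: a dual-feasible vertex weighting that blocked the desired fractional packing would yield a vertex set of density less than $p/q$ that covers nearly all copies of $\Akpq$ with good $A_j$'s, contradicting the degree condition. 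I then discretize $w$ to an integer $\Akpq$-packing $\F$ with $|V(\F)| \geq (1-\psi)m$ via the Pippenger--Spencer / R\"odl nibble theorem applied to the auxiliary hypergraph on $[m]$ whose hyperedges are the copies of $\Akpq$ in $\mc{H}$ weighted by $w$.

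Finally, connectivity of $G[V(\F)]$ follows since $|V(\F)| \geq (1-\psi)m$ ensures that most $(k-1)$-subsets of $V(\F)$ retain degree at least $(p/q + \theta/2)m$ in $G[V(\F)]$, which is enough to force connectivity; if $G[V(\F)]$ happens to have more than one non-trivial component, we can swap out a few $\Akpq$-copies and reinsert replacements straddling the components, using the degree condition to locate suitable copies. The main obstacle will be the nibble discretization step: one must verify that for each pair $u, v \in [m]$ the auxiliary codegree $\sum_{H \ni u, v} w_H$ is much smaller than the degree $\sum_{H \ni u} w_H$. This quantitative codegree bound translates to a second-moment estimate on the probabilistic construction of $w$, and crucially relies on the degree-regularity inherent in the hypothesis (most $(k-1)$-subsets have high degree), together with careful bookkeeping of how a pair of vertices can jointly participate in the $A$-roles versus the $B$-role of a single copy of $\Akpq$.
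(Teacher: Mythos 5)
Your plan hinges on first producing a near\hyphen perfect \emph{fractional} $\Akpq$-tiling from the codegree hypothesis, and that step is a genuine gap: neither of your two suggested constructions works as described, and this step is essentially where the whole difficulty of the lemma lives. For the probabilistic template, the common neighbourhood of $q-p$ randomly chosen good $(k-1)$-sets may be empty: each neighbourhood has density only slightly above $p/q$, and the hypothesis allows the neighbourhoods of different $(k-1)$-sets to be essentially disjoint sets of size about $pm/q$, so $q-p$ of them need not intersect at all; even conditioning on a nonempty intersection, nothing in the sketch shows that every vertex ends up with total weight close to $1$, which is what a near-perfect fractional tiling requires. For the LP-duality route, the dual object is a nonnegative vertex \emph{weighting} $y$ with $\sum_{v\in H}y_v\ge 1$ for every copy $H$ and small total weight, not ``a vertex set of density less than $p/q$ that covers nearly all copies''; extracting such a set from $y$ is exactly the hard part, and naive thresholding fails — the low-weight vertices may induce no copy of $\Akpq$ whatsoever (in the extremal configuration of Proposition~\ref{extrem2} the dual mass sits on the small side $A$ and the complement is edgeless). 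Analysing near-optimal dual solutions to get a contradiction with the degree condition is a substantial argument in its own right, comparable to the lemma you are trying to prove. On top of this, the pair-codegree condition you flag for the Pippenger--Spencer/Kahn discretization is not a formality: fractional tilings cannot in general be converted to almost-perfect integer ones (a disjoint union of triangles already defeats it for matchings), so you would additionally have to show that a suitably ``spread out'' fractional tiling exists — which again presupposes the missing construction.

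By contrast, the paper's proof is elementary and local: take a maximal $\Akpq$-packing $\F$, assume more than $\psi m/2$ vertices are uncovered, and use counting arguments to find either a copy of $\Akpq$ wholly inside the uncovered set, or a swap that replaces $p(k-1)$ members of $\F$ by $p(k-1)+1$ new copies, contradicting maximality; no fractional relaxation or nibble is needed. Your connectivity step is also stated too strongly: high degree of most $(k-1)$-sets does not by itself force $G[V(\F)]$ to be connected — it only forces all but one component of $G[V(\F)]$ to be small (any $(k-1)$-set straddling two components has degree $0$ there), so one must pass to a maximal connected sub-packing and verify by a counting argument (as the paper does) that it still covers at least $(1-\psi)m$ vertices. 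That part is repairable; the fractional-tiling step is the real missing idea.
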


\begin{proof} 
Let $\F$ be a maximal $\Akpq$-packing in 
$G$, and let $X := V(G) \sm V(\F)$. We will show that $|X| \leq \psi m/2$; to do this, we suppose for a contradiction that $|X| > \psi m/2$. For any $(k-1)$-tuple $S$ of 
vertices of $G$, we write $N(S)$ to denote the set $\{v \in V(G) : S \cup \{v\} \in G\}$ of neighbours of $S$, so $|N(S)| = \deg_G(S)$. We also write $\deg(S)$, $N_X(S)$ and $\deg_X(S)$ for $\deg_G(S)$, $N(S) \cap X$ and $|N_X(S)|$ respectively. Note that since $\theta \ll \psi$ we can greedily form a collection of at least $2\theta m$ disjoint $(k-1)$-tuples $S \in 
\binom{X}{k-1}$ which each satisfy $\deg(S) \geq pm/q + \theta m$.  

Suppose first that for some $r \geq \theta m$ there exist disjoint sets $S_1, \dots, S_r \in \binom{X}{k-1}$ such that $\deg_X(S_i) \geq \theta m/2$ for any $i \in [r]$. 
In this case, we count the pairs $(i, B)$ such that $i \in [r]$ and $B \subseteq N_X(S_i)$ has size $p(k-1)$. By our choice of the sets $S_1, \dots, S_r$, the number of such pairs is at 
least 
$$r \binom{\theta m/2}{p(k-1)} \geq \theta m \binom{\theta m/2}{p(k-1)} \geq (q-p) \binom{m}{p(k-1)} \geq (q-p) \binom{|X|}{p(k-1)}.$$ 
So there must be some set $B \in \binom{X}{p(k-1)}$ which lies in at least $q-p$ such pairs; the corresponding $q-p$ sets $S_i$ together with this set $B$ form a copy of $\Akpq$ contained in $G[X]$, contradicting the maximality of $\F$.

Since there are at least $2\theta m$ disjoint $(k-1)$-tuples $S \in \binom{X}{k-1}$ which each satisfy $\deg(S) \geq pm/q + \theta m$, it follows that we may choose a family of $r \geq \theta m$ subsets $S_1, \dots, S_r \in \binom{X}{k-1}$ such that each~$S_i$ satisfies $\deg(S_i) \geq pm/q + \theta m$ and $\deg_X(S_i) < \theta m/2$. Having fixed this family, we say that a copy $\A \in \F$ is \emph{good} for $S_j$ if $|V(\A) \cap N(S_j)| > p(k-1)$. Note that each set $S_j$ has at least $pm/q + \theta m/2$ neighbours in $V(\F)$, and at most 
$$|\F| p(k-1) \leq \frac{p(k-1) m}{|V(\Akpq)|} = pm/q$$ 
of these neighbours lie in members of $\F$ which are not good for $S_j$. So the number of copies $\A \in \F$ which are good for $S_j$ is at least $\theta m/2|V(\Akpq)| = \theta m/2q(k-1)$.

We now count the number of pairs $(j, \T)$ where $j \in [r]$ and $\T \subseteq \F$ consists of $p(k-1)$ copies $\A \in \F$, each of which is good for $S_j$. By the above calculation, this number is at least 
$$ r \binom{\theta m/2q(k-1)}{p(k-1)} \geq \theta m \binom{\theta m/2q(k-1)}{p(k-1)} \geq \sqrt{m}\binom{m}{p(k-1)} \geq \sqrt{m}\binom{|\F|}{p(k-1)}.$$
We can therefore choose a collection $\T$ of $p(k-1)$ copies $\A \in \F$ and a subset $R \subseteq [r]$ of size $|R| \geq \sqrt{m}$ such that $\A$ is good for $S_j$ for any $j \in R$ and $\A \in \T$. This means that for each $j \in R$ and each $\A \in \T$ we may choose a subset $L^\A_j \subseteq N(S_j) \cap V(\A)$ of size $p(k-1)+1$. Having done so, the fact that $|R| \geq \sqrt{m}$ implies that we may choose a subset $R' \subseteq R$ of size $(p(k-1)+1)(q-p)$ so that for any fixed $\A \in \T$, $L^\A_j$ is the same set for every $j \in R'$. We write $L^\A$ for this common value of $L^\A_j$. 
 
Arbitrarily partition $R'$ into $p(k-1)+1$ sets $R'_1, \dots, R'_{p(k-1)+1}$ of size $(q-p)$, and label the vertices of each $L^\A$ as $\{v^\A_1, v^\A_2, \dots, v^\A_{p(k-1)+1}\}$. Then for each $s \in [p(k-1)+1]$, the sets $S_j$ for $j \in R'_s$ and the set $\{v^\A_s : \A \in \T\}$ together form a copy of $\Akpq$. This produces $p(k-1)+1$ vertex-disjoint copies of $\Akpq$ which are contained in $X \cup V(\T)$, so we may enlarge $\F$ by replacing the members of $\T$ with these copies, giving another contradiction. 

This proves that $|X| \leq \psi m/2$, so $\F$ covers at least $(1-\psi/2)m$ vertices of $G$. Note that $G[\A]$ is connected for any $\A \in \F$. Let $\F' \subseteq \F$ be of maximum size such that $G[V(\F')]$ is connected, and suppose for a contradiction that $|V(\F')| < (1-\psi)m$, so $|V(\F) \sm V(\F')| > \psi m/2$. We first observe that some vertex of $V(\F)$ must lie in some $(k-1)$-tuple $S \in \binom{V(\F)}{k-1}$ with $\deg_G(S) \geq pm/q + \theta m$, and so has at least $pm/2q$ neighbours in $V(\F)$, so by maximality of $\F'$ we have $|V(\F')| \geq pm/2q$. Therefore, the number of sets $S \in \binom{V(\F)}{k-1}$ which contain a vertex $x \in V(\F) \sm V(\F')$ and a vertex $y \in V(\F')$ is at least 
$$ \frac{1}{(k-1)!} \cdot \frac{\psi m}{2} \cdot \frac{pm}{2q} \cdot ((1-\psi/2) m)^{k-3} > \theta m^{k-1}.$$
It follows that some such $S$ has degree at least $pm/q > \psi m/2$, and so can be extended to an edge of $G[V(\F)]$. But then the member of $\F$ containing $x$ can be added to $\F'$ to give a larger subpacking $\F'' \subseteq \F$ such that $G[V(\F'')]$ is connected, a contradiction. This proves that $|V(\F')| \geq (1-\psi)m$, so $\F'$ is the desired $\Akpq$-packing. 
\end{proof}

Having obtained an almost-perfect $\Akpq$-packing in the reduced $k$-graph $\R$, we will proceed to partition the clusters corresponding to copies of $\Akpq$, and then to rearrange the parts obtained into groups of $k$ subclusters which support regular and dense complexes. This partition is effected in the following way.

\begin{lemma} \label{akpqsplit}
Suppose that $pk \leq q$, and that for each vertex $u \in V(\Akpq)$ we have a set $V_u$ of $n$ vertices such that the sets $V_u$ are pairwise-disjoint. Let $V = \bigcup_{u \in V(\Akpq)} V_u$, and suppose also that $(q-p)p(k-1)$ divides $n$. Then we may partition $V$ into sets $X_j^i$ with $j \in [k]$ and $i \in [(q-p)p(k-1)]$ such that
\begin{enumerate}[(i)]
\item $|X_1^i| = \frac{p}{q}\sum_{j \in [k]} |X_j^i|$ for each $i$,
\item $n/(q-p) = |X_1^i| \leq |X_2^i| = |X_3^i| = \dots = |X_k^i|$ for each $i$,
\item for each $i$ and $j$ there exists $f(i,j) \in V(\Akpq)$ so that $X_j^i \subseteq V_{f(i,j)}$, and
\item for each fixed $i$ the set $\{f(i,j) : j \in [k]\}$ is an edge of $\Akpq$.
\end{enumerate}
\end{lemma}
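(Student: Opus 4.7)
The plan is to first pin down the sizes of the desired sets from conditions~(i) and~(ii), then construct the labelling $f$ and partition each $V_u$ accordingly. Condition~(ii) forces $|X_1^i| = n/(q-p)$, and writing $s_i := |X_2^i| = \dots = |X_k^i|$, condition~(i) becomes $n/(q-p) = \frac{p}{q}\bigl(n/(q-p) + (k-1)s_i\bigr)$, which rearranges to $s_i = n/(p(k-1))$. Both of these values are integers since $(q-p)p(k-1)$ divides $n$, and the required inequality $|X_1^i| \leq |X_2^i|$ reduces to $p(k-1) \leq q-p$, i.e.\ $pk \leq q$, which is our hypothesis. This is the only place where the hypothesis $pk \leq q$ is used.

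Next I would enumerate the $N := (q-p)p(k-1)$ edges of $\Akpq$ as $e_1, \dots, e_N$, where each $e_i$ has the form $\{x_i\} \cup A_{j_i}$ for a unique pair $(x_i, j_i) \in B \times [q-p]$. For each $i$ set $f(i,1) := x_i$ and let $f(i,2), \dots, f(i,k)$ be any bijection from $\{2, \dots, k\}$ onto the $k-1$ vertices of $A_{j_i}$. Condition~(iv) then holds by construction, since $\{f(i,j) : j \in [k]\} = \{x_i\} \cup A_{j_i} = e_i$.

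Finally, I would define the partition by a direct double-counting argument. Each $x \in B$ appears as $f(i,1)$ for exactly the $q-p$ indices $i$ with $x_i = x$, so I would partition $V_x$ into $q-p$ blocks of size $n/(q-p)$ and assign one block to each corresponding $X_1^i$. Each $u \in A_j$ (for some $j \in [q-p]$) appears as some $f(i, \ell)$ with $\ell \geq 2$ for each of the $p(k-1)$ edges $e_i$ containing $u$, so I would partition $V_u$ into $p(k-1)$ blocks of size $n/(p(k-1))$ and assign one block to each corresponding $X_\ell^i$. The arithmetic matches exactly, since $(q-p) \cdot n/(q-p) = n = p(k-1) \cdot n/(p(k-1))$, so this uses each $V_u$ in full and yields a valid partition of $V$. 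There is no real obstacle here; the lemma is essentially a book-keeping exercise, and the only substantive check is that the hypothesis $pk \leq q$ is precisely what delivers $|X_1^i| \leq |X_2^i|$ in condition~(ii).
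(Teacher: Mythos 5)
Your proposal is correct and follows essentially the same route as the paper: both fix $|X_1^i|=n/(q-p)$ and $|X_j^i|=n/(p(k-1))$ for $j\geq 2$, use $pk\leq q$ exactly to get $|X_1^i|\leq|X_2^i|$, and build one group per edge of $\Akpq$ by splitting each $V_x$ with $x\in B$ into $q-p$ equal blocks and each $V_u$ with $u\in A_j$ into $p(k-1)$ equal blocks. The only difference is cosmetic (you index groups by edges directly, the paper by pairs $(a,b)\in[q-p]\times[p(k-1)]$), so nothing further is needed.
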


\begin{figure}[t] 
\centering
\psfrag{A}{$A_1$} 
\psfrag{B}{$A_2$} 
\psfrag{C}{$A_3$} 
\psfrag{D}{$B$} 
\psfrag{E}{$X^{1, 1}_3$}
\psfrag{F}{$X^{2, 1}_3$}
\psfrag{G}{$X^{1, 1}_2$} 
\psfrag{H}{$X^{2, 1}_2$} 
\psfrag{I}{$X^{1, 2}_3$}
\psfrag{J}{$X^{2, 2}_3$}
\psfrag{K}{$X^{1, 2}_2$}
\psfrag{L}{$X^{2, 2}_2$} 
\psfrag{M}{$X^{1, 3}_3$} 
\psfrag{N}{$X^{2, 3}_3$}
\psfrag{O}{$X^{1, 3}_2$} 
\psfrag{P}{$X^{2, 3}_2$}
\psfrag{Q}{$X^{1, 1}_1$} 
\psfrag{R}{$X^{1, 2}_1$} 
\psfrag{S}{$X^{1, 3}_1$} 
\psfrag{T}{$X^{2, 1}_1$} 
\psfrag{U}{$X^{2, 2}_1$} 
\psfrag{V}{$X^{2, 3}_1$}
\includegraphics[width=12cm]{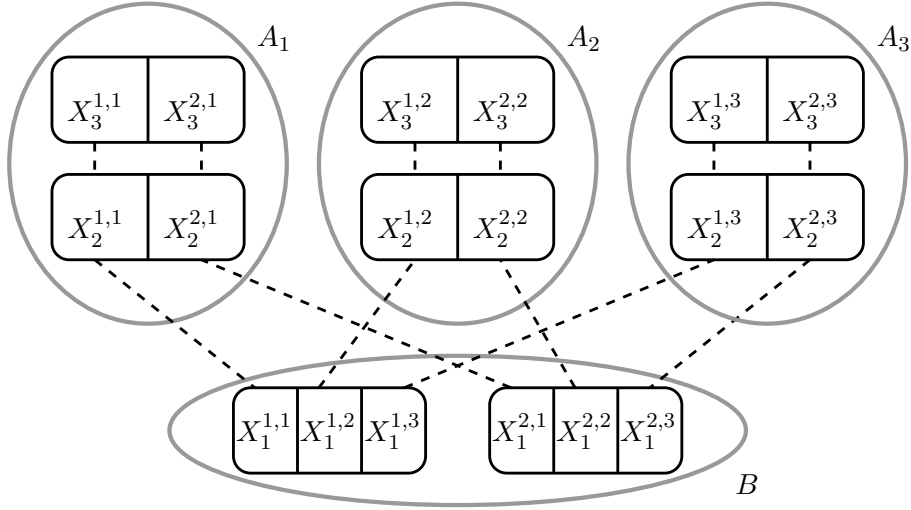}
\caption{An illustration of the division of clusters implemented in Lemma~\ref{akpqsplit} for the case $k=3, p=1, q=4$. The dashed lines join subclusters which form part of the same group.} 
\label{fig:split} 
\end{figure}

\begin{proof}
Let $A_1, \dots, A_{q-p}$ and $B$ be as in the definition of the $k$-graph $\Akpq$. So these sets are pairwise-disjoint and their union is $V(\Akpq)$; also, $|B| = p(k-1)$ and $|A_a| = k-1$ for each~$a \in [q-p]$. Arbitrarily order the vertices of each of these sets, and for $i \in [k-1]$ and $a \in [q-p]$ write $u(i, A_a)$ for the $i$th vertex of~$A_a$, and similarly for $j \in [p(k-1)]$ write $v(j, B)$ for the $j$th vertex of $B$. Next, for every $j \in [q-p]$ and every $u \in A_j$, partition the set $V_u$ into $p(k-1)$ parts $V_u^1, \dots, V_u^{p(k-1)}$ of equal size. Similarly, for each $v \in B$ partition $V_v$ into $q-p$ parts $V_v^1, \dots, V_v^{q-p}$ of equal size. Then for each $a \in [q-p]$ and $b \in [p(k-1)]$ define 
$$\mbox{$X^{a, b}_1 = V_{v(b, B)}^a$ and $X^{a, b}_j = V_{u(j-1, A_a)}^b$ for $2 \leq j \leq k$.}$$
Relabelling these sets (that is, replacing the superscript $(a,b)$ by an integer in $[(q-p)p(k-1)]$) gives the desired sets.

Property (iii) is immediate from the construction, and since any set of the form $A_a \cup \{v\}$ with $v \in B$ is an edge of $\Akpq$, (iv) is satisfied also. Finally, observe that for each $i$, $|X_1^i| = n/(q-p)$ and $|X_2^i| = \dots = |X_k^i| = n/p(k-1)$. So our assumption that $pk \leq q$ implies that $|X^i_1| \leq |X^i_2|$, proving (ii), and $$\frac{p}{q} \cdot \sum_{j \in [k]} |X_j^i| = \frac{p}{q}\left(\frac{n}{q-p} + (k-1) \cdot \frac{n}{p(k-1)}\right) = \frac{n}{q-p} = |X^i_1|,$$ 
so (i) holds also.
\end{proof}

In the proof of Lemma~\ref{main2simple} we will find a matching $M_\R$ in the reduced $k$-graph $\R$, rather than an $\Akpq$-packing. Observe for this that $\A^k_{1, k}$ contains a perfect matching, so it suffices to find an $\A^k_{1, k}$-packing in $\R$, which we can do by applying Lemma~\ref{akpqpacking} with $p=1$ and $q=k$. However, in the proof of Lemma~\ref{main2simple} we also require an additional assumption, namely that the restriction of $\R$ to any large submatching $M'_\R \subseteq M_\R$ of this matching is irreducible on $M'_\R$. The following corollary states that we can do this.

\begin{coro} \label{almostmatching}
Suppose that $1/m \ll \theta \ll \psi \ll 1/C, 1/L \ll \alpha, 1/k$, and that $G$ is a $k$-graph on vertex set $[m]$ such
that $\deg_G(S) > (1/k + \alpha)m$ for all but at most
$\theta m^{k-1}$ sets $S \in \binom{[m]}{k-1}$. Then~$G$ admits a matching $M$ with $|V(M)| \geq (1-\psi)m$ such that $G[V(M')]$ is $(C, L)$-irreducible on $M'$ for any $M' \subseteq M$ with $|M'| \geq (1-\alpha/2)|M|$.
\end{coro}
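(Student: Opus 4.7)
The plan is to construct a ``clean'' $k$-complex $J$ underneath $G$, extract a near-perfect matching $M$ from an $\A^k_{1,k}$-packing inside $J_=$, and then for each admissible sub-matching $M' \subseteq M$ verify that the restricted complex $J[V(M')]$ satisfies the hypotheses of Lemma~\ref{seqirreduc}. We may assume $\alpha \leq 1/(10k)$, since the hypothesis $\deg_G(S) > (1/k + \alpha)m$ weakens as $\alpha$ shrinks so the case of larger $\alpha$ follows. Introduce an auxiliary constant $\beta$ with $\theta \ll \beta \ll \psi$, and apply Lemma~\ref{obtainseq} to $G$ with this $\beta$ and $D := (1/k+\alpha)m$: since at most $\theta m^{k-1}$ sets $S \in \binom{[m]}{k-1}$ have $\deg_G(S) \leq D$, this produces a $k$-complex $J$ with $J_= \subseteq G$, $m' := |V(J)| \geq (1-\sqrt{\theta})m$, and
\[
\delta(J) \geq \bigl(m',\, (1-\beta)m',\, \ldots,\, (1-\beta)m',\, (1/k+\alpha)m - \beta m'\bigr).
\]
In particular, every $(k-1)$-tuple in $V(J)$ has $J_=$-degree at least $(1/k + \alpha - \beta)m'$.

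Next, apply Lemma~\ref{akpqpacking} to $J_=$ with $p=1$ and $q=k$, and with parameters $\beta$ and $\psi/2$ in place of $\theta$ and $\psi$ there. This gives an $\A^k_{1,k}$-packing $\F$ with $|V(\F)| \geq (1-\psi/2)m'$. The $k$-graph $\A^k_{1,k}$ consists of sets $A_1,\dots,A_{k-1}$ of size $k-1$ and a set $B$ of size $k-1$, with edges $\{v\}\cup A_j$ for $v \in B$, $j \in [k-1]$; pairing each $A_j$ with the $j$-th vertex of $B$ yields a perfect matching of this $k$-graph. Selecting such a perfect matching inside each copy in $\F$ produces a matching $M$ of $J_= \subseteq G$ with $V(M)=V(\F)$, hence $|V(M)| \geq (1-\psi/2)(1-\sqrt{\theta})m \geq (1-\psi)m$.

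Fix any $M' \subseteq M$ with $|M'| \geq (1-\alpha/2)|M|$ and set $V'' := V(M')$, $m'' := |V''|$. Then $m'' \geq (1-\alpha/2)|V(M)| \geq (1-\alpha/2 - \psi)m$, so $m' - m'' \leq (\alpha/2 + \psi)m$. For each $i \in \{1,\dots,k-1\}$ and each $e \in (J[V''])_i$, restricting to $V''$ decreases $\deg_{J_{i+1}}(e)$ by at most $m' - m''$, so from the degree sequence of $J$ we obtain
\[
\delta_i(J[V'']) \geq m'' - \beta m' \quad \text{for } 1 \leq i \leq k-2,
\]
\[
\delta_{k-1}(J[V'']) \geq (1/k+\alpha)m - \beta m' - (m'-m'') \geq \bigl(1/k + \alpha/2 - 2\psi\bigr)m \geq (1/k + \alpha/4)m''.
\]
Set $\alpha' := \alpha/4$. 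Using $\beta \ll \alpha', 1/k$ and $m'' \geq m/2$, the first inequality upgrades to $\delta_i(J[V'']) \geq (k-i)m''/k + \alpha' m''$ for each $1 \leq i \leq k-2$ (the assumption $\alpha \leq 1/(10k)$ ensures $\alpha' < 1/k$), and thus
\[
\delta(J[V'']) \geq \bigl(m'',\, (k-1)m''/k + \alpha' m'',\, \ldots,\, m''/k + \alpha' m''\bigr).
\]
Since $M' \subseteq (J[V''])_=$ is a perfect matching of $V''$, Lemma~\ref{seqirreduc} applied to $J[V'']$ with $\alpha'$ (valid since $1/m'' \ll 1/C, 1/L \ll \alpha', 1/k$) shows that $(J[V''])_=$ is $(C,L)$-irreducible on $M'$. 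As $(J[V''])_= \subseteq G[V'']$, the same witnessing multisets of edges show that $G[V(M')]$ is $(C,L)$-irreducible on $M'$.

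The one mildly delicate point is the final verification: passing from $V(J)$ to $V(M')$ may remove a $\Theta(\alpha)$-fraction of vertices, so one has to check that the ``just above $m''/k$'' codegree slack needed by Lemma~\ref{seqirreduc} survives this restriction. This is exactly where the hierarchy $\psi \ll \alpha$ in the corollary's hypothesis is used, allowing the choice $\alpha' = \alpha/4 > 0$ after absorbing the losses.
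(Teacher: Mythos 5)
Your proposal is correct and is essentially the paper's own proof: Lemma~\ref{obtainseq} to extract a $k$-complex $J$ with $J_=\subseteq G$, Lemma~\ref{akpqpacking} with $p=1$, $q=k$ together with a perfect matching of $\A^k_{1,k}$ to obtain $M$, and then a degree-sequence check on $J[V(M')]$ feeding Lemma~\ref{seqirreduc}. Two small touch-ups: the degree sequence of $J$ guarantees the codegree bound only for $(k-1)$-tuples that are edges of $J_{k-1}$ (which is all but roughly $k\beta m'^{k-1}/(k-1)!$ of them), so you should apply Lemma~\ref{akpqpacking} with, say, $k\beta$ in place of $\theta$, exactly as the paper does. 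Also, your reduction to $\alpha\leq 1/(10k)$ is not a genuine ``without loss of generality'', since enlarging $\alpha$ strengthens the conclusion (more submatchings $M'$ must be handled); the smallness you actually need is recovered simply by taking $\alpha':=\min(\alpha/4,1/(2k))$ in the final application of Lemma~\ref{seqirreduc}, whose top-level requirement your computation already covers with room to spare.
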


\begin{proof}
Introduce a new constant $\beta$ with $\theta \ll \beta \ll \psi$. By Lemma~\ref{obtainseq} there exists a $k$-complex $J$ with $V(J) \subseteq [m]$ such that $J_= \subseteq G$, $m_1 := |V(J)| \geq (1-\sqrt{\theta})m$, and $\delta(J) \geq (m_1, (1-\beta)m_1, \dots, (1-\beta)m_1, (1/k + \alpha-\beta) m_1)$. Then at least $(1-\beta)^{k-2}m_1^{k-1}/(k-1)! \geq (1-k\beta)\binom{m_1}{k-1}$ many $(k-1)$-tuples $S \in \binom{V(J)}{k}$ are edges of $J_{k-1}$, and so have $\deg_{J_=}(S) \geq (1/k+\alpha-\beta)m_1 \geq (1/k+k\beta)m_1$. So we can apply Lemma~\ref{akpqpacking} to $J_=$ with $p=1$ and $q=k$, and with $k\beta$ and $\psi/2$ in place of $\theta$ and $\psi$ respectively. Since $\A^k_{1, k}$ admits a perfect matching, this yields a matching $M$ in $J_= \subseteq G$ with $|V(M)| \geq (1-\psi/2)m_1 \geq (1-\psi)m$. Now fix any $M' \subseteq M$ of size $|M'| \geq (1-\alpha/2)|M|$, and define $m' := |V(M')|$, so $m_1 - m' \leq 2\alpha m_1/3$. It follows that $\delta(J[V(M')]) \geq (m', (1-\alpha)m_1, \dots, (1-\alpha)m_1,(1/k + \alpha/4) m_1)$, and so $J_=[V(M')]$ is $(C, L)$-irreducible on $M'$ by Lemma~\ref{seqirreduc} (with $\alpha/4$ in place of $\alpha$). Since $J_= \subseteq G$ it follows that $G[V(M')]$ is $(C, L)$-irreducible on $M'$.
\end{proof}

\subsection{Incorporating exceptional vertices}

We will need to be able to remove a small number of `bad' vertices of~$H$. Our strategy here will be to find a copy of~$K$ which contains the vertex to be removed, and to delete that copy of~$K$ from~$H$. This copy of~$K$ will ultimately form part of the perfect $K$-packing of~$H$ which we construct. The next lemma allows us to do this by demonstrating that any vertex of a $k$-graph $H$ with high codegree must lie in some copy of $K$ in $H$.

\begin{lemma} \label{incorporateexcep}
Suppose that $1/n \ll \alpha, 1/b$. Let $K$ be a $k$-partite $k$-graph on $b$ vertices, and let $H$ be a $k$-graph on $n$ vertices with $\delta(H) \geq \alpha n$. Then for any vertex $u \in V(H)$ there is a copy of $K$ in $H$ which contains $u$.
\end{lemma}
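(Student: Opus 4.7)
Proof plan. The lemma should follow directly from Theorem~\ref{turandensityzero} applied to the link of $u$. The trivial case is when $K$ contains an isolated vertex $v_0$: since $\delta(H) \geq \alpha n$ yields $|H| \geq \alpha n \binom{n-1}{k-1}/k = \Omega(n^k)$, a direct application of Theorem~\ref{turandensityzero} to $H[V(H)\sm\{u\}]$ produces a copy of $K - v_0$ avoiding $u$, to which we adjoin $u$ playing the role of the isolated $v_0$. So assume henceforth that $v_0 \in V(K)$ lies in at least one edge of $K$.

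Write $L := \{T \in \binom{V(H)\sm\{u\}}{k-1} : T \cup \{u\} \in H\}$ for the link $(k-1)$-graph of $u$. A straightforward double-counting from $\delta(H) \geq \alpha n$ gives $|L| \geq c_1 n^{k-1}$ for some $c_1 = c_1(\alpha,k) > 0$. Write $K_0 := K[V(K)\sm\{v_0\}]$ for the $k$-graph consisting of edges of $K$ avoiding $v_0$, and $L^K(v_0) := \{e \sm \{v_0\} : e \in K,\ v_0 \in e\}$ for the $(k-1)$-graph on $V(K) \sm \{v_0\}$ capturing the link of $v_0$ in $K$. Finding a copy of $K$ in $H$ with $\phi(v_0) = u$ amounts to finding an injection $\phi \colon V(K)\sm\{v_0\} \to V(H)\sm\{u\}$ which is simultaneously a copy of $K_0$ in $H[V(H)\sm\{u\}]$ and a copy of $L^K(v_0)$ in $L$. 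Note that $L^K(v_0)$ is $(k-1)$-partite (with the vertex classes inherited from $K$), and $K_0$ is $k$-partite, so each of the two embedding problems is covered in isolation by Theorem~\ref{turandensityzero}.

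The plan is to carry out these two embeddings simultaneously by iterating Theorem~\ref{turandensityzero} together with its standard supersaturation version, which says that a $(k-1)$-graph with $\Omega(n^{k-1})$ edges contains $\Omega(n^{b_1})$ copies of any fixed $(k-1)$-partite $(k-1)$-graph on $b_1$ vertices (obtained by the usual random-sampling argument from Theorem~\ref{turandensityzero}). First, apply this to $L$ and $L^K(v_0)$ to produce at least $c_2 n^{b_1}$ copies of $L^K(v_0)$ in $L$, where $b_1 := |V(L^K(v_0))| \leq b-1$; each such copy pins down $\phi$ on the vertices of $K$ adjacent to $v_0$. Second, for each such partial embedding, the residual vertex set has $n - O(1)$ vertices and the codegree of $H$ restricted there is still $\geq \alpha n - b = (1-o(1))\alpha n$, so another application of Theorem~\ref{turandensityzero} embeds the remaining vertices of $V(K)\sm\{v_0\}$ (those lying outside $V(L^K(v_0))$) so as to realise the edges of $K_0$.

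The main obstacle is compatibility: a partial embedding produced in the first step must admit a valid extension in the second, respecting the edges of $K_0$ that already involve vertices of $V(L^K(v_0))$ whose images are now fixed. I would handle this by a counting argument: among the $\Omega(n^{b_1})$ partial embeddings produced in the first step, the number that fail to extend to a copy of $K_0$ is $o(n^{b_1})$, because each potential obstruction corresponds to a non-edge of $H$ on a set of $k$ vertices, and the codegree condition $\delta(H) \geq \alpha n$ limits the total number of relevant non-edges. Hence at least one compatible $\phi$ exists, yielding the desired copy of $K$ in $H$ containing $u$.
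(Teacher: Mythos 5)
Your reduction of the problem to simultaneously embedding $L^K(v_0)$ into the link of $u$ and $K_0$ into $H$ identifies the right difficulty, but neither of your two steps resolves it. First, Theorem~\ref{turandensityzero} only supplies \emph{unrooted} copies: your second application asks the remaining vertices of $K_0$ to form edges together with vertices whose images are already fixed, i.e.\ a rooted extension, and neither Theorem~\ref{turandensityzero} nor the codegree hypothesis provides this. A vertex of $K_0$ lying in several edges all of whose other vertices are already embedded must be chosen in the intersection of the corresponding codegree neighbourhoods, and $\delta(H)\geq\alpha n$ with $\alpha$ small does not prevent such an intersection from being empty. Second, the counting patch rests on a false premise: $\delta(H)\geq\alpha n$ does \emph{not} limit the number of non-edges of $H$ — the graph may have only about $\alpha\binom{n}{k}$ edges and hence $\Theta(n^k)$ non-edges — and the obstruction to extending a partial embedding is not a single non-edge but the simultaneous failure of all candidate completions, so counting non-edges cannot show that only $o(n^{b_1})$ of the partial embeddings are bad. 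Already for $k=3$ and $K$ complete $3$-partite with classes of sizes $1,2,2$ containing $v_0$ in a class of size $2$, an extension requires one new vertex adjacent in $H$ to four prescribed pairs at once, and nothing in your argument rules out that for the vast majority of copies of $L^K(v_0)$ in the link of $u$ no such vertex exists. Whether most copies extend depends on \emph{joint} information about the link of $u$ and the edges away from $u$, which your two decoupled applications of Theorem~\ref{turandensityzero} never extract; this is exactly the gap.

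The paper obtains this joint information by a different device. Randomly partition $V(H)$ into $V_1\ni u$ and $V_2$, and let $H'$ consist of those $e\in H$ with $|e\cap V_1|=1$ such that $\{u\}\cup(e\cap V_2)\in H$. A double count from $\delta(H)\geq\alpha n$ shows that there are at least $\alpha^2\binom{n}{k}$ edges $e\in H$ admitting a vertex $y\in e$ with $(e\setminus\{y\})\cup\{u\}\in H$, so for some outcome of the partition $|H'|\geq 2^{-k}\alpha^2\binom{n}{k}$. One application of Theorem~\ref{turandensityzero} then yields a copy of $\B(K)$ inside $H'$; since every edge of $H'$ has exactly one vertex in $V_1$, one class $W_1$ of this copy lies in $V_1$ and the others in $V_2$, and by the definition of $H'$ every transversal of $W_2,\dots,W_k$ together with $u$ is an edge of $H$. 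Thus $H$ contains the complete $k$-partite $k$-graph with classes $W_1\cup\{u\},W_2,\dots,W_k$, each of size at least $b$, which contains a copy of $K$ through $u$. If you wish to keep a link-based argument, you would need something of this flavour — a single structure certifying both the link edges at $u$ and the $K_0$-edges simultaneously — rather than the extension-plus-non-edge count as written.
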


\begin{proof}
Partition the vertices of $H$ into parts $V_1$ and $V_2$ by assigning $u$ to $V_1$ and randomly assigning each other vertex of $H$ to $V_1$ with probability 1/2 and $V_2$ otherwise, where these assignments are independent for each vertex. Let $H' \subseteq H$ be the $k$-graph on vertex set $V(H)$ whose edge set is $$\{e \in H : |e \cap V_1| = 1 \textrm{ and } \{u\} \cup (e \cap V_2) \in H\}.$$ 
So an edge of $H$ is an edge of $H'$ if it has precisely $k-1$ vertices in $V_2$ and these $k-1$ vertices together with $u$ also form an edge of $H$. It suffices to show that for some outcome of our random selection the $k$-graph $H'$ has at least $2^{-k}\alpha^2 \binom{n}{k}$ edges. Indeed, by Theorem~\ref{turandensityzero}, $H'$ must then contain a copy of $\B(K)$ (recall from Definition~\ref{defBULo} that this is the complete $k$-partite $k$-graph with $k$ vertex classes each of size $b$). Together with $u$, this gives a subgraph of $H$ which contains as a subgraph a copy of $K$ containing $u$. 

Now, if we choose vertices $x_1, \dots, x_{k-1}$ in turn to form an edge $\{u, x_1, \dots, x_{k-1}\}$ of $H$, then we have $n-j$ choices for $x_j$ for $1 \leq j \leq k-2$ and at least $\delta(H) \geq \alpha n$ choices for $x_{k-1}$. Since this process will count each edge $(k-1)!$ times, we find that $u$ lies in at least $\alpha \binom{n}{k-1}$ edges of $H$. For any such edge $\{u, x_1, \dots, x_{k-1}\}$ there are at least $\delta(H) \geq \alpha n$ choices of $y$ such that $\{y, x_1, \dots, x_{k-1}\}$ is an edge $e \in H$. Each such edge may be formed by up to $k$ different choices of $x_1, \dots, x_{k-1}$, so we find that there are at least $\alpha^2 \binom{n}{k}$ edges $e \in H$ for which there is some $y \in e$ such that $\{u\} \cup e \sm \{y\}$ is an edge of $H$. For each such edge, the probability that $y$ is assigned to $V_1$ and all vertices of $e \sm \{y\}$ are assigned to $V_2$ is at least $2^{-k}$. So the expected number of edges $e$ with this form whose vertices are assigned in this way is at least $2^{-k} \alpha^2 \binom{n}{k}$, and every such edge is an edge of~$H'$. There must therefore be some outcome of our random partition of $V(H)$ for which $H'$ has at least this many edges, as required. 
\end{proof}

\subsection{Ensuring divisibility of subcluster sizes} \label{sec:delete}

As described in Section~\ref{sec:outline}, a key step in the proof of Lemma~\ref{main1simple} is to delete a $K$-packing in $H$ such that, following these deletions, the size of each subcluster is divisible by $bk \gcd(K)$ (recall that $b$ is the order of the $k$-graph $H$). This allows us to complete the proof by finding a perfect $K$-packing in each of our robustly universal $k$-partite $k$-graphs $G^i \sm Z^i$. In this section we prove Lemma~\ref{gcdbalance}, which states that we can indeed do this. We begin with the following lemma. 

\begin{lemma} \label{gcd1balance}
Let $G$ be a $t$-partite $k$-graph with vertex classes $X_1, \dots, X_t$. Fix any integer $d$, and let $d'$ be a factor of $d$ such that $d'$ divides $|X_j|$ for any $j \in [t]$. Also fix a $k$-graph $K$ on $b$ vertices, and suppose that $\Sa$ is a graph on vertex set~$[t]$ such that 
\begin{enumerate}[(i)]
\item for any connected component $C$ of $\Sa$, $\sum_{j \in V(C)} |X_j|$ is divisible by $d$, and
\item for any edge $uv \in \Sa$ there are at least $bdt^2$ vertex-disjoint copies $K'$ of $K$ in $G$ such that for each $j \in [t]$ we have
\begin{equation*}
|V(K') \cap X_j| \equiv 
\begin{cases}
-d' \mod d & \textrm{if } j = u\\
d' \mod d  & \textrm{if } j = v\\
0 \mod d & \textrm{otherwise}
\end{cases}
\end{equation*}
\end{enumerate}
Then $G$ contains an $K$-packing $M$ of size at most $dt^2$ so that $d$ divides $|X_j \sm V(M)|$ for any $j \in [t]$.
\end{lemma}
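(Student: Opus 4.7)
The plan is to reduce this to a routing problem on the graph $\Sa$. For each $j \in [t]$, let $s_j \in \{0, 1, \dots, d/d' - 1\}$ be the unique integer with $|X_j| \equiv s_j d' \pmod{d}$; this is well-defined because $d' \mid |X_j|$ and $d' \mid d$. The goal becomes to find a $K$-packing $M$ with $|V(M) \cap X_j| \equiv s_j d' \pmod{d}$ for every $j$, whereupon $d \mid |X_j \sm V(M)|$ as required. The key observation is that including a single copy $K'$ of $K$ in $M$, chosen from the family provided by (ii) for an edge $uv$, changes $|V(M) \cap X_u|$ by $\equiv -d' \pmod{d}$, changes $|V(M) \cap X_v|$ by $\equiv d' \pmod{d}$, and leaves all other $|V(M) \cap X_j|$ unchanged modulo $d$. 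Since $\Sa$ is undirected, applying (ii) with the roles of $u$ and $v$ swapped gives the reverse move. Thus, working in the abelian group $(\Z/(d/d'))^{[t]}$ where $s_u \mapsto s_u + 1$ and $s_v \mapsto s_v - 1$ corresponds to one copy, the task is to express the vector $(s_j)_{j \in [t]}$ as a short integer combination of the elementary differences $\mathbf{e}_u - \mathbf{e}_v$ for edges $uv \in \Sa$.

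I would then check that hypothesis (i) is exactly the solvability condition. For each connected component $C$ of $\Sa$, (i) gives $\sum_{j \in V(C)} |X_j| \equiv 0 \pmod{d}$, hence $\sum_{j \in V(C)} s_j \equiv 0 \pmod{d/d'}$. To realise the routing explicitly, fix a spanning tree $T_C$ of $C$ rooted at an arbitrary vertex, and process vertices in post-order. At a leaf $\ell$ with current value $s_\ell \in \{0, \dots, d/d'-1\}$, apply $s_\ell$ moves of the form $\mathbf{e}_p - \mathbf{e}_\ell$ along the tree edge $\ell p$ to its parent $p$; this resets $s_\ell$ to $0$ and adds $s_\ell$ to $s_p$ modulo $d/d'$. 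Prune $\ell$ and recurse. When the last leaf is processed, the value at the root of $T_C$ equals the original sum $\sum_{j \in V(C)} s_j \equiv 0 \pmod{d/d'}$, so every vertex has been zeroed. Each tree edge is used at most $d/d' - 1$ times, so the total number of copies required over all components is at most $(d/d' - 1)(t - 1) \leq dt$.

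Finally, I would materialise these at most $dt$ scheduled moves as an actual vertex-disjoint $K$-packing in $G$ by a greedy choice. Process the moves one at a time; before inserting a copy for some edge $uv$, the previously chosen copies occupy at most $b(dt - 1)$ vertices in total. By hypothesis (ii) there is a family $\F_{uv}$ of at least $bdt^2$ pairwise vertex-disjoint copies of $K$ in $G$ of the correct type for $uv$, and each previously used vertex can destroy at most one member of $\F_{uv}$, so at least $bdt^2 - b(dt - 1) \geq b \geq 1$ members of $\F_{uv}$ remain intact; pick any such copy and add it to $M$. The resulting $K$-packing has $|M| \leq dt \leq dt^2$, and by construction $|V(M) \cap X_j| \equiv s_j d' \pmod{d}$ for each $j$, so $d \mid |X_j \sm V(M)|$ as required. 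The only genuinely substantive step is the tree-routing argument, which succeeds precisely because the divisibility condition on the component sums in (i) matches the unavoidable conservation law $\sum_j (s_j \bmod (d/d')) \equiv 0$; the greedy realisation step is purely a counting exercise enabled by the polynomial slack ($bdt^2$ versus $bdt$) built into hypothesis (ii).
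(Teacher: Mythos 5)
Your proof is correct and takes essentially the same route as the paper: you shift residues of $d'$ modulo $d$ between classes along edges of $\Sa$ using the copies guaranteed by (ii), use (i) as the per-component conservation condition making the routing solvable, and realise the scheduled moves greedily via the $bdt^2$ slack in (ii). The only difference is bookkeeping — you route all residues at once up spanning trees of the components rather than repeatedly fixing one deficient class along a path as the paper does, which in fact gives the slightly stronger bound $|M| \leq dt$.
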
  

\begin{proof}
We prove the lemma by repeatedly choosing an $K$-packing in $G$, deleting its vertices from $G$, and adding its members to $M$ (which is initially taken to be empty). This ensures that $M$ will indeed be an $K$-packing in $G$. After each deletion we continue to write $X_j$ for the vertices in $X_j$ which were not deleted, and $G$ for the $k$-graph which remains (that is, the restriction of $G$ to the undeleted vertices). We will also ensure that each deletion preserves the properties that $d$ divides $\sum_{j \in V(C)} |X_j|$ for any component $C$ of $\Sa$ and that $d'$ divides $|X_j|$ for any $j \in [t]$. 

The deletion step is as follows: suppose that there is some $u \in [t]$ such that $|X_u| \not \equiv 0 \mod d$, and let $x \in [d-1]$ satisfy $xd' \equiv |X_u| \mod d$ (this is possible since $d'$ is a factor of $d$ which divides $|X_u|$). Let $C$ be the component of $\Sa$ containing $u$; since $d$ divides $\sum_{j \in V(C)} |X_j|$ by (i) there must be some $v \in V(C)$ such that $v \neq u$ and $|X_v| \not \equiv 0 \mod d$. Also, since $C$ is a component of $\Sa$ we may choose a path $P$ from $u$ to $v$ in $\Sa$. Let $u = w_0, w_1, \dots, w_p = v$ be the vertices of $P$ (in order), so $p \leq t$.
Now, for each $\ell \in [p]$, $w_{\ell-1}w_\ell$ is an edge of $\Sa$, so by~(ii) we may choose $x$ copies of $K$ in $G$ such that the intersection of each copy of $K$ with the vertex class $X_j$ has size equal to $d'$ modulo $d$ if $j = w_{\ell-1}$, equal to $-d'$ modulo $d$ if $j= w_{\ell}$, and equal to $0$ modulo $d$ otherwise. We do this so that the chosen copies of $K$ are pairwise vertex-disjoint (we shall see shortly that we can simply choose copies of $K$ greedily to ensure this). 
Delete the vertices of each chosen copy of $K$ from $G$ and add these copies to $M$. The effect of these deletions is to reduce $|X_u|$ by $xd'$ modulo $d$, to increase $|X_v|$ by $xd'$ modulo $d$, and to leave the size of each other vertex class unchanged modulo $d$. So we now have $|X_u| \equiv 0 \mod d$, that is, the number of vertex classes $X_j$ with $|X_j| \equiv 0 \mod d$ has increased by at least one.

We repeat the deletion step until $|X_j| \equiv 0 \mod d$ for every $j \in [t]$; the previous observation shows that this must occur after at most $t$ steps. Since at each step we deleted $px < td$ copies of $K$, the $K$-packing $M$ obtained at termination has size less than $dt^2$, as required. The same argument shows that it is possible to choose copies of $K$ as claimed, since at any point the fewer than $dt^2$ previously-deleted copies of $K$ can intersect fewer than $bdt^2$ members of a family of pairwise vertex-disjoint copies of $K$. 
\end{proof}

If $\gcd(K) = 1$, then Lemma~\ref{gcd1balance} is in fact sufficient for our purposes (i.e. to delete a $K$-packing in $H$ so that the number of remaining vertices in each subcluster is divisible by $bk \gcd(K)$).  Indeed, in this case we first arbitrarily delete a small number of copies of $K$ so that $bk$ divides the total number of remaining vertices. We then choose $s$ large enough such that $\U_s(K)$ is defined (see Definition~\ref{defBULo}) and so that $k$ divides $s$, and apply the lemma with the adjacency graph $\mathrm{Adj}(\R')$, $\U_s(K)$, $bk$ and the subclusters $V^i_j$ in place of $\Sa, K, d$ and the sets $X_j$ respectively, and with $d' = 1$. The graph $\mathrm{Adj}(\R')$ is connected since $\R'$ is connected, and so has only one connected component, so condition (i) of Lemma~\ref{gcd1balance} holds by our initial deletions, and condition (ii) Lemma~\ref{gcd1balance} follows from Lemma~\ref{getrobuni} and the fact that $\U_s(K)$ has one vertex class of size $bs-1$ and one of size $bs+1$, whilst all other vertex classes have size $bs$. So we obtain a $\U_s(K)$-packing $M$ in $H$ whose deletion leaves all subclusters with size divisible by $bk$; since $\U_s(K)$ admits a perfect $K$-packing this gives a $K$-packing as required (this argument is given in more detail in the proof of Lemma~\ref{gcdbalance}).

However, if $\gcd(K) \geq 2$ then the situation is somewhat more complicated, and we in fact make two applications of Lemma~\ref{gcd1balance}; once with $\mathrm{Adj}(\R')$ in place of $\Sa$ as described above, and another with $\Sa$ being the graph $\Sa'$ described in the proof outline in Section~\ref{sec:outline}, whose edges indicate that the corresponding subclusters were taken from clusters which form ends of a $\Phi$-dense and $Z$-sparse triple. In the latter application, condition (ii) of Lemma~\ref{gcd1balance} follows as a consequence of Lemma~\ref{getphirobuni}. However, it is more problematic to ensure that condition (i) is satisfied, as $\Sa'$ may have multiple connected components. The key here is that, as outlined in Section~\ref{sec:outline}, $\Sa'$ must have fewer than $p$ components, where $p$ is the least prime factor of $\gcd(K)$. That is, every prime factor of $\gcd(K)$ is strictly greater than $r$, the number of components of~$\Sa'$. The next lemma shows that this fact allows us to choose edges of $\R'$ whose index vectors with respect to the partition of $V(\R')=V(\Sa')$ into components of $\Sa'$ sum to any chosen `target vector' $\vb$. In the proof of Lemma~\ref{gcdbalance} we use these edges of $\R'$ to chose copies of $K$ for deletion to ensure that condition (i) of Lemma~\ref{gcd1balance} is satisfied. Note that Lemma~\ref{edgevectors} would not hold if $d$ had some prime factor $p$ equal to $r$, as demonstrated by the $k$-graph constructed in Proposition~\ref{extrem3} for this value of $p$. So Lemma~\ref{edgevectors} is the point in the proof of Lemma~\ref{main1simple} at which the minimum codegree condition $\delta(H) \geq n/p + \alpha n$ is necessary (in the case $\gcd(K) > 1$).

For this lemma we use a slightly different definition of index vector. Let $\Part$ be a partition of a set $X$ into parts $X_1, \dots, X_r$; then for a given $d$ and any $S \subseteq X$ we now define the index vector $\ib^d_\Part(S)$ of $S$ with respect to $\Part$ to be the vector in $\Z_d^r$ whose $j$-th coordinate is $|S \cap X_j|$ modulo~$d$ (whereas our previous definition had $r$ in place of $d$). Again, we sometimes omit the subscript $\Part$ and write simply $\ib^d(S)$ if $\Part$ is clear from the context. Recall that $\ub_j$ denotes the $j$th unit vector of $\Z^r_d$, \emph{i.e.} the vector whose $j$th coordinate is equal to one with all other coordinates equal to zero.

\begin{lemma} \label{edgevectors}
Suppose that $k, d$ and $r$ are positive integers such that $k \geq 3$ and every prime factor of $d$ is strictly greater than $r$. Let $H$ be a $k$-graph on vertex set $X$, and let $\Part$ partition $X$ into parts $X_1, \dots, X_r$.
Also suppose that for any $j_1, \dots, j_{k-1} \in [r]$ there is an edge $\{u_1, \dots, u_k\} \in H$ with $u_i \in X_{j_i}$ for every $i \in [k-1]$. 
Then for any $\vb = (v_1, \dots, v_r) \in \Z^r_d$ such that $d$ divides $\sum_{i=1}^{r} v_i$ there exist a set $S$ of at most $(r+1)^2$ edges of $H$ and integers $a_e$ for $e \in S$ such that  $0 \leq a_e \leq d-1$ for each $e \in S$, $d$ divides $\sum_{e \in S} a_e$ and, working in $\Z^r_d$, we have $\sum_{e \in S} a_e \ib^d(e) = \vb$. 
\end{lemma}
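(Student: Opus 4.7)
The plan is to reduce the statement to a single algebraic claim about the subgroup of $\Z^r_d$ generated by the index vectors of the available edges. For each $(k-1)$-multiset $\mu$ of $[r]$, I would use the hypothesis to fix an edge $e_\mu \in H$ realising $\mu$, and record by $f(\mu) \in [r]$ the index of the class containing the ``free'' vertex $u_k$, so that $\ib^d(e_\mu) = \mu + \ub_{f(\mu)}$ in $\Z^r_d$ (identifying $\mu$ with its index vector). Writing $T := \{\ib^d(e_\mu) : \mu\}$ and $W := \{\vb \in \Z^r_d : \sum_i v_i \equiv 0 \pmod d\}$, the key claim I would prove is $\langle T - T\rangle_{\Z_d} = W$.

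To establish this claim, by duality it suffices to show that any $\Z_d$-linear functional $\lambda(\vb) = \sum_i \lambda_i v_i$ constant on $T$ must have $\lambda_1 = \cdots = \lambda_r$. Suppose $\lambda$ takes the constant value $C$ on $T$, so that $\sum_{j \in \mu} \lambda_j + \lambda_{f(\mu)} = C$ for every $(k-1)$-multiset $\mu$. For each $(k-2)$-multiset $\mu_0$, set $F_{\mu_0}(j) := f(\mu_0 + \ub_j)$ and $A_{\mu_0} := C - \sum_{j \in \mu_0} \lambda_j$; this yields $\lambda_{F_{\mu_0}(j)} = A_{\mu_0} - \lambda_j$ for every $j \in [r]$. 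For any two $(k-2)$-multisets $\mu_0^1, \mu_0^2$, the composition $F := F_{\mu_0^1} \circ F_{\mu_0^2}$ then satisfies $\lambda_{F(j)} = \lambda_j + \delta$ with $\delta := A_{\mu_0^1} - A_{\mu_0^2}$, and iterating gives $\lambda_{F^n(j)} = \lambda_j + n\delta$. Since $F$ maps $[r]$ to itself, its orbit is eventually periodic with some period $s \leq r$, which forces $s\delta \equiv 0 \pmod d$. The divisibility hypothesis enters here: every prime factor of $d$ exceeds $r \geq s$, so $\gcd(s, d) = 1$ and hence $\delta \equiv 0 \pmod d$, meaning $A_{\mu_0}$ is independent of $\mu_0$. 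Consequently $\sum_{j \in \mu_0} \lambda_j$ is constant as $\mu_0$ ranges over $(k-2)$-multisets, and comparing $\mu_0 = (1)^{k-2}$ with $\mu_0 = (j, 1, \ldots, 1)$ (legitimate because $k \geq 3$) forces $\lambda_j = \lambda_1$ for every $j \in [r]$. The hardest part of the argument is precisely this step, where the adversarial nature of $f$ must be defeated by the composition trick together with the coprimality input.

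Finally, with $\langle T - T\rangle_{\Z_d} = W$ in hand, and since $W \cong \Z^{r-1}_d$ admits a generating set of size $r-1$ as a $\Z_d$-module, I would pick $r-1$ differences $\nu_i - \nu'_i \in T - T$ generating $W$ and write $\vb = \sum_{i=1}^{r-1} c_i(\nu_i - \nu'_i)$ with $c_i \in [0, d-1]$. Taking $S$ to consist of the edges $e_{\mu_i}, e_{\mu'_i}$ realising the vectors $\nu_i, \nu'_i$, with coefficients $a_{e_{\mu_i}} := c_i$ and $a_{e_{\mu'_i}} := d - c_i$ (combining coefficients and reducing modulo $d$ whenever an edge is selected more than once, with the excess multiples of $d$ absorbed into the total coefficient sum), the identity $c_i \nu_i + (d - c_i) \nu'_i \equiv c_i(\nu_i - \nu'_i) \pmod d$ gives $\sum_e a_e \ib^d(e) = \vb$ in $\Z^r_d$, each pair contributes $d$ to the coefficient sum (so $d$ divides $\sum_e a_e$ before and after combining), and $|S| \leq 2(r-1) \leq (r+1)^2$ as required.
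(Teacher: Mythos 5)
Your dual argument does correctly establish the qualitative structural claim. Writing $T$ for the chosen index vectors and $W=\{\vb\in\Z_d^r:\sum_i v_i\equiv 0\}$, the orbit/period trick (a cycle of length $s\le r$ forces $s\delta\equiv 0$, and coprimality of $s$ with $d$ kills $\delta$) shows that any $\Z_d$-functional constant on $T$ has all coordinates equal, and double-perp duality in the finite group $\Z_d^r$ then gives $\langle T-T\rangle_{\Z_d}=W$. This is a genuinely different route from the paper, which instead proves directly that some $\ub_i-\ub_j$ is an integer combination of at most $r$ elements of $\D=\{\ib^d(e)-\ib^d(e'):e,e'\in H\}$ (via the $2\ub_i-\ub_j-\ub_\ell$ configurations, the involution $f(f(j))=j$, and coprimality of $r$ with $d$) and then inducts on $r$ by merging $X_{r-1}$ and $X_r$.

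The gap is in your final extraction step. From $\langle T-T\rangle_{\Z_d}=W$ you cannot conclude that some $r-1$ elements of $T-T$ generate $W$: over $\Z_d$ with $d$ having several prime factors this selection principle is simply false as a module-theoretic statement (for instance $\{2,3\}$ generates $\Z_6$ over $\Z_6$, but no single element of it does). The natural repair -- choosing, for each prime $p\mid d$, a subset whose image spans the hyperplane mod $p$ -- only yields a generating subset of size about $(r-1)\omega(d)$, where $\omega(d)$ is the number of distinct prime factors of $d$; since the lemma allows arbitrary $d$, and $|T-T|$ itself grows with $k$, this does not produce a bound depending on $r$ alone, which is exactly what $|S|\le(r+1)^2$ demands. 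So the quantitative conclusion (boundedly many edges, uniformly in $k$ and $d$, with the coefficient sum divisible by $d$) is not established by the proposal; this is precisely where the paper's proof does its real work, obtaining a representation of a single $\ub_{r-1}-\ub_r$ using at most $2r$ edges and then controlling the total count ($r^2+2r\le(r+1)^2$) and the divisibility of $\sum_e a_e$ through the induction. The remaining issues in your write-up (terms with $c_i=0$ giving $a_e=d\notin[0,d-1]$, and combining repeated edges) are minor and fixable, but the selection step needs a new argument -- either a structural reason why $T-T$ admits a generating subset of size bounded in $r$, or a reduction along the lines of the paper's claim-plus-induction.
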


Note that we do not assume that $H$ is $\Part$-partite. Also, throughout the proof of Lemma~\ref{edgevectors} we work within $\Z^r_d$ for all vector calculations (so all equalities of vectors should be interpreted in this context). \medskip

\begin{proof}
We fix $k$ and $d$, and proceed by induction on $r$; for this note that the fact that every prime factor of $d$ is strictly greater than $r$ implies that every prime factor of $d$ is strictly greater than $r'$ for any $r' \leq r$. For $r = 1$ the lemma is trivial since we must have $\vb = (0)$. So fix $r \geq 2$, and assume that the lemma holds with $r-1$ in place of~$r$.

We claim that for some distinct $i, j \in [r]$ the vector $\ub_i - \ub_j$ can be written as an integer combination of at most $r$ members of $\D := \{\ib^d(e) - \ib^d(e') : e, e' \in H\}$ (that is, there are integers $c_1, \dots, c_p$ and vectors $\xb_1, \dots, \xb_p \in \D$ such that $p \leq r$ and $\ub_i - \ub_j = \sum_{i \in [p]} c_i\xb_i$; we don't place any other restrictions on the integers $c_i$). To see that this is true, suppose for a contradiction that the claim is false, and fix any distinct $i, j \in [r]$. Since $k \geq 3$, our assumption on $H$ allows us to choose an edge of $e \in H$ which has at least two vertices in $X_i$. 
Similarly we may choose an edge $e' \in H$ such that $\ib^d(e') = \ib^d(e) - 2\ub_i + \ub_j + \ub_\ell$ for some $\ell \in [r]$.
Then $\ib^d(e) - \ib^d(e') = 2\ub_i - \ub_j - \ub_\ell \in \D$. 
If $\ell = i$, then this gives $\ub_i - \ub_j \in \D$, giving a contradiction (since $\ub_i - \ub_j$ can then be expressed as an integer combination of a single member of $\D$).
Similarly, if $\ell = j$, then we obtain $2\ub_i - 2\ub_j \in \D$. Since $r \geq 2$ we know that $d$ is odd, so $d' := (d+1)/2$ is an integer with $d'(2\ub_i - 2\ub_j) = \ub_i - \ub_j,$ and so $\ub_i - \ub_j$ is an integer combination of a single member of $\D$, again giving a contradiction. 
So we must have $\ell \neq i, j$; since $i$ and $j$ were arbitrary this implies that for any distinct $i, j \in [r]$ there is some $\ell = \ell(i, j)$ which is distinct from $i$ and $j$ such that $\xb_{i, j} := 2 \ub_i - \ub_j - \ub_\ell \in \D$. 
Fix any $i$ and write $f(j) := \ell(i, j)$ for each $j \neq i$. Then for any $j \neq i$ we can write
\begin{align*}
\xb_{i, j} - \xb_{i, f(j)} = (2 \ub_i - \ub_j - \ub_{f(j)}) - (2 \ub_i - \ub_{f(j)} - \ub_{f(f(j))}) = \ub_{f(f(j))}-\ub_j.
\end{align*}
This expresses $\ub_j - \ub_{f(f(j))}$ as an integer combination of $2 \leq r$ members of $\D$, giving another contradiction unless $f(f(j)) = j$ for any $j \neq i$. So we may assume that the family $\F_i := \{\{j, f(j)\} : j \in [r] \sm \{i\}\}$ is a partition of $[r] \sm \{i\}$ into $(r-1)/2$ pairs (note in particular this implies that $r$ is odd). Then write
$$ \yb_i := \sum_{\{j, \ell\} \in \F_i} \xb_{i, j} = \sum_{\{j, \ell\} \in \F_i} 2 \ub_i - \ub_j - \ub_\ell = (r-1) \ub_i - \sum_{j \in [r] \sm \{i\}} \ub_j.$$
So $\yb_i$ can be written as an integer combination of at most $(r-1)/2$ members of $\D$.
Since $r$ and $d$ are coprime, we may fix integers $\lambda, \mu$ such that $\lambda r + \mu d = 1$, whereupon $\lambda (\yb_1 -\yb_2) = \lambda r \ub_1 - \lambda r \ub_2 = \ub_1 - \ub_2$ can be written as an integer combination of at most $r-1$ members of $\D$, giving a final contradiction which completes the proof of the claim. 

We may therefore assume without loss of generality that $\ub_{r-1} - \ub_r$ can be written as an integer combination of at most $r$ elements of $\D$. That is, we may choose a set $S_1$ of at most $2r$ edges of $H$ and integers $m_e$ for $e \in S_1$ such that $\sum_{e \in S_1} m_e = 0$ and $\sum_{e \in S_1} m_e \ib^d_\Part(e) = \ub_{r-1} - \ub_r$.
Let $Y_j = X_j$ for each $j \in [r-2]$, and let $Y_{r-1} = X_{r-1} \cup X_r$. Then $H$ is a $k$-graph on vertex set $X = Y_1 \cup \dots \cup Y_{r-1}$ such that for any $j_1, \dots, j_{k-1} \in [r-1]$ there is an edge $\{u_1, \dots, u_k\} \in H$ with $v_i \in Y_{j_i}$ for every $i \in [k-1]$. Let $\Qart$ denote the partition of $X$ into the parts $Y_1, \dots, Y_{r-1}$; then by our induction hypothesis we may choose a set $S_2$ of at most $r^2$ edges of $H$ and integers $n_e$ for $e \in S$ such that $d$ divides $\sum_{e \in S_2} n_e$ and $\sum_{e \in S_2} n_e \ib^d_\Qart(e) = (v_1, \dots, v_{d-2}, v_{d-1}+v_d)$. The latter equation implies that $\sum_{e \in S_2} n_e \ib^d_\Part(e) = (v_1, \dots, v_{d-2}, y, z)$ for some $y$ and $z$ with $y+z = v_{d-1}+v_d$ modulo $d$, and so
$$(v_{d-1}-y) (\ub_{r-1} - \ub_r) + \sum_{e \in S_2} n_e \ib^d_\Part(e) = (v_1, \dots, v_{d-2}, v_{d-1}, v_d) = \vb.$$
Let $S := S_1 \cup S_2$ and let integers $0 \leq a_e \leq d-1$ satisfy $a_e \equiv (v_{d-1} - y)m_e + n_e \mod d$ for each $e \in S$ (we take $m_e  = 0$ for any $e \notin S_1$ and $n_e = 0$ for any $e \notin S_2$). Then $S$ is a set of at most $r^2 + 2r \leq (r+1)^2$ edges of $H$, and the equation above shows that $\sum_{e \in S} a_e \ib^d_\Part(e) = \vb$. Finally, 
$$\sum_{e \in S} a_e \equiv (v_{d-1} - y) \sum_{e \in S} m_e + \sum_{e \in S} n_e \equiv 0 + 0 \equiv 0 \mod d,$$
so $d$ divides $\sum_{e \in S} a_e$, as required.
\end{proof}

Finally, we can now give the full statement and proof of Lemma~\ref{gcdbalance}, showing that we can delete a $K$-packing in $H$ so that, following these deletions, all subclusters have size divisible by $bk\gcd(K)$. We achieve this by deleting five vertex-disjoint $K$-packings in succession. The first deletion is simple and ensures that the total number of vertices is divisible by $b\gcd(K)$, whilst the second uses Lemma~\ref{edgevectors} to ensure that $\gcd(K)$ divides the total number of vertices in subclusters within any component of $\Sa$. The third then uses Lemma~\ref{gcd1balance} to ensure that $\gcd(K)$ divides the size of each subcluster, and the fourth (again straightforward) maintains this property whilst also ensuring that $bk\gcd(K)$ divides the total number of vertices. Finally, our fifth deletion uses Lemma~\ref{gcd1balance} again to ensure that $bk\gcd(K)$ divides the number of vertices within any subcluster.
 
\begin{lemma} \label{gcdbalance} 
Suppose that $N, s, t, b$ and $k$ are integers such that $1/N \ll 1/t, 1/s \ll 1/b, 1/k$. Let $K$ be the complete $k$-partite $k$-graph with vertex class sizes $b_1, \dots, b_k$, where $b_1 + \dots + b_k = b$, and suppose that $\gcd(K)$ is defined, that $s$ is divisible by $k \gcd(K)$ and that $b_1$ and $\gcd(K)$ are coprime. Next let $G$ be a $t$-partite $k$-graph with vertex classes $Y_1, \dots, Y_t$, and suppose that $b$ divides $|Y|$, where $Y = \bigcup_{i \in [t]} Y_i$. Finally suppose that $\R$ is a connected $k$-graph on $[t]$, and $\Sa$ is a graph on $[t]$ with $r$ connected components $C_1, \dots, C_r$, such that the following properties hold.
\begin{enumerate}[(i)]
\item For any edge $e \in \R$ there are more than $N$ vertex-disjoint copies of $\B(b(s+1))$ in $G[\bigcup_{j \in e} Y_j]$,
\item For any edge $uv \in \Sa$ there is a set $T \in \binom{[t]\sm \{u, v\}}{k-1}$ such that $G[\bigcup_{j \in \{u, v\} \cup T} Y_j]$ contains more than $N$ vertex-disjoint copies of $\Phi(b(s+1))$ whose end vertex classes lie in $Y_u$ and $Y_v$ and whose central vertex classes lie in the sets $Y_j$ for $j \in T$. 
\item If $\gcd(K) > 1$, then $r$ (the number of components of $\Sa$) is smaller than the least prime factor of $\gcd(K)$, and for any $i_1, \dots, i_{k-1} \in [r]$ there is some edge $e = \{u_1, \dots, u_k\}$ of $\R$ such that $u_j \in V(C_{i_j})$ for each $j \in [k-1]$.
\end{enumerate} 
Then $G$ contains a $K$-packing $M$ of size at most $N/2b$ such that $bk\gcd(K)$ divides $|Y_j \sm V(M)|$ for every $j \in [t]$.
\end{lemma}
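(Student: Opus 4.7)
The plan is to delete five successive vertex-disjoint $K$-packings, each imposing a stronger divisibility condition on the subcluster sizes; their union is the desired packing $M$. Let $D := \gcd(K)$. After each stage we reuse $Y_j$ and $G$ for the restriction to undeleted vertices. Property~(i) supplies more than $N$ vertex-disjoint copies of $\B(b(s+1))$ per edge of $\R$ (each containing a perfect $K$-packing by Proposition~\ref{hpackings}), and property~(ii) supplies more than $N$ vertex-disjoint copies of $\Phi(b(s+1))$ per edge of $\Sa$; since altogether we will delete $O(bsk^2Dt^2) \ll N$ copies of $K$, we can always choose them disjoint from those already deleted.

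Stage~1 ($bD \mid |Y|$): arbitrarily delete $(|Y|/b) \bmod D$ copies of $K$ on a fixed edge of $\R$, using that $b \mid |Y|$. Stage~2 ($D \mid \sum_{j \in V(C_i)}|Y_j|$ for every component $C_i$ of $\Sa$): let $\vb^* \in \Z^r_D$ record these residues, noting $\sum_i v_i^* \equiv 0 \pmod D$ after Stage~1. Since every vertex class of $K$ has size congruent to $b_1$ modulo $D$, deleting one copy of $K$ on an edge $e \in \R$ shifts $\vb^*$ by $-b_1 \ib^D_\Part(e)$, where $\Part$ partitions $[t]$ into the components of $\Sa$. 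Using that $b_1$ is coprime to $D$ and $r$ is strictly less than every prime factor of $D$ by hypothesis~(iii), apply Lemma~\ref{edgevectors} to $\R$ with target $b_1^{-1}\vb^*$ to obtain at most $(r+1)^2$ edges $e$ of $\R$ and coefficients $a_e \in [0,D-1]$ such that $D \mid \sum_e a_e$ and $\sum_e a_e \ib^D_\Part(e) = b_1^{-1}\vb^*$; delete $a_e$ copies of $K$ on each such edge. This clears $\vb^*$ and preserves $bD \mid |Y|$ since $bD$ divides the total $b\sum_e a_e$ vertices removed.

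Stage~3 ($D \mid |Y_j|$ for all $j$) applies Lemma~\ref{gcd1balance} to $\Sa$ with $d=D$, $d'=1$, and $\U_s(K)$ in place of $K$: inside each $\Phi(b(s+1))$ from~(ii), embed $\U_s(K)$ so that its size-$(sb+D)$ class occupies the two end classes with $D-1$ vertices in the end inside $Y_u$ and $sb+1$ vertices in the end inside $Y_v$ (giving residues $-1$ and $+1$ modulo $D$); the remaining classes of $\U_s(K)$, all of size divisible by $D$, sit in the central classes of $\Phi(b(s+1))$, and the resulting $\U_s(K)$-packing is converted to a $K$-packing via Proposition~\ref{hpackings}. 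Stage~4 ($bkD \mid |Y|$ while preserving $D \mid |Y_j|$): writing $|Y|=bDq$, choose $\ell \in \{0,\dots,k-1\}$ with $\ell \equiv q \pmod k$, and delete $\ell D$ copies of $K$ on one edge of $\R$, all placed identically; each affected $|Y_j|$ then decreases by $\ell D$ times some $b_i$, a multiple of $D$, and $|Y|$ becomes $bD(q-\ell)$, divisible by $bkD$. Stage~5 ($bkD \mid |Y_j|$ for all $j$) applies Lemma~\ref{gcd1balance} to $\mathrm{Adj}(\R)$ (connected since $\R$ is) with $d=bkD$, $d'=D$, and $\U_s(K)$: inside a $\B(b(s+1))$ on an edge of $\R$ containing both $u$ and $v$, embed $\U_s(K)$ naturally with its size-$(sb-D)$ class in $Y_u$ and size-$(sb+D)$ class in $Y_v$; since $kD \mid s$ forces $bkD \mid sb$, the residues modulo $bkD$ are $-D$ and $+D$ as required.

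The main obstacle is Stage~2, the only stage that cannot be reduced to independent adjustments at individual vertex classes or edges: here the algebraic mechanism of Lemma~\ref{edgevectors} (which requires $r$ less than every prime factor of $D$) and the structural hypothesis~(iii) are both essential, corresponding respectively to the codegree bounds $\delta(H) \geq n/p + \alpha n$ and $\delta(H) \geq \sigma(K)n + \alpha n$ from Lemma~\ref{main1simple}. All remaining verifications --- counting $O(bsk^2Dt^2) \leq N/2b$ deleted copies, checking Lemma~\ref{gcd1balance}(i) going into Stages~3 and~5, and observing that $\U_s(K)$ fits inside $\Phi(b(s+1))$ and $\B(b(s+1))$ (which reduces to $b \geq D$, a consequence of $D \leq b-k$) --- are routine.
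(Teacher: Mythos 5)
Your proposal is correct and follows essentially the same route as the paper: the same five successive deletions (first $b\gcd(K)\mid|Y|$, then component sums via Lemma~\ref{edgevectors} using hypothesis~(iii), then $\gcd(K)\mid|Y_j|$ and $bk\gcd(K)\mid|Y_j|$ via two applications of Lemma~\ref{gcd1balance}, with the interpolating step fixing $bk\gcd(K)\mid|Y|$), and your target vector $b_1^{-1}\vb^*$ is exactly the paper's $(v_1,\dots,v_r)$. The only deviation is cosmetic: in the third stage the paper embeds $\B(b\gcd(K))$ in each $\Phi(b(s+1))$ with a single vertex at one end, whereas you split the large class of $\U_s(K)$ across the two ends; both give the required residues $\mp 1$ modulo $\gcd(K)$.
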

 
\begin{proof}
As in Lemma~\ref{gcd1balance}, we prove the lemma by repeatedly choosing some vertex-disjoint copies of $K$ in $G$ and deleting their vertices from $G$; as there, we continue to write $Y_j$, $Y$ and~$G$ for the sets and graph obtained following these deletions. We shall verify at the end of the proof that the $K$-packing $M$ formed by all the deleted copies of $K$ has size at most $N/2b$, so $M$ covers at most $N/2$ vertices. With this in mind, we can always assume that (i) and (ii) provide at least $N/2$ copies of $\B(b(s+1))$ and $\Phi(b(s+1))$ of the given forms.

Our first step is to delete at most $\gcd(K)$ pairwise vertex-disjoint copies of $K$ from $G$ so that, following these deletions, we have that $b\gcd(K)$ divides $|Y|$. Since each copy of $K$ has $b$ vertices, and $b$ divides $|Y|$, we can indeed achieve this by deleting at most $\gcd(K)$ pairwise vertex-disjoint copies of $K$ from $G$; by (i) these copies can be chosen from $G[\bigcup_{j \in e} Y_j]$ for an arbitrary edge $e \in \R$.
 
The next step is to delete at most $(r+1)^2\gcd(K)$ copies of $K$ from $G$ so that $\gcd(K)$ divides $\sum_{u \in V(C_i)} |Y_u|$ for each component $C_i$ of $\Sa$. If $\gcd(K) = 1$ then no deletions are necessary, whilst if $\gcd(K) > 1$ then we use Lemma~\ref{edgevectors}. For each $i \in [r]$, write $V_i := \bigcup_{u \in V(C_i)} Y_u$, and define $v_i \in \{0, \dots, \gcd(K)-1\}$ to be such that $v_i b_1 \equiv |V_i| \mod \gcd(K)$ (since $\gcd(K)$ and $b_1$ are coprime a unique such $v_i$ exists). Then since $\gcd(K)$ divides $|Y|$, we have $\sum_{i \in [r]} v_i b_1 \equiv |Y| \equiv 0$ modulo $\gcd(K)$, so $\sum_{i \in [r]} v_i$ is divisible by $\gcd(K)$. We may therefore apply Lemma~\ref{edgevectors} with $\R, \gcd(K)$ and the sets $V(C_i)$ in place of $H, d$ and the sets $X_i$ respectively to obtain at most $(r+1)^2$ edges $e_1, \dots, e_p \in \R$ and integers $a_1, \dots, a_p \in \{0, 1, \dots, \gcd(K)-1\}$ so that $\gcd(K)$ divides $\sum_{j \in [p]} a_j$ and (working in $\Z^r_{\gcd(K)}$) we have
$$\sum_{j \in [p]} a_j \ib^d_\Qart(e_j) = (v_1, \dots, v_r),$$
where $\Qart$ denotes the partition of $[t]$ into parts $V(C_i)$ for $i \in [r]$. For each $j \in [p]$ by (i) we may choose $a_j$ pairwise vertex-disjoint copies of $\B(b(s+1))$ in $G[\bigcup_{\ell \in e_j} Y_\ell]$, within which we can find $a_j$ pairwise vertex-disjoint copies of $K$. Delete all of these copies of $K$ from~$G$. By definition of $\gcd(K)$, each vertex class of $K$ has size $b_1$ modulo $\gcd(K)$. Furthermore, since $G$ is $t$-partite, each of the deleted copies of $K$ has one vertex class contained in $Y_\ell$ for each $\ell \in e_j$. So for any $i \in [r]$, the total number of vertices deleted from $V_i$ is 
$$b_1 \sum_{j \in [p]} a_j |e_j \cap V(C_i)| \equiv b_1 v_i \equiv |V_i| \mod \gcd(K).$$
So following these deletions we have that $\gcd(K)$ divides $|V_i| = \sum_{u \in V(C_i)} |Y_u|$ for each component $C_i$ of $\Sa$. Furthermore, since in total $\sum_{j \in [p]} a_j$ copies of $K$ were deleted, each with $b$ vertices, our assumption that $\gcd(K)$ divides $\sum_{j \in [p]} a_j$ implies that the total number of vertices deleted is divisible by $b\gcd(K)$. So we still have that $b\gcd(K)$ divides $|Y|$ after these deletions.

We now delete at most $N/3b$ further copies of $K$ from $G$ so that $\gcd(K)$ divides $|Y_i|$ for every $i \in [t]$. For this we use Lemma~\ref{gcd1balance} with $\B(b\gcd(K)), \gcd(K)$ and the sets $Y_i$ in place of $K, d$ and the sets $X_i$ respectively, with $G, \Sa$ and $t$ playing the same role here as there, and with $d'=1$. Then condition (i) of Lemma~\ref{gcd1balance} is satisfied as a consequence of our last round of deletions. Also, by (ii), for any edge $uv \in \Sa$ we can choose $N/2 \geq (bk\gcd(K))\gcd(K)t^2$ vertex-disjoint copies of $\Phi(b(s+1))$ in $G$ whose end vertex classes lie in $Y_u$ and $Y_v$, and whose central vertex classes are each a subset of some $Y_j$. Within each of these copies of $\Phi(b(s+1))$ we can find a copy of $\B(b\gcd(K))$ with one vertex in $Y_v$, $b\gcd(K)-1 \equiv -1 \mod \gcd(K)$ vertices in $Y_u$, and $b\gcd(K) \equiv 0 \mod \gcd(K)$ vertices in each of the other vertex classes intersected by this copy of $\Phi(b(s+1))$. So condition (ii) of Lemma~\ref{gcd1balance} is satisfied also, and so Lemma~\ref{gcd1balance} yields a $\B(b\gcd(K))$-packing $M'$ in $G$ of size at most $\gcd(K)t^2$ such that, deleting all vertices covered by $M'$ from $G$, we find that $\gcd(K)$ divides $|Y_i|$ for every $i \in [t]$. Then, since $\B(b\gcd(K))$ has $kb\gcd(K)$ vertices, it remains the case that $b \gcd(K)$ divides $|Y|$ following these deletions. Recall that $\B(K) = \B(b)$, so $\B(b\gcd(K))$ admits a perfect $\B(K)$-packing of size $\gcd(K)$, whilst $\B(K)$ admits a perfect $K$-packing of size $k$ by Proposition~\ref{hpackings}. So there is a $K$-packing in $G$ which covers the same vertices as $M'$, so we did indeed delete a $K$-packing of size at most $kt^2\gcd(K)^2 \leq N/5b$ in this step.

Next we delete at most $k\gcd(K)$ further copies of $K$ from $G$ so that, following these deletions, we have that $bk\gcd(K)$ divides $|Y|$, as well as preserving the property that $\gcd(K)$ divides $|Y_i|$ for every $i \in [t]$. Since $b\gcd(K)$ divides $|Y|$, we can achieve the latter property by deleting $z\gcd(K)$ copies of $K$ for some integer $0 \leq z \leq k-1$. So choose an arbitrary edge $e \in \R$, and use (i) to choose $z\gcd(K)$ vertex-disjoint copies of $K$ in $G[\bigcup_{i \in e} Y_e]$; then for any $j \in [t]$ the number of vertices deleted from $Y_j$ is equal to $z \gcd(K) b_1 \equiv 0$ modulo $\gcd(K)$, so $|Y_j|$ is still divisible by $\gcd(K)$ following these deletions.

Finally, we apply Lemma~\ref{gcd1balance} again to delete a final set of at most $N/5b$ copies of $K$ from $G$ so that, following these deletions, $bk\gcd(K)$ divides $|Y_i|$ for every $i \in [t]$, giving the desired $K$-packing $M$. We shall use the adjacency graph $\mathrm{Adj}(\R)$ in place of $\Sa$; since $\R$ is connected, $\mathrm{Adj}(\R)$ is connected also, and so condition~(i) of Lemma~\ref{gcd1balance} holds (with $bk\gcd(K)$ in place of $d$) as a consequence of our last round of deletions. Furthermore, for any edge $uv$ of $\mathrm{Adj}(\R)$ there is an edge $e \in \R$ containing $u$ and $v$. So by (i) there are at least $N/2 > (bsk)(bk\gcd(K))t^2$ vertex-disjoint copies of $\B(b(s+1))$ in $G[\bigcup_{\ell \in e} Y_\ell]$, each of which contains a copy of $\U_s(K)$ with $bs-\gcd(K)$ vertices in $Y_u$, $bs+\gcd(K)$ vertices in $Y_v$ and $bs$ vertices in $Y_w$ for any $w \in e \sm \{u, v\}$ 
(we can assume that $\U_s(K)$ is defined since we assumed that $1/s \ll 1/b, 1/k$). Since $bs \equiv 0$ modulo $bk\gcd(K)$, we may apply Lemma~\ref{gcd1balance} with $\U_s(K), \mathrm{Adj}(\R), \gcd(K), bk\gcd(K)$ and the sets $Y_j$ in place of $K, \Sa, d', d$ and the sets $X_j$ respectively, whereupon the requirement that $\gcd(K)$ divides $|Y_j|$ for every $j \in [t]$ is satisfied by our previous deletions. 
This gives a $\U_s(K)$-packing $M''$ in $G$ of size at most $bk\gcd(K)t^2$ in $G$ such that, deleting all members of $M''$ from $G$, we find that $bk\gcd(K)$ divides $|Y_i|$ for every $i \in [t]$. 
Since $\U_s(K)$ admits a perfect $K$-packing of size $ks$ by Proposition~\ref{hpackings}, we may treat $M''$ as being a $K$-packing in $G$ of size at most $k^2bs\gcd(K)t^2 \leq N/5b$, as required.

To complete the proof we must show that at most $N/2b$ copies of $K$ were deleted in total. Indeed, we deleted at most $\gcd(K)$ copies in the first step, at most $\sum_{j \in [p]} a_j \leq (r+1)^2\gcd(K)$ copies in the second step, at most $N/5b$ copies in the third step, at most $k\gcd(K)$ copies in the fourth step, and at most $N/5b$ copies in the final step, that is, fewer than $N/2b$ copies in total.
\end{proof} 
 
\subsection{Ensuring equality of subcluster sizes} \label{sec:deleteeq}

In Lemma~\ref{main2simple}, each vertex class of the $k$-partite $k$-graph $K$ has equal size $b_1$, so it is insufficient to delete a $K$-packing in $H$ so that every cluster satisfies certain divisibility conditions. Instead, we must ensure that the clusters in each of our robustly universal $k$-partite $k$-graphs have equal size. In this section we prove Lemma~\ref{samesizebalance}, which gives sufficient conditions for this to be possible. The stronger minimum codegree condition on $H$ will ensure that we can satisfy these conditions, and so apply Lemma~\ref{samesizebalance} in the proof of Lemma~\ref{main2simple} similarly as Lemma~\ref{gcdbalance} is used in the proof of Lemma~\ref{main1simple}. However, the results of this section are stated in a more general form which we can also use in the proof of Theorem~\ref{cyclepack}. 

In this section we proceed under the following setup, in which the $k$-graphs $G$ and $\R$, the directed graph $\Sa^+$, base graph $\Sa$ and clusters $U_i$ play the roles described in Section~\ref{sec:outline}. A \emph{directed graph} $D$ consists of a vertex set $V$ and a set of edges $E$, where each edge is an ordered pair $(u, v)$ with $u, v \in V$ and $u \neq v$. We write $u \to v$ to mean that $(u, v) \in E$. The \emph{outdegree} $\deg^+(u)$ of a vertex $u \in V$ is the number of vertices $v \in V$ for which $u \to v$, and the \emph{minimum outdegree} of $D$ is  $\delta^+(D) := \min_{u \in V} \deg^+(u)$. Finally, the \emph{base graph} of $D$ is the (undirected) graph $G$ on $V$ in which $uv$ is an edge of $G$ if either $u \to v$ or $v \to u$ in $D$. 

\begin{setup} \label{balancesetup}
Let $\R$ be a $k$-graph with vertex set $[m]$ which admits a perfect matching $M_\R$, and for any $i \in [m]$ let $e(i)$ denote the edge of $M_\R$ which contains $i$. Let $\Sa^+$ be a directed graph on $[m]$, and $\Sa$ be the base graph of $\Sa^+$. Also let $G$ be an $m$-partite $k$-graph with vertex classes $U_1, \dots, U_m$ each of size $n$, and let $K$ be the $k$-partite $k$-graph whose vertex classes each have size $b_1$. Suppose also that for any sets $V_\ell \subseteq U_\ell$ with $|V_\ell| \geq n/2$ for each $\ell \in [m]$ the following statements hold for $V := \bigcup_{\ell \in [m]} V_\ell$.
\begin{enumerate}[(i)]
\item For any edge $e \in \R$ there are at least $N$ vertex-disjoint copies of $K$ in $G[\bigcup_{\ell \in e} V_\ell]$.
\item For any edge $i \to j$ of $\Sa^+$, there are at least $N$ vertex-disjoint copies of $\Phi(b_1)$ in $G[V]$ whose end vertex classes lie in $V_i$ and $V_j$ and whose central vertex classes lie in the sets $V_\ell$ for $\ell \in e(i) \sm \{i\}$.
\end{enumerate}
\end{setup}

Note that (ii) implies that $e(i) \neq e(j)$ for any edge $ij \in \Sa$.
If $i \to j$ is an edge of $\Sa^+$, then condition (ii) allows us to choose a copy of $K$ in $G$ with one vertex in $V_j$, $b_1-1$ vertices in $V_i$, and $b_1$ vertices in $V_\ell$ for $\ell \in e(i) \sm \{i\}$. Deleting this copy reduces the size of $V_j$ by one relative to the sets $V_\ell$ for $\ell \in e(j) \sm \{j\}$, and increases the size of $V_i$ by one relative to the sets $V_\ell$ for $\ell \in e(i) \sm \{i\}$. The next lemma states that, if $\R$ is irreducible on $M_\R$, we can delete copies of $K$ to achieve the same effect if $j \to i$ is an edge of $\Sa^+$. This allows us to ignore the direction of edges of $\Sa^+$ and consider only the base graph $\Sa$ when proving Lemma~\ref{samesizebalance}, which significantly simplifies the argument.

\begin{lemma} \label{bidirection}
Adopt Setup~\ref{balancesetup}, and suppose additionally that $\R$ is $(C, L)$-irreducible on $M_\R$ for some $C$ and $L$ with $kb_1L + b_1C \leq N$, and that we now have fixed sets $V_\ell$ with $|V_\ell| \geq n/2$ for each $\ell \in [m]$. Then for any $i, j \in [m]$ with $ij \in \Sa$ there is a $K$-packing $M$ in $G[V]$ of size at most $C+L$ such that, if we write $M(\ell) := |V(M) \cap V_\ell|$ for $\ell \in [m]$, we have the following properties.
\begin{enumerate}[(i)]
\item $M(\ell)$ is divisible by $b_1$ for any $\ell \in [m] \sm \{i, j\}$.
\item $M(i) = M(\ell) - 1$ for any $\ell \in e(i) \sm \{i\}$.
\item $M(j) = M(\ell) + 1$ for any $\ell \in e(j) \sm \{j\}$.
\item $M(\ell) = M(\ell')$ for any $e \in M_\R \sm \{e_i, e_j\}$ and any $\ell, \ell' \in e$.
\end{enumerate}
\end{lemma}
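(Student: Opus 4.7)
The plan is to distinguish the two cases according to the direction of the edge $ij$ in $\Sa^+$. If $i \to j$ in $\Sa^+$, I would apply Setup~\ref{balancesetup}(ii) to obtain a copy of $\Phi(b_1)$ with end vertex classes $W_i \subseteq V_i$, $W_j \subseteq V_j$ and central classes $W_\ell \subseteq V_\ell$ for $\ell \in e(i)\setminus\{i\}$. Inside this $\Phi(b_1)$ one can realize a copy $K'$ of $K$ that takes $b_1 - 1$ vertices of $W_i$, one vertex of $W_j$, and all $b_1$ vertices of each central class $W_\ell$; this is a valid copy of $K$ because any transversal of $\Phi(b_1)$ whose index is $\{v_1, \dots, v_k\}$ or $\{v_2, \dots, v_{k+1}\}$ is an edge, and both index types are realized. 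Taking $M := \{K'\}$ yields $M(i) = b_1 - 1 = M(\ell) - 1$ for $\ell \in e(i)\setminus\{i\}$, $M(j) = 1 = M(\ell) + 1$ for $\ell \in e(j)\setminus\{j\}$, and $M$ vanishing on every other matching edge, so (i)--(iv) hold with $|M| = 1$.

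The harder case is $j \to i$, in which (ii) only produces $\Phi(b_1)$'s whose centrals lie in $V_\ell$ for $\ell \in e(j)\setminus\{j\}$---the ``wrong'' matching edge---so no single copy can simultaneously create the required deficit at $i$ and excess at $j$. To compensate I would invoke the $(C,L)$-irreducibility of $\R$ on $M_\R$ applied to the pair $(u, v) = (j, i)$: this yields an integer $c \leq C$ together with multisets $S$ of edges of $\R$ and $T$ of edges of $M_\R$, each of size at most $L$, such that $\mathrm{mult}_S(\ell) - \mathrm{mult}_T(\ell) = c(\delta_{\ell j} - \delta_{\ell i})$ for every $\ell \in [m]$. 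I would then assemble $M$ in two parts. In Part~(A), pick $c$ pairwise vertex-disjoint copies of $\Phi(b_1)$ from (ii) and in each take a copy of $K$, arranging the end-class distributions so that across the $c$ copies exactly $b_1 c - 1$ end vertices lie in $V_i$ and exactly $1$ lies in $V_j$ (for instance, $c - 1$ of the copies place all $b_1$ end vertices in $W_i$ and the final copy places $b_1 - 1$ in $W_i$ and $1$ in $W_j$). In Part~(B), for each edge $e \in S$ (counted with multiplicity) add a further copy of $K$ on $e$ provided by (i). The total number of copies is $c + |S| \leq C + L$, and the greedy selection of pairwise vertex-disjoint copies is feasible thanks to the hypothesis $kb_1 L + b_1 C \leq N$ and the fact that each application of (i) or (ii) supplies $N$ vertex-disjoint structures.

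It remains to verify (i)--(iv) for this $M$, which reduces to a short arithmetic check. Writing $s_\ell := \mathrm{mult}_S(\ell)$ and using that $\mathrm{mult}_T(\cdot)$ is constant on each edge of $M_\R$ and equals $s_\ell$ for $\ell \notin \{i, j\}$, summing the Part~(A) and Part~(B) contributions gives $M(i) = b_1 c - 1 + b_1 s_i$ and $M(\ell) = b_1(s_i + c)$ for $\ell \in e(i)\setminus\{i\}$, whence $M(\ell) - M(i) = 1$; the symmetric computation at $j$ yields $M(j) - M(\ell) = 1$ for $\ell \in e(j)\setminus\{j\}$; on every matching edge $e \in M_\R \setminus \{e(i), e(j)\}$ the value $M(\ell) = b_1 s_\ell$ is constant in $\ell \in e$ and is a multiple of $b_1$, giving (iv); and divisibility by $b_1$ off $\{i, j\}$ then follows at once. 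The main obstacle is that $(C, L)$-irreducibility only furnishes some $c \leq C$ without allowing us to take $c = 1$; this is precisely why Part~(A) needs $c$ initial copies from (ii) with the $b_1 c - 1$ end-vertex distribution chosen to match $c$, rather than a single copy (which would only resolve the special case $c = 1$).
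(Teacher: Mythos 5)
Your proposal is correct and follows essentially the same route as the paper: the same case split on the orientation of $ij$ in $\Sa^+$, a single copy of $K$ inside one $\Phi(b_1)$ when $i \to j$, and in the case $j \to i$ the same combination of irreducibility-supplied multisets (one copy of $K$ on each edge of $S$, via condition (i)) with $\Phi(b_1)$-derived copies of $K$ (via condition (ii)) to create the $\pm 1$ imbalance. The only differences are cosmetic: you use $c$ blown-up copies of $\Phi$ with variable end-class splits where the paper uses $cb_1-1$ copies with the fixed split of one vertex in $V_i$ and $b_1-1$ in $V_j$ (your version in fact meets the stated size bound $C+L$ exactly), and in the disjointness count you should select the Part (B) copies before the Part (A) copies, since with only the guaranteed $N = kb_1L+b_1C$ disjoint structures a greedy choice in the opposite order could be blocked when $C(k-1)>k$, whereas choosing the at most $L$ copies of $K$ first leaves at least $N - kb_1L \geq b_1C \geq c$ unblocked members of the disjoint family of $\Phi(b_1)$'s.
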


\begin{proof}
Fix any $i, j \in [m]$ with $ij \in \Sa$. Then there is an edge between $i$ and $j$ in $\Sa^+$, directed either $i \to j$ or $j \to i$. Suppose first that $i \to j$; then there are at least $N \geq 1$ vertex-disjoint copies of $\Phi(b_1)$ in $G[V]$ whose end vertex classes lie in $V_i$ and $V_j$ and whose central vertex classes all lie in the sets $V_\ell$ for $\ell \in e(i) \sm \{i\}$. Inside any one of these copies we can find a copy of $K$ with $b_1-1$ vertices in $V_i$, with one vertex in $V_j$, and with $b_1$ vertices in $V_\ell$ for each $\ell \in e(i) \sm \{i\}$. We can then take $M$ to consist of this single copy of $K$.

So we may assume that $j \to i$, so there are at least $N$ vertex-disjoint copies of $\Phi(b_1)$ in $G[V]$ whose end vertex classes lie in $V_i$ and $V_j$ and whose central vertex classes all lie in the sets $V_\ell$ for $\ell \in e(j) \sm \{j\}$. Also, since $\R$ is $(C, L)$-irreducible on $M_\R$, we may choose $c \leq C$ and multisets $T$ and $T'$ of edges of $\R$ and $M_\R$ respectively such that $|T|, |T'| \leq L$ and such that $j$ appears in $c$ more edges of $T$ than of $T'$, $i$ appears in $c$ more edges of $T'$ than of $T$, and any $\ell \in [m] \sm \{i, j\}$ appears equally often in $T$ as in $T'$. For each edge $e \in T$, with multiplicity, choose a copy of $K$ in $G[\bigcup_{i \in e} V_i]$; since $N \geq kb_1L$ we may do this so that these copies are all vertex-disjoint. This gives a $K$-packing $M'$ in $G[V]$ of size at most $L$ such that $M'(\ell)$ is divisible by $b_1$ for any $\ell \in [m]$, $M'(i) = M'(\ell) - cb_1$ for any $\ell \in e(i) \sm \{i\}$, $M'(j) = M'(\ell) + cb_1$ for any $\ell \in e(j) \sm \{j\}$, and $M'(\ell) = M'(\ell')$ for any $e \in M_\R \sm \{e_i, e_j\}$ and any $\ell, \ell' \in e$. Now, since $M'$ covers at most $Lkb_1$ vertices of $G[V]$, and $N - Lkb_1 \geq cb_1-1$, we may choose $cb_1 -1 $ vertex-disjoint copies of $\Phi(b_1)$ in $G[V]$ which do not have any vertices in common with $M'$, whose end vertex classes lie in $V_i$ and $V_j$ and whose central vertex classes all lie in the sets $V_\ell$ for $\ell \in e(j) \sm \{j\}$. Each of these copies contains a copy of $K$ with one vertex in $V_i$, $b_1-1$ vertices in $V_j$, and $b_1$ vertices in $V_\ell$ for each $\ell \in e(j) \sm \{j\}$; adding these copies of $K$ to $M'$ gives the desired $K$-packing~$M$.
\end{proof}

We can now prove the main result of this section.

\begin{lemma} \label{samesizebalance}
Adopt Setup~\ref{balancesetup}, and assume that $1/n, 1/m, \beta \ll \alpha, 1/C, 1/L, 1/k, 1/b_1$, and also that $N \geq kb_1L + b_1C$ and $2n/3 \leq n' \leq n$. Let subsets $Y_j \subseteq U_j$ satisfy $(1-\beta) n' \leq |Y_j| \leq n'$ for each $j \in [m]$, and suppose that $b_1$ divides~$|Y|$, where $Y := \bigcup_{j \in [m]} Y_j$. Also let $X_1, \dots, X_s$ be sets which partition $[m]$ such that 
\begin{enumerate}[(i)]
\item For any $T \subseteq [m]$ with $|T| \leq \alpha m$, any $i \in [s]$ and any $x, y \in X_i \sm T$, there is a path from $x$ to $y$ in $\Sa[[m] \sm T]$ of length at most $L$. 
\item For any submatching $M_\R' \subseteq M_\R$ of size $|M_\R'| \geq (1-\alpha/2)|M_\R|$ the subgraph $\R[V(M_\R')]$ is $(C, L)$-irreducible on $M_\R'$.
\item The average size $Q_i := \sum_{j \in X_i} |Y_j|/|X_i|$ of sets $Y_j$ corresponding to vertices of $X_i$ is the same for every $i \in [s]$, and this common average size $Q := Q_1 = \dots = Q_s$ is an integer which is divisible by $b_1$.
\end{enumerate}
Then there is a $K$-packing $M$ in $G[Y]$ such that 
\begin{enumerate}[(a)]
\item for any edge $e \in M_\R$ the sets $Y_j \sm V(M)$ for $j \in e$ have equal size, and furthermore,
\item this common size is a multiple of $b_1$.
\end{enumerate}
\end{lemma}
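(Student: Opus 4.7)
Plan: Our approach constructs $M$ as the union of a bounded number of small $K$-packings, each produced by a single invocation of Lemma~\ref{bidirection}. To interpret such an invocation, observe that applying Lemma~\ref{bidirection} to a pair $ij \in \Sa$ yields a $K$-packing whose net effect on the remaining class sizes is to shift exactly one unit of ``imbalance'' between the two edges $e(i), e(j) \in M_\R$: within $e(i)$ the relative difference $|Y_i \sm V(M)| - |Y_\ell \sm V(M)|$ for $\ell \in e(i) \sm \{i\}$ increases by one, within $e(j)$ the difference $|Y_j \sm V(M)| - |Y_\ell \sm V(M)|$ for $\ell \in e(j) \sm \{j\}$ decreases by one, and all $k$ class sizes in every other edge of $M_\R$ shrink by the same integer amount. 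Thus each invocation is a ``transfer'' move between two vertices in different $M_\R$-edges joined by an edge of $\Sa$.

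First I would fix a target common size $r_e$ for each edge $e \in M_\R$: a non-negative integer divisible by $b_1$ with $r_e \leq \min_{\ell \in e} |Y_\ell|$, chosen compatibly with $Q$ (roughly $r_e = Q - c_e b_1$ for a bounded integer $c_e$ absorbing the local deviations from the average). The corresponding target class-size vector $M^*(\ell) := |Y_\ell| - r_{e(\ell)}$ satisfies (a) and (b) by construction; what remains is to realize $(M^*(\ell))_{\ell \in [m]}$ as the class-size vector of an actual vertex-disjoint $K$-packing in $G[Y]$.

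The main body of the proof is then an iterative transportation argument. While the current accumulated packing does not yet match the target, I would select a pair of vertices $u, v$ lying in the same $X_i$ for which the current imbalance profile deviates from the target, join them by a path $u = w_0, w_1, \dots, w_p = v$ of length $p \leq L$ in $\Sa[[m] \sm T]$ (where $T$ is the small set of vertices already ``used up'' by previous steps, which is controlled using condition~(i) and $\beta \ll \alpha$), and invoke Lemma~\ref{bidirection} on each consecutive pair $(w_r, w_{r+1})$. The telescoping cancels the net effect at every intermediate $w_r$ and realizes exactly one unit of transfer between $u$ and $v$. Condition~(ii) supplies the irreducibility hypothesis needed inside Lemma~\ref{bidirection}, and hypothesis~(iii), together with $b_1 \mid |Y|$, ensures that the total required flow inside each $X_i$ vanishes so the process can terminate with the targets met. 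Because each invocation produces a $K$-packing of size at most $C+L$ and the number of invocations is $O(m)$, the cushion $|Y_j| \ge (1-\beta) n'$ is ample to keep all the chosen copies of $K$ vertex-disjoint throughout.

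The central obstacle is the residue bookkeeping modulo $b_1$: each basic invocation changes $M(\ell)$ by a residue of $\pm 1 \pmod{b_1}$ at its two endpoint vertices and by a multiple of $b_1$ elsewhere, so the realizability of the target vector hinges on a careful check that the signed flow out of each vertex matches $|Y_\ell| - r_{e(\ell)} \pmod{b_1}$. Establishing this modular compatibility using condition~(iii) together with $b_1 \mid |Y|$, and then verifying that at every stage the hypotheses of Lemma~\ref{bidirection} still hold for the shrunken subsets (so that the successive invocations can be carried out with pairwise disjoint copies of~$K$), will form the heart of the argument.
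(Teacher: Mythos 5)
Your skeleton is essentially the paper's: pair off unit deficits and surpluses against the average $Q$ inside each $X_i$ (possible by (iii)), route each unit along a path of length at most $L$ in $\Sa$ avoiding a forbidden set $T$ (condition (i)), and telescope Lemma~\ref{bidirection} along the path, with (ii) supplying irreducibility. But your quantitative bookkeeping is wrong in a way that hides the actual difficulty. Since $|Y_j|$ may deviate from $Q$ by as much as $\beta n'$, the number of unit transfers is of order $\beta n m$, not $O(m)$ and certainly not ``bounded''. Each transfer deletes up to $kb_1L(C+L)$ vertices spread over the $M_\R$-edges met by its path, so a priori a single ``hub'' cluster lying on many of the $\Theta(\beta nm)$ paths could lose far more than $n$ vertices, destroying the hypothesis $|V_\ell|\ge n/2$ needed to keep invoking Lemma~\ref{bidirection}. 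The cushion $|Y_j|\ge(1-\beta)n'$ is therefore not by itself ``ample'': you need an argument bounding the loss of \emph{each individual} cluster. The paper's proof achieves this by retiring any edge $e\in M_\R$ once more than $\alpha n$ vertices of $\bigcup_{z\in e}Y_z$ have been deleted: retired edges are removed both from the path graph (condition (i) tolerates $|T|\le\alpha m$, and $|T|\le\alpha m$ follows from a total-deletion count using $\beta\ll\alpha$) and from the matching on which irreducibility is applied (this is exactly why (ii) is stated for submatchings $M_\R'$ with $|M_\R'|\ge(1-\alpha/2)|M_\R|$). After retirement a cluster is touched only when its own $M_\R$-edge contains an endpoint of a later pair, and since each $j$ occurs as an endpoint at most $|\,|Y_j|-Q\,|\le\beta n$ times, every cluster's total loss is capped at roughly $3\alpha n$. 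Without this (or an equivalent) mechanism your iteration cannot be guaranteed to run to completion, so merely ``verifying the hypotheses still hold for the shrunken subsets'' is not a deferred detail but the missing core of the proof.

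Second, your plan to prescribe targets $r_e=Q-c_eb_1$ with $c_e$ bounded, and then realize the exact class-size vector $M^*(\ell)=|Y_\ell|-r_{e(\ell)}$, is both unachievable and unnecessary. The intermediate steps of Lemma~\ref{bidirection} remove from every class of each intermediate $M_\R$-edge a multiple of $b_1$ whose size (up to $b_1L(C+L)$ per pass) is dictated by the irreducibility certificates, and the number of passes through a given edge is not bounded; moreover the local deviations to be ``absorbed'' are up to $\beta n'$, not $O(b_1)$. So the final common size of an edge differs from $Q$ by a multiple of $b_1$ of unbounded magnitude that you cannot fix in advance. The correct move — and the paper's — is to let these common sizes float: after all transfers one gets $|Y_j\setminus V(M)|=Q-b_1\sum_z n_z(e(j))$, which depends on $j$ only through $e(j)$ and is automatically divisible by $b_1$ because $Q$ is. This yields (a) and (b) directly, and the ``residue bookkeeping modulo $b_1$'' that you single out as the central obstacle simply does not arise.
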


\begin{proof}
For each $j \in [m]$ let $n_j := |Y_j| - Q$, that is, the difference between the size of $Y_j$ and the average size of the sets $Y_\ell$. So $n_j$ is an integer with $|n_j| \leq \beta n' \leq \beta n$ for any $j \in [m]$, and for any $i \in [s]$ we have $\sum_{j \in X_i} n_j = 0$ by (iii). Construct a multiset of pairs $\Gamma$ iteratively as follows. Initially take $\Gamma$ to be empty. If $n_j = 0$ for every $j \in [m]$, then terminate. Otherwise, since $\sum_{j \in X_i} n_j = 0$ for every $i \in [s]$, there must be $i \in [s]$ and $j, j' \in X_i$ such that $n_j < 0$ and $n_{j'} > 0$. Add $(j, j')$ to $\Gamma$, increment $n_j$ by one and decrement $n_{j'}$ by one, and repeat. Since $|n_j| \leq \beta n$ for each $j \in [m]$, we must terminate after at most $\beta nm$ steps. At this point, we have $|\Gamma| \leq \beta nm$, and, writing $s_j$ for the number of times $j$ appears as the first coordinate of a member of $\Gamma$, and $t_j$ for the number of times $j$ appears as the second coordinate of a member of $\Gamma$, we have 
\begin{equation} \label{gcdbalanceeq}
|Y_j| = Q - s_j + t_j 
\end{equation} 
for every $j \in [m]$. Furthermore, note that, returning to the original values of $n_j$, we have $s_j, t_j \leq |n_j| \leq \beta n$ for any $j \in [m]$.

Arbitrarily order the pairs of $\Gamma$, and take $M$ initially to be empty; we now add at most $L(C+L)$ copies of $K$ to $M$, and delete the vertices covered from $G$, for each member of $\Gamma$ in turn. So suppose that we are considering the $z$th pair in~$\Gamma$, say $(j, j')$. So $z \leq \beta nm$. Prior to this we have added at most $L(C+L)(z-1)$ copies of $K$ to $M$, so at most $kb_1 L(C+L)\beta nm$ vertices have been deleted. For each $\ell \in [m]$ let $Y'_\ell$ consist of the so far undeleted vertices of $Y_\ell$; we assume for now that $Y'_\ell \geq (1-7\alpha)n' \geq n/2$, and will justify this assumption later. Also, let $M'_\R$ be the submatching of $M_\R$ consisting of $e(i)$, $e(j)$, and every $e \in M_\R$ other than $e(j)$ and $e(j')$ for which at most $\alpha n$ vertices of $\bigcup_{z \in e} Y_z$ have previously been deleted, and let $T = [m] \sm V(M'_\R)$ and $Y' = \bigcup_{\ell \in V(M'_\R)} Y'_\ell$. Then $(\alpha n)(|T|/k) \leq kb_1L(C+L) \beta nm$, so we find that $|T| \leq \alpha m$. So $\R(V(M'_\R)$ is $(C, L)$-irreducible on $M_\R$ by (ii), whilst by (i) we may choose a path $j= v_0, \dots, v_p = j'$ from~$j$ to~$j'$ in $\Sa[[m] \sm T]$ of length $p \leq L$ (since $j, j' \in X_i$ for some $i$ by construction of $\Gamma$). Now, for each $x \in [p]$ in turn, apply Proposition~\ref{bidirection} to choose a $K$-packing $M'_x$ in $G[Y']$ of size at most $C+L$ which satisfies the conclusions of Proposition~\ref{bidirection} with $v_{x-1}$ and $v_x$ in place of~$i$ and~$j$, delete the vertices of $M'_x$ from the sets $Y_\ell$, and add the members of $M'_x$ to $M$. Having done this for every $x \in [p]$, the net effect is that there are integers $n_z(e) \leq p(C+L)$ for $e \in M_\R$ with the following property. For any $e \in M_\R$, precisely $b_1n_z(e)$ vertices were deleted from $Y'_\ell$ for every $\ell \in e$, except for two cases: $b_1n_z(e(j))-1$ vertices were deleted from~$Y'_{j}$, whilst $b_1n_z(e(j'))+1$ vertices were deleted from~$Y'_{j'}$. At this point we proceed to consider the $(z+1)$th pair in $\Gamma$, and continue in this manner.

After we have completed this process for every pair in $\Gamma$, we find that for any $j \in [m]$ the total number of vertices that were deleted from the set $Y_j$ is equal to  $t_j - s_j + b_1\sum_{z \leq |\Gamma|} n_z(e(j)).$
Combining this with~\eqref{gcdbalanceeq} we obtain 
$$|Y_j \sm V(M)| = Q - b_1\sum_{z \leq |\Gamma|} n_z(e(j)).$$
Properties (a) and (b) follow, since for any $e \in M_\R$ and $j \in e$ we have $e(j) = e$, and we assumed in (iii) that $Q$ is divisible by $b_1$. So it remains only to justify our assumption that at any point we had $|Y'_\ell| \geq (1-7\alpha)n'$ for any $\ell \in [m]$. For this, fix any $\ell \in [m]$, and consider the number of vertices deleted from $Y_\ell$ over the course of the procedure. If more than $\alpha n$ vertices of $Y_\ell$ were deleted in total, then for some $z$ the number of deleted vertices of $Y_\ell$ first exceeded $\alpha n$ when considering the $z$th pair of $\Gamma$. Whilst considering this pair we deleted at most $L(C+L)$ copies of $K$, and so at the end of this step the number of vertices deleted from $Y_\ell$ was at most $\alpha n + kb_1L(C+L) \leq 2\alpha n$. For all subsequent steps the edge $e(\ell)$ was excluded from $M_\R'$, and so vertices were only deleted from $Y_\ell$ when considering pairs $(j, j')$ for which $j \in e(\ell)$ or $j' \in e(\ell)$. The number of such pairs is at most $\sum_{j \in e(\ell)} s_j+t_j \leq 2k\beta n$, and so at most a further $(kb_1L(C+L))(2k\beta n) \leq \alpha n$ vertices were deleted from $Y_\ell$, giving a total of at most $3\alpha n \leq 6 \alpha n'$ vertices deleted from $Y_\ell$ over the entire course of the procedure. Since initially we had $(1-\beta)n' \leq |Y_\ell|$, this justifies our earlier assumption that $|Y'_\ell| \geq (1-7\alpha)n'$.
\end{proof}

\subsection{Packing complete $k$-partite $k$-graphs}~\label{sec:completegraphs}
The proof of Lemmas~\ref{main1simple} and~\ref{main2simple} will conclude by finding a perfect $K$-packing within the `top layer' $J_=$ of a $D$-universal $k$-partite $k$-complex $J$. Provided $D$ is sufficiently large, such a packing exists if the complete $k$-partite $k$-graph $G$ on the same vertex classes contains a perfect $K$-packing. In this section we give sufficient conditions to ensure that this is the case.

For Lemma~\ref{main2simple}, $K$ is a complete $k$-partite $k$-graph on vertex classes of equal size. In this case it is elementary to determine whether a complete $k$-partite $k$-graph $G$ contains a perfect $K$-packing. 

\begin{fact} \label{completepackingbalanced}
Let $K$ be the complete $k$-partite $k$-graph with vertex classes each of size $b_1$, and let $G$ be the complete $k$-partite $k$-graph with vertex classes $V_1, \dots, V_k$. Then $G$ contains a perfect $K$-packing if and only if $|V_1| = \dots = |V_k|$ and $b_1$ divides $|V_1|$.
\end{fact}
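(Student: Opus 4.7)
The plan is to establish the two directions of the biconditional separately, with the forward (necessity) direction requiring most of the work.

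For the forward direction, suppose $G$ contains a perfect $K$-packing $\F$. First I would argue that for any copy $K' \in \F$, the $k$ vertex classes $W_1,\dots,W_k$ of $K'$ must each be contained in some vertex class of $G$, and these containments give a bijection between the vertex classes of $K'$ and those of $G$. The key observation is that $K$ is complete $k$-partite, so for any two vertices $u,v$ lying in the same vertex class $W_i$ of $K'$, there exist edges $e,e' \in K'$ with $e \setminus \{u\} = e' \setminus \{v\} =: e^*$. Since $e,e' \subseteq G$ and $G$ is $k$-partite with vertex classes $V_1,\dots,V_k$, both $e$ and $e'$ contain exactly one vertex in each $V_j$; the $k-1$ coordinates pinned down by $e^*$ force $u$ and $v$ to lie in the same vertex class of $G$. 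So each $W_i$ sits inside some $V_{\pi(i)}$. If two distinct classes $W_i, W_{i'}$ shared a class $V_j$, any edge of $K'$ spanning $W_i$ and $W_{i'}$ would have two vertices in $V_j$, contradicting that $G$ is $k$-partite. Thus $\pi$ is a bijection, and each $V_j$ receives exactly $b_1$ vertices from $K'$. Summing over the $|\F|$ copies in the perfect packing forces $|V_1| = \cdots = |V_k| = b_1 |\F|$, in particular divisible by $b_1$.

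For the backward direction, assume $|V_1| = \cdots = |V_k| = t b_1$ for some integer $t$. For each $j \in [k]$ arbitrarily partition $V_j$ into $t$ blocks $V_j^1,\dots,V_j^t$ of size $b_1$. For each $s \in [t]$, since $G$ is the \emph{complete} $k$-partite $k$-graph on $V_1,\dots,V_k$, the induced subgraph $G[V_1^s \cup \dots \cup V_k^s]$ is the complete $k$-partite $k$-graph with vertex classes of size $b_1$, which is a copy of $K$. The $t$ copies obtained this way are pairwise vertex-disjoint and cover all of $V_1 \cup \cdots \cup V_k$, giving a perfect $K$-packing.

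The only mildly delicate step is the alignment argument in the first paragraph, since it must use the completeness of $K$ (a non-complete $k$-partite $K$ on equal vertex classes can embed into a complete $G$ with a single vertex class containing more than one class of the copy, potentially). Once this alignment is secured, both divisibility and equality of the $|V_j|$ fall out immediately, and the converse is a direct construction. No further machinery from the paper is needed.
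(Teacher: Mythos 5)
Your proof is correct, and since the paper states this Fact without proof (calling it elementary), your argument is exactly the kind of routine verification being omitted: the completeness of $K$ forces the vertex classes of each copy to align bijectively with $V_1,\dots,V_k$, giving necessity, and the block decomposition gives sufficiency. Nothing is missing.
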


However, for Lemma~\ref{main1simple} things are more complicated. The next lemma gives sufficient conditions which ensure that a complete $k$-partite $k$-graph $G$ contains a perfect $K$-packing when $K$ is a complete $k$-partite $k$-graph as in Lemma~\ref{main1simple}. Recall for this the definitions of $\B(K)$ and $\U_s(K)$ (Definition~\ref{defBULo}).

\begin{lemma} \label{completepackingnearlybalanced}
Suppose that $1/n \ll \beta \ll 1/b, 1/k$. Let $K$ be the complete $k$-partite $k$-graph with vertex classes of size $b_1, \dots, b_k$, where $b_1 + \dots + b_k = b$ and the $b_i$ are not all equal. Also let $G$ be a complete $k$-partite $k$-graph with vertex classes $V_1, \dots, V_k$, where $V := V_1 \cup \dots \cup V_k$ has size $n$. Suppose that
\begin{enumerate}[(i)]
\item $bk\gcd(K)$ divides $|V_i|$ for each $i \in [k]$, and
\item $|V_j| \geq n/k - \beta n$ for every $j \in [k]$. 
\end{enumerate}
Then $G$ contains a perfect $K$-packing.
\end{lemma}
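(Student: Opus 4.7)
The plan is to decompose the vertex set $V$ of $G$ into parts each inducing a copy of either $\B(K) = \B(b)$ or $\U_s(K) = \U(sb, \gcd(K))$, where $s = s(K)$ is a sufficiently large fixed integer (so that $\U_s(K)$ is defined). Both of these complete $k$-partite $k$-graphs admit perfect $K$-packings by Proposition~\ref{hpackings}, so by Proposition~\ref{fpacktohpack} their $K$-packings combine into a perfect $K$-packing of $G$. Since $G$ is itself complete $k$-partite, any assignment of pairwise disjoint vertex subsets of the right sizes produces the required copies, and we are free to choose on which two classes of $G$ the asymmetric (size $sb \pm d$) roles of each copy of $\U_s(K)$ are placed.

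Write $d := \gcd(K)$ and $\epsilon_\ell := |V_\ell| - n/k$ for each $\ell \in [k]$. Hypothesis (i) implies that $bd$ divides both $|V_\ell|$ and $n/k$, so $d \mid \epsilon_\ell$; and trivially $\sum_\ell \epsilon_\ell = 0$. Suppose we use $a$ copies of $\B(K)$ and, for each ordered pair $i \ne j$ in $[k]$, $u_{ij}$ copies of $\U_s(K)$ with the ``$-d$'' class on $V_j$ and the ``$+d$'' class on $V_i$. Writing $U := \sum_{i \ne j} u_{ij}$, the total number of vertices occupied in $V_\ell$ is
\[
ab + sbU + d\Bigl(\sum_{j \ne \ell} u_{\ell j} - \sum_{i \ne \ell} u_{i\ell}\Bigr).
\]
Equating this with $|V_\ell|$ for each $\ell$ and summing over $\ell$ yields the single scalar condition $a + sU = n/(kb)$ (an integer by (i)) together with the divergence system $\sum_{j} u_{\ell j} - \sum_{i} u_{i\ell} = \epsilon_\ell/d$ for $\ell \in [k]$.

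This divergence system is a routine bipartite transportation problem. Let $P = \{\ell : \epsilon_\ell > 0\}$ and $N = \{\ell : \epsilon_\ell < 0\}$; any non-negative integer matrix $(u_{ij})_{(i,j) \in P \times N}$ with row sums $\epsilon_i/d$ and column sums $-\epsilon_j/d$ solves the system (existence is immediate since both marginals sum to the same total). Hypothesis (ii) gives $|\epsilon_\ell| \le k\beta n$, so the resulting total $U$ satisfies $U \le k^2 \beta n / d$. Set $a := n/(kb) - sU$; this is a non-negative integer, by (i) and by $\beta \ll 1/b, 1/k$ (with $s$ depending only on $K$). Finally, arbitrarily partition each $V_\ell$ into $a$ subsets of size $b$ together with $U$ further subsets of sizes in $\{sb, sb-d, sb+d\}$ in the counts prescribed by $(u_{ij})$; each such subset induces a copy of $\B(K)$ or $\U_s(K)$ in $G$, giving the desired decomposition.

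There is no genuine obstacle here: the argument is essentially a divisibility and transportation calculation. The only delicate point is keeping the divisibility bookkeeping straight, namely that $n/(kb)$ is an integer and that $d \mid \epsilon_\ell$. Both follow cleanly from the precise divisor $bk\gcd(K)$ in hypothesis (i), which is exactly why that factor appears in Lemma~\ref{gcdbalance} upstream.
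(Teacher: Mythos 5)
Your proof is correct and essentially the same as the paper's: both reduce the problem to a perfect $\{\B(K),\U_s(K)\}$-packing of $G$, using hypothesis (i) for the divisibility bookkeeping and hypothesis (ii) to bound the number of correcting copies of $\U_s(K)$, then conclude via Propositions~\ref{fpacktohpack} and~\ref{hpackings}. The only difference is cosmetic: you determine the copy counts by solving a transportation system between surplus and deficit classes, whereas the paper shifts the imbalance along consecutive classes via the telescoping sums $d_i$ and finishes the balanced remainder with Fact~\ref{completepackingbalanced}.
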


\begin{proof}
Introduce an integer $s$ with $\beta \ll 1/s \ll 1/b, 1/k$. Then we may assume that $\U_s(K)$ is defined and so contains a perfect $K$-packing by Proposition~\ref{hpackings}; the same is true of $\B(K)$. Also note that (i) implies that $bk\gcd(K)$ divides $n$.
For each $i \in [k-1]$ define 
\begin{equation} \label{completeeq1}
d_i := \frac{|V_i| - n/k}{\gcd(K)} + d_{i-1},
\end{equation}
with $d_0$ taken to be zero; then each $d_i$ must be an integer by (i). So by (ii) we have $|d_i| \leq k\beta n + |d_{i-1}|$, which implies that $\sum_{i \in [k-1]}|d_i| \leq k^3\beta n$. Now, for each $i \in [k-1]$, if $d_i$ is positive then delete $d_i$ copies of $\U_s(K)$ from $G$, each with $bs+\gcd(K)$ vertices in $V_i$, $bs-\gcd(K)$ vertices in $V_{i+1}$ and $bs$ vertices in each other vertex class. On the other hand, if $d_i$ is negative then delete $d_i$ copies of $\U_s(K)$ from $G$ each with $bs-\gcd(K)$ vertices in $V_i$, $bs+\gcd(K)$ vertices in $V_{i+1}$ and $bs$ vertices in each other vertex class. We also insist that all of these copies of $\U_s(K)$ are pairwise vertex-disjoint; this is not a problem since the total number of copies of $\U_s(K)$ deleted is $N := \sum_{i \in [k-1]} |d_i| \leq k^3\beta n$, and so the total number of vertices deleted is
$$kbsN \leq bsk^4 \beta n \leq n/k -  \beta n \leq \min_{i \in [k]} |V_i|.$$
For each $i \in [k]$ let $X_i$ consist of the undeleted vertices of $V_i$, and let $X := X_1 \cup \dots \cup X_k$. Then by~\eqref{completeeq1} we obtain
$$ |X_i| = |V_i| - bsN - (d_i - d_{i-1})\gcd(K) = n/k - bsN. $$
We conclude that $|X_1| = \dots = |X_k|$ and that $b$ divides $|X_1|$. So $G[X]$ contains a perfect $\B(K)$-packing by Fact~\ref{completepackingbalanced}; combined with the previously deleted copies of $\U_s(K)$ this gives a perfect $\{\B(K), \U_s(K)\}$-packing of $G$. Propositions~\ref{fpacktohpack} and~\ref{hpackings} then imply that $G$ contains a perfect $K$-packing.
\end{proof}

Unfortunately, Lemma~\ref{completepackingnearlybalanced} is not strong enough for our purposes. Indeed, it requires that the vertex classes of $G$ all have approximately equal size, whilst we wish to find a perfect $K$-packing within a `lopsided' complete $k$-partite $k$-graph $G$. For this we use the following corollary, which shows that we can indeed do this provided that $\sigma(G) > \sigma(K) + o(1)$, that is, if $G$ is `less lopsided' than $K$ (it is not hard to see that if $G$ and $K$ are complete $k$-partite $k$-graphs and $\sigma(G) < \sigma(K)$ then there can be no perfect $K$-packing in $G$).
Recall for this corollary the definition of $\Lo(K)$ (Definition~\ref{defBULo}); in particular, $\Lo(K)$ has $(k-1)!b$ vertices in total, with one vertex class of size $(k-1)!b\sigma(K)$ and $k-1$ vertex classes of size $(k-2)!b(1-\sigma(K))$. 

\begin{coro} \label{completepacking}
Suppose that $1/n \ll \beta \ll \alpha, 1/b, 1/k$. Let $K$ be the complete $k$-partite $k$-graph with vertex classes of size $b_1, \dots, b_k$, where $b_1 + \dots + b_k = b$ and the $b_i$ are not all equal. Also let $G$ be a complete $k$-partite $k$-graph with vertex classes $V_1, \dots, V_k$, where $|V_1| \leq |V_2|, \dots, |V_k|$ and $V := V_1 \cup \dots \cup V_k$ has size $n$. Suppose that 
\begin{enumerate}[(i)]
\item $\sigma(G) \geq \sigma(K) + \alpha$,
\item $||V_i| - |V_j|| \leq \beta n$ for every $i, j \in \{2, \dots, k\}$, and 
\item $bk\gcd(K)$ divides $|V_i|$ for any $i \in [k]$. 
\end{enumerate}
Then $G$ contains a perfect $K$-packing. 
\end{coro}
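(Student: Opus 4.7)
The plan is to reduce the corollary to Lemma~\ref{completepackingnearlybalanced} by first using copies of $\Lo(K)$ to rebalance the vertex classes of $G$. Write $b^{*} := \sigma(K)\,b$ for the size of the smallest vertex class of $K$. By definition $\Lo(K)$ has one vertex class of size $(k-1)!\,b^{*}$ and $k-1$ vertex classes of common size $(k-2)!\,(b-b^{*})$. Since the $b_{i}$ are not all equal we have $b^{*} < b/k$, so $(k-1)!\,b^{*} < (k-2)!\,(b-b^{*})$; deleting a copy of $\Lo(K)$ with its small class embedded in $V_{1}$ therefore reduces $|V_{1}|$ by strictly less than it reduces each $|V_{i}|$ for $i \geq 2$. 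The per-copy gap-closing rate is $D := (k-2)!\,(b-kb^{*}) > 0$.

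For each $i \in \{2,\dots,k\}$ set $m_{i}^{*} := (|V_{i}| - |V_{1}|)/D \geq 0$; assumption (ii) gives $|m_{i}^{*} - m_{j}^{*}| \leq \beta n /D$ for all $i,j \geq 2$. I would choose $m$ to be the non-negative multiple of $bk\gcd(K)$ nearest to $m_{2}^{*}$, so that $|m - m_{i}^{*}| \leq bk\gcd(K) + \beta n / D = O(\beta n)$ for every $i \geq 2$ (using $1/n \ll \beta \ll 1/(bk)$). Then delete $m$ pairwise vertex-disjoint copies of $\Lo(K)$ from $G$, each with its small class inside $V_{1}$; this is possible because $G$ is complete $k$-partite and (as shown below) each class retains enough vertices. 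Writing $V_{j}'$ for the undeleted part of $V_{j}$ and $n' := \sum_{j} |V_{j}'|$, the residual sizes are $|V_{1}'| = |V_{1}| - m(k-1)!\,b^{*}$ and $|V_{i}'| = |V_{i}| - m(k-2)!\,(b-b^{*})$ for $i \geq 2$.

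Two properties need to be verified before applying Lemma~\ref{completepackingnearlybalanced} to $G[V_{1}' \cup \dots \cup V_{k}']$. First, since $m$ is a multiple of $bk\gcd(K)$, so are both $m(k-1)!\,b^{*}$ and $m(k-2)!\,(b-b^{*})$, and condition (iii) transfers: $bk\gcd(K)$ divides every $|V_{j}'|$. Second, $|V_{1}'| - |V_{i}'| = (m - m_{i}^{*})D = O(\beta n)$ for $i \geq 2$ and $|V_{i}'| - |V_{j}'| = |V_{i}| - |V_{j}| \in [-\beta n, \beta n]$ for $i,j \geq 2$, so all residual class sizes lie within $O(\beta n)$ of each other. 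A direct calculation using $|V_{1}| \geq (\sigma(K) + \alpha)n$ and the identity $1 - k\sigma(K) = (b-kb^{*})/b$ gives $m_{i}^{*}(k-1)!\,b \leq n - k\alpha n\cdot b/(b-kb^{*}) + O(\beta n) \leq n - k\alpha n + O(\beta n)$, so $n' \geq k\alpha n/2$. Setting $\beta' := O(\beta/\alpha)$, each $|V_{j}'| \geq n'/k - \beta' n'$ and $\beta' \ll 1/b, 1/k$, so Lemma~\ref{completepackingnearlybalanced} (applied with $\beta'$ in place of $\beta$) furnishes a perfect $K$-packing of the residual subgraph.

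Combining this $K$-packing with the $m$ deleted copies of $\Lo(K)$, each of which admits a perfect $K$-packing by Proposition~\ref{hpackings}, yields a perfect $\{\Lo(K), K\}$-packing of $G$ and hence a perfect $K$-packing of $G$ by Proposition~\ref{fpacktohpack}. The principal delicacy is the simultaneous enforcement of the divisibility constraint $bk\gcd(K) \mid m$ and the near-balance of all residual class sizes; this is feasible precisely because the tolerance $O(\beta n)$ in our choice of $m$ dominates the granularity $bk\gcd(K) = O(1)$ imposed by divisibility.
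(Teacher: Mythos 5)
Your proposal is correct and follows essentially the same route as the paper: delete vertex-disjoint copies of $\Lo(K)$ with their small class in $V_1$ until the classes are nearly balanced, then invoke Lemma~\ref{completepackingnearlybalanced} on the residual complete $k$-partite $k$-graph and combine via Propositions~\ref{hpackings} and~\ref{fpacktohpack}. The only difference is bookkeeping: the paper parametrises the number of deleted copies through $x = (\sigma(G)-\sigma(K))/(1/k-\sigma(K))$, whereas you choose it directly as the multiple of $bk\gcd(K)$ nearest to the target $(|V_2|-|V_1|)/D$, which makes the transfer of the divisibility condition (iii) to the residual classes immediate.
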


\begin{proof}
Let $d := \gcd(K)$, $\phi := \sigma(G)$ and $\sigma := \sigma(K)$, so $|V_1| = \phi n$ and by~(ii) we have $|V_j| \geq (1-\phi)n/(k-1) - \beta n$ for any $2 \leq j \leq k$. Without loss of generality we may assume that $b_1 \leq b_2, \dots, b_k$.
Define $x := \frac{\phi - \sigma}{1/k - \sigma}$, so $\alpha \leq x \leq 1$ by~(i). Then
\begin{equation}\label{completeeq2}
\phi = \frac{x}{k} + \sigma(1-x).
\end{equation}
Define $N := \lfloor (1-x)n/k!bd\rfloor$, and choose and delete a set of $kdN$ pairwise vertex-disjoint copies of $\Lo(K)$ in $G$, such that the smallest vertex class of each copy is contained in $V_1$. These copies thus cover $k!b\sigma Nd$ vertices of $V_1$ and $(k-2)!b(1-\sigma)kdN$ vertices of $V_i$ for each $2 \leq i \leq k$; in particular, the number of vertices covered in any vertex class $V_j$ is divisible by $kbd$. For each $i \in [k]$ let $X_i$ consist of the undeleted vertices of $V_i$, and let $X := X_1 \cup \dots \cup X_k$. So $|X_i|$ is divisible by $kbd$ for any $i \in [k]$ by (iii).
Also, by choice of $N$ we have 
\begin{align*} 
xn & \leq |X| = n - k!bdN \leq xn + k!bd,\\
|X_1| &= |V_1| - k!b\sigma dN \geq \phi n - (1-x)\sigma n = xn/k,\mbox{ and}\\
|X_j| &= |V_j| - (k-2)!b(1-\sigma) kdN \geq \frac{(1-\phi)n}{k-1} - \beta n - \frac{(1-x)(1-\sigma)n}{k-1} = \frac{xn}{k}-\beta n,
\end{align*}
for any $2 \leq j \leq k$, where the final equality in each of the last two lines holds by~\eqref{completeeq2}. Together these inequalities imply that $|X_j| \geq |X|/k - 2 \beta n$ for each $j \in [k]$.
Furthermore, together with $x \geq \alpha$ the first inequality shows that we may assume that $1/|X| \ll 2\beta \ll 1/b, 1/k$. So $G[X]$ meets the conditions of Lemma~\ref{completepackingnearlybalanced}, and so contains a perfect $K$-packing. Added to the deleted copies of $\Lo(K)$ this gives a perfect $\{K, \Lo(K)\}$-packing of $G$; by Propositions~\ref{fpacktohpack} and~\ref{hpackings} it follows that $G$ has a perfect $K$-packing.
\end{proof}

\section{Proofs} \label{sec:proofs}

We have now established all of the preliminary results and definitions we need for the proofs of Lemma~\ref{main1simple} and Lemma~\ref{main2simple}, for which we proceed as outlined in Section~\ref{sec:outline}.

\subsection{Proof of Lemma~\ref{main1simple}} \label{sec:proof1}

Recall the statement of Lemma~\ref{main1simple}.

\medskip \noindent {\bf Lemma~\ref{main1simple}.}
{\it Let $K$ be the complete $k$-partite $k$-graph whose vertex classes have sizes $b_1, \dots, b_k$, where these sizes are not all equal, and suppose that $\gcd(K)$ and $b_1$ are coprime. Then for any $\alpha > 0$ there exists $n_0 = n_0(K, \alpha)$ such that the following statement holds. Let $H$ be a $k$-graph on $n \geq n_0$ vertices such that
\begin{enumerate}
\item[(a)] $b := b_1 + \dots + b_k$ divides $n$,
\item[(b)] $\delta(H) \geq \sigma(K)n + \alpha n$, and
\item[(c)] if $\gcd(K) > 1$, then $\delta(H) \geq n/p^* + \alpha n$, where $p^*$ is the smallest prime factor of $\gcd(K)$.
\end{enumerate}
Then $H$ contains a perfect $K$-packing.}\medskip
 
\begin{proof}
If $\gcd(K) = 1$, define $p^* := b$; this is merely for notational convenience in handling the cases $\gcd(K) = 1$ and $\gcd(K) > 1$ simultaneously. Since $\sigma(K) \geq 1/b$ we may then assume that $\delta(H) \geq n/p^* + \alpha n$ in all cases. Furthermore, in any case we must have $p^* \leq b$.
Introduce new constants with
\begin{align*}
1/n &\ll 1/N \ll \eps  \ll d^* \ll 1/a \ll \xi, 1/r \ll \nu \ll \mu \\
&\ll c, \eta \ll \theta \ll \psi \ll \beta \ll 1/q \ll \alpha \ll 1/s, 1/D\ll 1/b, 1/k,
\end{align*}
and such that $s$ is divisible by $k \gcd(K)$, and choose an integer $p$ such that 
$$\sigma(K) +\alpha/3 \leq p/q \leq \sigma(K) + \alpha/2.$$ 
Note that our constant hierarchy assumes that $\alpha$ is sufficiently smaller that $1/s, 1/D, 1/b, 1/k$; this is not a problem since each property involving $\alpha$ in the statement of Lemma~\ref{main1simple} is monotone. Using this and the fact that the vertex class sizes $b_1, \dots, b_k$ are not all equal, we may assume that $\sigma(K) \leq 1/k - \alpha/2$, so $pk \leq q$. Finally, we may also assume that $a!r$ divides $n$. Indeed, by Theorem~\ref{turandensityzero} we may greedily delete a $K$-packing in $H$ of size up to $a!r$ so that the number of vertices remaining in $H$ is divisible by $a!r$, following which the subgraph induced by the remaining vertices of $H$ satisfies the conditions of the lemma (with weaker constants). So to prove the lemma it is sufficient to consider only the case where $a!r$ divides $n$. We assume this, so in fact no vertices were deleted.

\medskip \nib{Apply the Regular Approximation Lemma:} Let $U := V(H)$, and choose arbitrarily a partition $\Qart$ of $U$ into $r$ parts $T_1, \dots, T_r$ of equal size. Let $H'$ be the $k$-graph on $U$ consisting of all $\Qart$-partite edges of $H$. So $H'$ is a $\Qart$-partite $k$-graph on $U$ whose order is divisible by $a!r$, and so we may apply the Regular Approximation Lemma (Theorem~\ref{eq-partition}),
which yields
an $a$-bounded $\eps$-regular vertex-equitable partition $(k-1)$-complex $\Part$ on $U$, and 
a $\Qart$-partite $k$-graph $G$ on $U$ such that $G$ is $\xi$-close to $H'$ and the partition $k$-complex $G[\hat{\Part}]$ is $\eps$-regular. 
Let $Z = G \triangle H'$, so $Z$ is a $\Qart$-partite $k$-graph on $U$ with $|Z| \leq \xi n^k$, and we have $G \sm Z \subseteq H' \subseteq G \cup Z$. In particular, any edge of $G \sm Z$ is also an edge of $H$. Also let $U_1, \dots, U_m$ be the clusters of~$\Part$, and note that the partition $\Part^{(1)}$ of $U$ into clusters refines $\Qart$. Since $\Part$ is vertex-equitable, every cluster of $\Part$ must have the same size, so the number of clusters $m$ is divisible by $r$. Also, since $\Part$ is $a$-bounded we have $r \leq m \leq ar$. Define $n_1 := |U_i| = n/m$ to be the common cluster size. Then, since $a!r$ divides $n$ and $m \leq ar$ is divisible by $r$, we deduce that $n_1$ is divisible by all integers up to $a/2$; in particular, $n_1$ is certainly divisible by $(q-p)p(k-1)$.

Observe that the vertex set $U$, the partition $\Qart$ of $U$ into parts $T_1, \dots, T_r$, the partition $(k-1)$-complex $\Part$ with clusters $U_1, \dots, U_m$, and the $\Qart$-partite $k$-graphs $G$ and $Z$ therefore satisfy the conditions of Setup~\ref{redsetup}, with $U$ and the clusters $U_1, \dots, U_m$ in place of $X$ and $X_1, \dots, X_m$, and with constants $1/n \ll \eps \ll d^* \ll 1/a \ll \xi, 1/r \ll \nu \ll \mu \ll c, \eta \ll \theta \ll 1/D \ll 1/k$ playing identical roles there as here. Under this setup, let $\R$ be the reduced $k$-graph of $G$ and $Z$ as defined in Definition~\ref{redgraphdef}.
So $\R$ has vertex set $[m]$, where vertex $i$ corresponds to the cluster~$U_i$. Since $H'$ contains all $\Qart$-partite edges of $H$, and $H' \subseteq G \cup Z$, for any $\Qart$-partite $(k-1)$-tuple $e \in \binom{U}{k-1}$ we have 
$\deg_{G \cup Z}(e) \geq \deg_{H'}(e) \geq \delta(H) - (k-1)n/r \geq \sigma(K) n+2\alpha n/3$. 
So by Lemma~\ref{redgraphmindeg}, applied with $\sigma(K) + 2\alpha/3$ in place of $\gamma$,
\begin{enumerate}[(i)]
\newcounter{mem}
\setcounter{enumi}{\value{mem}} 
\item \label{itema} all but at most $\theta m^{k-1}$ many $(k-1)$-tuples $S \in \binom{[m]}{k-1}$ have 
$$\deg_\R(S) \geq \sigma(K) m + 2\alpha m/3 -\theta m \geq pm/q + \theta m.$$
\setcounter{mem}{\value{enumi}}
\end{enumerate}

\medskip \nib{Refine the regularity partition into `lopsided' groups:}
By (\ref{itema}) we may apply Lemma~\ref{akpqpacking} to obtain an $\Akpq$-packing $\F$ in $\R$ so that
\begin{enumerate}[(i)]
\setcounter{enumi}{\value{mem}} 
\item \label{pfcover} $\F$ covers at least $(1-\psi)m$ vertices of $\R$, and
\item \label{pfRconn} $\R[V(\F)]$ is connected,
\setcounter{mem}{\value{enumi}}
\end{enumerate}
where $V(\F)$ denotes the set of vertices of $\R$ covered by $\F$. Define a graph (\emph{i.e.} 2-graph) $\Sa$ with vertex set $V(\F)$, where $ij$ is an edge of $\Sa$ if there exists a $(k-1)$-tuple $S \in \binom{V(\F)}{k-1}$ for which the triple $(i, S, j)$ is $\Phi$-dense and $Z$-sparse (as defined in Definition~\ref{redgraphdef}). Then every $i \in V(\Sa)$ must lie in some edge $S$ of $\R$ (since $i$ is covered by $\F$). Since for any $\Qart$-partite $(k-1)$-tuple $e \in \binom{U}{k-1}$ we have 
$\deg_{G \cup Z}(e) \geq \deg_{H'}(e) \geq \delta(H) - kn/r \geq n/p^* + 2\alpha n/3$,
Lemma~\ref{redgraphmindeg} (applied with $1/p^*+2\alpha/3$ in place of $\gamma$) then implies that there are at least $m/p^* + \alpha m/2$ choices of $j \in [m] \sm S$ such that the triple $(i, S \sm \{i\}, j)$ is $\Phi$-dense and $Z$-sparse. At most $\psi m$ of these choices of $j$ do not lie in $V(\Sa)$, so we have $\deg_\Sa(i) \geq m/p^* + \alpha m/2 - \psi m > m/p^*$. So 
$$\delta(\Sa) > m/p^* \geq |V(\Sa)|/p^*,$$
 from which we conclude that each of the connected components $C_1, \dots, C_{s^*}$ of $\Sa$ contains more than $m/p^*$ vertices. In particular, for any $x_1, \dots, x_{k-1} \in [s^*]$, there are at least $\binom{m/p^*}{k-1} > \theta m^{k-1}$ many $(k-1)$-tuples $\{u_1, \dots, u_{k-1}\}$ with $u_j \in V(C_{x_j})$ for each $j \in [k-1]$, so by (\ref{itema}) at least one of these $(k-1)$-tuples must have degree at least $pm/q > \psi m$ in $\R$. This proves that 
\begin{enumerate}[(i)]
\setcounter{enumi}{\value{mem}} 
\item \label{itemd} $\Sa$ has fewer than $p^*$ connected components $C_1, \dots, C_{s^*}$, and for any $x_1, \dots, x_{k-1} \in [s^*]$ there is some edge $\{u_1, \dots, u_k\} \in \R[V(\Sa)]$ such that $u_j \in V(C_{x_j})$ for each $j \in [k-1]$.
\setcounter{mem}{\value{enumi}}
\end{enumerate}

Now, fix any $F \in \F$, and let $U_{F} := \bigcup_{j \in V(F)} U_j$. Then since each cluster $U_j$ has size $n_1$, which is divisible by $(q-p)p(k-1)$, by Lemma~\ref{akpqsplit} we may partition $U_F$ into disjoint sets $V_j^i$ with $j \in [k]$ and $i \in [(q-p)p(k-1)]$ such that
\begin{enumerate}[(i)]
\setcounter{enumi}{\value{mem}} 
\item \label{pfveq} $|V_1^i| = \frac{p}{q}\sum_{j \in [k]} |V_j^i|$ for each $i$, 
\item \label{pfvlb} $\frac{n_1}{q} \leq \frac{n_1}{q-p} = |V_1^i| \leq |V_2^i| = |V_3^i| = \dots = |V_k^i|$ for each $i$, and
\item \label{pfedgeR} for each $i$ and $j$ there exists $f(i,j) \in [m]$ for which $V_j^i \subseteq U_{f(i,j)}$ and such that for any fixed $i$ the set $\{f(i,j) : j \in [k]\}$ is an edge of $\R$.
\setcounter{mem}{\value{enumi}}
\end{enumerate}
Partition $U_F$ in this manner for every $F \in \F$ to obtain sets $V_j^i$ for $j \in [k]$ and $i \in [t]$, where $t := (q-p)p(k-1)|\F|$. We will refer to the sets $V_j^i$ as \emph{subclusters}. 

We naturally obtain from $\R$ a $k$-graph $\R'$ corresponding to our refined partition into subclusters. Indeed, this has vertex set $[t] \times [k]$, where the vertex $(i, j)$ corresponds to the subcluster $V^i_j$, and a set $\{(i_1, j_1), \dots, (i_k, j_k)\}$ is an edge of $\R'$ if and only if $\{f(i_1, j_1), \dots, f(i_k, j_k)\}$ is an edge of $\R$. That is, edges of $\R'$ correspond to $k$-tuples of subclusters which were taken from clusters of the same edge of the reduced $k$-graph. In the same way we define a graph $\Sa'$ on the vertex set $[t] \times [k]$, where $\{(i_1, j_1), (i_2, j_2)\}$ is an edge of $\Sa'$ if and only if $\{f(i_1, j_1), f(i_2, j_2)\}$ was an edge of $\Sa$. It follows from this definition that the components of $\Sa'$ correspond to the components of $\Sa$. That is, $\Sa'$ has components $C'_1, \dots, C'_{s^*}$, where for any $\ell \in [s^*]$ we have $(i, j) \in V(C'_\ell)$ if and only if $f(i, j) \in V(C_\ell)$. It then follows from (\ref{itemd}) that 
\begin{enumerate}[(i)]
\setcounter{enumi}{\value{mem}} 
\item \label{pfedges} $\Sa'$ has fewer than $p^*$ connected components $C'_1, \dots, C'_{s^*}$, and for any $x_1, \dots, x_{k-1} \in [s^*]$ there is some edge $e = \{(i_1, j_1), \dots, (i_k, j_k)\} \in \R'$ such that $(i_\ell, j_\ell) \in V(C'_{x_\ell})$ for each $\ell \in [k-1]$. 
Also, 
\item \label{pfconn} $\R'$ is connected. Indeed, if $f(i, j) = \ell$ and $f(i', j') = \ell'$, and $\ell$ and $\ell'$ were contained in a common edge of $\R$, then $(i, j)$ and $(i', j')$ are contained in a common edge of $\R'$ by definition of $\R'$. Since $\R[V(\F)]$ is connected by (\ref{pfRconn}), this implies that $\R'$ is connected.
\setcounter{mem}{\value{enumi}}
\end{enumerate}

\medskip \nib{Obtain robustly universal complexes:} 
Fix any $i \in [t]$. Then $e := \{f(i, j) : j \in [k]\}$ is an edge of $\R$ by (\ref{pfedgeR}), and $|V^i_j| \geq n_1/q \geq \eta n_1$ for each $j \in [k]$ by (\ref{pfvlb}). So we may apply Lemma~\ref{getrobuni} with the clusters $U_{f(i, j)}$ and subclusters $V^i_j$ in place of the clusters $X_i$ and subsets $Y_i$ respectively. This allows us to delete at most $\mu |V^i_j|$ vertices from each subcluster~$V^i_j$ to obtain subsets $W^i_j \subseteq V^i_j$ and a $k$-partite $k$-complex $J^i$ with vertex classes $W^i_1, \dots, W^i_k$ such that 
\begin{enumerate}[(i)]
\setcounter{enumi}{\value{mem}} 
\item \label{pfdens} $d(J^i_{[k]}) > d^*$ and $|J^i_=(v)| > d^*|J^i_=|/|W^i_j|$ for every $v \in W^i_j$, 
\item \label{pfrobuni} $J^i_= \subseteq G \sm Z$, and $J^i$ is $\eta$-robustly $D$-universal.
\setcounter{mem}{\value{enumi}}
\end{enumerate} 
Let $W_0$ be the set of all vertices of $H$ which do not lie in any set $W^i_j$. By (\ref{pfcover}) there are at most $\psi n$ vertices which lie in clusters $U_\ell$ for $\ell \in [m] \sm V(\F)$; the set $W_0$ contains all these, and also the at most $\mu n$ vertices deleted whilst forming the sets $W^i_j$. So $|W_0| \leq \psi n + \mu n \leq 2\psi n$. 
Next, for each $i$ and $j$ choose an integer $s^i_j$ such that $bk\gcd(K)$ divides $s^i_j$ and
$$\beta n_1 \leq s^i_j \leq 2\beta n_1,$$
and choose a subset $X^i_j \subseteq W^i_j$ of size precisely $s^i_j$ uniformly at random and independently of each other choice. Also let $Y^i_j = W^i_j \sm X^i_j$ for every $i$ and $j$, and for each $i \in [t]$ write $X^i := \bigcup_{j \in [k]} X^i_j$, $Y^i := \bigcup_{j \in [k]} Y^i_j$, $X = \bigcup_{i \in [t]} X^i$ and $Y = \bigcup_{i \in [t]} Y^i$. 
So the sets $X, Y$ and $W_0$ partition $V(H)$. Then we may fix an outcome of these random selections so that
\begin{enumerate}[(i)]
\setcounter{enumi}{\value{mem}}
\item \label{pfuni} for any $i \in [t]$ and any subset $Y''{}^i \subseteq Y^i$, the subcomplex $J^i[X^i \cup Y''{}^i]$ is $D$-universal, and
\item \label{pfdeg} $\delta(H[W_0 \cup Y]) \geq \delta(H) - |X| \geq \alpha n - \alpha n/2 = \alpha n/2$. 
\setcounter{mem}{\value{enumi}} 
\end{enumerate} 
Indeed, (\ref{pfdeg}) follows from the fact that $|X| \leq tk (2\beta n_1) \leq \alpha n/2$, whilst for any specific $i \in [t]$~(\ref{pfuni}) holds with probability $1-o(1)$ by Proposition~\ref{randomsplitkeepsmatching2}. Since the selections for distinct $i \in [t]$ are independent, with positive probability~(\ref{pfuni}) holds for every $i \in [t]$.

\medskip \nib{Delete a $K$-packing covering `bad' vertices:}
We will next greedily form a $K$-packing $M$ in $H[W_0 \cup Y]$ of size $|W_0|$ which covers every vertex of $W_0$ and which covers at most $\sqrt{\psi} n_1$ vertices from any set $Y^i$. To do this, suppose that we have already chosen fewer than $|W_0|$ members of $M$, and that $v \in W_0$ is not yet covered by $M$. Then $M$ covers fewer than $b|W_0| \leq 2b \psi n$ vertices of $H$, so there are at most $2b \psi n / (\sqrt{\psi} n_1/2) \leq 4b \sqrt{\psi} m$ sets $Y^i$ for which more than $\sqrt{\psi} n_1/2$ vertices of $Y^i$ are covered by $M$. Let $Y' \subseteq Y$ consist of all vertices not yet covered by $M$ which lie in sets $Y^i$ in which at most $\sqrt{\psi} n_1/2$ vertices of $Y^i$ are covered by~$M$. Then 
$$|Y'| \geq |Y| - (4b \sqrt{\psi} m) kn_1 - 2b \psi n \geq |Y| - \frac{\alpha n}{4},$$
so by (\ref{pfdeg}) we have 
$$\delta(H[\{v\} \cup Y']) \geq \frac{\alpha n}{2} - |(Y \cup W_0) \sm Y'| \geq \frac{\alpha n}{2} - \frac{\alpha n}{4} - 2 \psi n \geq \frac{\alpha n}{5}.$$
So by Lemma~\ref{incorporateexcep} there is a copy of $K$ in $H[\{v\} \cup Y']$ which contains $v$; choose such a copy and add it to $M$. Proceeding greedily in this manner, after $|W_0|$ steps we obtain a $K$-packing $M$ in $H[W_0 \cup Y]$ which covers every vertex of $|W_0|$, and which covers at most $\sqrt{\psi} n_1/2 + b \leq \sqrt{\psi} n_1$ vertices in any set $Y^i$. Write ${Y'}^i_j := Y^i_j \sm V(M)$ for each $i \in [t], j \in [k]$ and let $Y' := \bigcup_{i, j} Y'{}^i_j$.

\medskip \nib{Delete a $K$-packing to ensure divisibility of cluster sizes:} 
We now delete a further $K$-packing in $H[Y']$, so that after these deletions the number of vertices remaining in any set $Y^i_j$ is divisible by $bk \gcd(K)$. To do this we apply Lemma~\ref{gcdbalance} with $\R'$, $\Sa'$, $H'[Y']$ and $kt$ in place of $\R$, $\Sa$, $G$ and $t$ respectively, and the sets ${Y'}^i_j$ for $i \in [k]$ and $j \in [t]$ in place of the sets $Y_j$ of Lemma~\ref{gcdbalance}. (We use the subgraph $H'[Y']$ rather than simply $H[Y']$ in place of $G$ due to the requirement in  Lemma~\ref{gcdbalance} that $G$ should be $t$-partite.)  To see that $b$ divides $|Y'|$, note that $|X|$ is divisible by $b$ by choice of the integers $s^i_j$, that $|V(M)|$ is divisible by $b$ since $M$ is a $K$-packing, and $|V(H)|$ is divisible by $b$ by assumption; since $X$ and $V(M)$ are disjoint and $Y' = V(H) \sm (V(M) \cup X)$ we indeed have that $b$ divides $|Y'|$. Furthermore, $\R'$ is connected by (\ref{pfconn}), and condition (iii) of Lemma~\ref{gcdbalance} holds by (\ref{pfedges}) (since if $\gcd(K) > 1$ then $p^*$ is the smallest prime factor of $\gcd(K)$). So it remains to verify that conditions (i) and (ii) of Lemma~\ref{gcdbalance} are satisfied.

For condition (i), consider any edge $e \in \R'$, and let $(i_x, j_x)$ for $x \in [k]$ be the vertices of~$e$. Also let $v_x := f(i_x, j_x)$ for each $x \in [k]$; then $\{v_1, \dots, v_k\}$ is an edge of $\R[V_\F]$ by definition of $\R'$. We apply Lemma~\ref{getrobuni} with clusters $U_{v_x}$ and subclusters $Y'{}^{i_x}_{j_x}$ in place of the sets $X_\ell$ and $Y_\ell$ respectively, which is possible since $|Y'{}^{i_x}_{j_x}| \geq n_1/q - \sqrt{\psi}n_1 - 2\beta n_1 \geq \eta n_1$ for each pair $(i_x, j_x)$. Lemma~\ref{getrobuni} then yields a $(k+1)$-partite $k$-complex $J$ which covers at least $(1-\mu)|Y'{}^{i_x}_{j_x}| \geq n_1/2q$ vertices of each vertex class $Y'{}^{i_x}_{j_x}$ such that $J_= \subseteq G \sm Z \subseteq H'$ and such that $J$ is $D$-universal. By a very similar argument to the proof of Proposition~\ref{varykcomp}, the latter fact implies that $J_=$ contains at least $\lfloor n_1/2qb(s+1) \rfloor > N$ vertex-disjoint copies of $\B(b(s+1))$, and so $H'[\bigcup_{(i_x, j_x) \in e} Y'{}^{i_x}_{j_x}]$ does so also.

Now let $(i_1, j_1)(i_{k+1}, j_{k+1})$ be an edge of $\Sa'$, and write $v_1 = f(i_1, j_1)$ and $v_{k+1} = f(i_{k+1}, j_{k+1})$. Then $v_1v_{k+1}$ is an edge of $\Sa$ by definition of $\Sa'$, and so by definition of $\Sa$ there exists a $(k-1)$-tuple $S \in \binom{V(\Sa) \sm \{v_1, v_{k+1}\}}{k-1}$ such that $(u, S, v)$ is a $\Phi$-dense and $Z$-sparse triple. Let $v_2, \dots, v_k$ be the vertices of $S$, and for each $2 \leq x \leq k$ choose some $i_x$ and $j_x$ such that $f(i_x, j_x) = v_x$. We apply Lemma~\ref{getphirobuni} with $\{v_1, \dots, v_k\}$ and $\{v_2, \dots, v_{k+1}\}$ in place of $A$ and $B$ respectively, with the clusters $U_{v_x}$ and subclusters $Y'{}^{i_x}_{j_x}$ for $x \in [k+1]$ in place of the sets $X_\ell$ and $Y_\ell$ respectively. Similarly as before, this is possible since $|Y'{}^{i_x}_{j_x}| \geq \eta n_1$ for each pair $(i_x, j_x)$, and then Lemma~\ref{getphirobuni} then yields a $(k+1)$-partite $k$-complex $J$ which covers at least $n_1/2q$ vertices of each vertex class $Y'{}^{i_x}_{j_x}$ such that $J_= \subseteq G \sm Z \subseteq H'$ and such that $J$ is $D$-universal on $\Phi$ (where we consider vertex $x$ of $\Phi$ to correspond to the pair $(i_x, j_x)$). Then by Proposition~\ref{varykcomp}, $J_=$ contains $\lfloor (n_1/2q)/kbs \rfloor > N$ vertex-disjoint copies of $\Phi(b(s+1))$ whose end vertex classes lie in $Y'{}^{i_x}_{j_x}$ for $x = 1$ and $x = k+1$ and whose central vertex classes lie in $Y'{}^{i_x}_{j_x}$ for $2 \leq x \leq k$. Since $J_= \subseteq H'$, this establishes condition (ii) of Lemma~\ref{gcdbalance}.

So we may indeed apply Lemma~\ref{gcdbalance} as claimed. This yields a $K$-packing $M'$ in $H'[Y']$ (and therefore also in $H[Y']$) of size at most $N/b$ so that, taking $Y''{}^i_j := Y'{}^i_j \sm V(M')$ for each $i$ and $j$, we have that $bk \gcd(K)$ divides $|Y''{}^i_j|$ for every $i \in [t]$ and $j \in [k]$. 
 
\medskip \nib{Blow-up a perfect $K$-packing in the remaining $k$-graph:}
To finish the proof, let $L^i_j := X^i_j \cup Y''{}^i_j$ for every $i \in [t]$ and $j \in [k]$, and for each~$i \in [t]$ let $L^i := \bigcup_{j \in [k]} L^i_j$. So the sets $L^i$ partition $V(H) \sm V(M \cup M')$.
Fix any $i \in [t]$, and observe that the $k$-complex $J^i[L^i]$ is $D$-universal by (\ref{pfuni}). Since $J^i_= \subseteq G \sm Z \subseteq H$, this implies that $H[L^i]$ contains a perfect $K$-packing if $\K[L^i]$, the complete $k$-partite $k$-graph on vertex classes $L^i_1, \dots, L^i_k$, does also. For any $j \in [k]$, by choice of the sets $X^i_j$ and the $K$-packing $M'$ both $|X^i_j|$ and $|Y''{}^i_j|$ are divisible by $bk\gcd(K)$, so $bk\gcd(K)$ divides $|L^i_j|$ also. Furthermore, $L^i_j$ was formed from $V^i_j$ by first deleting at most $\mu |V^i_j|$ vertices to form $W^i_j$, and then deleting the at most $\sqrt{\psi} n_1 + N$ vertices covered by $M \cup M'$. Since $|V^i_j| \geq n_1/q$ by (\ref{pfvlb}), in total at most $\beta |V^i_j|$ vertices were deleted in forming $L^i_j$ from $V^i_j$, and in particular we have $|L^i_j| \geq |V^i_j|/2$. Writing $V^i := \bigcup_{j \in [k]} V^i_j$, by~(\ref{pfveq}),~(\ref{pfvlb}) and our choice of $p$, it follows that for any $j \in [k]$ we have 
$$|L^i_j| \geq (p/q)|V^i| - \beta |V^i_j| \geq (p/q)|L^i| - 2 \beta |L^i_j| \geq (\sigma(K) + \alpha/4) |L^i|,$$ 
so $\sigma(\K[L^i]) \geq \sigma(K) + \alpha/4$, and also that 
$||L^i_j| - |L^i_{j'}|| \leq \beta |V^i| \leq 2 \beta |L^i|$ for any $2 \leq j, j' \leq k$.
So by Corollary~\ref{completepacking} $\K[L^i]$ admits a perfect $K$-packing, and so $H[L^i]$  contains a perfect $K$-packing~$M^i$. Having chosen such a $K$-packing $M^i$ for every $i \in [t]$, the union $M \cup M' \cup \bigcup_{i \in [t]} M^i$ is a perfect $K$-packing in $H$. \end{proof}

\subsection{Proof of Lemma~\ref{main2simple}} \label{sec:proof2}
\medskip

Recall the statement of Lemma~\ref{main2simple}.

\medskip \noindent {\bf Lemma~\ref{main2simple}.}
{\it Let $K$ be the complete $k$-partite $k$-graph whose vertex classes each have size $b_1$. Then for any $\alpha > 0$ there exists $n_0 = n_0(K, \alpha)$ such that if $n \geq n_0$ is divisible by $b_1k$ and $H$ is a $k$-graph on $n$ vertices with $\delta(H) \geq n/2 + \alpha n$ then $H$ contains a perfect $K$-packing.} \medskip

In several places the proof of this lemma is similar to the proof of Lemma~\ref{main1simple}, in which case we refer to that proof.\medskip
 
\begin{proof}
Let $b := kb_1$, so $|V(K)| = b$, and introduce new constants with 
\begin{align*}
1/n & \ll \eps  \ll d^* \ll 1/a \ll \xi, 1/r \ll \nu \ll \mu \ll c, \eta \ll \theta  \ll \psi \ll 1/C, 1/L \ll \alpha, 1/D \ll 1/b, 1/k,
\end{align*}
As in the proof of Lemma~\ref{main1simple}, we have assumed without loss of generality that $\alpha$ is sufficiently smaller than $1/b$ and $1/k$. Furthermore, by the same argument as in the proof of Lemma~\ref{main1simple} we may assume that $n$ is divisible by $a!r$. 

We begin by following the exact same steps as in the section `Apply the Regular Approximation Lemma' of the previous proof, to obtain a partition $\Qart$ of $U := V(H)$ into parts $T_1, \dots, T_r$, a subgraph $H' \subseteq H$ consisting of all $\Qart$-partite edges of $H$, a partition $(k-1)$-complex $\Part$ on $U$ with clusters $U_1, \dots, U_m$, and $\Qart$-partite $k$-graphs $G$ and $Z = G \triangle H'$ which satisfy the conditions of Setup~\ref{redsetup} (with variables taking the same values there as here). Likewise, as before we let $\R$ be the reduced $k$-graph of $G$ and $Z$ as defined in Definition~\ref{redgraphdef}, so $\R$ has vertex set $[m]$. Similarly as before we find that any $\Qart$-partite $(k-1)$-tuple $e \in \binom{U}{k-1}$ has 
\begin{equation} \label{eq:GZdeg}
\deg_{G \cup Z}(e) \geq \deg_{H}(e) - (k-1)n/r \geq n/2 + 2\alpha n/3,\end{equation} 
and we apply Lemma~\ref{redgraphmindeg} to find that (with plenty of room to spare) all but at most $\theta m^{k-1}$ many $(k-1)$-tuples $S \in \binom{[m]}{k-1}$ have $\deg_\R(S) \geq m/k + \alpha m/2$. So we can apply Corollary~\ref{almostmatching} (with $\alpha/2$ in place of $\alpha$) to find a matching $M_\R$ in $\R$ which covers $m' \geq (1-\psi)m$ vertices of $\R$ such that $\R[V(M'_\R)]$ is $(C, L)$-irreducible on $M'_\R$ for any $M'_\R \subseteq M_\R$ with $|M'_\R| \geq (1-\alpha/4)|M_\R|$. Without loss of generality we assume that $V(M_\R) = [m']$.
 
Having chosen $M_\R$, we now define the graph $\Sa$ on $V(M_\R) = [m']$; this definition is different to that used in the proof of Lemma~\ref{main1simple}. Indeed, we first define a directed graph $\Sa^+$ on $[m']$, where $i \to j$ is an edge of $\Sa$ if the triple $(i, e_i \sm \{i\}, j)$ is $\Phi$-dense and $Z$-sparse, where $e_i$ is the edge of $M_\R$ which contains $i$. We then define $\Sa$ to be the base graph of $\Sa^+$. By~\eqref{eq:GZdeg} and Lemma~\ref{redgraphmindeg}, applied with $1/2 + 2\alpha/3$ in place of~$\gamma$, we find that for any $i \in [m']$ there are at least $m/2 + 2\alpha m/3 - \theta m$ choices of $j \in [m] \sm e_i$ such that the triple $(i, e_i \sm \{i\}, j)$ is $\Phi$-dense and $Z$-sparse. At most $\psi m$ of these choices of $j$ are not members of $[m']$, so we have $\delta(\Sa) \geq \delta^+(\Sa^+) > m/2 + \alpha m/2 \geq (1/2 + \alpha/2)m'$. 

\medskip \nib{Obtain robustly universal complexes:} 
We now obtain robustly universal complexes covering almost all of the vertices in clusters corresponding to edges of $\R$, similarly as in the proof of Lemma~\ref{main1simple} (but here we do not divide our clusters into subclusters, so there is no need to define $\R'$ and $\Sa'$). Fix any $e \in M_\R$. Then by Lemma~\ref{getrobuni} we can delete at most $\mu n_1$ vertices from each set $U_j$ with $j \in e$ to obtain subsets $W_j$ and a $k$-partite $k$-complex $J^e$ with vertex classes $W_1, \dots, W_k$ such that $J^e$ is $\eta$-robustly $D$-universal, $J^e_= \subseteq G \sm Z$, $d(J_e^e) > d^*$ and $|J^e_e(v)| > d^*|J^e_=|/|W_j|$ for every $v \in W_j$. 

Let $W_0$ be the set of all vertices of $H$ which do not lie in any set $W_j$. So $W_0$ contains the at most $\psi n$ vertices in clusters $U_\ell$ for $\ell \in [m] \sm [m']$, and the at most $\mu n$ vertices deleted from clusters $U_j$ in forming the sets $W_j$. So we have $|W_0| \leq \psi n + \mu n \leq 2\psi n$. Next, fix an integer $n_X$ with $\alpha n_1/3 \leq n_X \leq \alpha n_1/2$ such that $n_X$ is divisible by $b$. For each $j \in [m']$ choose a subset $X_j \subseteq W_j$ of size precisely $n_X$ uniformly at random, and take $Y_j := W_j \sm X_j$. Then just as in the proof of Lemma~\ref{main1simple}, we find that, as there, we can fix an outcome of these random selections so that 
\begin{enumerate}[(i)]
\item \label{pf2deg} $\delta(H[W_0 \cup Y]) \geq \alpha n/2$, and
\item \label{pf2uni} for any $e \in M_\R$ and any subset $Y''_e \subseteq Y_e$, the subcomplex $J^e[X_e \cup Y''_e]$ is $D$-universal,
\setcounter{mem}{\value{enumi}}
\end{enumerate}
where we write $Y_e := \bigcup_{j \in e} Y_j, X_e := \bigcup_{j \in e} X_j$, and $Y := \bigcup_{e \in M_\R} Y_e.$ Also let $X := \bigcup_{e \in M_\R} X_e$, and observe that the sets $W$, $X$ and $Y$ partition $V(H)$. So our assumption that $b$ divides $|V(H)|$, together with our choice of $n_X$, implies that $b$ divides both $|X|$ and $|W_0 \cup Y|$. 

\medskip \nib{Delete a $K$-packing covering `bad' vertices:}
Exactly as in the corresponding part of the proof of Lemma~\ref{main1simple}, (\ref{pf2deg}) allows us to repeatedly apply Lemma~\ref{incorporateexcep} to greedily form a $K$-packing $M^1$ in $H[W_0 \cup Y]$ of size $|W_0|$ which covers every vertex of $W_0$ and which covers at most $\sqrt{\psi} n_1$ vertices from $Y_e$ for any $e \in M_\R$. Following this, we apply Theorem~\ref{turandensityzero} up to $m'$ times to greedily choose a $K$-packing $M^2$ in $H[Y \sm V(M^1)]$ of size at most $m'$, so that $bm'$ divides $|Y \sm V(M^1 \cup M^2)|$ (this is possible since $Y \sm V(M^1)$ is identical to $(Y \cup W_0) \sm V(M^1)$, so has size divisible by $b$). Write $Y'_j := Y_j \sm V(M^1 \cup M^2)$ for every $j \in [m']$, and let $Y' := Y \sm V(M^1 \cup M^2)$. Note that $Y'_j$ was formed from $U_j$ by deleting at most $\mu n_1$ vertices to form $W_j$, then exactly $n_X$ vertices to form $Y_j$, and then at most $\sqrt{\psi} n_1 + bm'$ vertices which were covered by $M^1 \cup M^2$. So, writing $n'_1 := n_1 - n_X$, for any $j \in [m]$ we have 
$$(1-3\sqrt{\psi}) n'_1 \leq n'_1 - 2\sqrt{\psi} n_1 \leq |Y'_j| \leq n'_1.$$

\medskip \nib{Delete a $K$-packing to ensure equality of cluster sizes:}
We now use Lemma~\ref{samesizebalance} to delete a further $K$-packing $M^3$ in $H[Y']$ so that, following these deletions, the clusters $Y'_\ell$ for $\ell \in e$ have equal size for any edge $e \in M_\R$, and this common size is divisible by $b_1$. 
For this, recall that $\R' := \R[V(M_\R)]$ is a $k$-graph with vertex set $[m']$, and that $M_\R$ is a perfect matching in $\R'$, $\Sa^+$ is a directed graph on $[m']$, and $\Sa$ is the base graph of $\Sa^+$. We will show that $K, \R', M_\R, \Sa, \Sa^+$ and the clusters $U_\ell$ for $\ell \in [m']$ satisfy the conditions of Setup~\ref{balancesetup} with $H', m', n_1$ and $kb_1L+b_1C$ in place of $G, m, n$ and $N$ respectively. So fix any sets $V_\ell \subseteq U_\ell$ with $|V_\ell| \geq n_1/2$ for each $\ell \in [m']$, and let $V := \bigcup_{\ell \in [m']} V_\ell$. 

Let $e \in \R'$. We apply Lemma~\ref{getrobuni} with clusters $U_\ell$ and subclusters $V_\ell$ in place of the sets $X_\ell$ and $Y_\ell$ respectively. Lemma~\ref{getrobuni} then yields a $(k+1)$-partite $k$-complex $J$ which covers at least $(1-\mu)|V_\ell| \geq n_1/3$ vertices of each vertex class $V_\ell$ such that $J_= \subseteq G \sm Z \subseteq H'$ and such that $J$ is $D$-universal. So $J_=$ contains at least $\lfloor n_1/3b_1 \rfloor > N$ vertex-disjoint copies of $K$, and so $H'[\bigcup_{\ell \in e} V_\ell]$ does so also. This demonstrates that condition (i) of Setup~\ref{balancesetup} is satisfied.

Now let $i \to j$ be an edge of $\Sa^+$. By definition of $\Sa^+$ it follows that $(i, e_i \sm \{i\}, j)$ is a $\Phi$-dense and $Z$-sparse triple. Define $A := e_i$ and $B := \{j\} \cup e_i \sm \{i\}$, and apply Lemma~\ref{getphirobuni} with the sets $V_\ell$ and clusters $U_\ell$ for $\ell \in A \cup B$ in place of the sets $Y_\ell$ and $X_\ell$ respectively. Then Lemma~\ref{getphirobuni} yields a $(k+1)$-partite $k$-complex $J$ with $J_= \subseteq G \sm Z \subseteq H'$ which covers at least $(1-\mu)|V_\ell| \geq n_1/3$ vertices of each $V_\ell$ and is $D$-universal on $\Phi$. So by Proposition~\ref{varykcomp} $J_=$ contains at least $\lfloor n_1/3b_1 \rfloor \geq N$ vertex-disjoint copies of $\Phi(b_1)$ in $J_= \subseteq H'[V]$ whose end vertex classes lie in $V_i$ and $V_j$ and whose central vertex classes lie in $V_\ell$ for $\ell \in e_i \sm \{i\}$, so condition (ii) of Setup~\ref{balancesetup} is satisfied.

We will apply Lemma~\ref{samesizebalance} with the trivial partition of $[m']$ into one set $X_1 = [m']$ (so $s=1$). So condition (iii) of Lemma~\ref{samesizebalance} requires simply that $|Y'|/m'$ is an integer which is divisible by $b_1$; this holds by our choice of $M^2$ and the fact that $b_1$ divides $b$. Furthermore, condition (ii) of Lemma~\ref{samesizebalance} holds by our choice of $M_\R$. Finally, condition (i) of Lemma~\ref{samesizebalance} holds since $\delta(\Sa) > (1/2 + \alpha/2) m'$, so for any $x, y \in [m']$ there are more than $\alpha m'$ paths of length two from $x$ to $y$, and so these paths cannot all be be removed by the deletion of at most $\alpha m'$ vertices of $\Sa$ not including $x$ or $y$. We can therefore apply Lemma~\ref{samesizebalance}, with $3\sqrt{\psi}$ and $n_1'$ in place of $\beta$ and $n'$, and with $b_1, \alpha, k, C$ and $L$ playing the same role there as here. 
From this we obtain a $K$-packing $M^3$ in $H'[Y']$ such that, writing $Y''_j := Y'_j \sm V(M^3)$ for each $j \in [m']$, for each $e \in M_\R$ there is an integer $n_e$ such that $b_1$ divides $n_e$ and $|Y''_j| = n_e$ for any $j \in e$.

\medskip \nib{Blow-up a perfect $K$-packing in the remaining $k$-graph:}
To finish the proof, let $L_j := X_j \cup Y''_j$ for every $j \in [m']$, and for each~$e \in M_\R$ let $L_e := \bigcup_{j \in e} L_j$. So the sets $L_e$ for $e \in M_\R$ partition $V(H) \sm V(M^1 \cup M^2 \cup M^3)$. Now fix any $e \in M_\R$. By choice of $M^3$ we have $|L_j| = |Y''_j| + |X_j| = n_e + n_X$ for any $j \in e$, and so $|L_j|$ is divisible by $b_1$ since both $n_e$ and $n_X$ were. So the complete $k$-partite $k$-graph $\K[L_e]$ with vertex classes $L_j$ for $j \in e$ admits a perfect $K$-packing by Fact~\ref{completepackingbalanced}. Since the $k$-complex $J^e[L_e]$ is $D$-universal by~(\ref{pf2uni}) it follows that $J^e_=[L_e]$ admits a perfect $K$-packing also. Finally, since $J^e_= \subseteq G \sm Z \subseteq H$, this implies that $H[L_e]$ contains a perfect $K$-packing $M^e$. Choose $M^e$ in this way for each $e \in M_\R$; then $M^1 \cup M^2 \cup M^3 \cup \bigcup_{e \in M_\R} M^e$ is a perfect $K$-packing in $H$.
\end{proof}

\section{Packing loose cycles}\label{sec:cycles}

Recall that we write $C^k_s$ to denote the loose cycle $k$-graph on $s(k-1)$ vertices, which was defined for any $s > 1$ to have $s(k-1)$ vertices $\{1, \dots, s(k-1)\}$ and $s$ edges 
$\{\{j(k-1)+1, \dots, j(k-1)+k\} \textrm{ for }0 \leq j < s\}$, with addition taken modulo $s(k-1)$. Also recall that $\tau(K)$ denotes the proportion of vertices of $K$ in a smallest vertex cover of $K$, whilst $\sigma(K)$ denotes the proportion of vertices of $K$ in a smallest vertex class of a $k$-partite realisation of~$K$. In this section we prove Theorem~\ref{cyclepack}, giving the asymptotic value of $\delta(C^k_s, n)$ for any $k$ and $s$. To begin, we establish the values of $\gcd(C^k_s)$, $\tau(C^k_s)$ and $\sigma(C^k_s)$.

\begin{prop} \label{cycletype}
For any $k \geq 3$ and $s \geq 2$ we have 
$$\tau(C^k_s) = \sigma(C^k_s) = \frac{\lceil s/2 \rceil}{s(k-1)}.$$
Furthermore, we also have $\gcd(C^k_s) = 1$ except in the case $s=k=3$, whilst $\gcd(C^3_3)$ is undefined.
\end{prop}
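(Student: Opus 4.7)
The structure of $C^k_s$ partitions its $s(k-1)$ vertices into $s$ \emph{shared} vertices $v_j := j(k-1)+k$ (indices mod $s(k-1)$) for $j \in \{0,\ldots,s-1\}$, each lying in the two consecutive edges $E_j, E_{j+1}$, and $s(k-2)$ \emph{non-shared} vertices, each in exactly one edge. Two shared vertices $v_i, v_j$ co-occur in an edge precisely when $|i-j|\equiv 1\pmod{s}$, so subsets of shared vertices with at most one vertex per edge correspond bijectively to independent sets of the $s$-cycle on $\{v_0,\ldots,v_{s-1}\}$. For $\tau(C^k_s)$ my plan is to take the explicit cover $\{v_0, v_2, v_4, \ldots\}$ (with one extra shared vertex if $s$ is odd to close the wraparound), of size $\lceil s/2\rceil$; for the matching lower bound, any cover with $a$ shared and $b$ non-shared vertices must satisfy $2a+b \geq s$, and the integer minimum of $a+b$ subject to this is $\lceil s/2\rceil$.

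Since every vertex class of a $k$-partite realisation is itself a vertex cover, we always have $\sigma(C^k_s) \geq \tau(C^k_s)$; for the reverse inequality I would promote the cover above to colour class~$1$, assign all remaining odd-indexed shared vertices to colour~$2$ (augmented by one non-shared vertex of $E_0$ when $s$ is odd, in order to fill the slot left by $v_{s-1}$ in that edge), and then distribute the $k-2$ non-shared vertices of each edge among colours $3,\ldots,k$. When $s$ is odd the edge $E_{s-1}$ ends up containing two residual shared vertices $v_{s-2},v_{s-1}$ which must be placed in distinct classes, but since $k\geq 3$ this is easy to accommodate.

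For $\gcd(C^k_s)$ the exceptional case $(s,k)=(3,3)$ is immediate: $C^3_3$ has only $6$ vertices and each class has size at least $\sigma\cdot 6=2$, so the three classes all have size exactly~$2$ in every realisation; hence $\mathcal{D}(C^3_3)=\{0\}$ and $\gcd$ is undefined. For every other $(s,k)$ I would exhibit a single realisation with two classes of consecutive sizes, which immediately forces $1\in\mathcal{D}$ and hence $\gcd=1$. When $s=2$ the standard realisation gives sizes $1,1,2,\ldots,2$. For even $s\geq 4$ I would modify the realisation of the previous paragraph by absorbing one non-shared vertex from each of $E_0$ and $E_{s-1}$ into colour~$2$ (displacing $v_{s-1}$ into colour~$3$), producing sizes $s/2,s/2+1,s-1,s,\ldots,s$. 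For odd $s\geq 5$ with $k\geq 4$ the same type of modification yields sizes $(s+1)/2,(s+1)/2,s-1,s,\ldots,s$; in both cases the pair $(s-1,s)$ provides the required consecutive differ-by-one.

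The remaining case of odd $s\geq 5$ with $k=3$ is the main obstacle: there is no spare colour to absorb into, and the $\sigma$-achieving realisation has sizes $(s+1)/2,(s+1)/2,s-1$, whose non-zero difference is $(s-3)/2$---equal to $1$ only when $s=5$. To get $\gcd = 1$ uniformly for all odd $s\geq 5$, I would instead partition the $s$ shared vertices into three independent sets of the $s$-cycle of sizes $2,(s-3)/2,(s-1)/2$, for instance $\{v_{s-3},v_{s-1}\}$, $\{v_1,v_3,\ldots,v_{s-4}\}$, and $\{v_0,v_2,\ldots,v_{s-5},v_{s-2}\}$; independence within each set is a routine check (all pairs are at least $2$ apart modulo $s$). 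Since each edge then has its two shared vertices in two distinct classes, its single non-shared vertex is forced into the third class, producing a valid realisation with class sizes $s-2,(s+3)/2,(s+1)/2$ whose last two entries are consecutive. The delicate point here is sidestepping the natural symmetric three-way split available when $s\equiv 0\pmod{3}$, which would collapse all three class sizes to $2s/3$ and fail to yield any consecutive pair.
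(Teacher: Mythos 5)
Your overall strategy is the paper's: view the $s$ shared (degree-two) vertices as an $s$-cycle, note that a class containing $a$ shared vertices has size $s-a$, and exhibit explicit realisations; so the $\tau$ computation, the inequality $\sigma\geq\tau$, the $s=2$ and even $s\geq 4$ realisations, the three-independent-set trick for odd $s\geq 5$ with $k=3$, and the $C^3_3$ argument are all fine. However, there are two concrete defects. First, your construction of a $\sigma$-achieving realisation fails when $s$ is odd: the minimum cover you promote to colour class~$1$ consists of all even-indexed shared vertices, and since the independence number of an odd $s$-cycle is $(s-1)/2 < \lceil s/2\rceil$, this set is not independent in the link cycle --- concretely, $v_0$ and $v_{s-1}$ (both even-indexed) lie together in the edge $E_0$, so $E_0$ would contain two class-$1$ vertices and the assignment is not a $k$-partite realisation. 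Your patch (adding a non-shared vertex of $E_0$ to class~$2$) repairs class~$2$ but not class~$1$, and the remark about ``residual'' vertices $v_{s-2},v_{s-1}$ in $E_{s-1}$ is inconsistent with $v_{s-1}$ having been placed in class~$1$. The correct construction, which is what the paper does via a proper $3$-colouring of the link cycle with $\lfloor s/2\rfloor$ vertices in one colour, takes class~$1$ to be $(s-1)/2$ pairwise non-adjacent shared vertices together with one non-shared vertex of the single uncovered edge; a class of size $\lceil s/2\rceil$ consisting only of shared vertices cannot exist for odd $s$. (Your own later realisations, e.g.\ the one with class sizes $(s+1)/2,(s+1)/2,s-1,s,\dots,s$, do witness $\sigma\leq\lceil s/2\rceil/s(k-1)$, but the dedicated $\sigma$ paragraph as written does not.)

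Second, your case analysis for $\gcd(C^k_s)=1$ omits $s=3$ with $k\geq 4$: you treat $s=2$, even $s\geq 4$, odd $s\geq 5$ (split by $k=3$ versus $k\geq 4$), and the exceptional $s=k=3$, but never the remaining case. It is easy to close (the paper colours the triangle of shared vertices with three distinct colours, giving three classes of size $2$ and $k-3$ classes of size $3$, so a difference of $1$ appears), but as stated the proof has a hole there. Apart from these two points your route differs from the paper only in bookkeeping: the paper handles all $s\geq 4$ uniformly by a $3$-colouring of the link cycle with colour classes of sizes $\lfloor s/3\rfloor$ and $\lfloor s/3\rfloor+1$, which avoids your parity split and the special treatment of odd $s$ with $k=3$.
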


\begin{proof}
Note that the vertices $j(k-1)+1$ for $0 \leq j < s$ are the vertices of $C_s^k$ which lie in two edges of $C^k_s$. Let $C$ be the (graph) cycle on these $s$ vertices (in order). Then any proper $k$-colouring of $C$ (as a graph) can be extended to a $k$-partite realisation of $C^k_s$ by colouring the $k-2$ uncoloured vertices of each edge of $C^k_s$ with the $k-2$ colours not used to colour the two coloured vertices. Furthermore, the size of each vertex class $V_i$ is then simply $s$ minus the number of vertices of $C$ with colour $i$. Now, if $s \geq 4$ then we can 3-colour $C$ with $\lfloor s/3 \rfloor$ red vertices, $\lfloor s/3 \rfloor + 1$ green vertices and all remaining vertices blue. Extending this colouring to a $k$-partite realisation of $C^k_s$ as described above, we find that the red and green vertex classes differ in size by one, from which we conclude that $\gcd(C^k_s) = 1$. If instead $s=3$ and $k \geq 4$, then we 3-colour $C$ with one red, one blue and one green vertex; extending this $3$-colouring of $C$ to a $k$-partite realisation of $C^k_s$ we find that the blue, green and red vertex classes each have one fewer vertex than each other vertex class, so again we have $\gcd(C^k_s) = 1$. Finally, if $s=2$, then whilst $C$ is no longer a simple graph, if we colour its two vertices red and blue and then extend this colouring to a $k$-partite realisation of $C^k_s$, we find that the red and blue vertex classes each have one fewer vertex than each other vertex class, again giving $\gcd(C^k_s) = 1$. So $\gcd(C^k_s) = 1$ in any case except for $k=s=3$.

Next observe that any vertex of $C^k_s$ lies in at most two edges of $C^k_s$, so any vertex cover of $C^k_s$ has size at least $\lceil s/2 \rceil$; taking vertices $j(k-1)+1$ for even $0 \leq j < s$ gives a vertex cover of this size. So $\tau(C^k_s) = \lceil s/2 \rceil / s(k-1)$ as claimed. We must have $\sigma(C^k_s) \geq \tau(C^k_s)$ since any vertex class of any $k$-partite realisation of $C^k_s$ is a vertex cover of $C^k_s$. So it remains only to show that  $\sigma(C^k_s) \leq \lceil s/2 \rceil / s(k-1)$, that is, that $C^k_s$ has a $k$-partite realisation in which some vertex class has size $\lceil s/2 \rceil$. For this, observe that we can 3-colour $C$ with $\lfloor s/2 \rfloor$ blue vertices, $\lfloor s/2 \rfloor$ red vertices, and either one or zero green vertices. Extending this colouring to a $k$-partite realisation of $C^k_s$ we find that the blue vertex class has size $s - \lfloor s/2 \rfloor = \lceil s/2 \rceil$, as required. 

Finally, observe that $C^3_3$ has only one $3$-partite realisation up to permutation of the vertex classes $\{1, 4\}, \{2, 5\}, \{3, 6\}$. Since each vertex class has equal size, $\gcd(C^3_3)$ is undefined.
\end{proof}

Except in the case $k=s=3$, Proposition~\ref{cycletype} shows that Theorem~\ref{main2} and Proposition~\ref{extrem2} give asymptotically matching upper and lower bounds on $\delta(C^k_s, n)$. However, since any $3$-partite realisation of $C^3_3$ has two vertices in each vertex class, $C^3_3$ has type $0$, and so our main theorems provide only the bound $\delta(C^3_3, n) \leq n/2 + o(n)$ which applies to all $k$-partite $k$-graphs. However, by modifying the proof of Lemma~\ref{main2simple} we can actually prove that $\delta(C^3_3, n) \leq n/3 + o(n)$, giving the correct asymptotic threshold in this case also. For this we need the following proposition, which allows us to find copies of $C^3_3$ with an odd number of vertices on each side of a partition of $V(H)$. Note that the $k$-graph constructed in Proposition~\ref{extrem1} demonstrates that this proposition does not hold if we replace $C^3_3$ by the $3$-partite $3$-graph $K$ with two vertices in each vertex class; this is the point at which the proof of Theorem~\ref{cyclepack} fails to hold for $K$.

\begin{prop} \label{oddc33}
Suppose that $1/n \ll \alpha$. Let $H$ be a $3$-graph on $n$ vertices with $\delta(H) \geq n/3 + \alpha n$, and suppose that sets $A$ and $B$ partition $V(H)$ and satisfy $|A|, |B| \geq n/3 + \alpha n$. Then there is a copy of $C^3_3$ in $H$ with an odd number of vertices in each of $A$ and $B$.
\end{prop}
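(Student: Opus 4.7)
The plan is to split into two cases based on whether some pair has many neighbors on both sides of the partition. Call a pair $\{u,v\}$ \emph{good} if $|N_H(\{u,v\}) \cap A| \geq 4$ and $|N_H(\{u,v\}) \cap B| \geq 4$. If a good pair $\{c_1, c_3\}$ exists then I would pick any $c_5 \in V(H) \setminus \{c_1, c_3\}$ and successively $p_4 \in N_H(\{c_3, c_5\}) \setminus \{c_1\}$ and $p_6 \in N_H(\{c_5, c_1\}) \setminus \{c_3, p_4\}$ (possible since each codegree is at least $n/3 + \alpha n$). After excluding the three vertices $c_5, p_4, p_6$, the goodness of $\{c_1, c_3\}$ leaves at least one candidate for $p_2$ in $A$ and at least one in $B$, and the two resulting copies of $C^3_3$ differ only in the side of $p_2$. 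Hence their values of $|V(K) \cap A|$ differ by one, so one is odd, giving the required odd copy.

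In the remaining case no good pair exists, so every pair $\{u, v\}$ has either $|N_H(\{u, v\}) \cap B| \leq 3$ (call it type-$A$) or $|N_H(\{u, v\}) \cap A| \leq 3$ (call it type-$B$); each pair has exactly one type since $\deg_H(\{u,v\}) \geq n/3 + \alpha n > 6$, and I write $R$ for the graph of type-$A$ pairs on $V(H)$. The key observation is that if a central pair of $C^3_3$ is type-$A$ then its peripheral can always be chosen in $A$ (since at least $n/3 + \alpha n - 6$ candidates remain in $A$ after excluding the at most three other already-chosen vertices), and symmetrically for type-$B$. If $R[B]$ contains a triangle $\{b_1, b_2, b_3\}$ I would take it as the central triple and pick $p_2, p_4, p_6 \in A$ from the respective type-$A$ neighborhoods, yielding $|V(K) \cap A| = 3$, which is odd.

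Otherwise $R[B]$ is triangle-free. Fix any $a \in A$ and let $T$ be the larger of $S = \{b \in B : \{a, b\} \text{ is type-}A\}$ and $B \setminus S$, so $|T| \geq |B|/2 \geq 3$. Since $R[T] \subseteq R[B]$ is triangle-free on at least three vertices, $R[T]$ is not complete, so some $\{b_1, b_2\} \subset T$ is type-$B$. Taking $(c_1, c_3, c_5) = (a, b_1, b_2)$: if $T = S$ then $\{a, b_1\}$ and $\{a, b_2\}$ are type-$A$ while $\{b_1, b_2\}$ is type-$B$, forcing $p_2, p_6 \in A$ and $p_4 \in B$, so $|V(K) \cap A| = 3$; if $T = B \setminus S$ then all three central pairs are type-$B$, forcing $p_2, p_4, p_6 \in B$, so $|V(K) \cap A| = 1$. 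Either gives an odd copy. The main obstacle is this last case: recognizing that triangle-freeness of $R[B]$ together with pigeonhole on $S$ is exactly what supplies a mixed central triple of the right pair-type pattern to force the parity to be odd.
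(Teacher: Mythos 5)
Your proof is correct, and it takes a genuinely different route from the paper. You argue directly: first the dichotomy on whether some pair has at least four co-neighbours on each side (in which case the free choice of the last peripheral vertex toggles the parity), and otherwise a classification of every pair as type-$A$ or type-$B$, followed by a small Ramsey-type step (a triangle in the type-$A$ graph inside $B$ gives the pattern $3+3$; triangle-freeness plus pigeonhole on the neighbours of a fixed $a\in A$ supplies a central triple whose type pattern forces $3$ or $1$ vertices in $A$). The codegree hypothesis is used only through the trivial bounds $|N\cap A|\geq \delta(H)-3$ or $|N\cap B|\geq \delta(H)-3$ and the fact that $|B|/2\geq 3$, so no counting is needed beyond constants. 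The paper instead argues by contradiction: it counts edges crossing the partition, two-colours the pairs inside $A$ (red if at least $3$ co-neighbours in $B$, blue if at least $6$ in $A$), rules out all-red triangles and red--blue--blue triangles, deduces that the red graph is complete bipartite between parts $A_1,A_2$, and then uses the full strength of $\delta(H)\geq n/3+\alpha n$ together with $|A_1|\leq n/3-\alpha n/2$ in a counting argument to manufacture a forbidden blue cross-edge. Your argument is shorter and avoids both the WLOG step (which side carries many pairs) and the final density computation; the paper's colouring viewpoint is essentially an indirect version of your type classification, localised to one side of the partition, and buys nothing extra here beyond matching the style of the surrounding extremal arguments. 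Both proofs verify the same distinctness bookkeeping for the six vertices of the loose cycle, and yours handles it correctly (the pair defining each peripheral vertex is excluded automatically, leaving at most three explicit exclusions).
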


\begin{proof}
Suppose for a contradiction that no such copy of $C^3_3$ exists.
There are $|A||B| \geq n^2/9$ pairs $(x, y)$ with $x \in A$ and $y \in B$. Each of these pairs lies in at least $\delta(H) \geq n/3$ edges of~$H$, so there are at least $n^3/81$ edges $e \in H$ which have at least one vertex in each of $A$ and~$B$. So without loss of generality we may assume that there are at least $n^3/200$ edges $e \in H$ with precisely two vertices in $A$. We now `colour' the edges of the complete graph $K[A]$ on $A$ with colours red and blue. Indeed, we colour $xy$ red if there are at least $3$ vertices $w \in B$ with $\{x, y, w\} \in H$, and we colour $xy$ blue if there are at least $6$ vertices $w \in A$ such that $\{x, y, w\} \in H$. So every edge $xy$ receives at least one colour; if both conditions are satisfied then we give $xy$ both colours, meaning that we can treat it as being either colour. Since any pair $xy$ lies in at most $n$ edges, we find that there are at least $(n^3/200 - 2n^2)/n \geq n^2/300$ red edges of $K[A]$.

Observe that there can be no triangle in $K[A]$ with three red edges. Indeed, if $xyz$ is such a triangle then we may choose distinct $w_1, w_2, w_3 \in B$ such that $(x, y, w_1), (x, z, w_2)$ and $(y, z, w_3)$ are each edges of $H$, thus forming a copy of $C^3_3$ with three vertices in $A$ and three in $B$. Similarly, there can be no triangle in $K[A]$ with two blue edges and one red edge, as then we can form a copy of $C^3_3$ with one vertex in $B$ and five in $A$. Now, choose any vertex $x \in A$ which lies in a red edge, and define $A_1 = \{y \in A \sm \{x\} : xy \mbox{ is red}\}$ and $A_2 := A \sm A_1$. So $A_1$ and $A_2$ partition $A$, and by our previous observations no edge of $K[A_1]$ or $K[A_2]$ is red. So all edges of $K[A_1]$ and $K[A_2]$ are blue and not red; it follows that every edge $yz$ with $y \in A_1$ and $z \in A_2$ is red and not blue (so in fact every edge of $K[A]$ has only one colour). So every edge of $K[A]$ must have received a single colour. Moreover the red edges of $K[A]$ form a complete bipartite subgraph of $K[A]$ with vertex classes $A_1$ and $A_2$. Since the number of red edges of $K[A]$ is at least $n^2/300$ it follows that $|A_1|, |A_2| \geq n/300$. Without loss of generality we may assume that $|A_1| \leq |A_2|$, so $|A_1| \leq (n-|B|)/2 \leq n/3 - \alpha n/2$.

Now let $y, z \in A_1$. Then there are at least $\delta(H) \geq n/3 + \alpha n$ vertices $w$ such that $\{w, y, z\} \in H$. At most $n/3 - \alpha n/2$ of these vertices $w$ lie in $A_1$, and since $yz$ is not red at most $2$ of these vertices $w$ lie in $B$. So there are at least $\alpha n$ vertices $w \in A_2$ such that $\{w, y, z\} \in H$; summing over all pairs $y, z \in A_1$ we find that there are at least $\binom{|A_1|}{2}\alpha n \geq 10^{-6} \alpha n^3$ edges of $H$ with two vertices in $A_1$ and one vertex in $A_2$. Since there are $|A_1| |A_2| \leq n^2$ pairs $yz$ with $y \in A_1$ and $z \in A_2$, we deduce that some such pair $yz$ lies in at least $10^{-6} \alpha n \geq 6$ such edges of $H$. But then $yz$ is blue, a contradiction.
\end{proof}

We can now give the proof of Theorem~\ref{cyclepack}. Note that, as mentioned above, the proof follows immediately from Proposition~\ref{cycletype} except in the case $k=s=3$.

\medskip \noindent {\bf Theorem~\ref{cyclepack}.}
{\it
For integers $k \geq 3$ and $s \geq 2$ we have
$$\delta(C^k_s, n) = 
\begin{cases}
\frac{n}{2(k-1)} + o(n) &\mbox{if $s$ is even, and}\\
\frac{s+1}{2s(k-1)}n + o(n) &\mbox{otherwise.} 
\end{cases}$$}

\begin{proof} 
Suppose first that we do not have $k = s = 3$. Then Proposition~\ref{cycletype} shows that $\gcd(C^k_s) = 1$ and $\sigma(C^k_s) = \lceil s/2 \rceil / s(k-1)$, so Theorem~\ref{main2} implies that $\delta(C^k_s, n) \leq \lceil s/2 \rceil n/ s(k-1) + o(n)$. On the other hand, Proposition~\ref{cycletype} also shows that $\tau(C^k_s) = \lceil s/2 \rceil / s(k-1)$, so Proposition~\ref{extrem2} implies that $\delta(C^k_s, n) \geq \lceil s/2 \rceil n/ s(k-1)$. This completes the proof except for the case $k=s=3$. 

Now suppose that $k=s=3$. By Proposition~\ref{cycletype} we have $\tau(C^3_3) = 1/3$, and it follows by Proposition~\ref{extrem2} that $\delta(C^3_3, n) \geq n/3$. So it suffices to prove that $\delta(C^3_3, n) \leq n/3 + o(n)$. That is, we must show that for any $\alpha > 0$ there exists $n_0$ such that for any $n \geq n_0$ which is divisible by 6, any $k$-graph $H$ on $n$ vertices with $\delta(H) \geq n/3 + \alpha n$ contains a perfect $C^3_3$-packing. To do this, let $K$ denote the complete $3$-partite $3$-graph with vertex classes each of size two, and note that any copy of $K$ contains a copy of $C^3_3$ on the same vertex set, so we can treat copies of $K$ in $H$ as being copies of $C^3_3$ for the purpose of finding a $C^3_3$-packing in $H$. We mimic the proof of Lemma~\ref{main2simple} as it would apply to $K$. Indeed, we use the same hierarchy of constants as in the proof of Lemma~\ref{main2simple}, except that we now have the fixed values $b_1=2, b=6$ and $k=3$, and we introduce two additional constants $\beta$ and $\beta'$ with $\psi \ll \beta \ll \beta' \ll 1/C, 1/L$. We proceed exactly as in the proof of Lemma~\ref{main2simple} for most steps of the proof. In the first section of the proof, the calculation of \eqref{eq:GZdeg} now only gives us $\deg_{G \cup Z}(e) \geq n/3 + 2\alpha n/3$, so Lemma~\ref{redgraphmindeg} now yields the weaker result that at most $\theta m^{2}$ pairs $S \in \binom{[m]}{2}$ have $\deg_{\R}(S) \geq m/3 + \alpha m/2$, but this is still sufficient to apply Corollary~\ref{almostmatching} as in the proof of Lemma~\ref{main2simple}. However, at the end of the first section of that proof our weaker minimum codegree condition now only implies that $\delta(\Sa) \geq \delta^+(\Sa^+) > (1/3 + \alpha/2)m'$. This condition is only used in the step `Delete a $K$-packing to ensure equality of cluster sizes', and indeed the rest of the proof proceeds exactly as before until that step (in particular we obtain $n_X \leq \alpha n_1/2$, $n'_1 = n_1 - n_X$, $m' \geq (1-\psi)m$, $K$-packings $M^1$ and $M^2$, and sets $X_j$ and $Y'_j$ as there). 

In this step we wish to find a $C^3_3$-packing $M^3$ in $H[Y']$ so that for any $e \in M_\R$ the sets $Y'_j$ for $j \in e$ have equal size, and this common size is even. We demonstrate below how this can be done, but first note that once we have found such a $C^3_3$-packing $M^3$ in $H[Y']$, the final step of the proof proceeds exactly as before to give a perfect $C^3_3$-packing in $H$.

Suppose first that for any set $T \subseteq V(\Sa)$ with $|T| \leq \alpha m'/6$ the graph $\Sa \sm T$ formed by deleting the vertices of $T$ from $\Sa$ is connected (as was the case in the proof of Lemma~\ref{main2simple} as a consequence of our stronger bound on $\delta(\Sa)$). Then $\delta(\Sa \sm T) \geq \delta(\Sa) - \alpha m'/6 > m'/3$, and for any $x, y \in V(\Sa) \sm T$ there is a path from $x$ to $y$ in $\Sa \sm T$. It follows that for any $x, y \in V(\Sa) \sm T$ the shortest path in $\Sa \sm T$ from $x$ to $y$ has length at most five, as in a path of length six or more two of the first, fourth and seventh vertices must share a common neighbour in $\Sa \sm T$, allowing us to construct a shorter path. We can therefore apply Lemma~\ref{samesizebalance} exactly as in the proof of Lemma~\ref{main2simple}, with the trivial partition of $V(\Sa)$ into one set $X_1 = V(\Sa)$, except that now we have $\alpha/6$ in place of $\alpha$ (condition (i) of Lemma~\ref{samesizebalance} then follows by our remarks above, and all other conditions hold exactly as before).

We may therefore assume that there exists a set $T \subseteq V(\Sa)$ with $|T| \leq \alpha m'/6$ for which the graph $\Sa' := \Sa \sm T$ is disconnected.
Observe that $\delta(\Sa') \geq \delta(\Sa) - \alpha m'/6 \geq m'/3 + \alpha m'/3$, so $\Sa'$ has precisely two connected components $C_1$ and $C_2$, each with at least $m'/3 + \alpha m'/3$ vertices and so at most $2m'/3 - \alpha m'/3$ vertices. In particular, any vertices $x, y \in V(C_1)$ have at least $\alpha m'$ neighbours in $V(C_1)$, and the analogous condition holds for $V_2$. Furthermore, every vertex $v \in T$ has at least $m'/3 + \alpha m'/2$ neighbours in $\Sa$, and so has either at least $m'/6$ neighbours in $V(C_1)$ or at least $m'/6$ neighbours in $V(C_2)$. Form a set $V_1$ by adding to $V(C_1)$ all those vertices of $T$ with at least $m'/6$ neighbours in $V(C_1)$, and let $V_2 = V(\Sa) \sm V_1$, so $V(C_2) \subseteq V_2$ and every vertex of $T \cap V_2$ has at least $m'/6$ neighbours in $V(C_2)$. Then $V_1$ and $V_2$ partition $[m']$, and $m'/3 + \alpha m'/3 \leq |V_1|, |V_2| \leq 2m'/3 - \alpha m'/3$. Furthermore, writing $\Sa_1 := \Sa[V_1]$, for any vertices $x, y \in V_1$ there is a path from $x$ to $y$ of length at most four in $\Sa_1$ from $x$ to $y$, even after the deletion of at most $\alpha m'/6$ vertices other than $x$ and $y$. Indeed, even following this deletion, $x$ must have a neighbour $w$ in $V(C_1)$, and $y$ must have a neighbour $z$ in $V(C_1)$, and then $z$ and $w$ have a common neighbour in $V(C_1)$ by our observation above that $z$ and $w$ have at least $\alpha m'$ common neighbours in $\Sa'$. A similar argument shows that if we delete at most $\alpha m'/6$ vertices of $\Sa_2 := \Sa[V_2]$ then there is a path of length at most four between any two vertices in the remaining subgraph.

Now recall from the proof of Lemma~\ref{main2simple} that every set $Y'_j$ has size $(1-3\sqrt{\psi})n_1' \leq |Y'_j| \leq n'_1$. In particular, it follows that 
\begin{equation} \label{eq:Y'size}
n-2\alpha n/3 \leq (1-3\sqrt{\psi})(1-\alpha/2)n_1(1-\psi)m \leq (1-3\sqrt{\psi}) n_1'm' \leq |Y'| \leq n_1'm' \leq n.
\end{equation}
Fix an integer $Q$ which is divisible by $6$ such that $(1-\beta)n'_1 \leq Q \leq (1-\beta + \psi) n'_1$.

\begin{claim} \label{cycleclaim}
There exists a $C^3_3$-packing $M^*$ in $H[Y']$ of size
$$N := \frac{|Y'| - m'Q}{6}$$ so that $|V(M^*) \cap Y'_j| \leq \beta' n'_1$ for any $j \in [m']$ and $|V(M^*) \cap \bigcup_{j \in V_1} Y_j'| = \sum_{j \in V_1} |Y'_j| - |V_1| Q$.
\end{claim}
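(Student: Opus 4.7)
The plan is to build $M^*$ from two kinds of $C_3^3$-copies. Since $C_3^3$ is $3$-partite with each vertex class of size $2$ (Proposition~\ref{cycletype}), a \emph{block copy} extracted from the robustly universal complex $J^e$ of Lemma~\ref{getrobuni} for an edge $e\in\R$ uses exactly two vertices from each cluster of $e$, contributing $2|e\cap V_1|\in\{0,2,4,6\}$ to the $V_1$-side count. An \emph{adjustment copy} is produced by Proposition~\ref{oddc33} applied to $H[Y']$ with the bipartition $(A,B)=(\bigcup_{j\in V_1}Y'_j,\bigcup_{j\in V_2}Y'_j)$, and contributes an odd value in $\{1,3,5\}$. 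The hypotheses of Proposition~\ref{oddc33} are satisfied because $|Y'|\ge n-o(n)$, $|A|,|B|\ge |Y'|/3+\Omega(\alpha)|Y'|$ (using $|V_1|,|V_2|\ge (1/3+\alpha/3)m'$ and $|Y'_j|=(1\pm 3\sqrt\psi)n'_1$), and $\delta(H[Y'])\ge n/3+\alpha n-o(n)$.

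First I would do the feasibility arithmetic: the bounds above give $a_1/N=6|V_1|/m'+o(1)\in[2+2\alpha-o(1),\,4-2\alpha-o(1)]$, so the average per-copy $V_1$-contribution lies strictly in $(2,4)$ with slack of order $\alpha N$. Using Lemma~\ref{redgraphmindeg}, one argues that $\R$ contains $\Omega(m^3)$ edges of ``mixed'' type (with $|e\cap V_1|\in\{1,2\}$): indeed the $\Omega(m^2)$ cross-pairs $\{u,v\}$ with $u\in V_1$, $v\in V_2$ each have $\R$-codegree at least $m/3+\alpha m/2$, and each of their $\R$-edges has type $1$ or $2$. Moreover, if either extreme type ($r=0$ or $r=3$) is absent, the identities $n_1+2n_2+3n_3=|V_1|$ and $n_0+n_1+n_2+n_3=|M_\R|$ (where $n_r$ counts edges of $M_\R$ of type $r$) force the other types to be present in abundance. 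In all cases the available types, combined with the slack of order $\alpha N$, make the small integer programme $\sum_r y_r+|\T|=N$ and $\sum_r 2ry_r+\sum_{C\in\T}|V(C)\cap A|=a_1$ solvable with $|\T|\le 5$ adjustment copies, using shifts of copies between adjacent types to change the $V_1$-total by $\pm 2$ and adjustment copies to handle the parity residue.

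To realise the plan, I would greedily extract the $y_r$ block copies from the chosen host edges of $\R$ via the robust universality of each $J^e$, using at most $O(N/m)\ll\beta'n'_1$ vertices from each cluster; then produce the $|\T|\le 5$ adjustment copies by iterated applications of Proposition~\ref{oddc33} to the sub-$3$-graph of $H[Y']$ obtained after removing the block vertices, noting that its codegree remains at least $n/3+\alpha n/2$ since in total only $O(\beta n)\ll\alpha n$ vertices have been deleted. The main technical obstacle will be producing each adjustment copy with a \emph{prescribed} odd $V_1$-count (rather than an arbitrary odd value in $\{1,3,5\}$); this is handled by applying Proposition~\ref{oddc33} to auxiliary sub-bipartitions of $H[Y']$ (temporarily restricting $A$ or $B$ to a controlled subset to force the $V_1$-count into $\{1\}$ or $\{5\}$ as required), which preserves the codegree hypothesis of the proposition because the restriction loses at most $o(n)$ vertices.
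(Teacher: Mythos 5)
Your overall architecture matches the paper's: copies of $C^3_3$ extracted inside edges of $\R$ via Lemma~\ref{getrobuni} contribute an even number ($0,2,4$ or $6$) of vertices to $A:=\bigcup_{j\in V_1}Y'_j$, a bounded number of copies from Proposition~\ref{oddc33} contribute odd amounts, an averaging argument (the paper uses disjoint cross-pairs of large $\R$-codegree; your identity over $M_\R$ is a legitimate variant) supplies abundant vertex-disjoint host edges of both high and low contribution, and the slack $\alpha N$ in the target average makes the counting feasible. The gap is in your last step. You correctly identify that Proposition~\ref{oddc33} only guarantees \emph{some} odd number of vertices in each part, but your proposed remedy --- applying it to ``auxiliary sub-bipartitions'' obtained by shrinking $A$ or $B$ by $o(n)$ vertices --- cannot force the count into $\{1\}$ or $\{5\}$: the proposition's conclusion is still only ``odd'', and any restriction strong enough to pin down the exact value (e.g.\ making one side much smaller than a third of the host) destroys the hypotheses $|A|,|B|\geq n/3+\alpha n$. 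Preserving the codegree condition is beside the point; no choice of admissible bipartition controls which of $1,3,5$ you get. So the integer programme you set up is not solved by the argument you give, and in the genuinely problematic cases (e.g.\ when the abundant block contributions are $\{0,4\}$ or $\{2,6\}$, so block swaps move the total in steps of $4$) the residue issue is exactly what remains open.

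The fix, which is what the paper does, is to observe that prescribed odd values are unnecessary. Take three vertex-disjoint copies from Proposition~\ref{oddc33} with \emph{arbitrary} odd $A$-counts: subset sums of any three odd integers realise every residue modulo $4$, so by including a suitable subset of them one can make the discrepancy $|V(M^*)\cap A|-\bigl(|A|-|V_1|Q\bigr)$ divisible by $4$. Then, with two pools of size at least $N$ --- copies with four vertices in $A$ (from edges of $\R$ with exactly two clusters in $V_1$) and copies with zero or two vertices in $A$ (from edges with at least two clusters in $V_2$) --- one replaces copies of the first pool by copies of the second, performing all the $-4$ replacements before the $-2$ replacements; since the discrepancy starts divisible by $4$ and each phase preserves the appropriate congruence, the running total never skips the target, and an intermediate-value argument (all-first-pool overshoots, all-second-pool undershoots, which is your $a_1/N\in(2+2\alpha,4-2\alpha)$ computation) guarantees equality is attained. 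If you replace your ``prescribed odd count'' step by this mod-$4$ subset-sum plus ordered-swap argument, the rest of your outline (spreading block copies over $\Omega(m)$ vertex-disjoint host edges to respect the $\beta' n_1'$ per-cluster budget, and the codegree bookkeeping for Proposition~\ref{oddc33}) goes through.
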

 
To prove the claim, we first observe that $N$ is an integer, since $Y'$ consists of all vertices of $H$ except those in $X$ (which was chosen in the proof of Lemma~\ref{main2simple} to have size divisible by $b=6$) and those covered by the $K$-packing $M^1 \cup M^2$, so $|Y'|$ is divisible by 6, and we chose $Q$ to be divisible by 6. Also, we have $(\beta - \beta^2)m'n'_1 \leq 6N \leq \beta m'n'_1$ by \eqref{eq:Y'size} and our choice of~$Q$. Write $A := \bigcup_{j \in V_1} Y_j'$ and $B := \bigcup_{j \in V_2} Y_j'$. Since $|Y'| \geq n-2\alpha n/3$ by \eqref{eq:Y'size}, we have $\delta(H[Y']) \geq (1/3 + \alpha/3)|Y'|$, so, by three successive applications of Proposition~\ref{oddc33}, we may obtain a set $E_0$ of three vertex-disjoint copies of $C^3_3$ in $H[Y']$, each of which has an odd number of vertices in each of $A$ and $B$. 

Now recall that all but at most $\theta m^2$ pairs $S \in \binom{[m]}{2}$ had $\deg_\R(S) \geq m/3 + \alpha m/2 \geq m'/3 + \alpha m'/2$. Since $m' \geq (1-\psi)m$ and $|V_1|, |V_2| \geq m'/3$ we may greedily form a set of $m'/10$ disjoint pairs $(x, y)$ with $x \in V_1, y \in V_2$ and $\deg_\R(\{x, y\}) \geq m'/3$. So certainly either there are at least $m'/10$ edges of $\R$ with precisely two vertices in $V_1$ or there are at least $m'/10$ edges of $\R$ with precisely two vertices in $V_2$. Without loss of generality we assume the former, that there is a set $F_1$ of $m'/10$ vertex-disjoint edges of $\R$ each with precisely two vertices in $V_1$. The same argument for pairs $(x, y)$ with $x, y \in V_2$ shows that there must be a set $F_2$ of $m'/10$ vertex-disjoint edges of $\R$ each with at least two vertices in $V_2$. 
Using Lemma~\ref{getrobuni} exactly as Lemma~\ref{getphirobuni} was used in the proof of Lemma~\ref{main2simple}, we can obtain at least $\beta' n_1'/5$ vertex-disjoint copies of $K$ in $H'[\bigcup_{j \in e} Y'_e]$ for each edge $e \in F_1 \cup F_2$. So we can greedily form $K$-packings $E_1$ and $E_2$ in $H[Y']$ each of size $(\beta' n'_1/5)(m'/40) = \beta' n'_1m'/200 \geq N$ such that $V(E_1) \cap V(E_2) \cap V(E_3) = \emptyset$, every copy of $K$ in $E_1$ has precisely four vertices in $A$, every copy of $K$ in $E_2$ has either four or six vertices in $B$, and collectively $E_1$, $E_2$ and $E_3$ cover at most $4 \beta' n_1/5 + 24 \leq \beta' n_1'$ vertices in any set $Y'_j$. 

Recall that we want $M^*$ to cover $|A| - |V_1|Q$ vertices of $A$. Initially take $M^*$ to consist precisely $N$ edges taken from $E_0$ and $E_1$. By choosing an appropriate subset of the edges of $E_0$ to include, we can ensure that $|V(M^*) \cap A| - (|A|-|V_1|Q)$ is divisible by four. However, this initial selection of $M^*$ has 
$$|V(M^*) \cap A| \geq 4(N-3) \geq \frac{2(\beta-\beta^2) m'n'_1}{3} - 12 > \left(\frac{2}{3}-\frac{\alpha}{3}\right)\beta m'n_1' \geq |A| - |V_1|Q,$$ 
so too many vertices are taken from $A$ (the final inequality holds since $|A| \leq n_1'|V_1|$ and $|V_1| \leq (2/3 - \alpha/3)m'$). On the other hand, if we were to replace all edges of $E_1$ in $M^*$ by edges of $E_2$, then a similar calculation would show that $|V(M^*) \cap A| < |A| - |V_1|Q$, that is, that too few vertices are taken from $A$. Starting from our initial $M^*$, we repeatedly replace an edge $e \in E_1$ in $M^*$ by an edge $e' \in E_2$, beginning with those edges $e' \in E_2$ with $|e' \cap B| = 6$, and then using edges $e' \in E_2$ with $|e' \cap B| = 4$ if these run out. Then each replacement by an edge $e'$ with $|e' \cap B| = 6$ decreases $|V(M^*) \cap A|$ by four, and each replacement by an edge $e'$ with $|e' \cap B| = 4$ decreases $|V(M^*) \cap A|$ by two. Since our initial $M^*$ was chosen so that $|V(M^*) \cap A| - (|A| - |V_1|Q)$ was divisible by four, at some point in this process we must have $|V(M^*) \cap A| = |A| - |V_1|Q$. By our choice of $E_1$ and $E_2$, at this point we also have $|V(M^*) \cap Y'_j| \leq \beta' n_1'$ for any $j \in [m']$, so this $M^*$ is the desired $C^3_3$-packing. This completes the proof of Claim~\ref{cycleclaim}.

\medskip

Retuning to the proof of Theorem~\ref{cyclepack}, fix some $M^*$ as in Claim~\ref{cycleclaim}, and delete the vertices covered by $M^*$ from $H$. Having done so, the sets $Y^*_j := Y'_j \sm V(M^*)$ for each $j \in [m']$ of undeleted vertices satisfy 
\begin{equation} \label{eq:sizes}
(1-2\beta')n'_1 \leq |Y'_j| - \beta' n'_1 \leq |Y^*_j| \leq |Y'_j| \leq n'_1
\end{equation}
for any $j \in [m']$, and 
\begin{equation} \label{eq:sameaverage}
Q = \frac{|\bigcup_{i \in V_1} Y^*_i|}{|V_1|} = \frac{|\bigcup_{i \in V_2} Y^*_2|}{|V_2|}.
\end{equation}
where for the second equality we used the fact that $M^*$ covered $6N = |Y'| - m'Q$ vertices in total, so, since $V_1$ and $V_2$ partition $[m']$, we have
$$|V(M^*) \cap \bigcup_{j \in V_2} Y_j'| = |Y'| - m'Q - \sum_{j \in V_1} |Y'_j| + |V_1| Q= \sum_{j \in V_2} |Y'_j| - |V_2| Q.$$
We now apply Lemma~\ref{samesizebalance} similarly as in Lemma~\ref{main2simple}, but now with $V_1$ and $V_2$ in place of $X_1$ and $X_2$, giving a partition of $[m']$ into two parts. Again we have $n_1-n_X$ and $m'$ in place of $n$ and $m$ respectively and $M_\R$ and $\Sa$ play the same role there as here, but we now have the sets $Y^*_j, H'[Y^*]$, $\alpha/6$ and $2\beta'$ in place of $Y_j$, $G$, $\alpha$ and $\beta$ respectively, and we set $b_1=2$ and $k=3$. The sets $Y^*_j$ satisfy the size condition of Lemma~\ref{samesizebalance} by \eqref{eq:sizes}, and condition (iii) of Lemma~\ref{samesizebalance} then holds by \eqref{eq:sameaverage} (with $Q$ playing the same role here as there). The conditions of Setup~\ref{balancesetup} are satisfied exactly as in the proof of Lemma~\ref{main2simple}, since these do not depend on the sets $Y^*_j$. Likewise condition (ii) of Lemma~\ref{samesizebalance} holds by our choice of $M_\R$ exactly as in the proof of Lemma~\ref{main2simple}. Finally, condition~(i) of Lemma~\ref{samesizebalance} holds by our comments on $\Sa_1$ and $\Sa_2$ prior to the statement of Claim~\ref{cycleclaim}. 

So we may indeed apply Lemma~\ref{samesizebalance} as claimed, to obtain a $K$-packing $M^{**}$ in $H'[Y^*]$ such that for any $e \in M_\R$ the sets $Y''_j := Y^*_j \sm V(M^{**}) = Y'_j \sm V(M^* \cup M^{**})$ for $j \in e$ obtained by these deletions have equal size $n_e$, where $n_e$ is divisible by $b_1=2$. So we may take $M^3 = M^* \cup M^{**}$, following which, as stated earlier, the final step of the proof proceeds exactly as in the proof of Lemma~\ref{main2simple}.
\end{proof}

\section{Concluding remarks}\label{sec:conc}

\subsection{Non-complete $k$-partite $k$-graphs}
In Section~\ref{sec:extremal} we saw that Theorems~\ref{main1},~\ref{main2} and~\ref{main3} are asymptotically best possible for all complete $k$-partite $k$-graphs; we now consider those incomplete $k$-partite $k$-graphs for which the same is true. For brevity, throughout the following discussion the words `best possible' should be interpreted as meaning best possible up to an $o(n)$ error term.
 
Recall that Proposition~\ref{extrem2} showed that Theorem~\ref{main2} is best possible for all $k$-partite $k$-graphs $K$ of type $1$ with $\tau(K) = \sigma(K)$, and also for all $k$-partite $k$-graphs $K$ of type $d \geq 2$ with $\tau(K) = \sigma(K) \geq 1/p$, where $p$ is the smallest prime factor of $d$. In particular, any $k$-partite $k$-graph on $b$ vertices which contains a matching of size $\sigma(K) b$ (that is, a matching which covers an entire vertex class of some $k$-partite realisation) has $\sigma(K) = \tau(K)$.

However, there are $k$-partite $k$-graphs for which $\tau(K) \neq \sigma(K)$. For an example, choose disjoint sets $U_1, \dots, U_k$ each of size greater than $k$, and for each $j \in [k]$ choose a marked vertex $u_j \in U_j$. Let $K^*$ be the $k$-partite $k$-graph with vertex classes $U_1, \dots, U_k$ whose edges are all $k$-tuples of vertices including at least one of the marked vertices $u_j$.
Then it is not hard to see that $K^*$ has only one $k$-partite realisation up to permutations of the vertex classes $U_1, \dots, U_k$, so $\sigma(K^*) = \min_{j \in [k]} |U_j|/b > k/b$, where $b := |V(K^*)|$. However, $\{u_j : j \in [k]\}$ is a vertex cover of $K^*$, so we have $\tau(K^*) \leq k/b < \sigma(K^*)$. Having seen the construction of Proposition~\ref{extrem2}, it is natural to ask whether the best possible versions of Theorems~\ref{main2} and~\ref{main3} would have $\tau(K)$ in place of $\sigma(K)$, providing an upper bound to match the lower bound of Proposition~\ref{extrem2}. However, we can show this is not the case by considering the $k$-graph $K^*$ defined above and the following variation of the construction of Proposition~\ref{extrem2}. Take disjoint empty vertex sets $A$ and $B$ with sizes $|A| = \lceil(k+1)n/b\rceil - 1$ and $|B| = n - |A|$, and let $H$ be the $3$-graph on vertex set $A \cup B$ whose edges are all $3$-tuples $e \in \binom{A \cup B}{3}$ with $1 \leq |e \cap A| \leq k-1$. Then $\delta(H) = |A| = \lceil(k+1)n/b \rceil -1$, but $H$ does not have a perfect $K^*$-packing. This is because for any copy of $K^*$ in $H$, $A \cap V(K^*)$ is a vertex cover of $K^*$. However, we cannot have $u_j \in A$ for every $j \in [k]$ because $\{u_1, \dots, u_k\}$ is an edge of $K^*$. Any other vertex cover of $K^*$ has size at least $k+1$, so any copy of $K^*$ in $H$ must have at least $k+1$ vertices in $A$. It follows that any $K^*$-packing in $H$ has size at most $|A|/(k+1) < n/b$, so is not perfect. It is therefore not obvious what should be the general rule for the behaviour of $\delta(K, n)$ for $k$-partite $k$-graphs $K$ of type 1 which have $\tau(K) \neq \sigma(K)$.

We now consider the divisibility-based extremal constructions, for which we make the following definitions. Let $K$ be a $k$-partite $k$-graph; then we say that two vertices $u, v$ of $K$ are \emph{tightly-linked} if there is a set $S$ of $k-1$ vertices of $K$ such that $u \cup \{S\}$ and $v \cup \{S\}$ are both edges of $K$. Observe that $u$ and $v$ must then lie in the same vertex class of any $k$-partite realisation of $K$. Now let $U_1, \dots, U_k$ be the vertex classes of a $k$-partite realisation of $K$, and form a graph on $V(K)$ by joining any tightly-linked pair of vertices with an edge. So each connected component of this graph is a subset of a vertex class of $K$; we say that $K$ is \emph{tightly $k$-partite} if the connected components of this graph are precisely the vertex classes of $K$. So, for example, any complete $k$-partite $k$-graph is tightly $k$-partite. Note that if $K$ is tightly $k$-partite then it has only one $k$-partite realisation up to permutation of the vertex classes (but the converse does not hold: for example, $C^3_3$ has only one $k$-partite realisation but is not tightly $k$-partite).

Recall that Proposition~\ref{extrem1} showed that Theorem~\ref{main1} is best possible for any $k$-partite $k$-graph $K$ which satisfies property (P1) for some $p$, and that Theorem~\ref{main3} is best possible for any $k$-partite $k$-graph $H$ which satisfies property (P2) for some $p \geq 2$ with $1/p \geq \sigma(H)$. In Section~\ref{sec:extremal} we showed that property (P1) holds for any complete $k$-partite $k$-graph of type~0; the same argument shows that property (P1) holds for any tightly $k$-partite $k$-graph $K$ of type 0. Indeed, for any $i \in [k]$ and any $x, y \in U_i$, the fact that $H$ is tightly $k$-partite implies that there is a sequence $x, z_1, \dots, z_\ell, y$ such that each consecutive pair is tightly-linked in $K$, and so, given a set $A \subseteq V(K)$ such that $|e \cap A|$ is divisible by $p$ for any $e \in K$, the vertices $x, z_1, \dots, z_\ell, y$ are either all in $A$ or all not in $A$. Property (P1) then follows exactly as before. Similarly, in Section~\ref{sec:extremal} we showed that property (P2) holds for any complete $k$-partite $k$-graph of type $d \geq 2$ and any $p$ which divides $d$. Exactly as before we can adapt this argument to show that the same statement holds with `tightly $k$-partite' in place of `complete $k$-partite'. In conclusion, we find that Theorem~\ref{main1} is best possible for any tightly $k$-partite $k$-graph of type 0, and that Theorem~\ref{main3} is best possible for any tightly $k$-partite $k$-graph $K$ of type $d \geq 2$ such that the smallest prime factor $p$ of $d$ satisfies $1/p \geq \sigma(K)$. Again, it is not clear what the general rule should be for the behaviour of $\delta(K, n)$ for $k$-partite $k$-graphs $K$ of type $0$ or $d \geq 2$ which are not tightly $k$-partite. One example of the latter category is the cycle $C^3_3$, which has type $0$ and for which Theorem~\ref{cyclepack} showed that $\delta(C^3_3, n) = n/3 + o(n) = \sigma(C^3_3)n + o(n)$. The principal difference between the proofs of Theorem~\ref{cyclepack} and Lemma~\ref{main2simple} was the use of Proposition~\ref{oddc33} to find copies of $C^3_3$ with an odd number of vertices on each side of a partition of $V(H)$; it seems likely that the value of $\delta(K, n)$ for $k$-partite $k$-graphs $K$ of type $0$ or type $d \geq 2$ which are not tightly $k$-partite depends on the minimum codegree required to give an analogue of Proposition~\ref{oddc33}, in a manner reminiscent of the results of~\cite{KKM} for perfect matchings. 

\subsection{Almost-perfect packings} For many applications, it suffices to find an almost-perfect $K$-packing in a $k$-graph $H$, that is, a $K$-packing covering all but a small number of vertices. It is natural to consider the minimum codegree needed to guarantee such a packing, and this can be obtained by small changes to our methods. First, observe that if in Proposition~\ref{extrem2} we instead take the set $A$ to have size $\lceil \tau (n - C) \rceil - 1$, where $\tau = \tau(K)$, then we obtain a $k$-graph $G$ on $n$ vertices with $\delta(G) = |A| = \lceil \tau (n - C) \rceil - 1$ in which any $K$-packing has size at most $|A|/\tau b < (n-C)/b$ (where $b$ is the number of vertices of $K$). So any $K$-packing in $G$ leaves more than $C$ vertices uncovered, proving the following proposition.

\begin{prop} \label{almostpackex}
Let $K$ be a $k$-partite $k$-graph. Then for any $C > 0$ and any $n$ there exists a $k$-graph $G$ on $n$ vertices with $\delta(G) \geq \lceil\tau(K) (n - C)\rceil -1$ such that no $K$-packing in $G$ covers all but at most $C$ vertices of $G$.
\end{prop}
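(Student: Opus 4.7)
The plan is simply to shrink the set $A$ in the construction of Proposition~\ref{extrem2} so that the resulting uncovered portion has size greater than $C$ rather than merely greater than zero. Write $\tau := \tau(K)$ and $b := |V(K)|$, fix $C > 0$, and take disjoint vertex sets $A$ and $B$ with $|A| = \lceil \tau(n-C) \rceil - 1$ and $|A \cup B| = n$ (we may assume $n$ is large enough for $|B| \geq 0$; otherwise the claimed codegree bound is vacuous). Let $G$ be the $k$-graph on $A \cup B$ whose edges are exactly those $k$-tuples $e \in \binom{A \cup B}{k}$ with $|e \cap A| \geq 1$, i.e.\ the same construction as in Proposition~\ref{extrem2} but with the altered choice of $|A|$.

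The verification proceeds in two short steps. First, any $(k-1)$-tuple of vertices of $G$ can be extended to an edge of $G$ by appending any vertex of $A$ not already in the tuple, so $\delta(G) = |A| = \lceil \tau(n-C) \rceil - 1$, which is the required codegree bound. Second, for any copy $K'$ of $K$ embedded in $G$, every edge of $K'$ is an edge of $G$ and hence meets $A$; thus $A \cap V(K')$ is a vertex cover of $K'$, and by definition of $\tau(K)$ we have $|A \cap V(K')| \geq \tau b$. Consequently any $K$-packing $\F$ in $G$ satisfies
$$|\F| \cdot \tau b \leq |A| = \lceil \tau(n-C) \rceil - 1 < \tau(n-C),$$
so $|V(\F)| = b|\F| < n - C$, meaning strictly more than $C$ vertices of $G$ remain uncovered by $\F$.

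There is essentially no obstacle here: the argument is a direct quantitative tweak of Proposition~\ref{extrem2}, and the only thing to check is that the chosen size of $A$ produces the strict inequality $|A|/\tau < n-C$, which follows immediately from the floor/ceiling arithmetic above. No additional tools or hypotheses on $K$ are needed.
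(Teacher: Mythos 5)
Your proposal is correct and is essentially identical to the paper's own argument: the paper also proves this by rerunning the construction of Proposition~\ref{extrem2} with $|A| = \lceil \tau(K)(n-C)\rceil - 1$, noting $\delta(G) = |A|$ and that any $K$-packing has size at most $|A|/\tau(K) b < (n-C)/b$, so more than $C$ vertices are left uncovered. No further comment is needed.
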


In the other direction, we now prove Theorem~\ref{almostpack}, which gives an upper bound similar to that of Theorem~\ref{main2} but which holds for all $k$-partite $k$-graphs $K$ (whereas Theorem~\ref{main2} applied only when $\gcd(K) = 1$). Combined with Proposition~\ref{almostpackex} this gives the asymptotically correct threshold for the almost-packing problem for any $k$-partite $k$-graph $K$ with $\tau(K) = \sigma(K)$; as we have seen, this includes all complete $k$-partite $k$-graphs.

\medskip \noindent {\bf Theorem~\ref{almostpack}.}
{\it
Let $K$ be a $k$-partite $k$-graph. Then there exists a constant $C = C(K)$ such that for any $\alpha > 0$ there exists $n_0 = n_0(K, \alpha)$ such that any $k$-graph $H$ on $n \geq n_0$ vertices with $\delta(H) \geq \sigma(K)n + \alpha n$ admits a $K$-packing covering all but at most $C$ vertices of $H$.} \medskip

\begin{proof}
Suppose first that $\sigma(K) < 1/k$, and fix $U_1, \dots, U_k$ to be the vertex classes of a $k$-partite realisation of $K$ in which $|U_1| = \sigma(K) b$, where $b := |V(K)|$. Note that since $\sigma(K) < 1/k$, the vertex classes $U_1, \dots, U_k$ cannot all be the same size. Let $K^*$ be the complete $k$-partite $k$-graph on these vertex classes; then it suffices to find a perfect $K^*$-packing in $H$. To do this, we follow the proof of Lemma~\ref{main1simple} exactly as in the case when $\gcd(K) = 1$ until the step `Delete a $K$-packing to ensure divisibility of cluster sizes', in which we applied Lemma~\ref{gcdbalance} to delete a $K^*$-packing in $H$ such that following this deletion all subclusters had size divisible by $bk\gcd(K^*)$. We cannot now apply Lemma~\ref{gcdbalance} as we may have $\gcd(K^*) \neq 1$; however, it is straightforward to modify (and hugely simplify) the argument of Lemma~\ref{gcdbalance} to show that we can delete a $K^*$-packing in $H$ such that, following this deletion, at most one subcluster in any connected component of $\Sa'$ does not have size divisible by $bk\gcd(K^*)$. Our condition $\delta(H) \geq \sigma(K)n + \alpha n \geq n/b + \alpha n$ implies that $\Sa'$ has fewer than $b$ connected components (exactly as $\delta(H) \geq n/p+\alpha n$ implied that $\Sa'$ had fewer than $p$ connected components). So at most $b$ subclusters do not have size divisible by $bk\gcd(K^*)$; by deleting up to $bk\gcd(K^*)$ vertices from each of these we can ensure that every subcluster has size divisible by $bk\gcd(K^*)$. Following the remainder of the proof exactly as before, we obtain a $K^*$-packing in $H$ which covers all vertices except for the at most $b^2k\gcd(K^*) \leq b^3k$ deleted vertices, so we may take $C = b^3k$.

The remaining possibility is that $\sigma(K) = 1/k$, in which case we have $\delta(H) \geq n/k + \alpha n$. Since $\sigma(K) = 1/k$ every vertex class of any $k$-partite realisation of $K$ must have equal size, so we may assume without loss of generality that $K$ is the complete $k$-partite $k$-graph with vertex classes each of size $b_1$, for some positive integer $b_1$. In this case we mimic the proof of Lemma~\ref{main2simple}, and as in the proof of Theorem~\ref{cyclepack}, all steps proceed as before except for the choice of the $K$-packing $M^3$. Using the notation of the proof of Lemma~\ref{main2simple}, we proceed as in the proof of Theorem~\ref{cyclepack} to find $M^3$, first obtaining a partition of $[m']$ into parts $V_1, \dots, V_s$ for some $s < k$ such that for any $j \in [s]$ we have $|V_j| \geq m'/k + \alpha m'/2k$ and the property that, even after the deletion of up to $\alpha m'/2k$ vertices of $\Sa[V_j]$, the remaining subgraph contains a short path between any two vertices. We cannot then obtain $M^*$ as in Claim~\ref{cycleclaim}, since we now have no analogue of Proposition~\ref{oddc33}, but instead we can simply delete up to $2kb$ vertices of $\bigcup_{i \in V_j} Y'_i$ for each $j \in [s]$ before proceeding similarly as the proof of Claim~\ref{cycleclaim} to obtain a $K$-packing $M^*$ such that the sets $Y^*_i$ of vertices of $Y'_i$ which were not deleted or covered by $M^*$ have the desired property, that is, that the average size of sets $Y^*_i$ with $i \in V_j$ is the same for each $j \in [s]$, and divisible by $b_1$. From this we obtain $M^3$ exactly as in the proof of Theorem~\ref{cyclepack}, and the final step of the proof then proceeds exactly as for Lemma~\ref{main2simple}, giving a $K$-packing in $H$
covering all vertices except for the at most $2ksb < k^2b$ deleted vertices, so we may take $C = k^2b$.
\end{proof} 

\subsection{Non $k$-partite $k$-graphs}
All of the results of this paper pertain to $k$-partite $k$-graphs, but we can also consider the value of $\delta(H, n)$ for $k$-graphs $H$ which are not $k$-partite. However, for most such $k$-graphs $H$ we do not even know the asymptotic value of the Tur\'an density, that is, the number of edges needed in a large $k$-graph $G$ to guarantee even a single copy of $H$ in $G$. By contrast, in this paper we used several times the fact that the Tur\'an density of a $k$-partite $k$-graph is zero. A lack of knowledge of the Tur\'an density of $H$ is not an essential obstacle to finding $\delta(H, n)$; indeed, Keevash and Mycroft determined the exact value of $\delta(K^3_4, n)$ for large $n$ even though finding the Tur\'an density of $K^3_4$ remains a significant open problem. It would be interesting to know whether $\delta(H, n)$ is determined by the parameters $\gcd(H)$ and $\sigma(H)$ (whose definitions extend naturally to the non $k$-partite case), in the same way as for graphs. However, we note that it is not sufficient to consider only the smallest $r$ for which a $k$-graph $H$ admits an $r$-partite realisation. Indeed $K^3_4 - e$ and $K^3_4$ both admit a $4$-partite realisation with one vertex in each vertex class, and no $3$-partite realisation, but as we saw in Section~\ref{intro}, results of Lo and Markstr\"om~\cite{LMp, LM2} and of Keevash and Mycroft~\cite{KM} show that $\delta(K^3_4 - e, n)$ and $\delta(K^3_4, n)$ differ by $n/4 + o(n)$.

A natural starting point to consider would be the complete $4$-partite $3$-graph $K$ with vertex classes of size $b_1$, $b_2$, $b_3$ and $b_4$ (so the edges are any triple whose vertices lie in three different vertex classes). The fact that $\delta(K^3_4, n) \leq 3n/4$ strongly suggests that we have $\delta(K, n) \leq 3n/4 + o(n)$ for any values of $b_1, b_2, b_3$ and $b_4$. On the other hand, if $\gcd(\{b_1, b_2, b_3, b_4\}) > 1$, then we can modify a construction of Pikhurko~\cite{P} to show that $\delta(K, n) \geq 3n/4 - 2$, and in fact the same construction gives the same threshold if $\gcd(K) = 2$, where $\gcd(K) := \gcd(\{b_i - b_j: i, j \in [4]\}) > 1$. So we expect that $\delta(K, n) = 3n/4 + o(1)$ in these cases. If $\delta(K, n)$ exhibits behaviour analogous to that of $k$-partite $k$-graph case, then we would also expect $\delta(K, n)$ to be lower in the case $\gcd(K) = 1$. Assume without loss of generality that $b_1 \leq b_2, b_3, b_4$ and define $\sigma = \sigma(K) := \frac{b_1}{b_1+b_2+b_3+b_4}$. Given disjoint sets $A$ and $B$ with $|A| = \lceil \sigma n \rceil - 1$ and $|B| = n - |A|$, a well-known random construction due to Czygrinow and Nagle~\cite{CN} gives a $k$-graph $G'$ on $|B|$ with $\delta(G') \geq |B|/2 - o(n)$ which does not contain any copy of $K^3_4$. Form a $k$-graph $G$ on $A \cup B$ whose edges are the edges of $G'$ together with any triple of vertices which intersects~$A$. Then $\delta(G) = \delta(G') + |A| = n/2 + |A|/2 - o(n) = (1/2+\sigma)n - o(n)$, and $G$ cannot contain a perfect $K$-packing, as any copy of $K$ in $G$ must have at least $b_1$ vertices in $A$. So we have $\delta(K, n) \geq (1/2+\sigma)n - o(n)$; it would be interesting to know whether this is the asymptotically correct threshold for $\gcd(K) = 1$. However, demonstrating that this is the correct threshold would imply that the minimum codegree which guarantees the existence of a copy of $K^3_4$ in a $k$-graph on $n$ vertices is asymptotically $n/2$, so would require the solution of a well-known open problem (see~\cite{CN} for further details). 

\section{Acknowledgements}

I would like to thank Katherine Staden for her helpful comments on an earlier version of this manuscript. I also thank an anonymous referee for many helpful remarks.

\medskip

\noindent
{\footnotesize
Richard Mycroft, \\ 
School of Mathematics, \\ 
University of Birmingham, \\
Birmingham B15 2TT, \\
United Kingdom. \\
{\tt r.mycroft@bham.ac.uk }}
\end{document}